\newcommand{\norm}[1]{\left\lVert#1\right\rVert}
\newcommand{\ang}[1]{\left\langle #1 \right\rangle}
\newcommand{\floor}[1]{\left\lfloor #1 \right\rfloor}
\newcommand{\ceil}[1]{\left\lceil #1 \right\rceil}
\newcommand{\paren}[1]{\left( #1 \right)}
\newcommand{\sqb}[1]{\left[ #1 \right]}
\newcommand{\set}[1]{\left\{ #1 \right\}}
\newcommand{\setcond}[2]{\left\{ #1 \;\middle\vert\; #2 \right\}}
\newcommand{\CC}{\mathbb{C}}
\newcommand{\RR}{\mathbb{R}}
\newcommand{\NN}{\mathbb{N}}
\newcommand{\ZZ}{\mathbb{Z}}
\newcommand{\QQ}{\mathbb{Q}}
\newcommand{\cD}{\mathcal{D}}
\newcommand{\cF}{\mathcal{F}}
\newcommand{\cG}{\mathcal{G}}
\newcommand{\cI}{\mathcal{I}}
\newcommand{\cL}{\mathcal{L}}
\newcommand{\cM}{\mathcal{M}}
\newcommand{\cO}{\mathcal{O}}
\newcommand{\cP}{\mathcal{P}}
\newcommand{\cQ}{\mathcal{Q}}
\newcommand{\fraka}{\mathfrak{a}}
\newcommand{\frakb}{\mathfrak{b}}
\newcommand{\frakc}{\mathfrak{c}}
\newcommand{\frakD}{\mathfrak{D}}
\newcommand{\frakp}{\mathfrak{p}}
\newtheorem{thm}{Theorem}[section]
\newtheorem{lem}[thm]{Lemma}
\newtheorem{cor}[thm]{Corollary}
\newtheorem{conj}[thm]{Conjecture}
\newtheorem{claim}[thm]{Claim}
\newtheorem{obs}[thm]{Observation}
\newtheorem{prop}[thm]{Proposition}
\newtheorem*{rmk}{Remark}
\theoremstyle{definition}
\newtheorem{defn}[thm]{Definition}
\newtheorem{example}[thm]{Example}
\DeclareMathOperator{\GL}{GL}
\DeclareMathOperator{\Mat}{Mat}
\DeclareMathOperator{\Vol}{Vol}
\DeclareMathOperator{\LD}{LD}
\DeclareMathOperator{\diag}{diag}
\DeclareMathOperator{\Tr}{Tr}
\DeclareMathOperator{\cont}{cont}
\title{Sums of algebraic dilates}
\author{David Conlon\thanks{Department of Mathematics, Caltech, Pasadena, CA 91125, USA. Email: {\tt dconlon@caltech.edu}. Research supported by NSF Awards DMS-2054452 and DMS-2348859.} \and Jeck Lim\thanks{Department of Mathematics, Caltech, Pasadena, CA 91125, USA. Email: {\tt jlim@caltech.edu}. Research partially supported by an NUS Overseas Graduate Scholarship.}}
\date{}
\begin{document}
\maketitle

\begin{abstract}
We show that if $\lambda_1,\ldots,\lambda_k$ are algebraic numbers, then
$$|A+\lambda_1\cdot A+\dots+\lambda_k\cdot A|\geq H(\lambda_1,\ldots,\lambda_k)|A|-o(|A|)$$
for all finite subsets $A$ of $\mathbb{C}$, where $H(\lambda_1,\ldots,\lambda_k)$ is an explicit constant that is best possible. The proof combines several ingredients, including a lower bound estimate on the measure of sums of linear transformations of compact sets in $\mathbb{R}^d$, 
a variant of Freiman's theorem tuned specifically to sums of dilates and the analysis of what we call lattice density, which succinctly captures how a subset of $\mathbb{Z}^d$ is arranged relative to a given flag of lattices. 
As an application, we revisit the study of sums of linear transformations of finite sets, in particular proving an asymptotically best possible lower bound for sums of two linear transformations. 
\end{abstract}

\section{Introduction}

For any subset $A$ of $\CC$ and $\lambda_1, \dots, \lambda_k \in \CC$, the sum of dilates $A + \lambda_1 \cdot A + \cdots + \lambda_k \cdot A$ is given by
\[A + \lambda_1 \cdot A + \cdots + \lambda_k \cdot A := \{a_0 + \lambda_1 a_1 + \cdots + \lambda_k a_k : a_0, a_1, \dots, a_k \in A\}.\]
Our concern in this paper will be with estimating the minimum size of $|A + \lambda_1 \cdot A + \cdots + \lambda_k \cdot A|$ in terms of $|A|$. For $\lambda_1, \dots, \lambda_k \in \QQ$, this problem was essentially solved by Bukh~\cite{B08}, from whose results it follows that if $\lambda_i = p_i/q$ for $q$ as small as possible for such a common denominator, then 
\[|A + \lambda_1 \cdot A + \cdots + \lambda_k \cdot A| \geq (|p_1| + \cdots + |p_k| + |q|)|A| - o(|A|)\]
for all finite subsets $A$ of $\CC$, which is best possible up to the lower-order term. This result was later sharpened by Balog and Shakan~\cite{BS14} when $k = 1$ and then Shakan~\cite{S16} in the general case, improving the $o(|A|)$ term to a constant depending only on $\lambda_1, \dots, \lambda_k$. 

When at least one of the $\lambda_i$ is transcendental, it was shown by Konyagin and \L aba~\cite{KL06} that 
\[|A + \lambda_1 \cdot A + \cdots + \lambda_k \cdot A| = \omega(|A|).\]
The problem of giving more precise lower bounds for $|A + \lambda \cdot A|$ when $\lambda$ is transcendental was studied in some depth by Sanders~\cite{S08, S12} and Schoen~\cite{Sch11}, with progress tied to advances in quantitative estimates for Freiman's theorem on sets of small doubling.
Using quite different techniques, Conlon and Lim~\cite{CL22} recently resolved this problem, 
showing that there is a constant $c$ such that
\[|A+ \lambda \cdot A| \geq e^{c \sqrt{\log |A|}}|A|,\]
which, by a construction of Konyagin and \L aba, is best possible up to the value of $c$.

Our focus here will be on the complementary case, where each of $\lambda_1, \dots, \lambda_k$ is algebraic. Early results in this direction were proved by Breuillard and Green~\cite{BG13} and Chen and Fang~\cite{CF18}, with the latter showing that, for any fixed $\lambda \ge 1$, $|A + \lambda \cdot A| \ge (1 + \lambda)|A| - o(|A|)$ for all finite subsets $A$ of $\RR$. The problem of giving more precise lower bounds for $|A + \lambda \cdot A|$ when $\lambda$ is algebraic was raised explicitly by Shakan~\cite{S16} and by Krachun and Petrov~\cite{KP23}, with the latter authors conducting the first systematic study and making the first concrete conjectures. 

To state their conjecture, suppose that $f(x) \in \ZZ[x]$ is the minimal polynomial of $\lambda$, assumed to have coprime coefficients, and $f(x) = \prod_{i=1}^d (a_i x + b_i)$ is a full complex factorisation of $f$. If we set $H(\lambda) := \prod_{i=1}^d (|a_i| + |b_i|)$, the conjecture of Krachun and Petrov~\cite{KP23} is then as follows.

\begin{conj} \label{conj:KP}
For any algebraic number $\lambda$,
\[|A + \lambda \cdot A| \ge H(\lambda)|A| - o(|A|)\]
for all finite subsets $A$ of $\CC$.
\end{conj}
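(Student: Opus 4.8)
The plan is to bound the ratio $K := \abs{A + \lambda \cdot A}/\abs{A}$ from below by $H(\lambda) - o(1)$; we may assume throughout that $K$ is bounded, as there is nothing to prove otherwise. The first phase is a structural reduction. By the Ruzsa triangle inequality, $\abs{A - A} \le \abs{A + \lambda \cdot A}^{2}/\abs{A} \le K^{2}\abs{A}$, so $A$ has bounded doubling, and the variant of Freiman's theorem tuned to sums of dilates lets us pass, losing only a factor $1 + o(1)$ in all the ratios of interest, to a normalised picture: after translating so that $0 \in A$, let $V \subseteq \CC$ be the $\QQ(\lambda)$-span of $A$; choosing a $\QQ(\lambda)$-basis identifies $V$ with $\QQ(\lambda)^{m}$, where $m = \dim_{\QQ(\lambda)}V$, and the Minkowski embedding of $\QQ(\lambda)$ into $\QQ(\lambda)\otimes_{\QQ}\RR \cong \RR^{d}$ then places $A$ inside $\RR^{N}$ with $N = dm$, multiplication by $\lambda$ being realised by a fixed real linear map $M$ whose complex eigenvalues, counted with multiplicity, are the conjugates $\lambda_{1},\dots,\lambda_{d}$ of $\lambda$, each occurring $m$ times. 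Clearing denominators, $A$ becomes a finite set $B$ lying in a lattice $\Lambda_{0} \subseteq \RR^{N}$, with $\abs{A + \lambda \cdot A} = \abs{B + MB}$, so it suffices to bound $\abs{B + MB}$ from below.

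The second ingredient is a continuous inequality: for every compact $C \subseteq \RR^{N}$ of positive Lebesgue measure,
\[\Vol(C + MC) \ge \Bigl(\prod_{i=1}^{d}\bigl(1 + \abs{\lambda_{i}}\bigr)\Bigr)^{m}\Vol(C),\]
and, more generally, $\Vol(C + TC) \ge \prod_{j}(1 + \abs{\mu_{j}})\,\Vol(C)$ for any real linear map $T$ with complex eigenvalues $\mu_{j}$ listed with multiplicity. Since $H(\lambda) = \abs{a_{d}}\prod_{i=1}^{d}(1 + \abs{\lambda_{i}})$, with $a_{d}$ the leading coefficient of the minimal polynomial of $\lambda$, this supplies exactly the geometric part of the target bound $H(\lambda)^{m}$; the remaining factor $\abs{a_{d}}^{m}$ is arithmetic, coming from the index between the lattices $\Lambda_{j}$ and $M\Lambda_{j}$ in the flag introduced below. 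I would prove the continuous inequality by putting $M$ into real normal form and inducting on dimension: along a real eigendirection one slices and combines the trivial one-dimensional estimate $\Vol(C + \mu C) \ge (1 + \abs{\mu})\Vol(C)$ — for $C + \mu C$ contains translates of $C$ and of $\mu C$ overlapping in measure zero — with Brunn--Minkowski on the fibres, while the two-dimensional rotation--scaling blocks are handled directly via the Steiner formula, for which a round cylinder is extremal. The inequality is strictly stronger than Brunn--Minkowski, which would only give $\Vol(C + MC) \ge \bigl(1 + (\prod_{i}\abs{\lambda_{i}})^{1/d}\bigr)^{N}\Vol(C)$, by superadditivity of the geometric mean.

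The third and most delicate ingredient passes from this continuous inequality back to the count $\abs{B + MB}$, and it is here that the notion of lattice density enters. The set $B$ need not resemble a scaled convex body: it may concentrate on cosets of sublattices of $\Lambda_{0}$ at several scales at once, and since $M$ is multiplication by an algebraic number that is not a rational integer, it preserves no such sublattice, so one is forced to track a whole flag $\Lambda_{0} \supseteq \Lambda_{1} \supseteq \cdots$ simultaneously. The lattice density of $B$ relative to this flag records how efficiently $B$ fills the cosets it meets at each level. One shows that $\abs{B + MB}$ is, up to a factor $1 + o(1)$, at least the contribution from the level at which this density is largest; that at that level $B$ genuinely behaves like a full-dimensional subset of a convex body, so that the continuous inequality applies; and that incorporating the index-$\abs{a_{d}}$ bookkeeping between that lattice and its $M$-image yields $\abs{B + MB} \ge H(\lambda)^{m}\abs{B}(1 - o(1)) \ge H(\lambda)\abs{B} - o(\abs{B})$, using $H(\lambda) \ge 1$ and $m \ge 1$. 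Assembling the three ingredients proves the theorem, and running the same scheme with a tuple $\lambda_{1},\dots,\lambda_{k}$ in place of $\lambda$ gives the result announced in the abstract.

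I expect the genuine obstacle to be the third ingredient: one must frame lattice density so that it is meaningful for an arbitrary subset relative to the flag produced by the Freiman-type theorem, understand precisely how the shearing action of $M$ moves that flag, and show that reducing to the single densest level costs only a factor $1 + o(1)$. By comparison, the continuous inequality, while not routine, is fairly self-contained, and the structural reduction — granting the tuned Freiman theorem — is essentially bookkeeping.
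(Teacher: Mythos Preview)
Your three-step outline matches the paper's architecture exactly: continuous inequality, Freiman-type reduction to a dense subset of a box, and a lattice-density bridge from discrete to continuous. Two points, however, deserve comment.

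First, a simplification you miss: rather than working in $\RR^{dm}$ with $m=\dim_{\QQ(\lambda)}\operatorname{span}_{\QQ(\lambda)}(A)$, the paper immediately reduces to $m=1$ by observing that any $\QQ(\lambda)$-linear map $f:\operatorname{span}_{\QQ(\lambda)}(A)\to\QQ(\lambda)$ injective on the finite set $A$ satisfies $\abs{f(A)+\lambda\cdot f(A)}\le\abs{A+\lambda\cdot A}$. So one may assume $A\subset\QQ(\lambda)$ from the outset, and all of your $m$-fold exponents become $1$.

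Second, and more seriously, your description of the third step is essentially the Krachun--Petrov argument for algebraic \emph{integers}, and the paper explains precisely why that argument fails for general algebraic $\lambda$. You propose to pick ``the level at which this density is largest'' and argue that there $B$ behaves like a full-dimensional convex set. But when $\lambda$ is not integral, the relevant linear maps $\cL_0,\cL_1$ both have $\abs{\det}>1$, and the local density of $\cL_0 A+\cL_1 A$ at $\cL_0 x+\cL_1 y$ can be strictly \emph{smaller} than the local density of $A$ at $x$; a single scalar ``density at one level'' does not control the sumset. The paper's fix is not to choose a best level but to record the density of $A$ simultaneously against an entire flag $\cF=\{L_0\subseteq\cdots\subseteq L_k\}$ of sublattices, encoding this as a compact downset $\LD(A;\cF)\subseteq[0,1]^{k+1}$, and to build the continuous representative $\overline{A}\subset\RR^{d+k+1}$ with these $(k{+}1)$-dimensional fibres. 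The flags are not generic: they are the specific algebraic families $\cF_{\vec n}^K,\cG_{\vec n}^K$ built from the chain of denominator ideals $\fraka_l=\cO_K\cap\lambda_1^{-1}\cO_K\cap\cdots\cap\lambda_l^{-1}\cO_K$, chosen so that the $(l{+}1)$-st projection of the lattice density is (approximately) preserved under $\cL_l$. Even then no single pair of flags works for all $l$ at once, and a further regularity/refinement argument is needed to show that for $\vec n$ large the projections stabilise. None of this machinery is hinted at by ``reduce to the densest level,'' and without it the index factor $\abs{a_d}=N_{K/\QQ}(\frakD)$ cannot be recovered.
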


Krachun and Petrov~\cite{KP23} gave some evidence for their conjecture by proving it in the special case where $\lambda = \sqrt{2}$. Subsequently, as a consequence of their work~\cite{CL23} on a conjecture of Bukh regarding sums of linear transformations, Conlon and Lim verified the conjecture for all $\lambda$ of the form $(p/q)^{1/d}$ with $p, q, d \in \NN$. Assuming all of $p$, $q$ and $d$ are as small as possible for such a representation, their result, which includes that of Krachun and Petrov, says that
\[|A + \lambda \cdot A| \ge (p^{1/d} + q^{1/d})^d |A| - o(|A|)\]
for all finite subsets $A$ of $\CC$. 
Their results also imply a general lower bound for sums of algebraic dilates, though this bound only matches the conjectured one in some special cases.

More recently, Krachun and Petrov~\cite{KP24} have revisited the problem, proving their conjecture in full whenever $\lambda$ is an algebraic integer. This is somewhat incomparable to the result of Conlon and Lim, since $(p/q)^{1/d}$, when written in lowest terms, is only an algebraic integer when $q = 1$. Here we again revisit the problem, proving Conjecture~\ref{conj:KP} in full for all algebraic numbers. Our method also extends to longer sums of algebraic dilates, so we will state our results in that level of generality. 

To state the result, given algebraic numbers $\lambda_1, \dots, \lambda_k$, recall that if the field extension $K := \QQ(\lambda_1,\ldots,\lambda_k)$ of $\QQ$ is of degree $d=\deg(K/\QQ)$, then there are exactly $d$ different complex embeddings $\sigma_1,\ldots,\sigma_d:K\to\CC$. We also need to define the \emph{denominator ideal} (see, for example,~\cite{SL22}), which is the ideal in the ring of integers $\cO_K$ given by 
$$\frakD_{\lambda_1,\ldots,\lambda_k;K}:=\setcond{x\in\cO_K}{x\lambda_l\in\cO_K \text{ for } l=1,\ldots,k}.$$
The key quantity $H(\lambda_1,\ldots,\lambda_k)$ that plays the role of $H(\lambda)$ for sums of many algebraic dilates is then
$$H(\lambda_1,\ldots,\lambda_k) := N_{K/\QQ}(\frakD_{\lambda_1,\ldots,\lambda_k;K})\prod_{i=1}^d (1+|\sigma_i(\lambda_1)|+|\sigma_i(\lambda_2)|+\cdots+|\sigma_i(\lambda_k)|),$$
where $N_{K/\QQ}(\frakD_{\lambda_1,\ldots,\lambda_k;K})$ is the ideal norm of $\frakD_{\lambda_1,\ldots,\lambda_k;K}$, equal to  $[\cO_K:\frakD_{\lambda_1,\ldots,\lambda_k;K}]$. 
To see that this indeed generalises $H(\lambda)$, observe that we can write the minimal polynomial $f(x)\in \ZZ[x]$ of $\lambda$ as $f(x)=D(x-\lambda_1)(x-\lambda_2)\cdots (x-\lambda_d)$ for some integer $D$ and $\lambda_1,\ldots,\lambda_d$ the conjugates of $\lambda$. Then $H(\lambda)=|D|(1+|\lambda_1|)\cdots (1+|\lambda_d|)$ and it can be shown that $|D|=N_{K/\QQ}(\frakD_{\lambda;K})$. 
With this definition in place, our main result, which is best possible up to the behaviour of the lower-order term, is as follows.

\begin{thm} \label{thm:main}
For any algebraic numbers $\lambda_1,\ldots,\lambda_k$,
$$|A+\lambda_1\cdot A+\cdots+\lambda_k\cdot A|\geq H(\lambda_1,\ldots,\lambda_k)|A|-o(|A|)$$
for all finite subsets $A$ of $\CC$.
\end{thm}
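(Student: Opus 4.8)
The plan is to separate an \emph{unstructured} case, where the sumset is already large, from a \emph{structured} case, which is reduced via a Freiman-type dichotomy to a sumset-of-commuting-linear-maps problem inside the Minkowski space of $K$, and there handled by a Brunn--Minkowski estimate together with a discrete-to-continuous transfer. First, if $|A+\lambda_1\cdot A+\cdots+\lambda_k\cdot A|\geq H(\lambda_1,\ldots,\lambda_k)|A|$ there is nothing to prove, so assume the sumset has size at most $C|A|$ for a constant $C=C(\lambda_1,\ldots,\lambda_k)$. The variant of Freiman's theorem tuned to sums of dilates then provides a generalised arithmetic progression (equivalently, a lattice box) containing $A$ that is moreover \emph{compatible} with multiplication by each $\lambda_l$, in the sense that $\lambda_l\cdot A$ is covered by boundedly many translates of it. Since $1,\lambda_1,\ldots,\lambda_k$ span $K$ over $\QQ$, this compatibility forces the ambient lattice to be commensurable with $\cO_K$: after an affine transformation and clearing denominators (none of which changes $|A|$, the sumset size, or $H(\lambda_1,\ldots,\lambda_k)$), we may take $A=\sigma_1(A')$ for a finite $A'\subseteq\cO_K$ occupying a controlled portion of a box in a sublattice, and $\sigma_1^{-1}$ identifies the sumset with $A'+\lambda_1\cdot A'+\cdots+\lambda_k\cdot A'$ in $K$. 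Applying the full Minkowski embedding $\iota\colon\cO_K\hookrightarrow V:=K\otimes_\QQ\RR\cong\RR^d$ turns this into estimating $|B+T_1B+\cdots+T_kB|$ for $B:=\iota(A')$ a finite subset of the lattice $\iota(\cO_K)$, where $T_l$ is multiplication by $\lambda_l$ on $V$: a semisimple map, block-diagonal along the real and complex places, with complex eigenvalues $\sigma_1(\lambda_l),\ldots,\sigma_d(\lambda_l)$.

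Two lattices now govern the count. The $\cO_K$-module $M:=\cO_K+\lambda_1\cO_K+\cdots+\lambda_k\cO_K$ is exactly the fractional ideal $\frakD_{\lambda_1,\ldots,\lambda_k;K}^{-1}$, so $B+T_1B+\cdots+T_kB\subseteq\iota(M)$, a lattice of covolume $\Vol(\iota(\cO_K))/N_{K/\QQ}(\frakD_{\lambda_1,\ldots,\lambda_k;K})$; this is where the ideal-norm factor in $H$ enters. The first ingredient, the measure bound for sums of linear transformations in $\RR^d$, gives, for every compact $\Omega\subseteq V$,
$$\Vol(\Omega + T_1\Omega + \cdots + T_k\Omega) \geq \prod_{i=1}^d\paren{1 + |\sigma_i(\lambda_1)| + \cdots + |\sigma_i(\lambda_k)|}\Vol(\Omega),$$
which one proves by iterating the Brunn--Minkowski inequality along a flag of $(T_l)$-invariant subspaces of $V$ (real lines and complex planes), on each graded piece reducing to the one- or two-dimensional Brunn--Minkowski inequality with the scalings $|\sigma_i(\lambda_l)|$.

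It remains to show $|B+T_1B+\cdots+T_kB|\geq N_{K/\QQ}(\frakD_{\lambda_1,\ldots,\lambda_k;K})\prod_{i=1}^d(1+|\sigma_i(\lambda_1)|+\cdots+|\sigma_i(\lambda_k)|)\,|B|-o(|B|)$ for an \emph{arbitrary} finite $B\subseteq\iota(\cO_K)$. When $B$ is equidistributed at a single scale $t$, i.e.\ $t^{-1}B$ converges to a compact $\Omega$ of positive volume, this follows by tiling $V$ with translates of a fine sublattice of $\iota(M)$: $B$ meets almost all cells in the region $\approx t\Omega$, hence $B+T_1B+\cdots+T_kB$ meets almost all cells in $\approx t(\Omega+T_1\Omega+\cdots+T_k\Omega)$, and counting cells against $\Vol(\iota(M))$ together with the displayed inequality and $|B|\approx t^d\Vol(\Omega)/\Vol(\iota(\cO_K))$ yields the bound. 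In general $B$ is not equidistributed at any single scale, and here the third ingredient, lattice density relative to a flag of sublattices of $\iota(\cO_K)$, is used to decompose $B$ according to the scales and sublattices on which its mass concentrates, to reduce each piece to the single-scale case, and to reassemble — the point being that scale- and sublattice-splitting can only increase the sumset, so the extremal configuration is the equidistributed one. I expect this last step to be the main obstacle: setting up lattice density so that it interacts cleanly both with Minkowski sums under the commuting maps $T_l$ and with the discrete-to-continuous transfer, thereby controlling all multi-scale distributions of $B$ uniformly; securing the Freiman-type input of the first paragraph with a lattice structure genuinely compatible with the dilations and with the correct quantitative dependence is the other substantial hurdle.
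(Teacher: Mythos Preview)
Your three-ingredient framework (continuous Brunn--Minkowski for commuting maps, a dilate-compatible Freiman theorem, and lattice density relative to a flag) is exactly the paper's strategy, and your identification of $\frakD^{-1}=\cO_K+\lambda_1\cO_K+\cdots+\lambda_k\cO_K$ as the source of the ideal-norm factor is correct. The continuous estimate and the $\cO_K$-GAP Freiman step are also described accurately.

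The gap is in how lattice density is deployed. You describe it as a tool to ``decompose $B$ according to the scales and sublattices on which its mass concentrates, reduce each piece to the single-scale case, and reassemble,'' relying on the heuristic that splitting can only increase the sumset. The paper does something quite different: it does \emph{not} decompose $A$ at all. Instead, for a suitable flag $\cF$ of sublattices it attaches to each point of $A$ a compact downset $\LD_Q(A;\cF)\subseteq[0,1]^{k+1}$ (the local lattice density), and builds a single continuous body $\overline{A}\subset\RR^{d+k+1}$ whose base in $\RR^d$ looks like $A$ and whose fibre over a small cube $Q$ is $\LD_Q(A;\cF)$. The crucial fact---which replaces your ``splitting only increases''---is a sumset inclusion at the level of lattice densities (Theorem~\ref{thm:ldsum}): if $p\in\LD(A_1;\cF)$ and $q\in\LD(A_2;\cF)$ then the coordinatewise maximum $\max(p,q)\in\LD(A_1+A_2;\cF)$. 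Combined with carefully chosen flags $\cF_{\vec n},\cG_{\vec n}$ built from the chain of ideals $\cO_K\supseteq\fraka_1\supseteq\cdots\supseteq\fraka_k=\frakD$, this yields $\overline{\cL_0A+\cdots+\cL_kA}\supseteq\cL_0'\overline{A}+\cdots+\cL_k'\overline{A}$ for explicit maps $\cL_l'$ on $\RR^{d+k+1}$ that project onto the $(l{+}1)$-st fibre coordinate. One then applies the continuous estimate in $d+k+1$ dimensions, not in $d$.

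Your equidistributed-case sketch is fine when $B$ is literally all of $\iota(\cO_K)$ in a region, but already for a merely \emph{dense} subset of a box (which is all the Freiman step delivers) the naive ``$B$ fills cells of $\iota(\cO_K)$, hence the sumset fills cells of $\iota(M)$'' breaks down when $\lambda$ is not integral: the local density of $\cL_0A+\cdots+\cL_kA$ at a point can be \emph{smaller} than that of $A$ (this is exactly the obstruction the paper's introduction isolates in Observation~\ref{obs:localdens2}). The paper's lift to $\RR^{d+k+1}$ with the $\max$-sumset inequality is what circumvents this; a decompose-and-reassemble argument along the lines you describe would need a substitute for that inequality, and none is given.
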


In practice, we will view the problem of estimating sums of algebraic dilates as one about estimating sums of linear transformations. More precisely, if we consider the number field $K = Q(\lambda_1, \dots, \lambda_k)$ as a vector space over $\QQ$, that is, as $\QQ^d$ with $d=\deg(K/\QQ)$, then multiplication by $\lambda_i$ becomes a linear map $\cM_i$ from $\QQ^d$ to itself, so the problem of giving a lower bound for $|A+\lambda_1\cdot A+\cdots+\lambda_k\cdot A|$ for $A \subset \RR$ becomes equivalent to the analogous problem for $|A+\cM_1 A+\cdots+\cM_k A|$ for $A \subset \QQ^d$. A further reduction (see Section~\ref{sec:mapZd} for  details) then recasts the problem in terms of estimating $|\cL_0A+\cL_1 A+\cdots+\cL_k A|$ for $\cL_0, \cL_1,\ldots,\cL_k\in \Mat_d(\ZZ)$ and $A \subset \ZZ^d$.

Such sums of linear transformations have been studied before~\cite{BJM24, CL23, M19}, with much of the motivation coming from a conjecture of Bukh asking whether a discrete Brunn--Minkowski-type inequality holds for sums of linear transformations. A corrected version of his original conjecture, first stated in~\cite{CL23}, is as follows.

\begin{conj} \label{conj:bukh}
Suppose that $\cL_0,\ldots,\cL_k\in \Mat_d(\ZZ)$ are irreducible and coprime. Then
$$|\cL_0 A+\cdots +\cL_k A|\geq \paren{|\det(\cL_0)|^{1/d}+\cdots +|\det(\cL_k)|^{1/d}}^d|A| - o(|A|)$$
for all finite subsets $A$ of $\ZZ^d$.
\end{conj}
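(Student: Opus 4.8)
The plan is to convert the problem into one about lattice points in convex bodies and then to play a sharp continuous Brunn--Minkowski-type inequality off against a lattice-density estimate. Following the reduction outlined above (Section~\ref{sec:mapZd}) it suffices to lower-bound $|\cL_0A+\cL_1A+\cdots+\cL_kA|$ for $A\subseteq\ZZ^d$ and $\cL_i\in\Mat_d(\ZZ)$, and it is convenient to retain the number-theoretic structure: we identify $\ZZ^d$ with $\cO_K$ for $K=\QQ(\lambda_1,\dots,\lambda_k)$, identify $\RR^d$ with $K\otimes_\QQ\RR$ through the Minkowski embedding $(\sigma_1,\dots,\sigma_d)$, and let $T_i$ be the linear map on $\RR^d$ given by multiplication by $\lambda_i$, which after extending scalars to $\CC$ is simultaneously diagonalised by the $\sigma_j$ with eigenvalue $\sigma_j(\lambda_i)$. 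Since the left-hand side is invariant under scaling $A$, we may assume $A\subseteq\cO_K$; then $A+\lambda_1\cdot A+\cdots+\lambda_k\cdot A$ lies in $M:=\cO_K+\lambda_1\cO_K+\cdots+\lambda_k\cO_K$, which is a fractional ideal containing $\cO_K$ with $M^{-1}=\frakD_{\lambda_1,\dots,\lambda_k;K}$, so $[M:\cO_K]=N_{K/\QQ}(\frakD_{\lambda_1,\dots,\lambda_k;K})$.

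It suffices to prove that for each $\varepsilon>0$ the sumset has size at least $(H-\varepsilon)|A|$ once $|A|$ is large, so we may assume it has size at most $H|A|$. This is precisely the hypothesis of our Freiman-type theorem tuned to sums of dilates, which, after the deletion of $o(|A|)$ points, places $A$ inside a convex progression: there is a lattice $\Lambda$ — more precisely a flag of lattices, which is what the lattice-density machinery keeps track of — and a convex body $Q$ with $A\subseteq\Lambda\cap(Q+v)$ and $|\Lambda\cap Q|\le(1+o(1))|A|$. Replacing $A$ by $\Lambda\cap Q$ we get $|A|=(1+o(1))\Vol(Q)/\operatorname{covol}(\Lambda)$, and since $A+\lambda_1\cdot A+\cdots+\lambda_k\cdot A$ then fills essentially all of $\Lambda'\cap(Q+T_1Q+\cdots+T_kQ)$, where $\Lambda':=\Lambda+T_1\Lambda+\cdots+T_k\Lambda$, we obtain
\[|A+\lambda_1\cdot A+\cdots+\lambda_k\cdot A|\ \ge\ (1-o(1))\,\frac{\Vol(Q+T_1Q+\cdots+T_kQ)}{\operatorname{covol}(\Lambda')}.\]

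Two inputs now finish the job. The first is the continuous estimate $\Vol(Q+T_1Q+\cdots+T_kQ)\ge\prod_{j=1}^d\paren{1+|\sigma_j(\lambda_1)|+\cdots+|\sigma_j(\lambda_k)|}\Vol(Q)$, valid for every convex body $Q$; this is the promised lower bound on the measure of sums of linear transformations, and it follows from a Brunn--Minkowski/Pr\'ekopa--Leindler argument using the simultaneous complex diagonalisation of the $T_i$, the real and complex places combining to produce exactly the product over all $d$ embeddings. The second is the lattice-density analysis: a sumset of size at most $H|A|$ forces $\Lambda$ to be commensurable with a fractional ideal of $\cO_K$ — otherwise $T_i(\Lambda\cap Q)$ would be transverse to $\Lambda$ along some layer of the flag and the sumset would be much larger than $H|A|$ — and then $\Lambda'=M\Lambda$ is again a lattice with $\operatorname{covol}(\Lambda)/\operatorname{covol}(\Lambda')=[\Lambda':\Lambda]=[M:\cO_K]=N_{K/\QQ}(\frakD_{\lambda_1,\dots,\lambda_k;K})$. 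Multiplying the two estimates gives $|A+\lambda_1\cdot A+\cdots+\lambda_k\cdot A|\ge(1-o(1))H(\lambda_1,\dots,\lambda_k)|A|$, which is the theorem; taking $\Lambda=\cO_K$ and $Q$ a product of intervals and disks adapted to the $\sigma_j$ shows that $H$ cannot be improved.

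The main obstacle is the interface between the Freiman-type theorem and the lattice-density analysis. An off-the-shelf Freiman argument yields some lattice $\Lambda$, but recovering the full factor $N_{K/\QQ}(\frakD)$ requires showing quantitatively, and uniformly across every layer of the flag, that a small sumset forces $\Lambda$ not to be skew with respect to all of the maps $T_i$ simultaneously; isolating the right invariant — the lattice density — proving the inequality it must satisfy, and propagating it through the flag while keeping the convex body $Q$ under enough control that the continuous estimate still applies, is where the real difficulty lies. Establishing that continuous estimate with the sharp constant, for maps $T_i$ whose eigenvalues are genuinely complex rather than nonnegative reals, is a further but more self-contained hurdle.
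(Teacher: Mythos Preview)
The statement you are attempting to prove is Conjecture~\ref{conj:bukh}, which the paper does \emph{not} prove; it is stated as an open conjecture. The paper proves only the pre-commuting case, Theorem~\ref{thm:linearsums}, via Theorem~\ref{thm:main}. So there is no proof in the paper to compare your proposal against, and your proposal cannot be a proof of the conjecture either unless it introduces genuinely new ideas beyond what the paper contains.

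Your argument in fact immediately abandons the general conjecture: by identifying $\ZZ^d$ with $\cO_K$ for $K=\QQ(\lambda_1,\dots,\lambda_k)$ and the $\cL_i$ with multiplication maps, you have silently assumed the pre-commuting hypothesis. For general irreducible coprime $\cL_0,\dots,\cL_k$ there need not exist any such number field or algebraic numbers (this is exactly the content of Theorem~\ref{thm:precommchar}, which requires pre-commutativity). The concluding-remarks example with three $3\times3$ matrices illustrates a case your setup cannot reach.

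Even viewed as a sketch of Theorem~\ref{thm:main}, there is a genuine gap. After the Freiman-type reduction you write ``Replacing $A$ by $\Lambda\cap Q$'' and then assert that the sumset ``fills essentially all of $\Lambda'\cap(Q+T_1Q+\cdots+T_kQ)$''. Neither step is legitimate: the reduction only gives $A$ as a \emph{dense subset} of a box, not the full box, and the sumset of a dense subset need not fill the continuous sumset to leading order. Bridging exactly this gap is the entire purpose of the lattice-density machinery in Sections~\ref{sec:ld}--\ref{sec:dense}; your final paragraph correctly identifies this as the main obstacle, but the preceding paragraphs present it as already done. The claim that small sumset forces $\Lambda$ to be commensurable with a fractional ideal, yielding the factor $N_{K/\QQ}(\frakD)$ directly from covolumes, is also not how the argument works: that factor emerges only after the full flag-and-regularity analysis, not from a single covolume ratio.
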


The conditions on $\cL_0,\ldots,\cL_k$, that they be irreducible and coprime, are necessary, with irreducibility guaranteeing that the problem does not reduce to one of lower dimension and coprimeness that it cannot be restated in terms of matrices with smaller determinants. The formal definitions are as follows, though we refer the reader to~\cite{CL23} for a more complete discussion and some illustrative examples. 

\begin{defn}
We say that $\cL_0,\ldots,\cL_k\in \Mat_d(\ZZ)$ are \textit{irreducible} if there are no non-trivial subspaces $U$, $V$ of $\QQ^d$ of the same dimension such that $\cL_i U\subseteq V$ for all $i$.
\end{defn}

\begin{defn}
We say that $\cL_0,\ldots,\cL_k\in\Mat_d(\ZZ)$ are \textit{coprime} if there are no 
$\cP,\cQ\in\GL_d(\QQ)$ with $0<|\det(\cP)\det(\cQ)|<1$ such that
$$\cP\cL_0\cQ,\cP\cL_1\cQ,\ldots, \cP\cL_k\cQ\in \Mat_d(\ZZ).$$
In particular, $\cL_0\ZZ^d+\cdots+\cL_k\ZZ^d=\ZZ^d$.
\end{defn}

For $k = 1$, Conjecture~\ref{conj:bukh} was fully resolved in~\cite{CL23} and the lower bound for $|A + \lambda \cdot A|$ when $\lambda$ is of the form $(p/q)^{1/d}$ followed as a corollary. Here we work in the opposite direction, showing that our main result, Theorem~\ref{thm:main}, on sums of algebraic dilates implies a lower bound for sums of certain linear transformations. To state this result requires some further definitions, the first of which is an additional condition beyond irreducibility and coprimeness that a collection of linear transformations must satisfy for our methods to apply. 

\begin{defn}
We say that $\cL_0,\ldots,\cL_k\in \Mat_d(\QQ)$ are \emph{pre-commuting} if there is some $\cP\in \GL_d(\QQ)$ such that $\cP\cL_0,\ldots,\cP\cL_k$ pairwise commute.
\end{defn}

Suppose now that $\cL_0,\ldots,\cL_k\in \Mat_d(\ZZ)$ are non-zero, pre-commuting, irreducible and coprime. Let $G$ be the polynomial
\[G(x_0,\ldots,x_k):=\det(x_0\cL_0+\cdots+x_k\cL_k).\]
If $\cP\in \GL_d(\QQ)$ is such that $\cP\cL_0,\ldots,\cP\cL_k$ are pairwise commuting, then, by a folklore result, these matrices are simultaneously upper-triangularisable over $\CC$, so $G$ factorises into linear terms
$$G(x_0,\ldots,x_k)=\prod_{i=1}^d (a_{0i}x_0+\cdots+a_{ki}x_k).$$
We may then define $H(\cL_0,\ldots,\cL_k)$ to be the quantity
$$H(\cL_0,\ldots,\cL_k):=\prod_{i=1}^d (|a_{0i}|+\cdots+|a_{ki}|).$$ 
With this definition in place, our main result about sums of linear transformations, which is again best possible up to the lower-order term, is as follows. 

\begin{thm} \label{thm:linearsums}
    Suppose that $\cL_0,\ldots,\cL_k\in \Mat_d(\ZZ)$ are pre-commuting, irreducible and coprime. Then 
    $$|\cL_0 A+\cdots+\cL_k A|\geq H(\cL_0,\ldots,\cL_k)|A|-o(|A|)$$
    for all finite subsets $A$ of $\ZZ^d$.
\end{thm}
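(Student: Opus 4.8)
The plan is to transfer the problem to a number field and apply Theorem~\ref{thm:main}. First, since $\cL_0,\ldots,\cL_k$ are pre-commuting, fix $\cP\in\GL_d(\QQ)$ so that $\cM_i:=\cP\cL_i$ pairwise commute; as $\cP$ is a bijection of $\QQ^d$, one has $|\cL_0A+\cdots+\cL_kA|=|\cM_0A+\cdots+\cM_kA|$. A reformulation of irreducibility of $\{\cL_i\}$ is that there are no proper nonzero subspaces $U,V'\subseteq\QQ^d$ of equal dimension with $\cM_iU\subseteq V'$ for all $i$ (take $V'=\cP V$); specialising to $U=V'$ shows the commutative algebra $R:=\QQ[\cM_0,\ldots,\cM_k]$ acts on $\QQ^d$ with no proper nonzero invariant subspace, hence faithfully and irreducibly, so every nonzero element of $R$ is invertible, $K:=R$ is a number field, and $\QQ^d\cong K$ as $K$-modules with $d=[K:\QQ]$. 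Under this identification $\cM_i$ is multiplication by $\lambda_i:=\cM_i\cdot 1\in K$, with $\lambda_i\neq0$ since $\cL_i\neq0$, and $\ZZ^d$ becomes a full-rank lattice $\Lambda\subseteq K$, well defined up to scaling by $K^{\times}$.

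Dividing by $\lambda_0\in K^{\times}$ (again a bijection of $K$) gives $|\cL_0A+\cdots+\cL_kA|=|A+\mu_1\cdot A+\cdots+\mu_k\cdot A|$ with $\mu_i:=\lambda_i/\lambda_0$, where $A$ is regarded as a finite subset of $K\subseteq\CC$. One checks $\QQ(\mu_1,\ldots,\mu_k)=K$: if not, $K$ would contain a proper nonzero subspace $U$ over the subfield $K':=\QQ(\mu_1,\ldots,\mu_k)$, and then $U$ and $\lambda_0U$ would have equal $\QQ$-dimension with $\lambda_iU=\mu_i\lambda_0U\subseteq\lambda_0U$ for all $i$, contradicting the reformulated irreducibility above transported to $K$. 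Hence Theorem~\ref{thm:main} applies with ground field $K$, giving
\[|\cL_0A+\cdots+\cL_kA|\geq H(\mu_1,\ldots,\mu_k)\,|A|-o(|A|),\qquad H(\mu_1,\ldots,\mu_k)=N_{K/\QQ}(\frakD_{\mu;K})\prod_{i=1}^d\Bigl(1+\sum_{l=1}^k|\sigma_i(\mu_l)|\Bigr),\]
where $\frakD_{\mu;K}=\{x\in\cO_K:x\mu_l\in\cO_K\text{ for all }l\}$ and $\sigma_1,\ldots,\sigma_d$ are the complex embeddings of $K$.

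It remains to show $H(\mu_1,\ldots,\mu_k)=H(\cL_0,\ldots,\cL_k)$. Since $\sum_jx_j\cM_j$ is multiplication by $\sum_jx_j\lambda_j$ on $K$, we have $G(x_0,\ldots,x_k)=\det(\sum_jx_j\cL_j)=\det(\cP)^{-1}\prod_{i=1}^d\bigl(\sum_j\sigma_i(\lambda_j)x_j\bigr)$, and since $\QQ(\mu_l)=K$ the forms $(\sigma_i(\lambda_j))_j$ are pairwise non-proportional, so matching this with the factorisation defining $H(\cL_0,\ldots,\cL_k)$ yields $H(\cL_0,\ldots,\cL_k)=|\det\cP|^{-1}\prod_{i=1}^d\bigl(\sum_{j=0}^k|\sigma_i(\lambda_j)|\bigr)$. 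On the other hand $\sigma_i(\mu_l)=\sigma_i(\lambda_l)/\sigma_i(\lambda_0)$ gives $\prod_i\bigl(1+\sum_l|\sigma_i(\mu_l)|\bigr)=|N_{K/\QQ}(\lambda_0)|^{-1}\prod_i\bigl(\sum_j|\sigma_i(\lambda_j)|\bigr)$, so the desired identity is equivalent to $N_{K/\QQ}(\frakD_{\mu;K})\,|\det\cP|=|N_{K/\QQ}(\lambda_0)|$. Coprimeness forces $\sum_j\cL_j\ZZ^d=\ZZ^d$, which under the identification says $\sum_j\lambda_j\Lambda=\lambda_0\Lambda_2$ is the image of $\Lambda$ under $\cP$, where $\Lambda_2:=\Lambda+\mu_1\Lambda+\cdots+\mu_k\Lambda\supseteq\Lambda$; comparing covolumes gives $|\det\cP|=|N_{K/\QQ}(\lambda_0)|/[\Lambda_2:\Lambda]$, so everything reduces to the lattice identity $[\Lambda_2:\Lambda]=N_{K/\QQ}(\frakD_{\mu;K})$.

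The main obstacle is this last identity, and it is exactly where the full strength of coprimeness enters (the weaker consequence $\sum_j\cL_j\ZZ^d=\ZZ^d$ does not suffice). The plan has two parts. First, show that for \emph{every} full-rank lattice $M\subseteq K$ one has $[M+\mu_1M+\cdots+\mu_kM:M]\geq N_{K/\QQ}(\frakD_{\mu;K})$, with equality whenever $M$ is a fractional $\cO_K$-ideal, in which case $M+\mu_1M+\cdots+\mu_kM=M\cdot\frakD_{\mu;K}^{-1}$; the lower bound can be verified by localising at each prime of $\cO_K$, where every lattice is free and the index is a direct computation. Second, unwind the definition of coprimeness: a pair $\cP',\cQ'\in\GL_d(\QQ)$ with $\cP'\cL_i\cQ'\in\Mat_d(\ZZ)$ corresponds, via the dictionary above, to full-rank lattices $M\subseteq N\subseteq K$ with $\lambda_jM\subseteq N$ for all $j$, and tracking the determinant condition shows that coprimeness is precisely the statement that $[\Lambda_2:\Lambda]$ attains the minimum of $[M+\mu_1M+\cdots+\mu_kM:M]$ over all full-rank lattices $M$ — which by the first part equals $N_{K/\QQ}(\frakD_{\mu;K})$. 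This yields the required identity, hence $H(\mu_1,\ldots,\mu_k)=H(\cL_0,\ldots,\cL_k)$, and hence Theorem~\ref{thm:linearsums}; the remaining bookkeeping with the number-field dictionary largely parallels the reduction already carried out for Theorem~\ref{thm:main}.
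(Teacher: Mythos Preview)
Your overall approach mirrors the paper's Section~9 closely: identify $\QQ^d$ with a number field $K$ so that (after premultiplying by $\cP$) the $\cL_i$ become multiplications by algebraic numbers, then invoke Theorem~\ref{thm:main}. Your Schur-type construction of $K$ (the commutative algebra $R=\QQ[\cM_0,\ldots,\cM_k]$ acts irreducibly, hence is a field) is slightly slicker than the paper's eigenvector-based Lemma~\ref{lem:irredstruct}, and your unwinding of coprimeness in part~(b) --- that $[\Lambda_2:\Lambda]$ realises the minimum of $[M_2:M]$ over all full-rank lattices $M\subset K$ --- is correct and amounts to the second half of the paper's Theorem~\ref{thm:precommchar}.

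The gap is in your part~(a). The inequality $[M+\mu_1M+\cdots+\mu_kM:M]\geq N_{K/\QQ}(\frakD_{\mu;K})$ for \emph{every} full-rank $\ZZ$-lattice $M\subseteq K$ is true, but ``localising at each prime of $\cO_K$'' does not work as stated: $M$ is only a $\ZZ$-lattice, not an $\cO_K$-module, so $M\otimes_{\cO_K}\cO_{K,\frakp}$ is undefined, and if one instead completes at a rational prime $p$ the $\ZZ_p$-lattice $M_p\subset K\otimes\QQ_p=\prod_{\frakp\mid p}K_\frakp$ need not split as a product along the $\frakp\mid p$ (e.g.\ $M=\ZZ+5i\ZZ\subset\QQ(i)$ at the split prime $p=5$). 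The paper handles this step by a Gauss--content argument (Theorem~\ref{thm:altdenom} via Lemma~\ref{lem:gauss}): choosing $\ZZ$-bases for $M$ and $M_2$, the maps $m\mapsto\mu_lm$ are represented by integer matrices $\cL_l'$, so $\det\bigl(\sum_lx_l\cL_l'\bigr)=[M_2:M]\cdot N_{K/\QQ}(x_0+x_1\mu_1+\cdots+x_k\mu_k)$ lies in $\ZZ[x_0,\ldots,x_k]$, and since the norm form has content $N(\frakD_{\mu;K})^{-1}$ one obtains $N(\frakD_{\mu;K})\mid[M_2:M]$. This is the clean replacement for your localisation step.

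That said, for the theorem \emph{as stated} you only need $H(\mu_1,\ldots,\mu_k)\geq H(\cL_0,\ldots,\cL_k)$, equivalently $[\Lambda_2:\Lambda]\leq N(\frakD_{\mu;K})$, and this already follows from your (b) together with the fact that fractional ideals achieve $N(\frakD_{\mu;K})$. So your argument does establish Theorem~\ref{thm:linearsums} even before (a) is repaired; part~(a) is needed only for the reverse inequality, which witnesses that $H(\cL_0,\ldots,\cL_k)$ is best possible.
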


The first step in establishing our results is to prove a continuous analogue of this statement, a sharpening of the estimate coming from the Brunn--Minkowski inequality standing in the same relation to our results as that inequality does to Conjecture~\ref{conj:bukh}. For $k = 1$, such a continuous result 
was proved by Krachun and Petrov~\cite{KP23}. We extend it here to sums of several pre-commuting linear transformations. Our main contribution, Theorem~\ref{thm:main}, whose proof occupies the bulk of this paper, 
says that a discrete version of this continuous estimate holds for sums of pre-commuting linear transformations corresponding to sums of algebraic dilates. The proof of Theorem~\ref{thm:linearsums} then involves showing that this seemingly special case really encapsulates all pre-commuting families.

Before moving on to a more in-depth discussion of our proof and what is novel about it, we note that for $k = 1$ the condition that the matrices $\cL_0$ and $\cL_1$ be pre-commuting is always true, since the irreducibility condition implies that $\cL_0$ and $\cL_1$ are invertible, so we may take $\cP = \cL_0^{-1}$. As such, we have the following corollary of Theorem~\ref{thm:linearsums}, resolving another conjecture of Krachun and Petrov~\cite{KP23} (see also~\cite{CL23}) and, unlike the $k=1$ case of Conjecture~\ref{conj:bukh} verified in~\cite{CL23}, best possible up to the lower-order term for all $\cL_0$ and $\cL_1$.

\begin{cor}
    Suppose that $\cL_0,\cL_1\in \Mat_d(\ZZ)$ are irreducible and coprime. Then
    $$|\cL_0 A+\cL_1 A|\geq H(\cL_0,\cL_1)|A|-o(|A|)$$
     for all finite subsets $A$ of $\ZZ^d$.
\end{cor}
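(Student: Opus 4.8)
The plan is to deduce this immediately from Theorem~\ref{thm:linearsums}. The only hypothesis of that theorem not already present in the statement of the corollary is that $\cL_0,\cL_1$ be pre-commuting, so the entire task reduces to checking that, for $k=1$, an irreducible pair $\cL_0,\cL_1\in\Mat_d(\ZZ)$ is automatically pre-commuting (coprimeness plays no role in this step, it is simply carried along as a hypothesis of Theorem~\ref{thm:linearsums}). I would do this by first showing that irreducibility forces both $\cL_0$ and $\cL_1$ to be invertible, and then taking $\cP=\cL_0^{-1}\in\GL_d(\QQ)$: since $\cP\cL_0=I_d$ commutes with every matrix, in particular with $\cP\cL_1$, the pair $\cL_0,\cL_1$ is pre-commuting, and Theorem~\ref{thm:linearsums} applies verbatim.

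To prove the invertibility claim I would argue by contradiction. Suppose $d\ge 2$ and that $\cL_0$ is singular. Pick a proper non-zero subspace $V$ of $\QQ^d$ containing $\mathrm{im}(\cL_0)$ — for instance $V=\mathrm{im}(\cL_0)$ if $\cL_0\neq 0$, or any line if $\cL_0=0$. An elementary dimension count (e.g.\ applying rank–nullity to $\cL_1$ composed with the projection $\QQ^d\to\QQ^d/V$) gives $\dim\cL_1^{-1}(V)\ge\dim V$, so $\cL_1^{-1}(V)$ contains a subspace $U$ with $\dim U=\dim V$. Then $\cL_0U\subseteq\mathrm{im}(\cL_0)\subseteq V$ and $\cL_1U\subseteq V$, while $1\le\dim U=\dim V\le d-1$; thus $U$ and $V$ are non-trivial subspaces of equal dimension witnessing a failure of irreducibility — a contradiction. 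Hence $\cL_0$ is invertible, and the same argument applied with the roles swapped shows $\cL_1$ is invertible. (When $d=1$ there are no non-trivial subspaces at all, so pre-commuting holds trivially with $\cP=1$; alternatively, in that case the bound is the classical lower bound for sums of two dilates of a subset of $\ZZ$.)

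It then remains only to observe that the $H(\cL_0,\cL_1)$ appearing in the statement is exactly the quantity defined before Theorem~\ref{thm:linearsums} specialised to $k=1$: the polynomial $G(x_0,x_1)=\det(x_0\cL_0+x_1\cL_1)$ is a binary form of degree $d$, hence splits over $\CC$ as $\prod_{i=1}^d(a_{0i}x_0+a_{1i}x_1)$ with no commutativity hypothesis required, and $H(\cL_0,\cL_1)=\prod_{i=1}^d(|a_{0i}|+|a_{1i}|)$. I do not expect any genuine obstacle here: once Theorem~\ref{thm:linearsums} is available everything is purely formal, and the single non-mechanical point — the passage from irreducibility to invertibility — is elementary linear algebra.
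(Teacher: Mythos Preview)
Your proposal is correct and follows exactly the approach the paper takes: the paper notes just before the corollary that for $k=1$ irreducibility implies invertibility of $\cL_0$ and $\cL_1$, so one may take $\cP=\cL_0^{-1}$ and apply Theorem~\ref{thm:linearsums} directly. Your write-up actually supplies the short linear-algebra argument for the invertibility step (via $V\supseteq\mathrm{im}(\cL_0)$ and $U\subseteq\cL_1^{-1}(V)$), which the paper simply asserts, so if anything you have filled in a detail the paper leaves implicit.
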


\subsection{A sketch of the proof}

Our overall strategy is similar to that used by Krachun and Petrov~\cite{KP24} to treat the case of algebraic integers. After rephrasing the problem in terms of sums of linear transformations, their approach can be summarised as having the following three steps:
\begin{enumerate}
    \item Establishing a continuous version of the required estimate.
    \item Reducing to the case where $A$ is a dense subset of a box.
    \item Representing the discrete set $A$ by a continuous set $\overline{A}$.
\end{enumerate}
The idea is that Step 2 guarantees that the $\overline{A}$ obtained in Step 3 is well-behaved. One can then apply the continuous variant from Step 1 to $\overline{A}$ and the required result in the discrete world follows. However, despite having the same general outline, our proof is significantly more complex. We now discuss each step in turn, though we refer the reader to the relevant sections for more details and precise definitions.

Step 1 is the part which is most similar to that of Krachun and Petrov. After lifting the problem to one about sums of certain pre-commuting linear transformations, we prove a tight lower bound for the continuous analogue of this problem by partitioning the underlying space into eigenspaces and then symmetrising our set along these eigenspaces.

For their Step 2, Krachun and Petrov make use of Freiman's theorem on sets of small doubling, one version of which says that any finite set of reals $A$ with $|A + A| \le C |A|$ is contained in a small generalised arithmetic progression (or GAP, for brevity). For our result, we need to prove a novel variant of Freiman's theorem for sets $A$ with a small sum of dilates, that is, with 
$$|A+\lambda_1\cdot A+\cdots+\lambda_k\cdot A|\leq C|A|.$$
It is not hard to show that any such $A$ also has small doubling, so, by the usual version of Freiman's theorem, it must be contained in a small GAP. However, a GAP does not necessarily have a small sum of dilates. Our variant of Freiman's theorem, stated below, says instead that $A$ is contained in what we call an $\cO_K$-GAP, which shares with $A$ the property that it has a small sum of dilates.

\begin{thm}
    For every $C>0$ and $p \in \NN$, there are constants $n$ and $F$ such that for any $A\subset K$ satisfying
    $$|A+\lambda_1\cdot A+\cdots+\lambda_k\cdot A|\leq C|A|,$$
    there exists a $p$-proper $\cO_K$-GAP $P\subset K$ containing $A$ of dimension at most $n$ and size at most $F|A|$.
\end{thm}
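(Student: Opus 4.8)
The plan is to deduce this from the ordinary Freiman theorem over $\QQ^d$ (equivalently $\ZZ^d$, after scaling), and then to "upgrade" the generalised arithmetic progression it produces into an $\cO_K$-GAP without losing control of the relevant parameters. First I would observe that a small sum of dilates forces small doubling: since $A \subseteq A + \lambda_1\cdot A + \cdots + \lambda_k\cdot A - (\lambda_1\cdot A + \cdots + \lambda_k\cdot A)$ in a suitable sense, and more directly since each $\lambda_i\cdot A$ is an affine image of $A$ with $|\lambda_i\cdot A| = |A|$, the hypothesis $|A+\lambda_1\cdot A+\cdots+\lambda_k\cdot A|\le C|A|$ together with the Plünnecke--Ruzsa inequality gives $|A+A|\le C'|A|$ for some $C' = C'(C,k)$. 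Viewing $K$ as $\QQ^d$ and clearing denominators, the multidimensional Freiman theorem then yields a proper GAP $Q$ of bounded dimension $n_0 = n_0(C',d)$ and size $|Q|\le F_0|A|$ with $A\subseteq Q$.

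The heart of the argument is then the passage from an ordinary GAP to an $\cO_K$-GAP. The point is that a GAP $Q = \{v_0 + m_1 v_1 + \cdots + m_{n_0} v_{n_0} : 0\le m_j < N_j\}$ need not be closed (even approximately) under multiplication by the $\lambda_i$, so $Q + \lambda_1\cdot Q + \cdots$ can be much larger than $Q$. To fix this, I would replace $Q$ by the smallest $\cO_K$-GAP containing it: roughly, take the $\cO_K$-module (or rather the $\frakD_{\lambda_1,\ldots,\lambda_k;K}$-scaled version, to stay inside $\cO_K$ where the $\lambda_i$-multiplications land) generated by the difference set of $Q$, truncate each of the finitely many generators to an appropriate length, and arrange the result to be $p$-proper. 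Concretely, for each generator $v_j$ of $Q$ one enlarges the one-dimensional progression $\{0, v_j, 2v_j, \ldots\}$ to the finitely-generated $\cO_K$-progression spanned by $\{\omega v_j : \omega \in \mathcal{B}\}$ for a fixed $\ZZ$-basis $\mathcal{B}$ of a fixed finite-index submodule of $\cO_K$ on which all the $\lambda_i$ act integrally; the dimension multiplies by a constant $|\mathcal{B}| = d$, and one must check the lengths can be chosen so that the size grows only by a constant factor. This uses that the $v_j$ are themselves only defined up to the dilation structure, so one has freedom to quotient out by the lattice of relations; properness (or $p$-properness) is then recovered by the standard trick of shrinking the side lengths by a bounded factor and passing to a sub-GAP, as in the proof of the usual Freiman theorem, at the cost of absorbing constants into $n$ and $F$.

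Finally, I would verify the defining property of an $\cO_K$-GAP — that it has a small sum of dilates, i.e. $|P + \lambda_1\cdot P + \cdots + \lambda_k\cdot P| \le C'' |P|$ for some $C''$ depending only on the dimension $n$ and on $\lambda_1,\ldots,\lambda_k$ — which should follow directly from the construction, since multiplying an $\cO_K$-progression by $\lambda_i$ (after the $\frakD$-scaling) lands in an $\cO_K$-progression of the same dimension and comparable side lengths, so the Minkowski sum is again such a progression of bounded dimension, whose size is at most $(\text{const})^n$ times that of $P$. The main obstacle I anticipate is the bookkeeping in the upgrade step: one must simultaneously (i) keep the dimension bounded, (ii) keep the size within a constant factor of $|A|$ rather than $|Q|$ — which is automatic once size is within a constant of $|Q|$ since $|Q|\le F_0|A|$ — and (iii) restore $p$-properness, and these pull in slightly different directions. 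Getting a clean statement of what an $\cO_K$-GAP is, flexible enough that "the $\cO_K$-GAP generated by a GAP" is well-defined and has bounded dimension, is really where the work lies; the reductions at the two ends (small doubling in, small sum of dilates out) are comparatively routine.
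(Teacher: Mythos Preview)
Your outline has the right skeleton but the two steps you flag as ``bookkeeping'' are in fact the entire content, and neither works as you describe.

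First, the upgrade from a GAP to an $\cO_K$-GAP. You propose to replace each integer progression $\{0,v_j,\ldots,(L_j-1)v_j\}$ by $\{\omega v_j : \omega\in B(L_j)\}$ and say ``one must check the lengths can be chosen so that the size grows only by a constant factor''. But naively this multiplies the size by roughly $L_j^{d-1}$ per generator, giving $|Q|^d$ rather than $O(|Q|)$, and there is no freedom in the $v_j$ that fixes this. The paper's manoeuvre is to apply Freiman not to $A$ but to $A+\lambda_1\cdot A+\cdots+\lambda_k\cdot A$ (which also has small doubling), so that the resulting GAP $P_0$ contains every $\lambda_l\cdot A$. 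Since the $\lambda_l$ generate $K$ over $\QQ$, each basis element $e_i$ of $\cO_K$ is a rational combination of the $\lambda_l$; then a pigeonhole argument on translates $A,A+sv_j,\ldots,A+csv_j$ inside $P_0+P_0$ shows that each progression $\{e_iv_j,2e_iv_j,\ldots,L_je_iv_j\}$ is covered by $O(1)$ translates of $P_0$. Summing these $dn_0$ progressions gives the $\cO_K$-GAP $P_1$, and this covering argument is what keeps $|P_1|\ll|P_0|$. Your proposal never invokes the hypothesis on the sum of dilates beyond deducing small doubling, and small doubling alone cannot yield this step.

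Second, restoring $p$-properness is not ``the standard trick of shrinking side lengths''. Non-properness of an $\cO_K$-GAP gives a relation $\sum a_jv_j=0$ with $a_j\in\cO_K$, and eliminating one $v_j$ produces generators in $K^{n-1}$ whose $\cO_K$-span may sit badly inside the ambient lattice. The paper builds an $\cO_K$-version of Minkowski's second theorem and then an $\cO_K$-version of John's theorem precisely to show that a non-$p$-proper $\cO_K$-GAP is contained in one of strictly smaller dimension and comparable size; this is nontrivial because $\cO_K$ need not be a PID, so the index $[\Gamma:\Gamma']$ between the ambient lattice and the one generated by successive-minimum vectors must be bounded separately.
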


To prove this result, we first need to extend several results in additive geometry, a term we borrow from Tao and Vu~\cite[Chapter 3]{TV06}, to the ring of integers $\cO_K$. Once the theorem is in place, we can map $A$ to a dense subset of the box $[0,N)^d$ via a Freiman isomorphism of the surrounding $\cO_K$-GAP, reducing the problem, as promised, to the case of a dense set.

Step 3 is the main and most difficult step. To say something about it, we first describe the method used by Krachun and Petrov~\cite{KP24} to estimate $|A+\lambda\cdot A|$ when $\lambda$ is an algebraic integer. As indicated earlier, they viewed this problem in terms of estimating the size of $|A+\cL A|$ where $A$ is a dense subset of the box $[N]^d$ and $\cL\in \Mat_d(\ZZ)$ is a linear transformation corresponding to multiplication by $\lambda$ and we will discuss it in these terms here. 

A naive way of representing $A$ by a continuous set $\overline{A}$ is to divide the box $[N]^d$ into small cubes and set $\overline{A}\subset \RR^d$ to be the union of the cubes which intersect $A$. However, this is not a good representation, since the volume of $\overline{A}$ can be very different from $|A|$. Indeed, if $A$ consists of all the points in $[N]^d$ with even coordinates, its representation $\overline{A}$ would be the same as if $A$ contained all the points of $[N]^d$.

Krachun and Petrov's solution is to introduce a new dimension to encode the ``local density'' of $A$ at a point $x$, which is, roughly speaking, the relative density of $A$ within a small box containing $x$. More precisely, their continuous representation is a (compact) set $\overline{A}\subset \RR^{d+1}$, which can be seen as having a base in $\RR^d$ resembling $A$, as described in the naive way above, and fibres in $\RR$, with 
the fibre at the point $x\in \RR^d$ being the interval $[0,r]$, where $r$ is the local density of $A$ at $x$. 
In particular, the volume of $\overline{A}$ matches the size $|A|$. The key to their approach is the following simple observation.

\begin{obs} \label{obs:localdens}
    The local density of $B=A+\cL A$ at $x+\cL y$ is at least the local density of $A$ at $x$.
\end{obs}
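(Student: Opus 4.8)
The plan is to use the single structural fact that $B = A + \cL A$ contains the translate $A + \cL y$ of $A$ for every $y \in A$, together with the translation-invariance of lattice-point counting. Recall that the local density of a set $S \subseteq \ZZ^d$ at a point $z$ is the relative density $|S \cap Q|/|\ZZ^d \cap Q|$ of $S$ inside a small box $Q$ containing $z$, taken at the appropriate intermediate scale (small compared to the ambient box, large compared to the lattice spacing). So, first, I would fix a point $x$ at which $A$ has local density $r$, and pick a witnessing box $Q \ni x$ with $|A \cap Q| \ge (r - o(1))\,|\ZZ^d \cap Q|$.

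Next, fix any $y \in A$. Since $\cL \in \Mat_d(\ZZ)$ we have $\cL y \in \ZZ^d$, so translation by $\cL y$ is a bijection of $\ZZ^d$ onto itself, and moreover $A + \cL y \subseteq A + \cL A = B$. Set $Q' := Q + \cL y$, a box of the same shape containing $x + \cL y$. Then $\ZZ^d \cap Q' = (\ZZ^d \cap Q) + \cL y$ and $B \cap Q' \supseteq (A + \cL y) \cap Q' = (A \cap Q) + \cL y$, whence
\[
\frac{|B \cap Q'|}{|\ZZ^d \cap Q'|} \;\ge\; \frac{|(A \cap Q) + \cL y|}{|(\ZZ^d \cap Q) + \cL y|} \;=\; \frac{|A \cap Q|}{|\ZZ^d \cap Q|} \;\ge\; r - o(1).
\]
Passing to the limit in the definition of local density, the local density of $B$ at $x + \cL y$ is at least $r$, which is the assertion; in particular this covers the case where $x$ and $y$ both lie in $A$.

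The main obstacle is purely a matter of bookkeeping: reconciling the translated box $Q'$ with the canonical family of boxes used to define local density. If that definition proceeds through a fixed partition of the ambient box into cells, then $Q' = Q + \cL y$ need not be a cell, so one must take the partition fine enough that $Q'$ is approximable by unions of cells up to an $o(1)$ fraction of its lattice points, or else define local density as a limit (or supremum) over all sufficiently small boxes containing the point; either way the displayed estimate survives with the same $o(1)$ error. Once this is arranged, the observation reduces, as above, to the containment $A + \cL y \subseteq B$ and the invariance of lattice-point counting under the integer translation $\cL y$.
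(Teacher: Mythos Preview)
Your argument is correct and is exactly the intended one: the containment $A+\cL y\subseteq B$ together with the fact that $\cL y\in\ZZ^d$ (so translation by $\cL y$ is a lattice bijection) immediately gives the density comparison. The paper itself does not supply a proof of this observation---it is stated in the proof sketch as a ``simple observation'' underlying Krachun and Petrov's method and is left without justification---so there is nothing further to compare; your write-up simply fills in the one-line reason the paper omits.
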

If $\overline{B}$ is the continuous representation of $B$, then this observation is equivalent to saying that $\overline{B}$ contains $\overline{A}+\cL'(\overline{A})$, where $\cL':\RR^{d+1}\to \RR^{d+1}$ is given by $\cL'(x,y)=(\cL x, 0)$ for $x\in \RR^d$ and $y\in \RR$. Therefore, $\Vol(\overline{B})\geq \Vol(\overline{A}+\cL'(\overline{A}))$. One can then apply the continuous version of sums of dilates to obtain a tight lower bound for $\Vol(\overline{A}+\cL'(\overline{A}))$ in terms of $\Vol(\overline{A})$ and translate the result back to the discrete world using the fact that $\Vol(\overline{A})=|A|$.

The problem with extending this approach to general algebraic $\lambda$ is that Observation~\ref{obs:localdens} is too weak. Indeed, if $\lambda$ is not integral, estimating $|A+\lambda\cdot A|$ is equivalent to estimating $|\cL_1 A+\cL_2 A|$ for some $\cL_1,\cL_2\in \Mat_d(\ZZ)$ and $A$ a dense subset of $[N]^d$. The analogue of the observation in this situation is that the local density of $\cL_1 A+\cL_2 A$ at $\cL_1 x+\cL_2 y$ is at least $\frac{1}{|\det \cL_1|}$ times the local density of $A$ at $x$. However, this is not tight, since if $A$ contains all the lattice points in some convex region, then the local density of $A$ is $1$ uniformly and, by coprimeness, we also expect the local density of $\cL_1 A+\cL_2 A$ to be $1$ uniformly. But the observation only guarantees that the local density of $\cL_1 A+\cL_2 A$ is at least $\frac{1}{|\det \cL_1|}$, which is less than 1 if $\lambda$ is not integral.

Since the local structure of $\cL_1A+\cL_2A$ at $\cL_1x+\cL_2y$ depends on the local structure of $A$ at $x$ and $y$, which can look completely different, we need to consider asymmetric sums $\cL_1A_1+\cL_2A_2$. As an example, consider the periodic sets $A_1,A_2\subset \ZZ$ given by $A_1=\{0,3\}+6\ZZ$ and $A_2=\{0,4\}+6\ZZ$ and the sums $2\cdot A_1+3\cdot A_2$ and $2\cdot A_2+3\cdot A_1$ (for technical reasons we work with periodic sets when defining our notion of local density). Both $A_1,A_2$ have density $1/3$, whereas $2\cdot A_1+3\cdot A_2=6\ZZ$ has density $1/6$ and $2\cdot A_2+3\cdot A_1=\{0,2,3,5\}+6\ZZ$ has density $2/3$. Notice that although $2\cdot A_1+3\cdot A_2$ is less dense than $A_1$ and $A_2$, the swapped sum $2\cdot A_2+3\cdot A_1$ is more dense. In fact, this observation holds more generally.

\begin{obs} \label{obs:localdens2}
    Suppose $A_1$ has local density $\sigma_1$ at $x$ and $A_2$ has local density $\sigma_2$ at $y$. If $\cL_1A_1+\cL_2A_2$ has local density $\eta$ at $\cL_1x+\cL_2y$ and $\cL_1A_2+\cL_2A_1$ has local density $\eta'$ at $\cL_1y+\cL_2x$, then $\eta\eta'\geq \sigma_1\sigma_2$.
\end{obs}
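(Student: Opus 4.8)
The plan is to perform the reduction to periodic sets sketched above, after which the claim becomes a short counting inequality in a finite abelian group. In the reduced setting, $A_1,A_2$ are finite subsets of $\ZZ^d$ and $\cL_1,\cL_2\in\Mat_d(\ZZ)$ are, up to the standard normalisation, the matrices of multiplication by $D$ and $D\lambda$ on a rank-$d$ lattice in $K=\QQ(\lambda)$, where $D$ is the leading coefficient of the minimal polynomial of $\lambda$; in particular $\cL_1\pm\cL_2$ are invertible over $\QQ$, with $\det(\cL_1\pm\cL_2)=D^{d}N_{K/\QQ}(1\pm\lambda)\neq 0$ (the cases $\lambda=\pm1$ being classical). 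Fix a common period lattice $\Lambda=N\ZZ^d$ for $A_1$ and $A_2$; since $\cL_i\ZZ^d\subseteq\ZZ^d$ it is automatically stable under $\cL_1$ and $\cL_2$, so $B:=\cL_1A_1+\cL_2A_2$ and $B':=\cL_1A_2+\cL_2A_1$ are $\Lambda$-periodic as well. Passing to $G:=\ZZ^d/\Lambda\cong(\ZZ/N\ZZ)^d$ and writing $\ol{A_1},\ol{A_2},\ol B,\ol{B'}$ for the images, we have $\ol B=\cL_1\ol{A_1}+\cL_2\ol{A_2}$, $\ol{B'}=\cL_1\ol{A_2}+\cL_2\ol{A_1}$, and $\sigma_1,\sigma_2,\eta,\eta'$ are the densities of these four sets in $G$; the desired bound $\eta\eta'\geq\sigma_1\sigma_2$ is thus exactly $|\ol B|\cdot|\ol{B'}|\geq|\ol{A_1}|\cdot|\ol{A_2}|$.

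The content is now a single homomorphism. Let $\phi\colon G\times G\to G\times G$ be $\phi(u,v):=(\cL_1u+\cL_2v,\ \cL_2u+\cL_1v)$; it maps $\ol{A_1}\times\ol{A_2}$ into $\ol B\times\ol{B'}$, so if $\phi$ is injective then $|\ol B|\cdot|\ol{B'}|\geq|\phi(\ol{A_1}\times\ol{A_2})|=|\ol{A_1}|\cdot|\ol{A_2}|$ and we are done. Over $\ZZ$ the map $\phi$ is the block matrix $\bigl(\begin{smallmatrix}\cL_1&\cL_2\\\cL_2&\cL_1\end{smallmatrix}\bigr)$, whose determinant — as one sees by elementary block row and column operations — equals $\det(\cL_1+\cL_2)\det(\cL_1-\cL_2)=D^{2d}N_{K/\QQ}(1-\lambda^2)$, a fixed nonzero integer depending only on $\lambda$. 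Hence $\phi$ is injective on $G\times G$ whenever $\gcd\bigl(N,\,D^{2d}N_{K/\QQ}(1-\lambda^2)\bigr)=1$, and in that case the inequality holds.

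It remains to remove the coprimality hypothesis on $N$, and reconciling this with the definition of local density is the delicate point. Because a local density is a limit of relative densities over boxes, and this limit is unaffected by restricting the box side lengths to be coprime to the fixed integer $D^{2d}N_{K/\QQ}(1-\lambda^2)$, one may take the periodic sets realising $\sigma_1$ and $\sigma_2$ — and hence the resulting sets realising $\eta$ and $\eta'$ — to have period coprime to this integer, after which the previous two paragraphs apply verbatim and yield $\eta\eta'\geq\sigma_1\sigma_2$. I expect this bookkeeping — verifying that the four local densities in the statement really are captured by such ``good'' periods, so that nothing is lost in passing to them — to be the only genuinely subtle ingredient; everything else reduces to the injectivity of the block map $\phi$, which rests only on $\cL_1,\cL_2$ being multiplications from $K$, so that $\det(\cL_1\pm\cL_2)$ is the transparent quantity $D^{d}N_{K/\QQ}(1\pm\lambda)$.
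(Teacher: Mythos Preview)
Your block-map idea is clean and genuinely different from what the paper does: the paper never proves this observation in general but only illustrates it in the example $\cL_1=2,\cL_2=3$ by a Chinese-remainder decomposition of $\ZZ/6\ZZ$ into its prime parts, and the rigorous substitute is the lattice-density machinery of Sections~\ref{sec:ld}--\ref{sec:families}, where the analogue of ``$\eta\eta'\ge\sigma_1\sigma_2$'' becomes Lemmas~\ref{lem:pi1stable} and~\ref{lem:pilstable}, proved prime-by-prime via the ideals $\frakb_l^{n_l}$.

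That said, the final paragraph of your proposal contains a real gap. You need $N$ coprime to $\det\phi$, and you try to arrange this by choosing the box side $N$ freely. But once you periodise $A_1,A_2$ with a period $N$ foreign to their intrinsic arithmetic, the set $\cL_1\tilde A_1+\cL_2\tilde A_2$ no longer has density close to $\eta$. Concretely, take $d=1$, $\cL_1=2$, $\cL_2=3$, so $\det\phi=-5$, and $A_1=A_2=5\ZZ$; then $\sigma_1=\sigma_2=\eta=\eta'=1/5$. Periodising with any $N$ coprime to $5$ gives $\tilde A_i=\{0,5,\dots,5m\}+N\ZZ$ with $N=5m+r$, and since $5$ is now a unit modulo $N$, the sumset $2\tilde A_1+3\tilde A_2=\{5(2i+3j):0\le i,j\le m\}+N\ZZ$ hits almost every residue class, so $\tilde\eta\to 1$ as $N\to\infty$, not $1/5$. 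Thus the inequality $\tilde\eta\tilde\eta'\ge\tilde\sigma_1\tilde\sigma_2$ you prove says nothing about the actual $\eta,\eta'$. The obstruction is structural: the period relevant to $\eta$ is dictated by the arithmetic of $\cL_1,\cL_2$ and of $A_1,A_2$, and precisely when that arithmetic involves the primes dividing $\det\phi$ --- the only interesting case --- your injectivity argument breaks down. The paper's CRT viewpoint sidesteps this by working at those very primes rather than trying to avoid them.
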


This somewhat resolves the previous issue: although $B=\cL_1A+\cL_2A$ can be locally less dense than $A$ in some places, it must be more dense in others. To justify this observation, consider the sets $A_1,A_2$ as before. Draw the elements of $\ZZ/6\ZZ$ as a $2\times 3$ grid according to their residues modulo 2 and 3 and scale the grid to be a square of side length 1. Denote by $\LD(A_1)$ the union of the cells representing the residues modulo 6 contained in $A_1$ and similarly for $\LD(A_2)$ (see Figure~\ref{fig:mod6ld}). Note that the density of $A_i$ is equal to the volume of $\LD(A_i)$. Then do the same for $2\cdot A_1+3\cdot A_2$ and $2\cdot A_2+3\cdot A_1$ (see Figure~\ref{fig:mod6ldsum}).

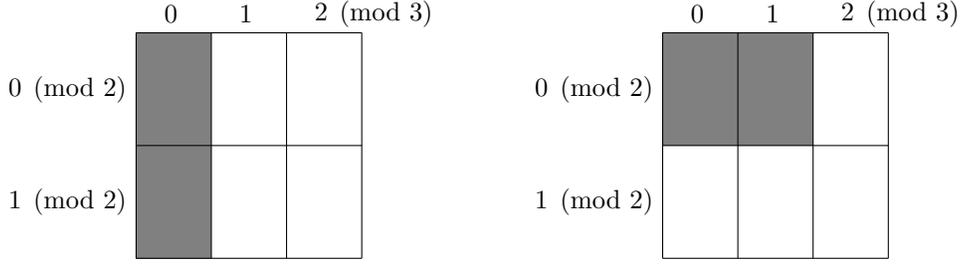
\begin{figure}
    \centering
    \begin{tikzpicture}
        \foreach \x/\y in {0/0, 0/1} {
            \fill[gray] (\x,1.5*\y) rectangle (\x+1,1.5*\y+1.5);
        }
        
        \foreach \i in {0,1,2,3} {
            \draw (\i,0) -- (\i,3);
        }
        \foreach \i in {0,1.5,3} {
            \draw (0,\i) -- (3,\i);
        }
        \node[anchor=east] at (0,0.75) {$1\pmod{2}$};
        \node[anchor=east] at (0,2.25) {$0\pmod{2}$};
        \node[anchor=west] at (0.25,3.25) {$0$};
        \node[anchor=west] at (1.25,3.25) {$1$};
        \node[anchor=west] at (2.25,3.25) {$2\pmod{3}$};

        \pgfmathsetmacro{\xoffset}{7}
        \foreach \x/\y in {0/1, 1/1} {
            \fill[gray] (\xoffset+\x,1.5*\y) rectangle (\xoffset+\x+1,1.5*\y+1.5);
        }
        
        \foreach \i in {0,1,2,3} {
            \draw (\xoffset+\i,0) -- (\xoffset+\i,3);
        }
        \foreach \i in {0,1.5,3} {
            \draw (\xoffset,\i) -- (\xoffset+3,\i);
        }
        \node[anchor=east] at (\xoffset+0,0.75) {$1\pmod{2}$};
        \node[anchor=east] at (\xoffset+0,2.25) {$0\pmod{2}$};
        \node[anchor=west] at (\xoffset+0.25,3.25) {$0$};
        \node[anchor=west] at (\xoffset+1.25,3.25) {$1$};
        \node[anchor=west] at (\xoffset+2.25,3.25) {$2\pmod{3}$};
        
    \end{tikzpicture}
    \caption{The shaded regions are $\LD(A_1)$ and $\LD(A_2)$.}
    \label{fig:mod6ld}
\end{figure}

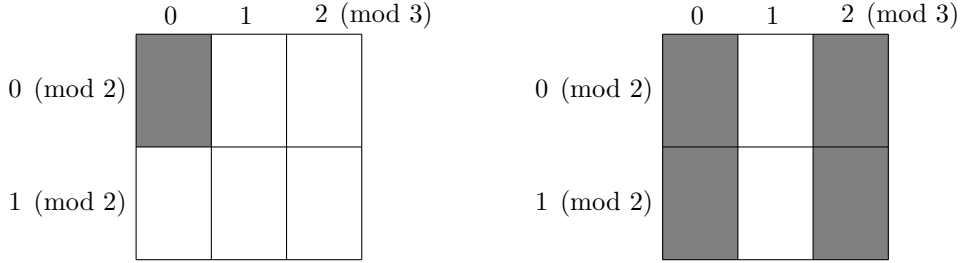
\begin{figure}
    \centering
    \begin{tikzpicture}
        \foreach \x/\y in {0/1} {
            \fill[gray] (\x,1.5*\y) rectangle (\x+1,1.5*\y+1.5);
        }
        
        \foreach \i in {0,1,2,3} {
            \draw (\i,0) -- (\i,3);
        }
        \foreach \i in {0,1.5,3} {
            \draw (0,\i) -- (3,\i);
        }
        \node[anchor=east] at (0,0.75) {$1\pmod{2}$};
        \node[anchor=east] at (0,2.25) {$0\pmod{2}$};
        \node[anchor=west] at (0.25,3.25) {$0$};
        \node[anchor=west] at (1.25,3.25) {$1$};
        \node[anchor=west] at (2.25,3.25) {$2\pmod{3}$};

        \pgfmathsetmacro{\xoffset}{7}
        \foreach \x/\y in {0/0, 0/1, 2/0, 2/1} {
            \fill[gray] (\xoffset+\x,1.5*\y) rectangle (\xoffset+\x+1,1.5*\y+1.5);
        }
        
        \foreach \i in {0,1,2,3} {
            \draw (\xoffset+\i,0) -- (\xoffset+\i,3);
        }
        \foreach \i in {0,1.5,3} {
            \draw (\xoffset,\i) -- (\xoffset+3,\i);
        }
        \node[anchor=east] at (\xoffset+0,0.75) {$1\pmod{2}$};
        \node[anchor=east] at (\xoffset+0,2.25) {$0\pmod{2}$};
        \node[anchor=west] at (\xoffset+0.25,3.25) {$0$};
        \node[anchor=west] at (\xoffset+1.25,3.25) {$1$};
        \node[anchor=west] at (\xoffset+2.25,3.25) {$2\pmod{3}$};
        
    \end{tikzpicture}
    \caption{The shaded regions are $\LD(2\cdot A_1+3\cdot A_2)$ and $\LD(2\cdot A_2+3\cdot A_1)$.}
    \label{fig:mod6ldsum}
\end{figure}

Writing $\pi_1$ and $\pi_2$ for the projections onto the mod $3$ and mod $2$ axes, respectively, we see that $\LD(2\cdot A_1+3\cdot A_2)$ is a $\pi_1(\LD(A_1))\times \pi_2(\LD(A_2))$ rectangle, while, if we allow a permutation of the columns, $\LD(2\cdot A_2+3\cdot A_1)$ is a $\pi_1(\LD(A_2))\times \pi_2(\LD(A_1))$ rectangle. This justifies Observation~\ref{obs:localdens2}, that $\Vol(\LD(2\cdot A_1+3\cdot A_2))\cdot \Vol(\LD(2\cdot A_2+3\cdot A_1))\geq \Vol(\LD(A_1))\cdot \Vol(\LD(A_2))$, in this case. We also note that $|\pi_1(\LD(A_i))|$ is preserved under $\cL_1 = 2 \times$, while $|\pi_2(\LD(A_i))|$ is preserved under $\cL_2 = 3 \times$.

The set $\LD(A)$, which records how $A$ is arranged relative to certain lattices, is roughly what we call the ``lattice density''. More precisely, for a (periodic) set $A\subseteq \ZZ^d$ and a flag of lattices $\cF=\set{L_0\subseteq L_1\subseteq\cdots\subseteq L_k}$, the lattice density $\LD(A;\cF)$ is a compact downset in $[0,1]^{k+1}$ which encodes information about the density of $A$ relative to the lattices $L_l$. Our continuous representation $\overline{A}$ is then a compact subset of $\RR^{d+k+1}$ with a base in $\RR^d$ resembling $A$ and fibres in $\RR^{k+1}$ equal to the local lattice density at each point of $A$. 

Once again, estimating $|A+\lambda_1\cdot A+\cdots+\lambda_k\cdot A|$ is equivalent to estimating $|\cL_0 A+\cdots+\cL_k A|$ for some $\cL_0,\ldots,\cL_k\in \Mat_d(\ZZ)$. The key now, proved through a refinement argument, is that one can find two flags $\cF,\cG$ such that $\pi_{i+1}(\LD(\cL_i A;\cG))\approx \pi_{i+1}(\LD(A;\cF))$ for each $i=0,\ldots,k$. 
In turn, this allows us to show that if $A_0,\ldots,A_k \subset A$ are (periodic) sets, then $\LD(\cL_0 A_0+\cdots+\cL_k A_k;\cG)$ roughly contains the cuboid with side lengths 
\[|\pi_1(\LD(A_0;\cF))|, \ldots, |\pi_{k+1}(\LD(A_k;\cF))|.\]
By making appropriate choices of $A_0, \dots, A_k \subset A$ locally, 
this implies that if $B=\cL_0 A+\cdots+\cL_k A$, then $\overline{B}\subset \RR^{d+k+1}$ contains the sumset $\cL_0'\overline{A}+\cdots+\cL_k'\overline{A}$, where $\cL_i':\RR^{d+k+1}\to \RR^{d+k+1}$ is given by $\cL_i'(x,y)=(\cL_i x, \pi_{i+1}(y))$ for $x\in \RR^d$ and $y\in \RR^{k+1}$. In line with the application of Observation~\ref{obs:localdens} in the algebraic integer case, we can then apply our continuous estimate to $\cL_0'\overline{A}+\cdots+\cL_k'\overline{A}$, which in turn yields the desired result in the discrete case.

\subsection{Notation}

Throughout the paper, we will use the following notation:
\begin{itemize}
    \item $\lambda_0, \lambda_1,\ldots,\lambda_k$ are algebraic numbers with $\lambda_0=1$.
    \item $K:=\QQ(\lambda_1,\ldots,\lambda_k)$ is the number field generated by $\lambda_1,\ldots,\lambda_k$.
    \item The degree of $K$ over $\QQ$ is $d:=\deg(K/\QQ)$, so $K\cong \QQ^d$. 
    \item The ring of integers over $K$ is denoted by $\cO_K$, so $\cO_K\cong \ZZ^d$.
    \item We write $K_\RR:=K\otimes_\QQ \RR=\cO_K\otimes_\ZZ \RR \cong \RR^d$ and $K_\CC:=K\otimes_\QQ \CC\cong \CC^d$. 
    \item We will generally use $i$ to index $1,\ldots,d$, $j$ to index $1,\ldots,n$ and $l$ to index $1,\ldots,k$ (possibly starting at 0). However, this is not strict and the usage can depend on context.
\end{itemize}

\subsection{Organisation of the paper}

The remainder of the paper is laid out as follows. We begin, in Section~\ref{sec:mapZd}, by formally describing how to rephrase the problem of estimating sums of algebraic dilates in terms of estimating sums of linear transformations, in particular observing that the linear transformations corresponding to taking various algebraic dilates are simultaneously diagonalisable. In Section~\ref{sec:cts}, we prove the continuous analogue of our estimate, extending a result of Krachun and Petrov to sums of arbitrarily many simultaneously diagonalisable linear transformations of compact sets. It is also here, in Section~\ref{sec:low}, that we show that Theorem~\ref{thm:main} is best possible. We extend several results from additive geometry, including Minkowski's second theorem and John's theorem, to rings of integers in Section~\ref{sec:algaddgeom}, culminating in our Freiman-type structure theorem for small sums of dilates. We then use this result in Section~\ref{sec:reduction} to reduce the proof of Theorem~\ref{thm:main} to the special case where $A$ is a dense subset of a box. In Section~\ref{sec:ld}, we introduce lattice densities and prove some of their basic properties, while in Section~\ref{sec:families} we introduce two key families of flags of lattices and establish some relations between lattice densities taken relative to these flags. Using these results, we conclude the proof of Theorem~\ref{thm:main} in Section~\ref{sec:dense} by verifying it in the dense case. In Section~\ref{sec:linearsums}, we discuss the general problem of estimating sums of pre-commuting linear transformations, showing how it reduces to Theorem~\ref{thm:main}. Finally, in Section~\ref{sec:conc}, we point towards some further possible research directions.

\subsection{Acknowledgements}

The authors thank Deepesh Singhal for helpful discussions on the algebraic number theory aspects of the paper. We would also like to note that this work was supported by NSF grant DMS-1928930 while the authors were in residence at the Simons--Laufer Mathematical Sciences Institute in Berkeley, California during the Spring 2025 semester on Extremal Combinatorics.

\section{Mapping to $\mathbb{Z}^d$} \label{sec:mapZd}

In this section, we show how the problem of estimating sums of algebraic dilates can be recast in terms of estimating sums of linear transformations. 
Our first lemma, generalising~\cite[Lemma~3.1]{KP24}, will allow us to assume that $A$ is a subset of $K$.

\begin{lem} \label{lem:aink}
Suppose that $\lambda_1,\ldots,\lambda_k\in\CC$ and $A\subset\CC$ is finite. Then there exists a finite set $B\subset K=\QQ(\lambda_1,\ldots,\lambda_k)$ such that $|B| = |A|$ and $|B + \lambda_1 \cdot B+\cdots+\lambda_k \cdot B| \leq |A + \lambda_1 \cdot A+\cdots+\lambda_k \cdot A|$.
\end{lem}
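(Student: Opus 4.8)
The plan is to construct a $K$-linear map from the $K$-linear span of $A$ (inside $\CC$) to $K$ itself that is injective on $A$, and then take $B$ to be the image of $A$. The point is that $K$-linearity forces the map to commute with multiplication by each $\lambda_l \in K$, so it sends $A+\lambda_1\cdot A+\cdots+\lambda_k\cdot A$ onto $B+\lambda_1\cdot B+\cdots+\lambda_k\cdot B$; injectivity on $A$ gives $|B|=|A|$, while a set image never grows in size, which yields the sumset inequality for free.

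Concretely, regard $\CC$ as a vector space over $K$ (this makes sense since $K\subseteq\CC$) and let $V$ be the $K$-linear span of the finite set $A$, a $K$-vector space of dimension $m\le|A|$. Fix a $K$-basis $e_1,\ldots,e_m$ of $V$ and write each $a\in A$ uniquely as $a=\sum_{i=1}^m c_i(a)e_i$ with $c_i(a)\in K$. For a tuple $t=(t_1,\ldots,t_m)\in K^m$, let $\phi_t\colon V\to K$ be the $K$-linear map $\phi_t\big(\sum_i x_i e_i\big)=\sum_i t_i x_i$. I would then choose $t$ so that $\phi_t$ separates the points of $A$: for each pair $a\ne a'$ in $A$, the set of $t$ with $\phi_t(a)=\phi_t(a')$ is the kernel of the $K$-linear map $t\mapsto\sum_i t_i\big(c_i(a)-c_i(a')\big)$, which is nonzero because some $c_i(a)-c_i(a')\ne 0$; hence it is a proper $K$-subspace of $K^m$. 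Since $K$ is infinite and there are only finitely many such pairs, $K^m$ is not the union of these finitely many proper subspaces, so a suitable $t$ exists; write $\phi:=\phi_t$ for this choice.

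Finally, set $B:=\phi(A)\subseteq K$. Injectivity of $\phi$ on $A$ gives $|B|=|A|$. Since $V$ is a $K$-subspace we have $A+\lambda_1\cdot A+\cdots+\lambda_k\cdot A\subseteq V$, and since $\phi$ is $K$-linear and $\lambda_1,\ldots,\lambda_k\in K$,
\[\phi\big(A+\lambda_1\cdot A+\cdots+\lambda_k\cdot A\big)=\phi(A)+\lambda_1\cdot\phi(A)+\cdots+\lambda_k\cdot\phi(A)=B+\lambda_1\cdot B+\cdots+\lambda_k\cdot B.\]
As the image of a finite set has size at most that of the set, $|B+\lambda_1\cdot B+\cdots+\lambda_k\cdot B|\le|A+\lambda_1\cdot A+\cdots+\lambda_k\cdot A|$, as required. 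There is no serious obstacle here: this is a soft argument, and the only point needing care is that the separating map must be $K$-linear (not merely $\QQ$-linear or additive) so that it intertwines dilation by the $\lambda_l$, which is exactly what the genericity argument over the infinite field $K$ supplies.
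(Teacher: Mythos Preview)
Your proof is correct and follows essentially the same approach as the paper: construct a $K$-linear map into $K$ that is injective on $A$ and let $B$ be the image. The only cosmetic difference is that the paper takes the domain to be the field extension $L$ of $K$ generated by $A$ rather than the $K$-linear span $V$ of $A$ inside $\CC$, and simply asserts that a $K$-linear map $f:L\to K$ injective on $A$ exists because $A$ is finite, whereas you spell out the genericity argument explicitly; either domain works and the underlying idea is identical.
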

\begin{proof}
    Let $L$ be the field extension of $K$ generated by $A$. Pick any $K$-linear map $f:L\to K$ which is injective on $A$. Such a map exists since $A$ is finite. Set $B=f(A)$. Then $|B|=|A|$ and, for any $a_0,\ldots,a_k\in A$, 
    $$f(a_0+\lambda_1 a_1+\cdots+\lambda_k a_k)=f(a_0)+\lambda_1 f(a_1)+\cdots+\lambda_k f(a_k).$$
    Hence, $|B + \lambda_1 \cdot B+\cdots+\lambda_k \cdot B|=|f(A + \lambda_1 \cdot A+\cdots+\lambda_k \cdot A)|\leq |A + \lambda_1 \cdot A+\cdots+\lambda_k \cdot A|$.
\end{proof}

In light of this result, we will henceforth assume that $A\subset K$. For any $a\in K$, there exists a positive integer $n$ such that $na\in \cO_K$. In fact, this is true for any fractional ideal $\cI \subseteq \cO_K$ -- for any $a\in K$, there exists a positive integer $n$ such that $na\in \cI$. Thus, since $A$ is finite, by rescaling $A$ to $n\cdot A$ for an appropriately large $n$, we may assume that $A\subset \cI$ if we wish to without any loss of generality.

To pass to linear transformations, we fix a $\ZZ$-basis $e_1=1,e_2,\ldots,e_d$ of $\cO_K$ and let $\Phi:\cO_K\to \ZZ^d$ be the isomorphism mapping the $e_i$ to the standard basis of $\ZZ^d$. 
This map extends linearly to an isomorphism $\Phi:K\to \QQ^d$. Under this isomorphism, multiplication by $\lambda_l$ corresponds to the linear map $\cM_l\in \Mat_d(\QQ)$ defined by 
\[\cM_l(x)=\Phi(\lambda_l\cdot \Phi^{-1}(x)).\] 
The problem of estimating $|A+\lambda_1\cdot A+\cdots+\lambda_k\cdot A|$ for $A\subset K$ is then equivalent to estimating $|A+\cM_1 A+\cdots+\cM_k A|$ for $A\subset \QQ^d$.

One further step allows us to convert the problem into one about sums of linear transformations with \emph{integer} entries. 
Recall, from the introduction, that the denominator ideal of $\lambda_1,\ldots,\lambda_k$ is the non-zero ideal $\frakD = \cO_K\cap \lambda_1^{-1}\cO_K\cap\cdots \cap\lambda_k^{-1}\cO_K$ with the property that $\lambda_l \frakD\subseteq \cO_K$ for all $l=0,\ldots,k$.
If we fix an isomorphism $\Phi':\frakD\to \ZZ^d$, then multiplication of the elements of $\frakD$ by $\lambda_l$ corresponds to the linear map $\cL_l:\ZZ^d\to \ZZ^d$ defined by 
\[\cL_l(x)=\Phi(\lambda_l\cdot \Phi'^{-1}(x)).\]
By rescaling, we may assume that $A\subset \frakD$, 
so that Theorem~\ref{thm:main} becomes equivalent to the following result, whose proof will now be our principal goal.

\begin{thm} \label{thm:main2}
    For all finite subsets $A$ of $\ZZ^d$,
    $$|\cL_0 A+\cdots+\cL_k A|\geq H(\lambda_1,\ldots,\lambda_k)|A|-o(|A|).$$
\end{thm}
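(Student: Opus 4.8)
The plan is to follow the three-step strategy sketched above, which I now flesh out. The starting point is a bookkeeping observation: since $\cL_l(x)=\Phi(\lambda_l\Phi'^{-1}(x))$ and $\cM_l(x)=\Phi(\lambda_l\Phi^{-1}(x))$, one checks that $\cL_l=\cM_l\cS$ for a single fixed matrix $\cS=\Phi\circ\Phi'^{-1}\in\GL_d(\QQ)$ with $|\det\cS|=[\cO_K:\frakD]=N_{K/\QQ}(\frakD)$. The $\cM_0=I,\cM_1,\ldots,\cM_k$ pairwise commute, being multiplications in the field $K$, and are simultaneously diagonalisable over $\CC$, with $\cM_l$ acting on the $i$-th common eigenline by the scalar $\sigma_i(\lambda_l)$; the relevant real structure is $K_\RR\cong\prod_{v\mid\infty}K_v$, a product of copies of $\RR$ and $\CC$ on which each $\cM_l$ is a real or complex scalar. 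In particular $\cL_0,\ldots,\cL_k$ are pre-commuting (via $\cP=\cS^{-1}$) and coprime, since $\sum_l\cL_l\ZZ^d=\Phi\big(\sum_l\lambda_l\frakD\big)=\Phi(\frakD\cdot(\cO_K+\lambda_1\cO_K+\cdots+\lambda_k\cO_K))=\Phi(\cO_K)=\ZZ^d$, as $\frakD$ is the inverse of that fractional ideal. Consequently $H(\lambda_1,\ldots,\lambda_k)=N_{K/\QQ}(\frakD)\prod_{i=1}^d\big(\sum_{l=0}^k|\sigma_i(\lambda_l)|\big)$ is exactly $|\det\cS|$ times a product over archimedean places, and by superadditivity of the geometric mean it dominates $\big(\sum_l|\det\cL_l|^{1/d}\big)^d$, so the target strictly sharpens the Brunn--Minkowski bound.

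\textbf{Step 1 (continuous analogue, Section~\ref{sec:cts}).} I would first prove that every compact $X\subset\RR^d$ satisfies $\Vol(\cL_0X+\cdots+\cL_kX)\ge H(\lambda_1,\ldots,\lambda_k)\Vol(X)$, equivalently, after the linear substitution $Y=\cS X$, that $\Vol(\cM_0Y+\cdots+\cM_kY)\ge\prod_{i=1}^d\big(\sum_{l=0}^k|\sigma_i(\lambda_l)|\big)\Vol(Y)$. Using $K_\RR=\prod_{v\mid\infty}K_v$, one treats each archimedean factor separately: on a real factor the $\cM_l$ are scalings of $\RR$ and the sumset of a measure-$m$ set has measure at least $(\sum_l|\mu_l|)m$ by the standard interval-gluing argument, while on a complex factor Brunn--Minkowski gives $\Vol(\sum_l\mu_l\cdot X)^{1/2}\ge\sum_l|\mu_l|\,\Vol(X)^{1/2}$. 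Symmetrising $Y$ successively along each eigenspace (a fibrewise Steiner-type symmetrisation that preserves volume without enlarging the sumset) combines the factors into the claimed product bound. This extends the $k=1$ theorem of Krachun and Petrov, and its tightness — witnessed by a small box adapted to the real structure — is the content of Section~\ref{sec:low}.

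\textbf{Step 2 (reduction to a dense subset of a box, Sections~\ref{sec:algaddgeom}--\ref{sec:reduction}).} If $|\cL_0A+\cdots+\cL_kA|\le C|A|$ then, since each $\cL_l$ is injective, $A$ has doubling bounded in terms of $C$, so our Freiman-type structure theorem for small sums of dilates puts $A$ inside a $p$-proper $\cO_K$-GAP $P$ of bounded dimension and size $O(|A|)$; proving that theorem requires transporting Minkowski's second theorem, John's theorem and the Ruzsa covering/GAP machinery to $\cO_K$, as in Section~\ref{sec:algaddgeom}. A Freiman isomorphism of $P$ onto a box $[0,N)^d$ respecting multiplication by each $\lambda_l$ at the relevant scale then lets us assume, at the cost of only an $o(|A|)$ error, that $A$ is a subset of $[0,N)^d$ of density bounded below, exactly as in Section~\ref{sec:reduction}.

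\textbf{Step 3 (continuous model via lattice density, Sections~\ref{sec:ld}--\ref{sec:dense}).} This is the crux and the main obstacle. For a periodic $A\subseteq\ZZ^d$ and a flag $\cF=\{L_0\subseteq\cdots\subseteq L_k\}$ of lattices one defines the lattice density $\LD(A;\cF)$, a compact downset in $[0,1]^{k+1}$, and attaches to a dense $A\subset[0,N)^d$ a compact $\overline A\subset\RR^{d+k+1}$ whose base in $\RR^d$ is a cube-union model of $A$ and whose fibre at each point is the local lattice density there, arranged so that $\Vol(\overline A)=|A|$. The heart of the argument, via a refinement procedure on flags, is to produce two flags $\cF,\cG$ with $\pi_{l+1}(\LD(\cL_lA;\cG))\approx\pi_{l+1}(\LD(A;\cF))$ for all $l$; granting this and choosing, locally, subsets $A_0,\ldots,A_k\subseteq A$ witnessing the relevant shadow lengths, a multiplicative estimate in the spirit of Observation~\ref{obs:localdens2} shows that $\LD(\cL_0A_0+\cdots+\cL_kA_k;\cG)$ essentially contains the cuboid with side lengths $|\pi_1(\LD(A_0;\cF))|,\ldots,|\pi_{k+1}(\LD(A_k;\cF))|$, hence that the model $\overline B$ of $B=\cL_0A+\cdots+\cL_kA$ contains $\cL_0'\overline A+\cdots+\cL_k'\overline A$ with $\cL_l'(x,y)=(\cL_lx,\pi_{l+1}(y))$. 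Since each $\cL_l'$ acts on the $k+1$ new coordinates as a coordinate projection (eigenvalues $0$ and $1$), the family $\cL_0',\ldots,\cL_k'$ is simultaneously diagonalisable with the archimedean data of $\cL_0,\ldots,\cL_k$ on $\RR^d$ and trivial data on the new coordinates, so Step 1 applies and yields $|B|=\Vol(\overline B)\ge\Vol\big(\sum_l\cL_l'\overline A\big)\ge H(\lambda_1,\ldots,\lambda_k)\Vol(\overline A)=H(\lambda_1,\ldots,\lambda_k)|A|$; tracking the $o(|A|)$ losses from Step 2 through this chain gives Theorem~\ref{thm:main2}. The points I expect to fight with are making the flag refinement uniform over all $l$ simultaneously without blowing up the ambient lattices, and the bookkeeping needed to pass between genuinely periodic sets and the finite dense set $A$ while keeping every error term $o(|A|)$.
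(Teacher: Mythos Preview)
Your proposal is correct and follows essentially the same route as the paper: Step~1 is Theorem~\ref{thm:cts} via eigenspace decomposition and Steiner symmetrisation, Step~2 is Sections~\ref{sec:algaddgeom}--\ref{sec:reduction} culminating in Theorem~\ref{thm:algfreiman} and the reduction to Lemma~\ref{lem:dense}, and Step~3 is the lattice-density machinery of Sections~\ref{sec:ld}--\ref{sec:dense}, including the flag families $\cF_{\vec n},\cG_{\vec n}$ and the regularity argument you anticipate. One small wording slip: the $\cL_l'$ are not themselves simultaneously diagonalisable (the $\cL_l$ are only pre-commuting), so in the paper one first applies $\cL^*=(\cL_0^{-1}\oplus I)$ to make $\cL^*\cL_l'$ commute before invoking Theorem~\ref{thm:cts}, picking up the factor $|\det\cL_0|=N_{K/\QQ}(\frakD)$ when undoing $\cL^*$; your substitution $Y=\cS X$ plays exactly this role, so the content is right.
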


The next lemma determines all the (simultaneous) eigenvalues of the $\lambda_l$, when they are viewed as $\QQ$-linear maps on $K$. In the statement and proof, we will use the fact that there are exactly $d$ different complex embeddings (that is, injective field homomorphisms) of $K$ in $\CC$, which we denote by  $\sigma_1,\ldots,\sigma_d$ with $\sigma_1$ the identity. 

\begin{lem} \label{lem:diag}
    Viewing $K\cong \QQ^d$, multiplication by $\lambda_l$ induces a $\QQ$-linear map $\cM_l:\QQ^d\to\QQ^d$. Then the maps $\cM_0,\ldots,\cM_k$ are simultaneously diagonalisable over $\CC$ into the diagonal matrices $\cD_0,\ldots,\cD_k$, where $\cD_l$ has diagonal entries $(\sigma_1(\lambda_l),\ldots,\sigma_d(\lambda_l))$ for $l=0,\ldots,k$.
\end{lem}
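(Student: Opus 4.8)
The plan is to deduce everything from the standard isomorphism $K\otimes_\QQ\CC\cong\CC^d$ induced by the $d$ complex embeddings, under which multiplication by each $\lambda_l$ is diagonal essentially by construction. First I would recall that, since $\QQ$ has characteristic zero, $K/\QQ$ is separable, so by the primitive element theorem $K=\QQ(\theta)$ for some $\theta$ whose minimal polynomial $m\in\QQ[x]$ has degree $d$ and $d$ \emph{distinct} roots in $\CC$; these roots are exactly $\sigma_1(\theta),\ldots,\sigma_d(\theta)$, and $\sigma_i$ is determined by $\theta\mapsto\sigma_i(\theta)$. Then I would consider the $\CC$-algebra homomorphism
\[
\Psi:K\otimes_\QQ\CC\longrightarrow\CC^d,\qquad x\otimes z\longmapsto\bigl(z\,\sigma_1(x),\ldots,z\,\sigma_d(x)\bigr).
\]
Writing $K\cong\QQ[x]/(m(x))$ gives $K\otimes_\QQ\CC\cong\CC[x]/(m(x))$, and since the roots of $m$ are distinct the Chinese Remainder Theorem identifies $\CC[x]/(m(x))$ with $\prod_{i=1}^d\CC$; tracing through this identification shows it is precisely $\Psi$, so $\Psi$ is an isomorphism of $\CC$-algebras.

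Next I would observe that $\cM_l$, being $\QQ$-linear, extends to the $\CC$-linear map $\cM_l\otimes\mathrm{id}_\CC$ on $K\otimes_\QQ\CC$, which is nothing but multiplication by $\lambda_l\otimes 1$. Because $\sigma_i(\lambda_l x)=\sigma_i(\lambda_l)\sigma_i(x)$, conjugating this operator by $\Psi$ turns it into the diagonal operator $\diag(\sigma_1(\lambda_l),\ldots,\sigma_d(\lambda_l))$ on $\CC^d$. Thus, taking $P\in\GL_d(\CC)$ to be the change-of-basis matrix expressing the preimage under $\Psi$ of the standard basis of $\CC^d$ in terms of the original $\QQ$-basis $e_1,\ldots,e_d$ of $K$, we simultaneously diagonalise: $P^{-1}\cM_l P=\cD_l$ with $\cD_l=\diag(\sigma_1(\lambda_l),\ldots,\sigma_d(\lambda_l))$ for every $l=0,\ldots,k$. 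Since $\lambda_0=1$ and $\sigma_i(1)=1$, this forces $\cD_0=I$, as it should.

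There is no genuine obstacle here; the only point that needs care is the assertion that $\Psi$ is an isomorphism, whose content is exactly the separability of $K/\QQ$ — equivalently, that the matrix $(\sigma_i(e_j))_{i,j}$ is invertible. If one prefers to avoid the tensor-product formalism, the same argument can be phrased directly: each $\sigma_i:K\to\CC$ is a $\QQ$-linear functional with $\sigma_i\circ\cM_l=\sigma_i(\lambda_l)\,\sigma_i$, so the row vectors representing $\sigma_1,\ldots,\sigma_d$ in the basis $e_1,\ldots,e_d$ are common left eigenvectors of $\cM_0,\ldots,\cM_k$ with eigenvalues $\sigma_i(\lambda_0),\ldots,\sigma_i(\lambda_k)$; their linear independence (Dedekind's theorem on independence of characters, again a manifestation of separability) shows the $\cM_l^\top$, hence the $\cM_l$, are simultaneously diagonalisable with precisely the stated eigenvalues.
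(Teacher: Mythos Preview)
Your proof is correct and follows essentially the same approach as the paper: both construct the isomorphism $K\otimes_\QQ\CC\to\CC^d$ via the embeddings and observe that multiplication by $\lambda_l$ becomes diagonal on the right-hand side. The only cosmetic difference is that the paper verifies the isomorphism via a Vandermonde determinant and then checks the eigenvector property by an explicit coordinate computation, whereas you invoke the Chinese Remainder Theorem and use the ring-homomorphism property of $\Psi$ directly; your alternative via Dedekind's independence of characters is a pleasant bonus not in the paper.
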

\begin{proof}
    Let $K_\CC=K\otimes_\QQ \CC$ and define $\sigma:K_\CC \to \CC^d$ to be the $\CC$-linear map defined by $\sigma(\alpha\otimes c)=(c\sigma_1(\alpha),\ldots,c\sigma_d(\alpha))$. We claim that $\sigma$ is an isomorphism. Indeed, let $\alpha\in K$ be a generator of $K$, i.e., $K=\QQ(\alpha)$. Then $(1,\alpha,\ldots,\alpha^{d-1})$ is a $\QQ$-basis for $K$ and $\sigma_1(\alpha),\ldots,\sigma_d(\alpha)$ are all distinct. Under this basis, which is also a basis for $K_\CC$, $\sigma$ is represented by the matrix
    \[
    \begin{pmatrix}
        1 & \sigma_1(\alpha) & \sigma_1(\alpha)^2 & \cdots & \sigma_1(\alpha)^{d-1}\\
        1 & \sigma_2(\alpha) & \sigma_2(\alpha)^2 & \cdots & \sigma_2(\alpha)^{d-1}\\
        \vdots & \vdots & \vdots & \ddots & \vdots\\
        1 & \sigma_d(\alpha) & \sigma_d(\alpha)^2 & \cdots & \sigma_d(\alpha)^{d-1}
    \end{pmatrix},
    \]
    which is non-singular, since it is a Vandemonde matrix. 
    Let $e_1,\ldots,e_d\in \CC^d$ be the standard basis of $\CC^d$ and $v_i=\sigma^{-1}(e_i)$. Then $v_1,\ldots,v_d$ form a basis for $K_{\CC}$. We claim that, in this basis, $\cM_l$ diagonalises into the desired form. It suffices to check that $\cM_l(v_i)=\sigma_i(\lambda_l)v_i$.

    Let $x_1,\ldots,x_d\in K$ be a $\QQ$-basis for $K$. Then $v_i$ can be written in the form $v_i=x_1\otimes c_{i1}+\cdots+x_k\otimes c_{ik}$ for some $c_{il}\in \CC$, so that $\sigma(v_i)=e_i$ says that 
    $$\sum_{l=1}^k c_{il}\sigma_j(x_l)=\delta_{ij}.$$
    But then
    \begin{align*}
        \sigma_j(\cM_l(v_i)) &= \sigma_j\paren{\cM_l\paren{\sum_{m=1}^k x_m\otimes c_{im}}}
        = \sigma_j\paren{\sum_{m=1}^k (\lambda_l x_m)\otimes c_{im}}\\
        &= \sum_{m=1}^k c_{im}\sigma_j(\lambda_l x_m)
        = \sigma_j(\lambda_l)\sum_{m=1}^k c_{im}\sigma_j(x_m)\\
        &= \sigma_j(\lambda_l)\delta_{ij}=\sigma_i(\lambda_l)\delta_{ij}.
    \end{align*}
    It follows that $\cM_l(v_i)=\sigma_i(\lambda_l)v_i$, as required.
\end{proof}

\section{The continuous version} \label{sec:cts}

We now come to the first part of our argument, which is to extend an estimate of Krachun and Petrov~\cite[Theorem~2]{KP24} on sums of linear transformations of compact sets to more than two variables. We will need to assume that the linear transformations are simultaneously diagonalisable. But, as we have seen in Lemma~\ref{lem:diag} above, this is exactly the situation we are concerned with.

Throughout this section, we will fix an identification $K_\RR\cong \RR^d$ and take $\mu$ to be the Lebesgue measure on $\RR^d$ and, hence, on $K_\RR$. Our main result may then be stated as follows.

\begin{thm} \label{thm:cts}
    Suppose $\cL_1,\ldots,\cL_k\in \Mat_d(\RR)$ are simultaneously diagonalisable over $\CC$ into the diagonal matrices $\cD_1,\ldots,\cD_k$, where $\cD_l=\diag(\lambda_{l1},\ldots,\lambda_{ld})$ with each $\lambda_{li}\in\CC$. Then, for any compact $A\subset \RR^d$,
    $$\mu(\cL_1 A+\cL_2 A+\dots+\cL_k A)\geq \paren{\prod_{i=1}^d \sum_{l=1}^k |\lambda_{li}|}\mu(A).$$
    Moreover, equality holds for some $A$ with $\mu(A)>0$.
\end{thm}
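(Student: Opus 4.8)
The plan is to reduce, by symmetrising $A$ within each common eigenspace of the $\cL_l$, to the case where $A$ is a ``downset'', for which the inequality becomes almost tautological.

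\medskip
\emph{Step 1: a normal form.} Using the simultaneous diagonalisability, I would split $\RR^d = W_1\oplus\cdots\oplus W_m$ into subspaces invariant under every $\cL_l$, with $\delta_j := \dim W_j \in \{1,2\}$ ($1$-dimensional blocks coming from real eigenvalues, $2$-dimensional ones from conjugate pairs of complex eigenvalues), on each of which every $\cL_l$ acts as multiplication by a scalar --- real when $\delta_j = 1$ and, after fixing a complex structure on $W_j$, complex when $\delta_j = 2$. A block-diagonal change of coordinates scales $\mu$ by the same positive factor on both sides of the desired inequality and leaves the $\lambda_{li}$ unchanged, so I may assume the $W_j$ are mutually orthogonal coordinate subspaces with the standard Euclidean structure and that $\cL_l$ acts on $W_j$ as multiplication by a scalar of modulus $\rho_{lj} = |\lambda_{li(j)}|$, where $i(j)$ is any index in block $j$. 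Since $\prod_{i=1}^d\sum_l|\lambda_{li}| = \prod_{j=1}^m\left(\sum_l\rho_{lj}\right)^{\delta_j}$, it suffices to prove $\mu(\cL_1 A+\cdots+\cL_k A) \ge \prod_j\left(\sum_l\rho_{lj}\right)^{\delta_j}\mu(A)$, and any block with $\sum_l\rho_{lj}=0$ may be discarded since the bound is then vacuous.

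\medskip
\emph{Step 2: symmetrisation along the eigenspaces.} For each $j$, let $S_j$ denote symmetrisation within $W_j$: every slice of a compact set in a translate of $W_j$ is replaced by the concentric Euclidean ball (a symmetric interval if $\delta_j=1$, a disc if $\delta_j=2$) of the same $\delta_j$-dimensional measure. Two properties are needed. Since $\cL_l$ scales $W_j$-slices by a factor of modulus $\rho_{lj}$ and carries concentric balls to concentric balls, $\cL_l S_j = S_j \cL_l$; together with the slice-wise Brunn--Minkowski inequality in $\delta_j$ variables, applied to $\cL_1 A,\ldots,\cL_k A$, this gives $\mu(\cL_1(S_j A)+\cdots+\cL_k(S_j A)) \le \mu(\cL_1 A+\cdots+\cL_k A)$, while $\mu(S_j A) = \mu(A)$ by Fubini. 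Moreover, because the $W_j$ are mutually orthogonal and the $\delta_j$-measure of a $W_j$-slice of $S_{j-1}\cdots S_1 A$ is non-increasing in $\snorm{w_i}$ for each $i<j$, applying $S_j$ does not destroy the symmetry already gained from $S_1,\ldots,S_{j-1}$. Hence one pass produces $A' := S_m\cdots S_1 A$ with $\mu(A') = \mu(A)$, $\mu(\cL_1 A'+\cdots+\cL_k A') \le \mu(\cL_1 A+\cdots+\cL_k A)$, and membership in $A'$ depending only on $(\snorm{w_1},\ldots,\snorm{w_m})$ and non-increasing in each coordinate; equivalently, writing $D(\beta) := \prod_j\{w_j\in W_j : \snorm{w_j}\le\beta_j\}$ for $\beta\in\RR_{\ge0}^m$, we have $A' = \bigcup_{\beta\in T} D(\beta)$ for some $T \subseteq \RR_{\ge0}^m$. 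I expect the bookkeeping needed to see that the symmetrisations in different eigenspaces do not interfere --- so that a single pass suffices --- to be the main obstacle; the remaining ingredients are soft.

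\medskip
\emph{Step 3: the downset case and sharpness.} For $A'$ of the above form, $\cL_l D(\beta) = D\big((\rho_{lj}\beta_j)_j\big)$, and a Minkowski sum of origin-anchored polydiscs is the polydisc whose radii are the blockwise sums, so
\[\cL_1 A' + \cdots + \cL_k A' = \bigcup_{\beta^{(1)},\ldots,\beta^{(k)}\in T} D\Big(\big(\textstyle\sum_l\rho_{lj}\beta^{(l)}_j\big)_j\Big).\]
Setting $c_j := \sum_l\rho_{lj} > 0$ and rescaling the $j$th block by $c_j^{-1}$ (dividing $\mu$ by $\prod_j c_j^{\delta_j}$), the identity $\sum_l\rho_{lj}/c_j = 1$ shows that taking $\beta^{(1)} = \cdots = \beta^{(k)} = \beta$ puts $D(\beta)$ into the rescaled sumset for every $\beta\in T$, so the rescaled sumset contains $A'$; hence $\mu(\cL_1 A'+\cdots+\cL_k A') \ge \prod_j c_j^{\delta_j}\mu(A')$, which is the claim after undoing Step 1. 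Finally, taking $A$ to be (the coordinate preimage of) a single polydisc $D(\beta)$ with every $\beta_j > 0$ turns each inequality above into an equality --- in particular the Brunn--Minkowski steps are tight on concentric balls --- which gives the ``moreover'' assertion with $\mu(A) > 0$.
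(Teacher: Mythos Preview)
Your proposal is correct and follows essentially the same route as the paper: decompose $\RR^d$ into real and complex eigenspaces, Steiner-symmetrise along each, and then use that the symmetrised set is rotationally invariant in every block to reduce to pure dilations (your polydisc picture in Step~3 is exactly the paper's observation that $\cL_l B = \cL_l' B$ and $\sum_l \cL_l' B \supseteq (\sum_l \cL_l')B$). One small point: the identity $\cL_l S_j = S_j \cL_l$ as you wrote it is slightly too strong when some $\cL_l$ is singular on the complementary subspace (distinct slices can collapse), but only the containment $S_j(\cL_l A)\supseteq \cL_l(S_j A)$ is needed, and that holds; similarly, the non-interference in Step~2 rests on rotational invariance in each already-symmetrised $W_i$ (not merely monotonicity in $\snorm{w_i}$), which the paper records as its Claim~3.1(6).
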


\begin{proof}
    Let $\Lambda=\set{(\lambda_{1i},\lambda_{2i},\ldots,\lambda_{ki})}_{i=1}^d$. Since complex conjugation preserves each $\cL_l$, it permutes the elements of $\Lambda$. Thus, we can split $\Lambda$ into two parts $\Lambda_1$ and $\Lambda_2$, where $\Lambda_1\subset \RR^k$ consists of those tuples fixed by conjugation and $\Lambda_2$ consists of conjugate pairs of tuples. Then we may decompose $\RR^d$ into the eigenspaces
    $$\RR^d=\bigoplus_{\lambda\in \Lambda_1} E_\lambda\oplus \bigoplus_{(\lambda,\overline{\lambda})\in \Lambda_2} E_{\lambda,\overline{\lambda}},$$
    where each $E_{\lambda}$ is 1-dimensional and each $E_{\lambda,\overline{\lambda}}$ is 2-dimensional. For $\lambda=(\lambda_1,\ldots,\lambda_k)\in \Lambda_1$, each $\cL_l$ acts on $E_{\lambda}$ by $\lambda_l$. For $(\lambda,\overline{\lambda})\in \Lambda_2$, each $\cL_l$ acts on $E_{\lambda,\overline{\lambda}}$ by $|\lambda_l|R_{\arg(\lambda_l)}$, where $R_{\theta}$ is the rotation map on $\RR^2$ by $\theta$.

    We prove the theorem in the following more general form. Suppose we have a decomposition
    $$\RR^d=\bigoplus_{j=1}^n E_j,$$
    where $\dim E_j=d_j$ and $\cL_l$ acts on $E_j$ by $r_{lj}P_{lj}$, where $r_{lj}\geq 0$ and $P_{lj}$ is an orthogonal matrix acting on $E_j$. In other words, for any vector $v\in \RR^d$, if we decompose it into $v=v_1+\cdots+v_n$ with $v_j\in E_j$ for all $1 \le j \le n$, then $\cL_l v=r_{l1}P_{l1}v_1+\cdots+r_{ln}P_{ln}v_n$. 
    We will show that
    $$\mu(\cL_1 A+\cL_2 A+\cdots+\cL_k A)\geq \paren{\prod_{j=1}^n \paren{\sum_{l=1}^k r_{lj}}^{d_j}}\mu(A).$$
    
    We perform Steiner symmetrisation, a continuous analogue of compression introduced by Steiner in his classical work on the isoperimetric problem,    
    along each of the eigenspaces $E_j$ as follows. Write $\RR^d=E_j\oplus E$, where $E$ is the direct sum of the remaining spaces. Let $\pi_1:\RR^d\to E_j$ and $\pi_2:\RR^d\to E$ be the projections onto $E_j$ and $E$, respectively. For a compact $A\subset \RR^d$ and $x\in E$, write $A_x:=\pi_1(\pi_2^{-1}(x))\subset E_j$ for the fibre of $A$ at $x$. Then $\mu(A)=\int_x \mu(A_x) d\mu(x)$. The \emph{Steiner symmetrisation} of $A$ along $E_j$ is the set $S_j(A)\subset \RR^d$ with the same support as $A$ on $E$ and such that, for each $x\in \pi_2(A)$, $S_j(A)_x$ is the closed ball centered at 0 with the same volume as $A_x$. 
    
    \begin{claim} \label{claim:steiner}
        The Steiner symmetrisation has the following properties:
        \begin{enumerate}
            \item $\mu(S_j(A))=\mu(A)$.
            \item $S_j(A)$ is invariant under any orthogonal transformation of $E_j$.
            \item $S_j(\cL_l A)\supseteq \cL_l (S_j(A))$ for all $l$.
            \item $S_j(A)$ is compact.
            \item If $B$ is compact, then $S_j(A+B)\supseteq S_j(A)+S_j(B)$.
            \item If $F\in \GL(E)$ and $F'\in \GL_d(\RR)$ is given by $I_{E_j}\oplus F$ and $F'(A)=A$, then $F'(S_j(A))=S_j(A)$. 
        \end{enumerate}
    \end{claim}
    \begin{proof}
        \begin{enumerate}
            \item This is true since $\mu(S_j(A)_x)=\mu(A_x)$ for all $x\in E$.
            \item This is true since $S_j(A)_x$ is a ball for all $x\in E$.
            \item Let $x\in \pi_2(A)$ and $B=S_j(A)_x$, a ball. Then $\cL_l(S_j(A))=\bigcup_{x\in \pi_2(A)} \cL_l|_{E_j}(B)\oplus \cL_l x$. Note that $\cL_l|_{E_j}(B)$ is also a ball of volume $\mu(\cL_l|_{E_j}(A_x))\leq \mu((\cL_lA)_{\cL_l x})$. 
            Thus, $\cL_l|_{E_j}(B)\oplus \cL_l x\subseteq S_j(\cL_l A)$ and the result follows.
            \item Since $A$ is bounded, so is $S_j(A)$. To show that $S_j(A)$ is closed, it is sufficient to show that for any sequence $x_1,x_2,\ldots\in E$ converging to $x\in E$, we have $\mu(A_x)\geq \limsup_{n} \mu(A_{x_n})$. 
            Since $A$ is closed, $A_x\supseteq \limsup_i A_{x_i}$, so it suffices to show that $\mu(\limsup_i A_{x_i})\geq \limsup_i \mu(A_{x_i})$. But this is true since the $A_{x_i}$ are uniformly bounded.

            \item For $x\in \pi_2(A)$ and $y\in \pi_2(B)$, let $r,r'$ be the radii of the balls $S_j(A)_x$ and $S_j(B)_y$, with volumes $V,V'$. Then $(S_j(A)+S_j(B))_{x+y}$ is a ball of radius $r+r'$, maximised over all $x,y$ with the same fixed sum. But, by the Brunn--Minkowski inequality,
            \begin{align*}
                \mu(S_j(A+B)_{x+y}) &\geq \mu(S_j(A)_x + S_j(B)_y)\\
                &\geq (\mu(S_j(A)_x)^{1/d_j}+\mu(S_j(B)_y)^{1/d_j})^{d_j}\\
                &= (V^{1/d_j}+V'^{1/d_j})^{d_j}\\
                &= \mu((S_j(A)+S_j(B))_{x+y}).
            \end{align*}
            Thus, $S_j(A+B)_{x+y}\supseteq (S_j(A)+S_j(B))_{x+y}$ and the result follows. 

            \item Let $x\in E$. Since $F'(A)=A$, we have $A_{F(x)}=A_x$. Therefore, $S_j(A)_{F(x)}=S_j(A)_x$, so we have $F'(S_j(A))=S_j(A)$. \qedhere
        \end{enumerate}
    \end{proof}

    Perform Steiner symmetrisation on $A$ successively along $E_1,\ldots,E_n$ to obtain, by Claim~\ref{claim:steiner}(4), the compact set $B=S_1(S_2(\cdots S_n(A)\cdots))$. By Claim~\ref{claim:steiner}(1), (5) and (3),
    \begin{align*}
        \mu(\cL_1 A+\cdots +\cL_k A) &= \mu(S_j(\cL_1 A+\cdots +\cL_k A))\\
        &\geq \mu(S_j(\cL_1 A)+\cdots +S_j(\cL_k A))\\
        &\geq \mu(\cL_1(S_j(A))+\cdots +\cL_k(S_j(A))).
    \end{align*}
    Iterating, we see that $\mu(\cL_1 A+\cdots +\cL_k A)\geq \mu(\cL_1 B+\cdots +\cL_k B)$, where we also have $\mu(B)=\mu(A)$. 
    
    Let $\cL_l'$ be the linear map that just scales by $r_{lj}$ on each $E_j$, i.e., $\cL_l'(v_1+\cdots+v_n)=r_{l1}v_1+\cdots+r_{ln}v_n$ for any $v_j\in E_j$. By repeated applications of Claim~\ref{claim:steiner}(2) and (6), we may check that $B$ is rotationally invariant on each $E_j$, so we have $\cL_l' B=\cL_l B$. Thus,
    \begin{align*}
        \mu(\cL_1 B+\cdots +\cL_k B) &= \mu(\cL_1' B+\cdots +\cL_k' B)\\
        &\geq \mu((\cL_1'+\cdots+\cL_k')(B))\\
        &= |\det(\cL_1'+\cdots+\cL_k')|\mu(B)\\
        &= \paren{\prod_{j=1}^l \paren{\sum_{l=1}^k r_{lj}}^{d_j}}\mu(B).
    \end{align*}

    Finally, to see that equality may hold, observe that we can take $A$ to be the product of the unit balls in each $E_j$.
\end{proof}

In particular, this yields the smallest possible value of $\mu(A+\lambda_1\cdot A+\dots+\lambda_k\cdot A)$ in terms of $\mu(A)$. To see this, let $\cM_l\in \Mat_d(\QQ)$ be the matrix representing multiplication by $\lambda_l$ for $l=0,\ldots,k$, as defined in Section~\ref{sec:mapZd}. Then, by Lemma~\ref{lem:diag}, the $\cM_l$ are simultaneously diagonalisable into the diagonal matrices $\cD_l$ with entries $(\sigma_1(\lambda_l),\ldots,\sigma_d(\lambda_l))$, where $\sigma_1,\ldots,\sigma_d$ are all the complex embeddings of $K$. By Theorem~\ref{thm:cts}, we therefore have
\begin{align*}
    \mu(A+\lambda_1\cdot A+\cdots+\lambda_k\cdot A) &= \mu(\cM_0 A+\cdots+\cM_k A)\\
    &\geq \paren{\prod_{i=1}^d (1+|\sigma_i(\lambda_1)|+\cdots+|\sigma_i(\lambda_k)|)}\mu(A).
\end{align*}
Comparing this to our main result, Theorem~\ref{thm:main}, we see that the discrete version differs from the continuous one only in the factor $N_{K/\QQ}(\frakD_{\lambda_1,\ldots,\lambda_k;K})$, which is a measure of the non-integrality of $\lambda_1,\ldots,\lambda_k$. We say more below.

\subsection{Lower bound construction} \label{sec:low}

In this short subsection, we give a lower bound construction for the discrete case, showing that the constant $H(\lambda_1,\ldots,\lambda_k)$ in Theorem~\ref{thm:main} is best possible. In brief, the construction is a discretised version of the equality case in Theorem~\ref{thm:cts}.

\begin{prop}
    Let $\lambda_1,\ldots,\lambda_k\in K=\QQ[\lambda_1,\ldots,\lambda_k]$ be algebraic numbers. Then there exist arbitrarily large $A\subset \CC$ such that
    \[|A+\lambda_1\cdot A+\cdots+\lambda_k\cdot A|\leq H(\lambda_1,\ldots,\lambda_k)|A|+O(|A|^{\frac{d-1}{d}}),\]
    where $d=\deg(K/\QQ)$.
\end{prop}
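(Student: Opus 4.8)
The plan is to discretise the equality case of Theorem~\ref{thm:cts}. Fix the identification $K_\RR \cong \RR^d$ from Section~\ref{sec:cts}, under which multiplication by $\lambda_l$ becomes the map $\cM_l$ and, by Lemma~\ref{lem:diag}, $K_\RR$ splits into the eigenspaces appearing in the proof of Theorem~\ref{thm:cts}: a one-dimensional $E_\sigma$ for each real embedding $\sigma$ and a two-dimensional $E_{\sigma,\bar\sigma}$ for each conjugate pair, with $\cM_l$ acting on the eigenspace attached to $\sigma_i$ as scaling by $|\sigma_i(\lambda_l)|$ composed with a rotation. Let $\Omega\subset K_\RR$ be the product of the closed unit balls in these eigenspaces — the extremiser exhibited at the end of the proof of Theorem~\ref{thm:cts} for the maps $\cM_0,\dots,\cM_k$ — and let $\frakD=\frakD_{\lambda_1,\dots,\lambda_k;K}$ be the denominator ideal, so that under the Minkowski embedding $\iota\colon K\hookrightarrow K_\RR$ the lattices $\Lambda_\frakD:=\iota(\frakD)$ and $\Lambda_{\cO_K}:=\iota(\cO_K)$ are full-rank with $\mathrm{covol}(\Lambda_\frakD)=N_{K/\QQ}(\frakD)\,\mathrm{covol}(\Lambda_{\cO_K})$. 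For a large parameter $t>0$ I would take
\[A:=\{a\in\frakD : \iota(a)\in t\Omega\},\]
a finite subset of $K\subset\CC$, and note that $|A|$ and the sumset size are unchanged whether computed in $\CC$, in $K$, or (via $\iota$) in $K_\RR$.

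The argument then rests on two lattice-point counts together with one volume computation. First, since $\partial\Omega$ is a bounded set of codimension one with rectifiable boundary (a product of spheres and endpoints), a standard Gauss-type estimate gives $|A| = t^d\mu(\Omega)/\mathrm{covol}(\Lambda_\frakD) + O(t^{d-1}) = t^d\mu(\Omega)/\bigl(N_{K/\QQ}(\frakD)\,\mathrm{covol}(\Lambda_{\cO_K})\bigr) + O(t^{d-1})$, so $|A|=\Theta(t^d)$. Second, because $\lambda_l\frakD\subseteq\cO_K$ for every $l=0,1,\dots,k$ (using $\lambda_0=1$), the set $A+\lambda_1\cdot A+\cdots+\lambda_k\cdot A$ lies in $\Lambda_{\cO_K}$ and in the bounded convex region $R:=\cM_0(t\Omega)+\cdots+\cM_k(t\Omega)=t\,(\cM_0\Omega+\cdots+\cM_k\Omega)$, whence
\[|A+\lambda_1\cdot A+\cdots+\lambda_k\cdot A|\le|\Lambda_{\cO_K}\cap R|=\frac{t^d\,\mu(\cM_0\Omega+\cdots+\cM_k\Omega)}{\mathrm{covol}(\Lambda_{\cO_K})}+O(t^{d-1}).\]
Finally, since $\Omega$ is a product of origin-centred balls along the eigenspaces and each $\cM_l$ acts on the eigenspace attached to $\sigma_i$ as a rotation-scaling by $|\sigma_i(\lambda_l)|$, the Minkowski sum $\cM_0\Omega+\cdots+\cM_k\Omega$ factors over eigenspaces into sums of balls, and a sum of origin-centred balls of radii $r_0,\dots,r_k$ is the ball of radius $r_0+\cdots+r_k$; this is exactly the computation in the equality case of Theorem~\ref{thm:cts} and yields
\[\mu(\cM_0\Omega+\cdots+\cM_k\Omega)=\Bigl(\prod_{i=1}^d\bigl(1+|\sigma_i(\lambda_1)|+\cdots+|\sigma_i(\lambda_k)|\bigr)\Bigr)\mu(\Omega).\]

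Substituting this into the second estimate and using the first to replace $t^d\mu(\Omega)/\mathrm{covol}(\Lambda_{\cO_K})$ by $N_{K/\QQ}(\frakD)\,|A|+O(t^{d-1})$ gives
\[|A+\lambda_1\cdot A+\cdots+\lambda_k\cdot A|\le N_{K/\QQ}(\frakD)\Bigl(\prod_{i=1}^d\bigl(1+|\sigma_i(\lambda_1)|+\cdots+|\sigma_i(\lambda_k)|\bigr)\Bigr)|A|+O(t^{d-1})=H(\lambda_1,\dots,\lambda_k)\,|A|+O(t^{d-1}),\]
and as $|A|=\Theta(t^d)$ the error is $O(|A|^{(d-1)/d})$; letting $t\to\infty$ produces arbitrarily large such $A$. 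I expect no genuine obstacle here: the only point needing care is the uniformity of the $O(t^{d-1})$ lattice-point error as $t$ varies, which is routine since $\Omega$ and $R$ are fixed bounded sets with rectifiable boundary (indeed $R$ is convex), so a single implied constant depending only on $K$ and the $\lambda_l$ works throughout.
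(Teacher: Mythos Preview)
Your proposal is correct and follows essentially the same approach as the paper: both take the extremiser from Theorem~\ref{thm:cts}, intersect a dilate with $\frakD$, use that the sum of dilates lands in $\cO_K$ inside the dilated continuous sumset, and compare lattice-point counts for $\frakD$ and $\cO_K$ via $N_{K/\QQ}(\frakD)$. Your write-up is slightly more explicit about the shape of the extremiser and the volume computation, but the skeleton is identical.
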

\begin{proof}
    Let $\sigma_1,\ldots,\sigma_d:K\to \CC$ be the complex embeddings of $K$ and set $\frakD=\frakD_{\lambda_1,\ldots,\lambda_k;K}$. Viewing multiplication by $\lambda_l$ as a $\QQ$-linear map $\cM_l:K\to K$ for each $l$, 
    take $A'\subset K_\RR$ satisfying the equality case in Theorem~\ref{thm:cts} with $\mu(A')=1$. Then $\mu(A'+\lambda_1\cdot A'+\cdots+\lambda_k\cdot A')= \paren{\prod_{i=1}^d (1+|\sigma_i(\lambda_1)|+\cdots+|\sigma_i(\lambda_k)|)}\mu(A')$.

    Let $n$ be an arbitrarily large positive integer and let $A=nA'\cap \frakD$, so that 
    $$|A| = \mu(nA')/\Vol(K_\RR/\frakD) + O(n^{d - 1}) = n^d/\Vol(K_\RR/\frakD) + O(n^{d-1}).$$ 
    On the other hand, for each $l$, $\lambda_l\cdot A\subset \lambda_l\cdot \frakD\subseteq \cO_K$, so we have
    \[A+\lambda_1\cdot A+\cdots+\lambda_k\cdot A\subseteq n(A'+\lambda_1\cdot A'+\cdots+\lambda_k\cdot A')\cap \cO_K.\]
    Therefore,
    \begin{align*}
        |A+\lambda_1\cdot A+\cdots+\lambda_k\cdot A| &\leq \mu(n(A'+\lambda_1\cdot A'+\cdots+\lambda_k\cdot A'))/\Vol(K_\RR/\cO_K)+O(n^{d-1})\\
        &=n^d\paren{\prod_{i=1}^d (1+|\sigma_i(\lambda_1)|+\cdots+|\sigma_i(\lambda_k)|)}/\Vol(K_\RR/\cO_K)+O(n^{d-1}).
    \end{align*}
    Since $\Vol(K_\RR/\frakD)/\Vol(K_\RR/\cO_K)=[\cO_K:\frakD]=N_{K/\QQ}(\frakD)$, we obtain that
    \begin{align*}
        |A+\lambda_1\cdot A+\cdots+\lambda_k\cdot A| &\leq N_{K/\QQ}(\frakD)\paren{\prod_{i=1}^d (1+|\sigma_i(\lambda_1)|+\cdots+|\sigma_i(\lambda_k)|)}|A|+O(n^{d-1})\\
        &= H(\lambda_1,\ldots,\lambda_k)|A|+O(|A|^{\frac{d-1}{d}}). \qedhere
    \end{align*}
\end{proof}

\section{Algebraic additive geometry} \label{sec:algaddgeom}

In this section, we extend several results from additive geometry 
to  rings of integers, culminating in the version of Freiman's theorem for sums of dilates mentioned in the introduction. Along the way, we prove several results that may be of independent interest, including versions of Minkowski's second theorem and John's theorem for lattices over rings of integers.

\subsection{A norm on $\cO_K$ and $K_{\RR}$}

In this subsection, we define a norm 
on $\cO_K$ and $K_{\RR}$ and note some of its basic properties (this is not to be confused with the field norm $N_{K/\QQ}(\cdot)$ on $K$, which we also use). Recall from Section~\ref{sec:mapZd} that we have an isomorphism $\Phi:\cO_K\to \ZZ^d$ given by sending a basis $e_1,\ldots,e_d$ of $\cO_K$ to the standard basis. By pulling back $\Phi$, the $\infty$-norm on $\ZZ^d$ defines a norm $\norm{\cdot}$ on $\cO_K$, 
namely, for $l_1,\ldots,l_d\in \ZZ$,
$$\norm{l_1e_1+\cdots+l_de_d}:=\max_i |l_i|.$$
The open ball $B(L)$ of radius $L>0$ under this norm is then given by
$$B(L):=\setcond{l_1e_1+\cdots+l_de_d\in \cO_K}{|l_i|< L \mbox{ for all } i}.$$
$\norm{\cdot}$ extends linearly and continuously to a norm on $K_\RR$, which we also denote by $\norm{\cdot}$. The open ball $B_\RR(R)$ of radius $R>0$ in $K_\RR$ is then
$$B_\RR(R):=\setcond{e_1\otimes r_1+\cdots+e_d\otimes r_d\in K_\RR}{|r_i|<R \mbox{ for all } i}.$$

The following lemma may be seen as defining some constants associated to the norm $\norm{\cdot}$.

\begin{lem} \label{lem:consts}
    There exist constants $C_1,C_2,C_3\in \NN$ such that the following hold:
    \begin{enumerate}
        \item For all $x,y\in K_\RR$, $\norm{xy}\leq C_1\norm{x}\norm{y}$.
        \item For all $l=0,\ldots,k$, $C_2\lambda_l\in \cO_K$.
        \item For all $l=0,\ldots,k$ and $x\in \cO_K$, $\lambda_lx\in \frac{1}{C_2}\cdot B(C_3\norm{x})$.
    \end{enumerate}
\end{lem}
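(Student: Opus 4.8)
The plan is to establish the three parts in the order stated, since part~(1) is the workhorse and parts~(2) and~(3) follow from it together with one standard fact from algebraic number theory. The only substantive content is the observation that multiplication on the finite-dimensional $\RR$-algebra $K_\RR$ is a bounded bilinear map; everything else is bookkeeping, so I do not anticipate a real obstacle.

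For part~(1), I would use that $\cO_K$ is a ring: writing $e_ie_j=\sum_{m=1}^d c_{ijm}e_m$ with structure constants $c_{ijm}\in\ZZ$ and extending multiplication bilinearly and continuously to $K_\RR=\cO_K\otimes_\ZZ\RR$, if $x=\sum_i r_ie_i$ and $y=\sum_j s_je_j$ with $r_i,s_j\in\RR$ then
\[\norm{xy}=\max_m\abs{\sum_{i,j}c_{ijm}r_is_j}\leq\paren{\max_m\sum_{i,j}\abs{c_{ijm}}}\norm{x}\norm{y},\]
so one takes $C_1:=\max_m\sum_{i,j}\abs{c_{ijm}}$, which is a positive integer (it is at least $1$ since $e_1=1$ forces $c_{111}=1$).

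For part~(2), the relevant classical fact is that every element of $K$ has a positive integer multiple in $\cO_K$: if $\alpha\in K$ has monic minimal polynomial $x^m+c_{m-1}x^{m-1}+\cdots+c_0$ over $\QQ$ and $N$ is a common denominator of the $c_i$, then multiplying $\alpha^m=-\sum_{i<m}c_i\alpha^i$ by $N^m$ exhibits $N\alpha$ as a root of the monic integer polynomial $x^m+\sum_{i<m}N^{m-i}c_i\,x^i$, hence $N\alpha\in\cO_K$. Applying this to $\lambda_0=1,\lambda_1,\ldots,\lambda_k$ and letting $C_2$ be a common multiple of the resulting integers gives the claim; alternatively one may simply take $C_2$ to be any positive integer in the denominator ideal $\frakD$, which is nonzero by definition. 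Finally, for part~(3), if $x\in\cO_K$ then $C_2\lambda_l\in\cO_K$ by~(2), and by~(1) we get $\norm{C_2\lambda_l x}\leq C_1\norm{C_2\lambda_l}\norm{x}$; choosing $C_3:=1+C_1\max_{0\leq l\leq k}\norm{C_2\lambda_l}\in\NN$ and using $\norm{x}\geq 1$ for nonzero $x\in\cO_K$ (the case $x=0$ being trivial), this yields $\norm{C_2\lambda_l x}<C_3\norm{x}$, i.e. $C_2\lambda_l x\in B(C_3\norm{x})$, which is precisely $\lambda_l x\in\frac{1}{C_2}\cdot B(C_3\norm{x})$.
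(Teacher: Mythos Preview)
Your proof is correct and follows essentially the same approach as the paper's. The only cosmetic differences are that for part~(1) you work with the structure constants $c_{ijm}$ directly (yielding $C_1=\max_m\sum_{i,j}|c_{ijm}|$) whereas the paper bounds via $M=\max_{i,j}\norm{e_ie_j}$ to get $C_1=d^2M$, and for part~(2) you spell out the minimal-polynomial clearing-of-denominators argument while the paper simply invokes that $\cO_K$ has full rank; parts~(3) are identical in substance.
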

\begin{proof}
    \begin{enumerate}
        \item Let $M>0$ be the maximum of $\norm{e_ie_j}$ over all pairs $i,j\in [d]$. Now, for any $x=e_1\otimes x_1+\cdots+e_d\otimes x_d$ and $y=e_1\otimes y_1+\cdots+e_d\otimes y_d$ with $x_i,y_i\in \RR$, we have $|x_i|\leq \norm{x}$ and $|y_i|\leq \norm{y}$. Therefore,
        \[
            \norm{xy} = \|\sum_{i,j} e_ie_j\otimes x_iy_i\|
            \leq \sum_{i,j}\norm{e_ie_j\otimes x_iy_i}
            = \sum_{i,j}\norm{e_ie_j}|x_iy_i|
            \leq d^2M\norm{x}\norm{y},
        \]
        so we may pick $C_1=d^2M$.
        
        \item Since $\cO_K$ is of full rank, for any $\lambda\in K$, there is some integer $C>0$ such that $C\lambda\in \cO_K$. Thus, we may pick $C_2$ to be the lowest common multiple of the $C$'s corresponding to each $\lambda_l$.
        
        \item Pick an integer $C_3$ such that $C_3>C_1C_2\max_l \norm{\lambda_l}$. Then we have $C_2\lambda_lx\in \cO_K$ and $\norm{C_2\lambda_lx}\leq C_1C_2\norm{\lambda_l}\norm{x}<C_3\norm{x}$. Therefore, $\lambda_lx\in \frac{1}{C_2}\cdot B(C_3\norm{x})$. \qedhere
    \end{enumerate}
\end{proof}

Throughout the rest of this section, we will use the constants $C_1,C_2,C_3$ as given by this lemma.

\subsection{An algebraic Minkowski's second theorem}

In this subsection, we prove a variant of Minkowski's second theorem for lattices over rings of integers. Before we state this result, let us recall the original theorem of Minkowski. We first need a definition, noting that here a \emph{convex body} is assumed to be convex, open, non-empty and bounded.

\begin{defn}
    Let $\Gamma\subset \RR^n$ be a lattice of rank $m$ and $B\subset \RR^n$ a convex body containing 0. We define the \emph{successive minima} $\ell_j=\ell_j(B,\Gamma)$ of $B$ with respect to $\Gamma$ by
    $$\ell_j:=\inf\setcond{\ell>0}{\ell\cdot B \text{ contains $j$ linearly independent elements of $\Gamma$}}$$
    for each $1\leq j\leq m$.  
    Note that $0<\ell_1\leq\cdots\leq \ell_m<\infty$.
\end{defn}

Minkowski's second theorem (see, for example,~\cite[Theorem 3.30]{TV06}) is then as follows.

\begin{thm}[Minkowski's second theorem] \label{thm:minkowski2}
    Let $\Gamma\subset \RR^n$ be a lattice of full rank and let $B$ be a centrally symmetric convex body in $\RR^n$ with successive minima $0<\ell_1\leq\cdots\leq \ell_n$. Then there exist $n$ linearly independent vectors $v_1,\ldots,v_n\in \Gamma$ with the following properties:
    \begin{itemize}
        \item for each $1\leq j\leq n$, $v_j$ lies in the boundary of $\ell_j\cdot B$, but $\ell_j\cdot B$ itself does not contain any vectors in $\Gamma$ outside the span of $v_1,\ldots,v_{j-1}$;
        \item the octahedron with vertices $\pm v_j$ for $1 \le j \le n$ contains no elements of $\Gamma$ in its interior other than the origin;
        \item 
        one has
        $$\frac{2^n[\Gamma:\ang{v_1,\ldots,v_n}_\ZZ]}{n!}\leq \frac{\ell_1\cdots\ell_n \Vol(B)}{\Vol(\RR^n/\Gamma)}\leq 2^n.$$
    \end{itemize}
\end{thm}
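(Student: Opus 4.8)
The plan is to construct the vectors $v_1,\ldots,v_n$ greedily from the successive minima, read off the two structural bullets from the definition of $\ell_j$, and then deduce the two-sided volume estimate; the lower inequality there is an elementary cross-polytope comparison, while the upper inequality is the classical hard core of the theorem. First I would pass to the Minkowski gauge $\norm{x}_B:=\inf\set{t>0:x\in tB}$, a genuine norm on $\RR^n$ since $B$ is a bounded, open, centrally symmetric convex body, so that $tB=\set{x:\norm{x}_B<t}$ with boundary $\set{x:\norm{x}_B=t}$. Take $v_1\in\Gamma\setminus\set{0}$ of minimal gauge, and for $j\geq 2$ take $v_j\in\Gamma$ of minimal gauge among the lattice vectors not in $L_{j-1}:=\ang{v_1,\ldots,v_{j-1}}_\RR$ (minima exist since $\Gamma$ is discrete); this yields $n$ independent vectors spanning $\RR^n$. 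Unwinding the definition shows $\norm{v_j}_B=\ell_j$: for $\ell<\norm{v_j}_B$, minimality of $v_j$ forces every lattice vector of gauge $<\ell$ into $L_{j-1}$, so $\ell B$ cannot hold $j$ independent lattice vectors, whence $\ell_j\geq\norm{v_j}_B$; while for $\ell>\norm{v_j}_B$ the vectors $v_1,\ldots,v_j$ (of gauges at most $\norm{v_j}_B<\ell$) all lie in $\ell B$, whence $\ell_j\leq\norm{v_j}_B$. So $v_j$ lies on the boundary of $\ell_j B$, and any lattice vector in the open set $\ell_j B$ has gauge $<\ell_j$ and hence lies in $L_{j-1}$; that is the first bullet.

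For the second bullet, I would argue by contradiction: suppose $0\neq w=\sum_{j=1}^n t_jv_j\in\Gamma$ lies in the interior of $\conv\set{\pm v_1,\ldots,\pm v_n}$, i.e.\ $s:=\sum_j\sabs{t_j}<1$, and let $m$ be the largest index with $t_m\neq 0$. Then $w/s$ is a convex combination of the points $\pm v_j$ with $j\leq m$, each lying in $\ell_j\overline B\subseteq\ell_m\overline B$ (as $0\in B$ and $B$ is convex), so $w/s\in\ell_m\overline B$; scaling by $s\in(0,1)$ and using that $0$ is interior to the convex set $\ell_m B$ gives $w\in\ell_m B$. But $w\notin L_{m-1}$ since $t_m\neq 0$ and the $v_j$ are independent, contradicting the first bullet at index $m$.

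For the third bullet, using $[\Gamma:\ang{v_1,\ldots,v_n}_\ZZ]\cdot\Vol(\RR^n/\Gamma)=\sabs{\det(v_1,\ldots,v_n)}$, the claimed chain is equivalent to
\[
\frac{2^n}{n!}\,\sabs{\det(v_1,\ldots,v_n)}\ \leq\ \ell_1\cdots\ell_n\,\Vol(B)\ \leq\ 2^n\,\Vol(\RR^n/\Gamma).
\]
The left inequality I would get directly: writing $b_j:=v_j/\ell_j\in\partial B\subseteq\overline B$, the octahedron $\conv\set{\pm v_j}$ has volume $\ell_1\cdots\ell_n\cdot\frac{2^n}{n!}\sabs{\det(b_1,\ldots,b_n)}=\ell_1\cdots\ell_n\,\Vol(\conv\set{\pm b_j})$, and $\conv\set{\pm b_j}\subseteq\overline B$ since $\overline B$ is convex and centrally symmetric, so $\Vol(\conv\set{\pm b_j})\leq\Vol(\overline B)=\Vol(B)$.

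The right inequality $\ell_1\cdots\ell_n\Vol(B)\leq 2^n\Vol(\RR^n/\Gamma)$ is the classical and most delicate part, and I expect it to be the main obstacle. Applying a linear automorphism to both $B$ and $\Gamma$ — which changes neither the successive minima (linear maps preserve independence) nor the ratio $\ell_1\cdots\ell_n\Vol(B)/\Vol(\RR^n/\Gamma)$ ($\Vol$ and covolume scale by the same factor) — I may assume $v_j=\ell_j e_j$, so $\Gamma\supseteq\ang{\ell_1e_1,\ldots,\ell_ne_n}_\ZZ$ and $\pm e_j\in\partial B$. I would then run Minkowski's packing argument along the coordinate flag $0=L_0\subset L_1\subset\cdots\subset L_n=\RR^n$: the minimality property from the first bullet, that $\ell_jB$ meets $\Gamma$ only inside $L_{j-1}$, is exactly what forces suitably flag-compressed half-copies of $B$ to have pairwise disjoint $\Gamma$-translates, and comparing their total volume with $\Vol(\RR^n/\Gamma)$ yields the bound. (Alternatively one can induct on $n$: the successive minima of $(B\cap L_{n-1},\Gamma\cap L_{n-1})$ are again $\ell_1,\ldots,\ell_{n-1}$, so one applies the inductive bound inside $L_{n-1}$ and then analyses how $B$ meets the cosets of $\Gamma\cap L_{n-1}$ in the last direction, with base case $n=1$ the identity $\ell_1\Vol(B)=2\Vol(\RR/\Gamma)$.) This uses Minkowski's first convex body theorem, which is standard; the real work is in making the flag-compatible packing precise, including the treatment of ties among the $\ell_j$. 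Combining the two displayed inequalities with the index identity above then gives the stated bound.
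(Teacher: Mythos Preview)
The paper does not actually prove this theorem; it is quoted verbatim from \cite[Theorem~3.30]{TV06} as background. The closest thing to a ``paper's own proof'' is the proof of the $\cO_K$-analogue, Lemma~\ref{lem:algminkowski2}, which the authors say is ``essentially identical'' to the Tao--Vu argument, so that is the natural benchmark.

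Your greedy construction of $v_1,\ldots,v_n$, the verification that $\norm{v_j}_B=\ell_j$, the contradiction argument for the octahedron, and the cross-polytope comparison for the lower volume bound are all correct and match exactly what the paper does in the algebraic version.

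The difference is in the upper bound. You gesture at ``Minkowski's packing argument along the coordinate flag'' and at an induction on $n$, and honestly flag this as the unfinished part. The paper (again via Lemma~\ref{lem:algminkowski2}, following Tao--Vu) makes this precise through the \emph{Squeezing Lemma}: starting from $A_0=\tfrac{\ell_n}{2}B$, one iteratively squeezes along the flag $V_j=\operatorname{span}(v_1,\ldots,v_j)$ with ratios $\ell_j/\ell_{j+1}$ to obtain an open $A_{n-1}\subseteq A_0$ of volume $(\ell_1\cdots\ell_n/2^n)\Vol(B)$ such that $(A_{n-1}-A_{n-1})\cap V_j\subseteq \ell_jB$ for every $j$; the first bullet then forces $(A_{n-1}-A_{n-1})\cap\Gamma=\{0\}$, and Blichfeldt's principle gives $\Vol(A_{n-1})\leq\Vol(\RR^n/\Gamma)$. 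This is exactly the ``flag-compatible packing'' you allude to, but the squeezing lemma is the concrete device that handles all cases uniformly, including ties $\ell_j=\ell_{j+1}$, which you correctly identify as a sticking point. Your inductive alternative is a genuine second route, but it is not the one the paper (or Tao--Vu) takes.
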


To state our variant of this theorem, we need to first clarify what we mean by a lattice over a ring of integers. 

\begin{defn}
    An \emph{$\cO_K$-lattice} is a lattice $\Gamma$ in $K^n\cong \QQ^{dn}$ that is closed under multiplication by $\cO_K$. That is, for any $v\in \Gamma$ and $a\in \cO_K$, $av\in \Gamma$. Equivalently, $\Gamma$ is a discrete $\cO_K$-submodule of $K^n$. Observe that $\QQ\cdot \Gamma=K\cdot \Gamma$ is a $K$-subspace of $K^n$. The \emph{$\cO_K$-rank} of $\Gamma$ is the dimension $m$ of this subspace. Note that, when viewed as an ordinary lattice, the rank of $\Gamma$ is $md$. 
\end{defn}

For the next definition, we recall, from Section~\ref{sec:mapZd}, that we view $\cO_K$ as having a fixed $\ZZ$-basis $e_1,\ldots,e_d$.

\begin{defn}
    For a real number $r\geq 1$, a subset $B\subseteq K_\RR^n$ is said to be \emph{$r$-thick} if $e_i\cdot B\subseteq r\cdot B$ for all $i\in [d]$.
\end{defn}
For example, by Lemma~\ref{lem:consts}, $\norm{e_i x}\leq C_1\norm{e_i}\norm{x}=C_1\norm{x}$ for all $x\in K_\RR$, so that $B_\RR(L)$ is $C_1$-thick for any $L>0$.

We now redefine successive minima, but with respect to $\cO_K$-lattices.

\begin{defn}
    Let $\Gamma$ be an $\cO_K$-lattice of $\cO_K$-rank $m$ and $B$ a convex body in $K_\RR^n$ containing 0. We define the \emph{successive minima} $\ell_j=\ell_j(B,\Gamma)$ of $B$ with respect to $\Gamma$ by
    $$\ell_j:=\inf\setcond{\ell>0}{\ell\cdot B\text{ contains $j$ $K$-linearly independent elements of $\Gamma$}}$$
    for each $1\leq j\leq m$. 
    Note that we again have $0<\ell_1\leq\cdots\leq \ell_m<\infty$, since $\Gamma$ has $\cO_K$-rank $m$ and so contains $m$ $K$-linearly independent elements of $K^n$.
\end{defn}

We may now state and prove our version of Minkowski's second theorem for $\cO_K$-lattices.

\begin{lem} \label{lem:algminkowski2}
    Let $r\geq 1$ be a real number, let $\Gamma\subset K^n$ be an $\cO_K$-lattice of full rank and let $B$ be an $r$-thick centrally symmetric convex body in $K_\RR^n$ with successive minima $0<\ell_1\leq\cdots\leq \ell_n$. Then there exist $n$ $K$-linearly independent vectors $v_1,\ldots,v_n\in \Gamma$ with the following properties:
    \begin{itemize}
        \item for each $1\leq j\leq n$, $v_j$ lies in the boundary of $\ell_j\cdot B$, but $\ell_j\cdot B$ does not contain any vectors in $\Gamma$ outside the $K$-span of $v_1,\ldots,v_{j-1}$;
        \item the octahedron with vertices $\pm \frac{1}{r}e_iv_j$ for $i\in [d],j\in [n]$ contains no elements of $\Gamma$ in its interior other than the origin;
        \item if $\Gamma'$ is the $\cO_K$-lattice generated by $v_1,\ldots,v_n$, then
        \begin{equation}
            \frac{(2/r)^{nd}[\Gamma:\Gamma']}{(nd)!}\leq \frac{(\ell_1\cdots\ell_n)^d \Vol(B)}{\Vol(K_\RR^n/\Gamma)}\leq 2^{nd}. \label{eqn:algminkowskiineq}
        \end{equation}
    \end{itemize}
\end{lem}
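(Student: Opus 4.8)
The plan is to deduce the $\cO_K$-version of Minkowski's second theorem from the classical version (Theorem~\ref{thm:minkowski2}) applied to $\Gamma$ viewed as an ordinary lattice of rank $nd$ in $\RR^{nd}\cong K_\RR^n$, while using the $r$-thickness hypothesis to reconcile the two notions of ``linearly independent'' (over $K$ versus over $\QQ$). First I would observe that the $\cO_K$-successive minima $\ell_1\le\cdots\le\ell_n$ and the ordinary successive minima $m_1\le\cdots\le m_{nd}$ of $B$ with respect to $\Gamma$ are tightly related: since $K$-linear independence is stronger than $\QQ$-linear independence, one gets $m_j \le \ell_{\lceil j/d\rceil}$ for all $j$, giving $\prod_{j=1}^{nd} m_j \le (\ell_1\cdots\ell_n)^d$. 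For the reverse direction, the point of $r$-thickness is that if $w$ lies in the boundary of $\ell\cdot B$ then each $e_iw$ lies in $r\ell\cdot B$, so a single $K$-independent vector in $\ell_j\cdot B$ produces $d$ honestly $\QQ$-independent vectors in $r\ell_j\cdot B$; chaining this gives $m_j \le r\,\ell_{\lceil j/d\rceil}$ more robustly and, crucially for the octahedron claim, control in the other direction.

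Next I would run the classical Minkowski argument to produce $\QQ$-linearly independent $u_1,\ldots,u_{nd}\in\Gamma$ with $u_j$ on the boundary of $m_j\cdot B$ and satisfying the classical octahedron and volume bounds. The task is then to extract from these a $K$-independent subfamily $v_1,\ldots,v_n$ with the stated boundary property. I would do this greedily: let $v_1$ be the first $u_j$ (equivalently, a minimal-$\ell$ lattice vector), which sits on the boundary of $\ell_1\cdot B$ by definition of $\ell_1$; having chosen $v_1,\ldots,v_{t-1}$, let $v_t$ be a vector of $\Gamma$ of minimal norm (w.r.t.\ the gauge of $B$) lying outside the $K$-span of $v_1,\ldots,v_{t-1}$ — by the definition of $\ell_t$ this vector lies on the boundary of $\ell_t\cdot B$, and $\ell_t\cdot B$ contains no element of $\Gamma$ outside $\mathrm{span}_K(v_1,\ldots,v_{t-1})$, which is exactly the first bullet. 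The $v_t$ are $K$-independent by construction.

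For the second bullet, I would argue that the octahedron with vertices $\pm\frac1r e_i v_j$ contains no nonzero element of $\Gamma$ in its interior. Suppose $w\in\Gamma$ is such an element, so $w=\sum_{i,j} c_{ij}\frac1r e_i v_j$ with $\sum|c_{ij}|<1$. Since $B$ is $r$-thick and centrally symmetric, $\frac1r e_i v_j$ lies in $\ell_j\cdot B$; writing $t$ for the largest index with $c_{ij}\ne 0$ for some $i$, convexity and central symmetry force $w$ into the (open) set $\ell_t\cdot B$, yet $w\notin \mathrm{span}_K(v_1,\ldots,v_{t-1})$ (because the $e_i v_t$ together with $v_1,\ldots,v_{t-1}$ are $K$-independent — here one uses that $1=e_1$ is among the basis elements, so $v_t$ itself appears), contradicting the first bullet. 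This is the step I expect to be the main obstacle: one must be careful that the coefficient vector genuinely witnesses membership outside the relevant $K$-span, and handle the case distinctions on which $c_{ij}$ vanish cleanly.

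Finally, for the volume inequality \eqref{eqn:algminkowskiineq}: the upper bound $(\ell_1\cdots\ell_n)^d\Vol(B)/\Vol(K_\RR^n/\Gamma)\le 2^{nd}$ follows from the classical upper bound $\prod m_j\,\Vol(B)/\Vol(\RR^{nd}/\Gamma)\le 2^{nd}$ together with $\prod m_j \le (\ell_1\cdots\ell_n)^d$ established above. For the lower bound, I would relate $[\Gamma:\Gamma']$ to the classical index $[\Gamma:\langle u_1,\ldots,u_{nd}\rangle_\ZZ]$: the $\cO_K$-lattice $\Gamma'$ generated by $v_1,\ldots,v_n$ is spanned over $\ZZ$ by $\{e_i v_j\}$, and since each $e_i v_j\in r\ell_j\cdot B$ while $v_j\in\ell_j\cdot B$, the covolume of $\Gamma'$ is comparable to $\prod_j (\ell_j)^d$ up to the factor $r^{nd}$; combining with the classical lower bound $2^{nd}[\Gamma:\langle u_j\rangle]/(nd)! \le \prod m_j \,\Vol(B)/\Vol(\RR^{nd}/\Gamma)$ and $\Vol(K_\RR^n/\Gamma') = [\Gamma:\Gamma']^{-1}\Vol(K_\RR^n/\Gamma)$ yields the claimed $(2/r)^{nd}[\Gamma:\Gamma']/(nd)! \le (\ell_1\cdots\ell_n)^d\Vol(B)/\Vol(K_\RR^n/\Gamma)$. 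I would keep the bookkeeping of the $r$-factors explicit here, as that is where the extra $r^{-nd}$ in the statement, relative to the classical theorem, comes from.
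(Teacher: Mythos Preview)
Your overall strategy---reduce to the classical Minkowski theorem by comparing the $\cO_K$-successive minima $\ell_t$ with the ordinary successive minima $m_j$---is different from the paper's, which reproves both bounds from scratch (the Squeezing Lemma for the upper bound, a direct octahedron volume computation for the lower bound). The reduction strategy can be made to work, but your execution of the upper bound is broken. You claim $m_j \le \ell_{\lceil j/d\rceil}$ and hence $\prod m_j \le (\ell_1\cdots\ell_n)^d$, then combine with $\prod m_j \cdot \Vol(B)/\Vol(K_\RR^n/\Gamma) \le 2^{nd}$. This is the wrong direction: from $A\le B$ and $A\cdot X\le C$ you cannot deduce $B\cdot X\le C$. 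Moreover the inequality $m_j \le \ell_{\lceil j/d\rceil}$ is false---at scale $\ell_1$ you have only the single vector $v_1$, not $d$ $\QQ$-independent ones (the $e_iv_1$ are only guaranteed to lie in $r\ell_1\cdot B$). What you actually need is the \emph{reverse} inequality $(\ell_1\cdots\ell_n)^d \le \prod m_j$, which does hold via $\ell_t \le m_{(t-1)d+1}$: once $(t-1)d+1$ $\QQ$-independent lattice vectors appear, their $K$-span has $K$-dimension at least $t$, so one can extract $t$ that are $K$-independent. With this correction the upper bound follows from the classical one.

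Your lower-bound sketch has a separate gap: the classical theorem produces the index $[\Gamma:\langle u_1,\ldots,u_{nd}\rangle_\ZZ]$, and there is no clear way to trade this for $[\Gamma:\Gamma']$, since $\Gamma'=\ZZ\text{-span}\{e_iv_j\}$ and $\langle u_j\rangle_\ZZ$ are unrelated sublattices. The paper sidesteps this by noting that the octahedron with vertices $\pm\frac{1}{r\ell_j}e_iv_j$ lies in $\overline{B}$ (by $r$-thickness) and computing its volume as $\frac{(2/r)^{nd}}{(nd)!}\cdot\frac{\Vol(K_\RR^n/\Gamma')}{(\ell_1\cdots\ell_n)^d}$; this gives the lower bound with $[\Gamma:\Gamma']$ directly. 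Finally, your octahedron argument for the second bullet reaches the right conclusion but for the wrong stated reason: the $e_iv_t$ are \emph{not} $K$-independent of one another (they all lie in $Kv_t$). The correct point is that the $nd$ vectors $\{e_iv_j\}$ are $\RR$-linearly independent, so the barycentric representation of $w$ is unique and its $v_t$-component in $K_\RR v_t$ is genuinely nonzero.
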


We note that here the volume of a set $B\subset K_\RR^n$ is defined by fixing some isomorphism $K_\RR^n\cong \RR^{nd}$ and using the standard Lebesgue measure on $\RR^{nd}$. Crucially, the statement of the lemma does not depend on the particular identification $K_\RR^n\cong \RR^{nd}$, since any two volume forms differ by a scalar.

\begin{proof}[Proof of Lemma~\ref{lem:algminkowski2}]
    The proof is essentially identical to that of the original theorem given in~\cite[Theorem~3.30]{TV06}, though some care is required to differentiate between the $\QQ$-span and $K$-span.
    
    By the definition of $\ell_1$, we may find $v_1\in \Gamma$ on the boundary of $\ell_1\cdot B$, where $\ell_1\cdot B$ does not contain any non-zero elements of $\Gamma$. By the definition of $\lambda_2$, we may then find $v_2\in \Gamma$ on the boundary of $\ell_2\cdot B$ which is $K$-linearly independent of $v_1$, where $\ell_2\cdot B$ contains no elements of $\Gamma$ outside the $K$-span of $v_1$. Continuing, we have a $K$-basis $v_1,\ldots,v_n$ such that $v_j$ is on the boundary of $\ell_j\cdot B$, where $\ell_j\cdot B$ does not contain any element of $\Gamma$ outside the $K$-span of $v_1,\ldots,v_{j-1}$, as required by the first property. 

    Since $v_1,\ldots,v_n$ are $K$-linearly independent, the vectors $e_iv_j$ are $\QQ$-linearly independent. Therefore, the octahedron $S$ with vertices $\pm \frac{1}{r}e_iv_j$ is non-degenerate and spans $K^n$ over $\QQ$. 
    Suppose the interior of $S$ contains a non-zero point $v\in \Gamma$. Let $m$ be the smallest positive integer such that $v$ lies in the $K$-span of $v_1,\ldots,v_m$. Then $v$ does not lie in the $K$-span of $v_1,\ldots,v_{m-1}$. Since $\ell_m\cdot \overline{B}$ contains $v_1,\ldots,v_m$ and $B$ is $r$-thick, $r\ell_m\cdot \overline{B}$ contains $e_iv_j$ for all $i\in [d]$ and $j\leq m$. Therefore, $\ell_m\cdot \overline{B}$ contains $\pm\frac{1}{r}e_iv_j$ for all $i \in [d]$ and $j \le m$, so its interior $\ell_m\cdot B$ contains $v$. But this contradicts the definition of $\ell_m$, since $\ell_m\cdot B$ cannot contain any vector outside the $K$-span of $v_1,\ldots,v_{m-1}$, including $v$. Hence, the interior of $S$ contains no vector in $\Gamma$, verifying the second property.

    Since $e_iv_j\in r\ell_j\cdot \overline{B}$, we have that $\overline{B}$ contains the vectors $\frac{1}{r\ell_j}e_iv_j$ and, hence, the octahedron $S'$ with vertices $\pm \frac{1}{r\ell_j}e_iv_j$ for $i \in [d]$, $j \in [n]$. The volume of the simplex with vertices 0 and $e_iv_j$ for all $i \in [d]$, $j \in [n]$ is $\frac{1}{(nd)!}\Vol(K_\RR^n/\Gamma')$. Since $S'$ is the union of $2^{nd}$ scaled copies of this simplex, the volume of $S'$ is 
    $$\Vol(S')=\frac{1}{r^{nd}\ell_1^d\cdots \ell_n^d}\frac{2^{nd}}{(nd)!}\Vol(K_\RR^n/\Gamma').$$
    Therefore, since $\overline{B}$ contains $S'$, we have
    \begin{align*}
        \Vol(B) &\geq \Vol(S') = \frac{1}{r^{nd}\ell_1^d\cdots \ell_n^d}\frac{2^{nd}}{(nd)!}\Vol(K_\RR^n/\Gamma')\\
        &= \paren{\frac{(2/r)^n}{\ell_1\cdots\ell_n}}^d\frac{[\Gamma:\Gamma']}{(nd)!}\Vol(K_\RR^n/\Gamma),
    \end{align*}
    establishing the lower bound in (\ref{eqn:algminkowskiineq}).

    For the upper bound, we require the following lemma.

    \begin{lem}[Squeezing lemma {\cite[Lemma~3.31]{TV06}}]
        Let $S$ be a centrally symmetric convex body in $\RR^n$, $A$ be an open subset of $S$, $V$ be an $m$-dimensional subspace of $\RR^n$ and $0< \theta\leq 1$. Then there exists an open subset $A'$ of $S$ such that $\Vol(A')=\theta^m \Vol(A)$ and $(A'-A')\cap V\subseteq \theta\cdot (A-A)\cap V$.
    \end{lem}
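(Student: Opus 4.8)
The plan is to produce $A'$ as the image of $A$ under a fibrewise affine contraction directed at the "spine" of $S$. Fix any linear complement $W$ of $V$, so $\RR^n=V\oplus W$, and for $w\in W$ write $A_w=\{v\in V:v+w\in A\}$ and $S_w=\{v\in V:v+w\in S\}$ for the corresponding slices. These are bounded open subsets of $V\cong\RR^m$ with $A_w\subseteq S_w$ and $S_w$ convex; moreover $A_w\neq\emptyset$ exactly when $w$ lies in the open projection $\pi_W(A)$, and $S_w\neq\emptyset$ on the open set $\pi_W(S)\supseteq\pi_W(A)$. Let $c(w)$ be the barycentre $\tfrac{1}{\Vol_V(S_w)}\int_{S_w}v\,dv$; since $S_w$ is bounded, open and convex, $c(w)\in S_w$, and $w\mapsto c(w)$ is continuous on $\pi_W(S)$ (see the last paragraph).

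Define $\Psi\colon V\times\pi_W(S)\to V\times\pi_W(S)$ by letting $\Psi(v+w)$ be the point with $W$-coordinate $w$ and $V$-coordinate $\theta v+(1-\theta)c(w)$, so on each slice $\Psi$ contracts towards $c(w)$ by the factor $\theta$. As $c$ is continuous, $\Psi$ is a homeomorphism, with inverse $u+w\mapsto\theta^{-1}\!\left(u-(1-\theta)c(w)\right)+w$; and $A\subseteq V\times\pi_W(S)$ since $S$, hence $\pi_W(S)$, is open. Put $A':=\Psi(A)$, which is open as the homeomorphic image of the open set $A$. Its slices are $A'_w=\theta A_w+(1-\theta)c(w)$.

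Now all three requirements are immediate. (i) $A'\subseteq S$: for $w\in\pi_W(A)$ and $a\in A_w\subseteq S_w$ we have $c(w)\in S_w$, so $\theta a+(1-\theta)c(w)\in S_w$ by convexity; thus $A'_w\subseteq S_w$ for every $w$. (ii) $\Vol(A')=\theta^m\Vol(A)$: by Fubini in $V\oplus W$, $\Vol(A')=\int\Vol_V(A'_w)\,dw=\int\theta^m\Vol_V(A_w)\,dw=\theta^m\Vol(A)$, since $A'_w$ is a translate of $\theta A_w$ and $\dim V=m$. (iii) $(A'-A')\cap V\subseteq\theta\big((A-A)\cap V\big)$: a difference of two points of $A'$ lies in $V$ only if those points share a $W$-coordinate $w$, and then it lies in $A'_w-A'_w=\theta(A_w-A_w)\subseteq\theta\big((A-A)\cap V\big)$.

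The only non-formal point is the continuity of $c$, and this is where I would be slightly careful, though it is routine: for $w_n\to w$ in $\pi_W(S)$ the slices of the convex body $S$ satisfy $\mathbf{1}_{S_{w_n}}\to\mathbf{1}_{S_w}$ off the null set $\partial S_w$, so dominated convergence (all relevant slices lie in a fixed compact set because $S$ is bounded) gives $\Vol_V(S_{w_n})\to\Vol_V(S_w)>0$ and $\int_{S_{w_n}}v\,dv\to\int_{S_w}v\,dv$, hence $c(w_n)\to c(w)$. Alternatively one can bypass this by using Michael's selection theorem to choose any continuous section $c(w)\in S_w$ of the lower-hemicontinuous, convex-valued map $w\mapsto S_w$; the barycentre is just an explicit choice. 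Really the whole content is hitting on the right construction — contract each slice of $A$ toward the centroid of the corresponding slice of $S$, not of $A$ — which is precisely what keeps $A'$ inside $S$ (by convexity) while keeping it open (by continuity of $c$); after that there is no genuine obstacle, and central symmetry of $S$ is not even used.
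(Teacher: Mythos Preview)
Your proof is correct. The paper does not give its own proof of this lemma --- it is quoted verbatim from Tao--Vu \cite[Lemma~3.31]{TV06} and used as a black box --- so there is nothing in the paper to compare against. Your argument is essentially the standard one: slice along a complement of $V$, contract each slice of $A$ by factor $\theta$ toward a continuously chosen point of the corresponding slice of $S$, and read off the three conclusions. Your observation that central symmetry of $S$ is never used is also correct; only convexity and openness are needed. The continuity of the barycentre $c(w)$ is the one genuine technical point, and your dominated-convergence sketch is right (the key fact, which you implicitly use, is that for a convex open $S$ and $w\in\pi_W(S)$, any point $v$ with $v+w\in\partial S$ must lie in $\partial_V S_w$, since the open segment from $v$ to any interior point of $S_w$ stays in $S_w$).
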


    Let $V_j$ be the $\RR$-span of the $K$-span of $v_1,\ldots,v_j$, so that $V_j$ is a $jd$-dimensional real subspace of $K_\RR^n$. We apply the squeezing lemma iteratively, starting with $A_0:=\frac{\ell_n}{2}\cdot B$, to create open sets $A_1,\ldots,A_{n-1}\subseteq A_0$ such that 
    $$\Vol(A_j)=\paren{\frac{\ell_j}{\ell_{j+1}}}^{jd}\Vol(A_{j-1})$$
    and
    $$(A_j-A_j)\cap V_j\subseteq \frac{\ell_j}{\ell_{j+1}}\cdot (A_{j-1}-A_{j-1})\cap V_j$$
    for $j=1,\ldots,n-1$. Then $\Vol(A_{n-1})=(\ell_1\cdots\ell_n2^{-n})^d\Vol(B)$ and one can show by induction that
    $$(A_{n-1}-A_{n-1})\cap V_j\subseteq \frac{\ell_j}{\ell_n}\cdot (A_{j-1}-A_{j-1})\cap V_j.$$
    
    On the other hand, $A_{j-1}\subseteq A_0=\frac{\ell_n}{2}\cdot B$ and $B$ is centrally symmetric, so $A_{j-1}-A_{j-1}\subseteq \ell_n\cdot B$. It follows that
    $$(A_{n-1}-A_{n-1})\cap V_j\subseteq \lambda_j\cdot B\cap V_j$$
    for $j=1,\ldots,n$.
    By the definition of successive minima, $\lambda_j\cdot B\cap V_j$ does not contain any point in $\Gamma$ except for those in $V_{j-1}$. This implies that $A_{n-1}-A_{n-1}$ does not contain any point in $\Gamma$ other than the origin. If $\Vol(A_{n-1})>\Vol(K_\RR^n/\Gamma)$, then, by Blichfeldt's principle, one can find a translate $A_{n-1}+t$ of $A_{n-1}$ containing two distinct points of $\Gamma$. Thus, $A_{n-1}-A_{n-1}$ contains a non-zero point of $\Gamma$, a contradiction. Therefore, we have $\Vol(A_{n-1})\leq \Vol(K_\RR^n/\Gamma)$. Hence, we have
    $$(\ell_1\cdots\ell_n2^{-n})^d\Vol(B)\leq \Vol(K_\RR^n/\Gamma),$$
    giving the upper bound in (\ref{eqn:algminkowskiineq}).
\end{proof}

\subsection{$\cO_K$-GAPs and an algebraic John's theorem}

Recall that a \emph{generalised arithmetic progression (or GAP)} $P\subset \ZZ^d$ is a set of the form
$$P=\setcond{v_0+l_1v_1+\cdots+l_n v_n}{0\leq l_j< L_j \text{ for all } j}$$
for some $v_0,\ldots,v_n\in \ZZ^d$ and $L_1,\ldots,L_n\in \NN$. The \emph{dimension} of $P$ is $n$. We say that $P$ is \emph{proper} if all elements on the RHS are distinct and $k$-proper if 
$$\setcond{l_1v_1+\cdots+l_n v_n}{0\leq l_j< kL_j \text{ for all } j}$$
has all elements distinct.

Our object of study for the remainder of this section is the following algebraic analogue of a GAP, which we call an $\cO_K$-GAP.

\begin{defn}
    An \emph{$\cO_K$-GAP} is a set $P\subset K$ of the form
    \begin{equation}
        P=\setcond{v_0+l_1v_1+\cdots+l_nv_n}{l_j\in B(L_j) \text{ for all } j} \label{eqn:okgap}
    \end{equation}
    for some $v_0,\ldots,v_n\in K$ and $L_1,\ldots,L_n\in \NN$. The \emph{dimension} of $P$ is $n$. For $p\in \NN$, define
    \begin{equation}
        p\star P:=\setcond{pv_0+l_1v_1+\cdots+l_nv_n}{l_j\in B(pL_j) \text{ for all } j}. \label{eqn:okgapscale}
    \end{equation}
    We say that $P$ is \emph{proper} if all the elements on the RHS of (\ref{eqn:okgap}) are distinct and \emph{$p$-proper} if all the elements on the RHS of (\ref{eqn:okgapscale}) are distinct. Note that $p\star P$ is similar, but, because $B(L_j)$ is an \emph{open} ball, not exactly equal, to the $p$-fold sumset $pP$. 
\end{defn}

The classical John's theorem (see \cite{J48} or \cite[Theorem~3.13]{TV06}) says that any centrally symmetric convex body $A$ in $\RR^n$ can be approximated by an open centrally symmetric ellipsoid $E$ in the sense that $E \subseteq A \subseteq \sqrt{n} \cdot E$. A discrete version of this result, due to Tao and Vu~\cite{TV08}, says that the intersection of a centrally symmetric convex body with a lattice in $\RR^n$ can be approximated by a GAP. Here we prove the following algebraic analogue of this result. 

\begin{lem}
\label{lem:algjohn}
    For any real number $r\geq 1$, there are integer constants $D_1,D_2>0$ such that the following holds. Let $\Gamma\subseteq K^n$ be an $\cO_K$-lattice of full rank and $B\subset K_\RR^n$ be an $r$-thick convex centrally symmetric body. Then there exist $v_1,\ldots,v_n\in K$ and positive integers $L_1,\ldots,L_n$ such that the $\cO_K$-GAPs given by 
    \begin{align*}
        P_1 &:=\setcond{l_1v_1+\cdots+l_nv_n}{l_j\in B(L_j) \text{ for all } j},\\
        P_2 &:=\setcond{l_1v_1+\cdots+l_nv_n}{l_j\in B(D_1L_j) \text{ for all } j}
    \end{align*}
    satisfy
    \begin{equation}
        P_1\subseteq B\cap \Gamma\subseteq \frac{1}{D_2}\cdot P_2. \label{eqn:algjohn}
    \end{equation}
\end{lem}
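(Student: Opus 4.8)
The plan is to mimic the classical deduction of the Tao--Vu discrete John's theorem from Minkowski's second theorem, but carried out in the $\cO_K$-category using the algebraic Minkowski theorem (Lemma~\ref{lem:algminkowski2}) that we have just proved. First I would apply Lemma~\ref{lem:algminkowski2} to the pair $(B,\Gamma)$: since $B$ is $r$-thick, centrally symmetric and convex, and $\Gamma$ has full $\cO_K$-rank $n$, it produces $K$-linearly independent vectors $v_1,\ldots,v_n\in\Gamma$ and successive minima $0<\ell_1\le\cdots\le\ell_n$ such that $v_j$ lies on the boundary of $\ell_j\cdot B$, the octahedron on $\pm\tfrac1r e_iv_j$ contains no nonzero lattice point, and the determinant inequality (\ref{eqn:algminkowskiineq}) holds. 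I would then set $L_j:=\max(1,\lfloor 1/\ell_j\rfloor)$ (or a comparable integer comparable to $1/\ell_j$), so that $\ell_j L_j \asymp 1$ with constants depending only on $r,n,d$, and define $P_1,P_2$ with these $v_j$ and $L_j$ and with $D_1$ an integer constant to be chosen.

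The left inclusion $P_1\subseteq B\cap\Gamma$ is the easy direction. Each generator $v_j\in\Gamma$ and $\Gamma$ is an $\cO_K$-module, so $l_jv_j\in\Gamma$ for $l_j\in B(L_j)\subset\cO_K$, hence $P_1\subseteq\Gamma$. For membership in $B$: each $v_j\in\ell_j\cdot\overline B$, so $e_iv_j\in r\ell_j\cdot\overline B$ by $r$-thickness; writing a general $l_j\in B(L_j)$ as an $\cO_K$-combination $\sum_i c_{ij}e_i$ with $|c_{ij}|<L_j$, we get $l_jv_j$ inside a scaled copy of $\overline B$ of radius $\asymp d\, r\,\ell_j L_j\asymp dr$, and summing over $j$ puts $l_1v_1+\cdots+l_nv_n$ in $O(ndr)\cdot\overline B$; choosing the constant in the definition of $L_j$ (equivalently rescaling $B$ is not allowed, but we are free to shrink $L_j$ by an absolute factor) we can force $P_1\subseteq B$. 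So the left inclusion holds with $L_j\asymp 1/\ell_j$, the implied constant absorbed into the choice of $L_j$.

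The right inclusion $B\cap\Gamma\subseteq \frac{1}{D_2}\cdot P_2$ is the substantive direction, and it is where I expect the main obstacle to lie, exactly as in the classical proof. The idea is: take $x\in B\cap\Gamma$. Because $v_1,\ldots,v_n$ form a $K$-basis of $K^n$, we can write $x=\alpha_1v_1+\cdots+\alpha_nv_n$ with $\alpha_j\in K$; I need to bound the ``size'' of the $\alpha_j$ and show that, after clearing a bounded denominator $D_2$, each $D_2\alpha_j$ lies in $B(D_1L_j)$. Quantitatively, one expresses $\alpha_j$ via Cramer's rule in terms of the coordinates of $x$ (which are controlled since $x\in B$) and the vectors $v_i$ (controlled since $v_i\in\ell_i\cdot\overline B\subseteq\ell_n\cdot\overline B$); the determinant in the denominator is, up to the index $[\Gamma:\Gamma']$ controlled by (\ref{eqn:algminkowskiineq}), comparable to $\Vol(K_\RR^n/\Gamma)/\prod\ell_i^d$, which is what makes $\alpha_j$ have $\norm{\alpha_j}\lesssim 1/\ell_j\asymp L_j$ up to constants depending on $n,d,r$. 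The denominator $D_2$ arises because $x\in\Gamma$ need not lie in the sublattice $\Gamma'$ generated by the $v_j$; but $[\Gamma:\Gamma']$ is bounded by a constant (again from (\ref{eqn:algminkowskiineq}) together with $\ell_jL_j\asymp1$), so $D_2x\in\Gamma'$, i.e.\ $D_2\alpha_j\in\cO_K$, and combined with the norm bound this gives $D_2\alpha_j\in B(D_1L_j)$ for suitable integer $D_1,D_2$ depending only on $r$ (through $n,d$ and the thickness constant). The one genuinely new point compared to the $\ZZ$-lattice case is bookkeeping the factor $C_1$ from $\cO_K$-multiplication (Lemma~\ref{lem:consts}(1)) and the thickness parameter $r$ when passing between $\norm{\cdot}$-balls $B(L)$ and genuine convex bodies; these only inflate the constants $D_1,D_2$ and do not affect the structure of the argument, so I would carry the classical proof through verbatim while tracking these extra factors.
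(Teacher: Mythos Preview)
Your outline---apply Lemma~\ref{lem:algminkowski2} to $(B,\Gamma)$ to produce $v_j$ and successive minima $\ell_j$, set $L_j\asymp 1/\ell_j$, and handle the right inclusion by Cramer's rule together with the index bound $[\Gamma:\Gamma']\le (nd)!\,r^{nd}$ for $D_2$---matches the paper's approach, and your treatment of the left inclusion and of $D_2$ is correct. The gap is in the Cramer step: you assert that it yields $\norm{\alpha_j}\lesssim 1/\ell_j$, but you never say how to bound the \emph{numerator} $\Vol(e_1v_1\wedge\cdots\wedge x\wedge\cdots\wedge e_dv_n)$. Knowing only that $x\in B$ and $e_{i'}v_{j'}\in r\ell_{j'}\cdot\overline B$ does not by itself produce a volume bound of the right order $(\prod_{j'}\ell_{j'}^d)/\ell_j$; that requires a Hadamard-type inequality $\Vol(u_1\wedge\cdots\wedge u_{nd})\le\prod\norm{u_i}$, which holds for inner-product norms but not for the Minkowski functional of a general convex body. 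Your sentence about the denominator being ``comparable to $\Vol(K_\RR^n/\Gamma)/\prod\ell_i^d$'' mixes the two sides of Cramer and does not explain where the factor $1/\ell_j$ comes from.

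The paper supplies the missing ingredient by inserting the classical (continuous) John theorem \emph{before} Lemma~\ref{lem:algminkowski2}: one takes an ellipsoid $E$ with $E\subseteq B\subseteq\sqrt{nd}\cdot E$, checks that $E$ is $r_1$-thick with $r_1=r\sqrt{nd}$, and then runs both Minkowski and the Cramer argument with $E$ in place of $B$. Because $\norm{\cdot}_E$ arises from an inner product, the genuine Hadamard bound $\Vol(u_1\wedge\cdots\wedge u_{nd})\le\prod\norm{u_i}_E$ is available (in the volume form normalised so that $\Vol(E)=V_{nd}$); combined with $\norm{e_{i'}v_{j'}}_E\le r_1\ell_{j'}$, $\norm{x}_E\le\sqrt{nd}$, and the Minkowski upper bound for $E$, this gives $|l_{ij}|\lesssim 1/\ell_j\asymp L_j$. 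This ellipsoid step is already part of the classical Tao--Vu discrete John theorem, so ``mimic the classical deduction'' should include it.
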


Unlike for the discrete John's theorem for ordinary lattices, the constant $D_2$ is necessary here. 
Indeed, if $K$ has non-trivial ideal class group, then, for $\Gamma\subset \cO_K$ a non-principal ideal, we cannot hope for a one-dimensional $\cO_K$-GAP to span the same lattice as $\Gamma$, since any such $\cO_K$-GAP is generated by a single element.

\begin{proof}[Proof of Lemma~\ref{lem:algjohn}]
    Applying John's theorem to $B\subset K_\RR^n\cong \RR^{dn}$, we obtain an open centrally symmetric ellipsoid $E\subset K_\RR^n$ such that $E\subseteq B\subseteq \sqrt{dn}\cdot E$. 
    For any $x\in E$ and $i\in [d]$, $e_ix\in e_i\cdot B\subseteq r\cdot B\subseteq r\sqrt{dn}\cdot E$, so $E$ is $r_1$-thick with $r_1:=r\sqrt{dn}$. Consider the norm $\norm{\cdot}_E$ on $K_\RR^n$ whose unit ball is $E$, that is,
    $$\norm{x}_E:=\inf\setcond{\ell>0}{x\in \ell\cdot E}.$$
    Since $E$ is $r_1$-thick, for any $\ell>0$ and $x\in K_\RR^n$, $x\in \ell\cdot E$ implies that $e_ix\in r_1\ell\cdot E$. Therefore, for any $x \in  K_\RR^n$,
    \begin{equation}
        \norm{e_ix}_E\leq r_1\norm{x}_E. \label{eqn:normeix}
    \end{equation}
    Since $|a_i|\leq \norm{l}$ for any $l=a_1e_1+\cdots+a_de_d\in \cO_K$, we also have, for any $x \in  K_\RR^n$, that 
    \begin{equation}
        \norm{lx}_E\leq \sum_{i=1}^d \norm{a_ie_ix}_E\leq dr_1\norm{l}\norm{x}_E. \label{eqn:ellipsenorm}
    \end{equation}
    
    Let $v_1,\ldots,v_n\in K$ be as in Lemma~\ref{lem:algminkowski2}, when applied to the centrally symmetric convex body $E$. For each $j$, let 
    $$L_j:=\ceil{\frac{1}{nd r_1\norm{v_j}_E}}.$$
    Then, for any $l_j\in B(L_j)$, $\norm{l_j}< \frac{1}{nd r_1\norm{v_j}_E}$. Thus, by (\ref{eqn:ellipsenorm}),
    $$\norm{l_jv_j}_E\leq dr_1\norm{l_j}\norm{v_j}_E < \frac{1}{n}.$$
    Therefore, if $l_j\in B(L_j)$ for all $j$,
    $$\norm{l_1v_1+\cdots+l_nv_n}_E\leq \sum_{j=1}^n \norm{l_jv_j}_E < 1.$$
    In other words, $P_1\subseteq E\cap \Gamma\subseteq B\cap \Gamma$, giving the first inclusion in (\ref{eqn:algjohn}).

    Let $\Gamma'\subseteq \Gamma$ be the $\cO_K$-span of $v_1,\ldots,v_n$. Then, from Lemma~\ref{lem:algminkowski2}, $[\Gamma:\Gamma']\leq D:=\floor{r^{nd}(nd)!}$. 
    As a finite abelian group, $\Gamma/\Gamma'$ has order at most $D$, so every element has order dividing $D!$. Therefore, $D!\cdot \Gamma\subseteq \Gamma'$ or, equivalently, $\Gamma\subseteq \frac{1}{D!}\Gamma'$. 

    Since $E$ is a centrally symmetric ellipsoid, the norm $\norm{\cdot}_E$ arises from an inner product on $K_\RR^n$. Define a volume form on $K_\RR^n$ based on this inner product. Then $\Vol(E)=V_{nd}$, the volume of the unit ball in $\RR^{nd}$. For $u_1,\ldots,u_{nd}\in K_\RR^n$, write $u_1\wedge\cdots\wedge u_{nd}$ for the parallelotope in $K_\RR^n$ spanned by $u_1,\ldots u_{nd}$. Then $\Vol(u_1\wedge\cdots\wedge u_{nd})\leq \norm{u_1}_E\cdots \norm{u_{nd}}_E$.

    Let the successive minima of $E$ with respect to $\Gamma$ be $\ell_1,\ldots,\ell_n$, so we have $\norm{v_j}_E=\ell_j$. Let $x\in B\cap \Gamma\subseteq \sqrt{dn}\cdot E$, so that $\norm{x}_E\leq \sqrt{dn}$. Since $x\in \Gamma\subseteq \frac{1}{D!}\Gamma'$, we can find unique integers $l_{ij}$ for $i=1,\ldots,d$ and $j=1,\ldots,n$ such that
    $$x=\frac{1}{D!}(l_{11}e_1v_1+\cdots+l_{dn}e_dv_n).$$
    Using Cramer's rule, we can solve for $|l_{ij}|$. This gives 
    \begin{align*}
        |l_{ij}| &= D!\frac{\Vol(e_1v_1\wedge \cdots\wedge x\wedge\cdots\wedge e_dv_n)}{\Vol(e_1v_1\wedge\cdots\wedge e_dv_n)} & \text{here $x$ is in place of $e_iv_j$}\\
        &=D!\frac{\Vol(e_1v_1\wedge \cdots\wedge x\wedge\cdots\wedge e_dv_n)}{\Vol(K_\RR^n/\Gamma')}\\
        &\leq D!\frac{\norm{x}_E\prod_{(i',j')\neq (i,j)}\norm{e_{i'}v_{j'}}_E}{\Vol(K_\RR^n/\Gamma')}\\
        &\leq D!\frac{r_1^{nd-1}\norm{x}_E\prod_{(i',j')\neq (i,j)}\norm{v_{j'}}_E}{\Vol(K_\RR^n/\Gamma')} & \text{by (\ref{eqn:normeix})}\\
        &= D!r_1^{nd-1}\frac{(\ell_1\cdots\ell_n)^d\norm{x}_E}{\ell_j\Vol(K_\RR^n/\Gamma')}.
    \end{align*}
    From Lemma~\ref{lem:algminkowski2}, we have 
    $$\Vol(K_\RR^n/\Gamma') \geq \Vol(K_\RR^n/\Gamma)\geq \paren{\frac{\ell_1\cdots\ell_n}{2^n}}^d\Vol(E)= \paren{\frac{\ell_1\cdots\ell_n}{2^n}}^dV_{nd}.$$
    Therefore, using that $\norm{x}_E\leq \sqrt{dn}$ and $L_j\geq \frac{1}{ndr_1\norm{v_j}_E}$, we have
    \begin{align*}
        |l_{ij}|\leq \frac{D!r_1^{nd-1}2^{nd}\norm{x}_E}{\ell_j V_{nd}}< \frac{D!r_1^{nd}2^{nd+1}nd\sqrt{nd}}{V_{nd}}L_j.
    \end{align*}
    We obtain the second inclusion in (\ref{eqn:algjohn}) by setting $D_2=D!$ and $D_1=\ceil{D!r_1^{nd}2^{nd+1}nd\sqrt{nd}/V_{nd}}$.
\end{proof}

We now come to a key lemma, for which we need our algebraic version of John's lemma, saying that if $P$ is an $\cO_K$-GAP that is not $p$-proper, then there is an $\cO_K$-GAP of smaller dimension which contains and is not too much larger than $P$.

\begin{lem} \label{lem:properreduce}
    If $P$ is an $\cO_K$-GAP of dimension $n$ that is not $p$-proper, then there is an $\cO_K$-GAP $Q$ of dimension $n-1$ containing $P$ with $|Q|\ll_{n,p} |P|$.
\end{lem}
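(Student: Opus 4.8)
The plan is to mimic the standard proof that a GAP which is not proper can be ``collapsed'' to one of lower dimension (see~\cite[Lemma~3.33]{TV06}), with the necessary modifications to account for the ring structure of $\cO_K$. Write
$$P=\setcond{l_1v_1+\cdots+l_nv_n}{l_j\in B(L_j)\text{ for all }j}$$
(we may assume $v_0=0$ by translating, which does not affect properness or size). Saying that $P$ is not $p$-proper means that there exist $l_j,l_j'\in B(pL_j)$, not all equal, with $\sum_j l_jv_j=\sum_j l_j'v_j$, i.e.\ there is a nonzero tuple $(m_1,\ldots,m_n)\in\cO_K^n$ with each $m_j\in B(2pL_j)$ satisfying $\sum_j m_jv_j=0$. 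First I would fix such a relation, say with $m_{n}\neq 0$ after reindexing, and use it to express $v_n$ ``approximately'' in terms of $v_1,\ldots,v_{n-1}$. Since $\cO_K$ is not a field, $m_n$ need not be invertible; instead multiply through by a suitable element. Concretely, $N_{K/\QQ}(m_n)\in\ZZ\setminus\{0\}$ and $m_n\cdot w=N_{K/\QQ}(m_n)$ for some $w\in\cO_K$ with controlled norm (a product of conjugates of $m_n$, so $\norm{w}\ll_n (pL_n\cdots)^{\,}$, bounded in terms of the $L_j$ and $n$ via Lemma~\ref{lem:consts}(1)). Then $N_{K/\QQ}(m_n)v_n=-w\sum_{j<n}m_jv_j$.

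Now set $N:=|N_{K/\QQ}(m_n)|$ and define a new $\cO_K$-GAP on the generators $v_1,\ldots,v_{n-1}$ together with the ``scaled'' copies, the point being that every element of $P$ can be rewritten using $v_1,\ldots,v_{n-1}$ once we clear the denominator $N$. Given $x=\sum_{j\le n}l_jv_j\in P$, multiply by $N$: then $Nx=\sum_{j<n}(Nl_j)v_j + l_n\cdot N v_n = \sum_{j<n}(Nl_j - l_n w m_j)v_j$, so
$$x=\frac1N\sum_{j<n}\paren{Nl_j-l_nwm_j}v_j.$$
Each coefficient $Nl_j-l_nwm_j$ lies in $B(L_j')$ for some $L_j'\ll_{n,p}N L_j$ by the triangle inequality and Lemma~\ref{lem:consts}(1) (using $\norm{w},\norm{m_j}\ll_{n,p}$ bounded and $\norm{l_j}<L_j$, $\norm{l_n}<L_n$, while $N$ is itself bounded in terms of the $L_j$). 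Therefore, taking $Q$ to be the $(n-1)$-dimensional $\cO_K$-GAP
$$Q:=\setcond{\tfrac1N\paren{l_1v_1+\cdots+l_{n-1}v_{n-1}}}{l_j\in B(L_j')\text{ for all }j<n}$$
(rescaling the generators to $v_j/N$ to make this an honest $\cO_K$-GAP), we get $P\subseteq Q$, and $|Q|\le \prod_{j<n}|B(L_j')|\ll_{n,p}\prod_{j<n}(NL_j)^d$. The one remaining point is to bound this by $|P|$: since $P$ is a sum of $n$ nontrivial $\cO_K$-progressions, $|P|\gg \prod_{j\le n}L_j^{\,d}/(\text{something})$ — more carefully, one uses that a (not necessarily proper) $\cO_K$-GAP of dimension $n$ always satisfies $|P|\ge \max_j |B(L_j)| \gg L_n^d$ and, combined with the fact that $N$ and all the reindexing constants are $O_{n,p}(1)$-controlled \emph{relative to} the $L_j$, that $\prod_{j<n}(NL_j)^d\ll_{n,p}\prod_{j\le n}L_j^d/L_n^d\cdot N^{(n-1)d}\cdot L_n^d$; bounding $N^{(n-1)d}\ll_{n,p}1$ is false in general, so instead one argues as in~\cite{TV06}: the relation $\sum m_jv_j=0$ forces one of the $v_j$ to already lie in the span of the others over the relevant scaled lattice, and a dimension count shows $|P|\ge c_{n,p}\prod_{j\le n}L_j^d/L_{j_0}^{d}$ is the wrong normalisation — the correct statement is simply that $P$, being a union of at least $\prod_{j}L_j^d$ formal sums with at most $N^{O_n(1)}$-to-one overlaps, has $|P|\gg_{n,p}\prod_j L_j^d$, whence $|Q|\ll_{n,p}\prod_{j<n}L_j^d\cdot N^{nd}\ll_{n,p}|P|$ after absorbing the bounded factor $N^{nd}$.

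The main obstacle is exactly this last bookkeeping step: in the classical (torsion-free, $\ZZ^d$) setting the denominator $N$ cleared when collapsing is at worst $\max_j |m_j|$, which is $O_{n,p}(1)$ times nothing because the $m_j$ themselves have bounded \emph{size} ratios, and one can reindex so that the collapsed progression is literally a sub-progression; here, because $\cO_K$ has no division, we pay a field-norm factor $N=|N_{K/\QQ}(m_n)|$, and one must check this is bounded by an absolute constant depending only on $n$ and $p$ and not on the $L_j$. This is true because $m_n\in B(2pL_n\cdots)$ — wait, no: $m_n$ has norm bounded by a quantity depending on the $L_j$, so $N$ is \emph{not} absolutely bounded. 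The resolution, and the technically delicate point, is that one should \emph{not} clear the full norm but instead work modulo the sublattice generated by the relation: choose $v_n$ to be the generator with $\ell_j$-minimal index in the relation, observe $\gcd$-type reasoning over $\cO_K$ is replaced by choosing the relation with $\norm{m_n}$ minimal, and then the coefficient blow-up in $Q$ is controlled by $\norm{m_n}$ alone, which by minimality divides (up to units and bounded factors) all the other $m_j$, keeping everything proportional. I would therefore structure the proof around selecting a relation $\sum m_jv_j=0$ with $\prod_j\norm{m_j}$ (or $\norm{m_n}$, after reindexing so this is the largest) as small as possible among all nontrivial relations with $m_j\in B(2pL_j)$, carry out the collapse above, and verify $|Q|\ll_{n,p}|P|$ using that the overlap multiplicity of the formal sum map onto $P$ is at most $\norm{m_n}^{O_n(1)}\ll_{n,p}$ a constant by the minimality and the constraint $m_n\in B(2pL_n)$ giving $\norm{m_n}<2pL_n$ — so the blow-up factor $N^{nd}\le (C_1^{n}(2pL_n)^{?})$ must be shown to cancel against the corresponding $L_n^d$ in the lower bound for $|P|$, which is precisely where the hypothesis that $B(L_j)$ contains $\asymp L_j^d$ lattice points is used.
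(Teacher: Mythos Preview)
Your proposal contains a genuine gap that you yourself half-identify but do not resolve. The denominator-clearing step via $N=|N_{K/\QQ}(m_n)|$ introduces a factor that depends polynomially on the $L_j$: since $m_n\in B(2pL_n)$, one only has $N\ll_{n,p} L_n^{\,d}$, so the coefficients of your collapsed progression $Q$ live in balls of radius $L_j'\ll_{n,p} L_n^{\,d}L_j$, giving $|Q|\ll_{n,p} L_n^{\,d^2(n-1)}\prod_{j<n}L_j^{\,d}$. To conclude $|Q|\ll_{n,p}|P|$ you would need a matching \emph{lower} bound on $|P|$, but $P$ is not assumed proper, so no bound of the form $|P|\gg\prod_j L_j^{\,d}$ is available; your claim that the formal-sum map onto $P$ is ``at most $N^{O_n(1)}$-to-one'' is unjustified and in any case circular. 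The alternative you sketch---choosing a minimal relation and hoping that minimality forces $m_n$ to ``divide'' the other $m_j$---has no content over a general $\cO_K$: there is no Euclidean algorithm, and minimality of $\norm{m_n}$ gives no divisibility.

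The paper's proof sidesteps this entirely, and the mechanism is worth noting. Rather than clear denominators, it absorbs the relation into an auxiliary $\cO_K$-lattice $\Gamma=\cO_K^{\,n-1}+\cO_K\cdot w\subset K^{n-1}$ with $w=(-m_1/m_n,\ldots,-m_{n-1}/m_n)$, together with an $\cO_K$-linear map $f:\Gamma\to K$ sending a suitable convex body $B\cap\Gamma$ onto something sandwiched between $P$ and a bounded dilate $(2pC_1+1)\star P$. The algebraic John-type theorem (Lemma~\ref{lem:algjohn}) then produces $(n-1)$-dimensional $\cO_K$-GAPs $P_1\subseteq B\cap\Gamma\subseteq \tfrac{1}{D_2}P_2$ with $P_2=D_1\star P_1$ and $D_1,D_2=O_n(1)$. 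Setting $Q=\tfrac{1}{D_2}f(P_2)$, the bound $|Q|\ll_{n,p}|P|$ follows not by counting but by \emph{covering}: $P_2$ is covered by $D_1^{\,n-1}$ translates of $P_1$, hence $f(P_2)$ by $O_n(1)$ translates of $f(P_1)\subseteq f(B\cap\Gamma)\subseteq (2pC_1+1)\star P$. The crucial point is that the constants from John's theorem depend only on the dimension and the thickness of $B$, not on the $L_j$, which is exactly what your direct collapse cannot achieve.
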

\begin{proof}
    Assume that $P$ is centered and of the form
    $$P=\setcond{l_1v_1+\cdots+l_nv_n}{l_j\in B(L_j)}$$
    with $L_j>1$ for all $j$. Since $P$ is not $p$-proper, there exist $l_j,l_j'\in B(pL_j)$ for all $j$ such that $l_j\neq l_j'$ for some $j$ and
    $$l_1v_1+\cdots+l_nv_n=l_1'v_1+\cdots+l_n'v_n.$$
    Setting $a_j=l_j-l_j'\in B(2pL_j)$, we have that the $a_j$ are not all 0 and $a_1v_1+\cdots+a_nv_n=0$. We may assume without loss of generality that $a_n\neq 0$. Then we have the relation
    \begin{align}
        v_n=-\frac{a_1v_1}{a_n}-\cdots-\frac{a_{n-1}v_{n-1}}{a_n}. \label{eqn:lindep}
    \end{align}

    Let $w=(-\frac{a_1}{a_n},\ldots,-\frac{a_{n-1}}{a_n})\in K^{n-1}$. Let $\Gamma:=\cO_K^{n-1}+\cO_K\cdot w\subset K^{n-1}$. Then $\Gamma$ is a discrete lattice which is invariant under multiplication by $\cO_K$ and so is an $\cO_K$-lattice. $\Gamma$ is also of full rank, since it contains $\cO_K^{n-1}$. Consider the homomorphism $f:\Gamma\to K$ given by
    $$f((x_1,\ldots,x_{n-1})+x_nw):=x_1v_1+\cdots+x_nv_n.$$
    Then $f$ is well-defined because of the relation (\ref{eqn:lindep}). Note also that $f$ is $\cO_K$-linear, that is, $f$ is linear and $f(ax)=af(x)$ for any $a\in \cO_K,x\in \Gamma$. We may also extend $f$ $\cO_K$-linearly to a $K$-linear map $f:K^{n-1}\to K$.

    Let $B_0\subset K_\RR^{n-1}$ be the convex centrally symmetric body
    $$B_0:= \setcond{(x_1,\ldots,x_{n-1})\in K_\RR^{n-1}}{x_i\in B_\RR(L_i)}.$$
    Let $B=B_0+B_\RR(L_n)\cdot w$, which is also a convex centrally symmetric body. Since $B_0$ and $B_\RR(L_n)\cdot w$ are $C_1$-thick, so is $B$. Indeed, if $x\in B_0$ and $y\in B_\RR(L_n)\cdot w$, then $e_i\cdot(x+y)=e_i\cdot x+e_i\cdot y\in C_1\cdot B_0+C_1\cdot (B_\RR(L_n)\cdot w)=C_1\cdot(B_0+B_\RR(L_n)\cdot w)$. 

    \begin{claim}
        One has the inclusions
        $$P\subseteq f(B\cap \Gamma)\subseteq (2pC_1+1)\star P.$$
    \end{claim}
    \begin{proof}
        For the first inclusion, let $v=l_1v_1+\cdots+l_nv_n\in P$ with $l_j\in B(L_j)$. Then $v=f((l_1,\ldots,l_{n-1})+l_nw)$ with $\norm{l_j}<L_j$, so that $(l_1,\ldots,l_{n-1})+l_nw\in B\cap \Gamma$.

        For the second inclusion, let $(l_1,\ldots,l_{n-1})+l_nw\in B\cap \Gamma$ with $l_j\in \cO_K$. Since $(l_1,\ldots,l_{n-1})+l_nw\in B$, there exist $x_1,\ldots,x_n\in K_\RR$ with $\norm{x_j}<L_j$ such that $(l_1,\ldots,l_{n-1})+l_nw=(x_1,\ldots,x_{n-1})+x_nw$. In other words, $l_j-\frac{a_jl_n}{a_n}=x_j-\frac{a_jx_n}{a_n}$ for $j=1,\ldots,n-1$. Let $z=\frac{l_n-x_n}{a_n}\in K_\RR$, so we have
        \begin{equation}
            l_j-x_j=a_jz \label{eqn:lj-xj}
        \end{equation}
        for all $j=1,\ldots,n$. Let $x\in \cO_K$ be the closest element to $z$ according to the metric $\norm{\cdot}$. Recall that this is the $\infty$-norm, so we have $\norm{x-z}\leq 1$. Let $l_j'=l_j-a_jx\in \cO_K$. Then $l_1v_1+\cdots+l_nv_n=l_1'v_1+\cdots+l_n'v_n$, so we have $f((l_1,\ldots,l_{n-1})+l_nw)=l_1'v_1+\cdots+l_n'v_n$. It suffices to show that $\norm{l_j'}< (2pC_1+1)L_j$ for all $j$. 
        But we have 
        \begin{align*}
            \norm{l_j'} &=\norm{l_j-a_jx}\\
            &\leq \norm{l_j-a_jx-x_j}+\norm{x_j}\\
            &< \norm{a_j(z-x)}+L_j & \text{by (\ref{eqn:lj-xj})}\\
            &\leq C_1\norm{a_j}\norm{z-x}+L_j & \text{by Lemma~\ref{lem:consts}}\\
            &\leq (2pC_1+1)L_j, 
        \end{align*}
    as required.    
    \end{proof}

    By Lemma~\ref{lem:algjohn}, we can find constants $D_1,D_2=O_{n}(1)$ and $\cO_K$-GAPs $P_1,P_2$ of dimension $n-1$ such that $P_2=D_1\star P_1$ and $P_1\subseteq B\cap \Gamma\subseteq \frac{1}{D_2}\cdot P_2$. In particular, $P_2$ can be covered by $D_1^{n-1}$ translates of $P_1$.
    
    Applying the homomorphism $f$, we obtain 
    $$f(P_1)\subseteq f(B\cap \Gamma)\subseteq \frac{1}{D_2}f(P_2).$$
    Since $f$ is $\cO_K$-linear, $f(P_1)$ and $f(P_2)$ are also $\cO_K$-GAPs of dimension $n-1$. Setting $Q=\frac{1}{D_2}f(P_2)$, which is again an $\cO_K$-GAP of dimension $n-1$, we have, by the claim above,  that $P\subseteq f(B\cap \Gamma)\subseteq Q$, so it suffices to show that $Q$ is small. Since $P_2$ can be covered by $D_1^{n-1}=O_n(1)$-many translates of $P_1$, $f(P_2)$ can also be covered by $O_n(1)$-many translates of $f(P_1)$. But then
    \[    |f(P_2)| \ll_n |f(P_1)|
        \leq |f(B\cap \Gamma)|
        \leq |(2pC_1+1)\star P|
        \ll_{n,p} |P|,\]
    as required.
\end{proof}

\subsection{Freiman's theorem for sums of dilates} \label{sec:freiman}

One version of Freiman's fundamental theorem on sets of small doubling is as follows.

\begin{thm}[Freiman~\cite{Fr73}] \label{thm:freiman}
    For every $C>0$, there are constants $n$ and $F$ such that for any $A\subset \ZZ^d$ satisfying $|A+A|\leq C|A|$, there exists a proper GAP $P\subset \ZZ^d$ containing $A$ of dimension at most $n$ and size at most $F|A|$.
\end{thm}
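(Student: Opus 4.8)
This is the classical theorem of Freiman, so the plan is to recall the Fourier-analytic proof due essentially to Ruzsa (see~\cite{TV06} for a textbook treatment), which also serves as the template for the $\cO_K$-analogue proved below. \textbf{Step 1 (reduction to a cyclic group).} From $|A+A|\le C|A|$, the Pl\"unnecke--Ruzsa inequality gives $|mA-\ell A|\le C^{m+\ell}|A|$ for all integers $m,\ell\ge 0$. Since $A$ lies in the torsion-free group $\ZZ^d$, a generic linear functional $\ZZ^d\to\ZZ$ is injective on the finite set $16A-16A$, so after composing with it we may assume $A\subset\ZZ$ without changing any of the relevant sumset sizes. By Ruzsa's modelling lemma there is then a prime $N$ with $|A|\le N\ll_C|A|$ and a Freiman $16$-isomorphism $\phi$ from $A$ onto a set $B\subseteq\ZZ/N\ZZ$; in particular $\alpha:=|B|/N\gg_C 1$.

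\textbf{Step 2 (Bogolyubov and the geometry of numbers).} Applying Bogolyubov's lemma to $B$ shows that $2B-2B$ contains a Bohr set $\setcond{x\in\ZZ/N\ZZ}{\snorm{sx/N}<1/4\text{ for all }s\in S}$ for some frequency set $S$ with $|S|\le\alpha^{-2}\ll_C 1$. A Bohr set of rank $r$ in $\ZZ/N\ZZ$ contains a proper GAP of dimension at most $r$ and size $\gg_r N$: concretely, one applies Minkowski's second theorem (Theorem~\ref{thm:minkowski2}) to the lattice $\ZZ^{r+1}$ and the symmetric convex body cut out by the conditions $|x|<N$ and $\snorm{s_ix/N}<1/4$, and reads a GAP off the resulting short vectors. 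Hence $2B-2B$ contains a proper GAP $Q$ of dimension $O_C(1)$ and size $\gg_C N\gg_C|A|$.

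\textbf{Step 3 (covering and properness).} Since $Q\subseteq 2B-2B$ we have $|B+Q|\le|3B-2B|\le C^5|A|\ll_C|Q|$, so Ruzsa's covering lemma yields $B\subseteq X+Q-Q$ for some $X\subseteq\ZZ/N\ZZ$ with $|X|\ll_C 1$. Absorbing the bounded set $X$ into the generating set of $Q-Q$ shows that $B$ is contained in a GAP of dimension $O_C(1)$ and size $O_C(|A|)$ that lies inside $8B-8B$, hence well within the range on which $\phi$ is a Freiman isomorphism. Pulling this GAP back through $\phi^{-1}$ and then through the linear functional of Step~1 produces a GAP $P'\subset\ZZ^d$ containing $A$ of dimension $n=O_C(1)$ and size $O_C(|A|)$. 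Finally, the classical analogue of Lemma~\ref{lem:properreduce} --- a GAP that fails to be proper is contained in a GAP of strictly smaller dimension and comparable size --- applied at most $n$ times replaces $P'$ by a proper GAP $P$ with the same bounds.

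\textbf{The main obstacle.} The substantive part is Step~2: the Fourier-analytic Bogolyubov argument together with the extraction, via Minkowski's second theorem, of a proper GAP of bounded dimension inside $2B-2B$. Everything else is Ruzsa-type sumset calculus; the one delicate point there is the bookkeeping of Freiman isomorphisms, since one must fix the order (here $16$) large enough that every sumset arising through Bogolyubov, the covering lemma, and the pull-back stays inside the set $16A-16A$ on which $\phi$ is a bijection.
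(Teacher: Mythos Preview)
The paper does not prove Theorem~\ref{thm:freiman} at all: it is quoted as a known result with a citation to \cite{Fr73} (and implicitly to the textbook treatment in \cite{TV06}), and then used as a black box inside the proof of Theorem~\ref{thm:algfreiman}. Your proposal, by contrast, supplies an actual sketch of the Ruzsa proof. So there is nothing to compare on the paper's side; your write-up is simply additional content beyond what the paper contains.

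As a sketch, your argument is essentially the standard one and is correct in outline. One small point that would need tightening in a full proof is the bookkeeping around the order of the Freiman isomorphism in Step~3. Once you absorb the finite set $X$ as extra generators, the resulting GAP has dimension $n=O_C(1)$, and to pull back its \emph{properness} (not just the set) through $\phi^{-1}$ you need $\phi$ to be a Freiman isomorphism of order depending on $n$, not a fixed absolute constant like $16$. The usual fix is to first bound $n$ in terms of $C$ via Steps~1--2, and only then choose the modelling order large enough (still $O_C(1)$); your sentence flagging ``the bookkeeping of Freiman isomorphisms'' as the delicate point is exactly right, but the specific constant $16$ as written would not survive a careful check.
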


We have now built up sufficient background to prove the promised Freiman-type structure theorem for sets with small sums of dilates, which we restate for the reader's convenience.

\begin{thm} \label{thm:algfreiman}
    For every $C>0$ and $p \in \NN$, there are constants $n$ and $F$ such that for any $A\subset K$ satisfying
    $$|A+\lambda_1\cdot A+\cdots+\lambda_k\cdot A|\leq C|A|,$$
    there exists a $p$-proper $\cO_K$-GAP $P\subset K$ containing $A$ of dimension at most $n$ and size at most $F|A|$.
\end{thm}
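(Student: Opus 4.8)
The plan is to derive the theorem from the ordinary Freiman theorem (Theorem~\ref{thm:freiman}) together with the $\cO_K$-analogues of Minkowski's second theorem and John's theorem proved above. First come some reductions: rescaling $A$ to $n\cdot A$ preserves both the hypothesis and the conclusion (dilation carries $\cO_K$-GAPs to $\cO_K$-GAPs and preserves dimension, size and $p$-properness), so I may assume $A\subseteq\frakD:=\frakD_{\lambda_1,\ldots,\lambda_k;K}$ and hence $\lambda_l\cdot A\subseteq\cO_K$ for all $l$; translating, I may assume $0\in A$, and I may clearly assume $k\geq1$. Put $\wt A:=A\cup(\lambda_1\cdot A)\cup\cdots\cup(\lambda_k\cdot A)\subseteq\cO_K$, so $|\wt A|\leq(k+1)|A|$. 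Since $0\in A$, each $A+\lambda_l\cdot A$ with $1\leq l\leq k$ occurs as a translate inside $A+\lambda_1\cdot A+\cdots+\lambda_k\cdot A$, so $|A+\lambda_l\cdot A|\leq C|A|$ for all such $l$; running these bounds through Ruzsa's triangle inequality and the Pl\"unnecke--Ruzsa inequality gives $|A+A|$, $|A-\lambda_l\cdot A|$ and $|\lambda_l\cdot A+\lambda_m\cdot A|$ all $O_C(|A|)$, and hence $|\wt A+\wt A|=O_{C,k}(|\wt A|)$. I would also observe that the hypothesis forces the $\QQ$-span of $\wt A$ to be all of $K$: if $\wt A$ spanned a proper subspace, that subspace would fail to be $\cO_K$-stable, so after one further round of dilation the sum of dilates would be pushed into a strictly larger subspace and thus be too large relative to $|A|$. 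Applying Theorem~\ref{thm:freiman} to $\wt A\subseteq\cO_K\cong\ZZ^d$ now yields a proper GAP $P_0\subseteq\cO_K$ with $\wt A\subseteq P_0$, of dimension $O_{C,k}(1)$ and size $O_{C,k}(|A|)$; since $0\in A\subseteq P_0$ and $\QQ$-span$(\wt A)=K$, the dimension of $P_0$ is in fact at least $d$, and $P_0$ contains $A$ together with all its dilates $\lambda_l\cdot A$.

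The heart of the proof is to upgrade the ordinary GAP $P_0$ to an $\cO_K$-GAP without inflating the bound $|P_0|=O(|A|)$. Let $\Gamma$ be the $\cO_K$-module generated by $P_0$ -- a nonzero ideal of $\cO_K$, hence a full-rank $\cO_K$-lattice in $K$. Exploiting that $P_0$ contains all the dilates $\lambda_l\cdot A$, together with the integrality $C_2\lambda_l\in\cO_K$ furnished by Lemma~\ref{lem:consts}, I would show that $P_0$ sits ``straight'' rather than skew with respect to the $\cO_K$-structure, and use this to build an $r$-thick (with $r=O(1)$) centrally symmetric convex body $B\subseteq K_\RR$ satisfying $P_0\subseteq B$ and $|B\cap\Gamma|=O(|P_0|)$. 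Feeding the pair $(\Gamma,B)$ into the algebraic John theorem (Lemma~\ref{lem:algjohn}) then returns an $\cO_K$-GAP $Q_0$ of bounded dimension with $A\subseteq P_0\subseteq B\cap\Gamma\subseteq Q_0$ and $|Q_0|=O(|B\cap\Gamma|)=O(|A|)$. Finally, if $Q_0$ is not $p$-proper I would apply Lemma~\ref{lem:properreduce} repeatedly: each application produces an $\cO_K$-GAP still containing $A$, of strictly smaller dimension and larger by only an $O_{n,p}(1)$ factor, so after at most $\dim Q_0$ steps one reaches a $p$-proper $\cO_K$-GAP $P\supseteq A$ with $\dim P\leq n$ and $|P|\leq F|A|$, as required.

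The main obstacle is the construction of $B$ in the middle step: one needs a thick convex body that models $P_0$ faithfully enough that the algebraic John theorem outputs an $\cO_K$-GAP of size only $O(|A|)$, and this is exactly the point where the $\cO_K$-versions of Minkowski's second theorem and John's theorem are indispensable, and where the hypothesis must do work beyond merely supplying small doubling. That small doubling alone is insufficient can be seen from $A=\{0,1,\ldots,N-1\}\subseteq\ZZ\subseteq\cO_{\QQ(\sqrt2)}$: this $A$ has doubling constant at most $2$, yet every $\cO_K$-GAP containing it has size $\gg N^2$, because multiplication by $\sqrt2$ interchanges the scalings of the two real embeddings and so forces such a GAP to be ``round'' in embedding space, whereas $A$ itself is concentrated on a line.
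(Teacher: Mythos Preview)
Your overall strategy matches the paper's: reduce to small doubling, apply ordinary Freiman to get a GAP $P_0$ containing $A$ and all $\lambda_l\cdot A$, upgrade $P_0$ to an $\cO_K$-GAP of comparable size, then iterate Lemma~\ref{lem:properreduce}. You also correctly flag the upgrade as the crux and observe that small doubling alone cannot suffice.

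The genuine gap is that you never carry out the upgrade. The sentence ``I would show that $P_0$ sits `straight' \ldots\ and use this to build an $r$-thick \ldots\ body $B$ with $P_0\subseteq B$ and $|B\cap\Gamma|=O(|P_0|)$'' is precisely the whole difficulty, and you supply no mechanism for it. Knowing that $\lambda_l\cdot A\subseteq P_0$ and that $C_2\lambda_l\in\cO_K$ does not by itself produce such a $B$; one needs a concrete argument linking the \emph{generators} $v_j$ of $P_0$ to the $\cO_K$-structure, and you give none. (Also, with $\Gamma\subseteq K=K^1$, Lemma~\ref{lem:algjohn} would return a $1$-dimensional $\cO_K$-GAP; that is fine, but it makes the missing construction of $B$ carry the entire load.)

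The paper bypasses convex bodies here. It sets
\[
P_1=\Bigl\{l_1v_1+\cdots+l_{n_0}v_{n_0}:l_j\in B(L_j)\Bigr\},
\]
the $\cO_K$-GAP with the same data as $P_0$, and proves $|P_1|\ll|P_0|$ directly by a covering/pigeonhole argument. The key is to show each progression $S_{ij}=\{e_iv_j,2e_iv_j,\ldots,L_je_iv_j\}$ is covered by $O(1)$ translates of $P_0$. Writing $be_i=a_1\lambda_1+\cdots+a_k\lambda_k$ (possible since the $\lambda_l$ generate $K$), this reduces to the analogous claim for $\{\lambda_lv_j,\ldots,L_j\lambda_lv_j\}$. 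For that, pigeonhole the translates $A,A+sv_j,\ldots,A+csv_j$ inside $P_0+P_0$ to force $(m'-m)sv_j\in A-A$, hence $c!\,s\lambda_lv_j\in c!(\lambda_l\cdot A-\lambda_l\cdot A)\subseteq c!P_0-c!P_0$, and conclude. This is exactly where ``the hypothesis does work beyond small doubling,'' and it is the missing idea in your sketch. Once $|P_1|\ll|P_0|$ is known, one applies Lemma~\ref{lem:properreduce} as you say.

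A minor point: your claim that the hypothesis forces $\QQ\text{-span}(\wt A)=K$ is not justified by the sentence you give (the sum of dilates of $A$ stays inside $\QQ\text{-span}(\wt A)$, so ``pushing into a strictly larger subspace'' does not occur), and in any case the argument does not need it.
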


Recall, from Lemma~\ref{lem:consts}, that we have constants $C_2,C_3\in \NN$ with the property that $\lambda_lx\in \frac{1}{C_2}\cdot B(C_3\norm{x})$ for all $l=0,\ldots,k$ and $x\in \cO_K$. Thus, if $P$ is an $\cO_K$-GAP, then $\lambda_l\cdot P$ lies in a translate of $\frac{1}{C_2}\cdot (C_3\star P)$. Indeed, if $x=v_0+l_1v_1+\cdots+l_mv_m\in P$, then $\lambda_lx=\lambda_lv_0+(\lambda_l l_1)v_1+\cdots+(\lambda_l l_m)v_m$ with $\lambda_l l_i\in \frac{1}{C_2}B(C_3\norm{l_i})$.  Therefore,
\begin{align*}
    |P+\lambda_1\cdot P+\cdots+\lambda_k\cdot P| &\leq |(k+1)C_3\star P|\\
    &\leq ((k+1)C_3)^{nd}|P|.
\end{align*}
In other words, $P$ has a small sum of dilates. That is, Theorem~\ref{thm:algfreiman} embeds a set $A$ with a small sum of dilates into another, more structured set which, unlike an ordinary GAP, also has a small set of dilates. We now proceed to the proof of this statement.

\begin{proof}[Proof of Theorem~\ref{thm:algfreiman}]
    By translating, we may assume that $0\in A$. By the Ruzsa triangle inequality,
    \[|A+A||\lambda_1\cdot A+\cdots+\lambda_k\cdot A|\leq |A+\lambda_1\cdot A+\cdots+\lambda_k\cdot A|^2\leq C^2|A|^2.\]
    Using the trivial bound $|\lambda_1\cdot A+\cdots+\lambda_k\cdot A|\geq |A|$, we obtain $|A+A|\leq C^2|A|$. By the Pl\"unnecke--Ruzsa inequality, $|A+A+A|\leq C^6|A|$. By the Ruzsa triangle inequality again,
    \[|(A+A)+\lambda_1\cdot A+\cdots+\lambda_k\cdot A||A|\leq |A+A+A||A+\lambda_1\cdot A+\cdots+\lambda_k\cdot A|\leq C^7|A|^2,\]
    so $|(A+A)+\lambda_1\cdot A+\cdots+\lambda_k\cdot A|\leq C^7|A|$. Similar repeated applications of the triangle inequality gives $|(A+A)+\lambda_1\cdot (A+A)+\cdots+\lambda_k\cdot (A+A)|\leq C^{7+6k}|A|$. Thus, 
    $A+\lambda_1\cdot A+\cdots+\lambda_k\cdot A$ has small doubling constant. Therefore, by Freiman's theorem, $A+\lambda_1\cdot A+\cdots+\lambda_k\cdot A$ is contained in a proper GAP 
    $$P_0=\setcond{l_1v_1+\cdots+l_{n_0}v_{n_0}}{-L_i< l_i< L_i}$$
    of dimension $n_0$ with $|P_0|\ll |A|$. Note that since $0\in A\subseteq P_0$, we are free to assume that $P_0$ is centered.

    Now let $P_1$ be the $\cO_K$-GAP given by
    $$P_1=\setcond{l_1v_1+\cdots+l_{n_0}v_{n_0}}{l_i\in B(L_i)}.$$
    Then $P_1$ contains $P_0$. At first glance, it might seem that the size of $P_1$ could be as large as $|P_0|^d$. However, we now show that this is not the case.

    \begin{claim}
        $|P_1|\ll |P_0|$. 
    \end{claim}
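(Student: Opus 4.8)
Here the task is to show $|P_1|\ll|P_0|$, where $P_1$ is the $\cO_K$‑closure of the Freiman GAP $P_0\supseteq B:=A+\lambda_1\cdot A+\cdots+\lambda_k\cdot A$. The plan is to exploit the hypothesis on $A$, not merely the fact that $P_0$ is a GAP: for a generic GAP the $\cO_K$‑closure genuinely has size of order $|P_0|^d$ (take $P_0\subset\ZZ e_2$), so what rescues us is that $P_0$ arises from Freiman's theorem applied to a set with a small sum of dilates. Throughout one may discard the coordinates with $L_i=1$, so assume every $L_i\ge 2$, and, after rescaling $A$ by an integer (which changes neither the hypothesis nor the conclusion), assume $A\subset\cO_K$.

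The first ingredient is an iterated sum‑of‑dilates estimate: for every $t$ there is $C_t=C_t(C,k)$ with $|\mu_1\cdot A+\cdots+\mu_m\cdot A|\le C_t|A|$ whenever $m\le C_t$ and each $\mu_r$ is a monomial $\lambda^a:=\prod_{l=1}^k\lambda_l^{a_l}$ of degree $|a|:=\sum_l a_l\le t$. This is standard additive combinatorics: the Ruzsa triangle inequality and the trivial bound show $A$ has bounded doubling, hence so does each $\mu\cdot A$; multiplying by $\mu^{-1}$ and applying the Ruzsa triangle inequality gives $|\mu\cdot A+\mu'\cdot A|\le C^{O(1)}|A|$ for two such monomials; and the Pl\"unnecke--Ruzsa inequality for families handles all the bounded‑length sums. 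Applying this to $B$, and invoking the efficient form of Freiman's theorem so that $P_0$ may additionally be taken with $P_0\subseteq m(B-B)$ for a constant $m=O_C(1)$, the identity $\lambda^a\cdot B=\sum_{l=0}^k\lambda_l\lambda^a\cdot A$ (with $\lambda_0=1$) shows that, for any fixed $t_0$, the set $W:=\sum_{|a|\le t_0}\lambda^a\cdot P_0$ has $|W|=O_C(|A|)$ and bounded doubling.

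The second ingredient is that $\cO_K$ is polynomial in the $\lambda_l$: since $\ZZ[C_2\lambda_1,\ldots,C_2\lambda_k]$ is an order of finite index $M$ in $\cO_K$ (with $C_2$ as in Lemma~\ref{lem:consts}), there are constants $t_0,M,Q$, depending only on $K$ and the $\lambda_l$, with $Me_i=\sum_{|a|\le t_0}c_{i,a}\lambda^a$, $c_{i,a}\in\ZZ$, $|c_{i,a}|\le Q$, for each basis vector $e_i$. Hence for $l\in B(L)$ one has $Ml=\sum_{|a|\le t_0}d_a\lambda^a$ with $|d_a|<dQL$, so $M\cdot P_1$ is contained in the ordinary GAP $Q^{\ast}$ generated over $\ZZ$ by the elements $\lambda^a v_i$ ($|a|\le t_0$, $1\le i\le n_0$), of bounded dimension, with side length $dQL_i$ in the directions indexed by $i$. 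Since $x\mapsto Mx$ is injective, $|P_1|=|M\cdot P_1|\le|Q^{\ast}|$. Now each axis $\{\,lv_i:|l|<L_i\,\}$ lies in $P_0$, so $\{\,d\,\lambda^a v_i:|d|<dQL_i\,\}\subseteq\lambda^a\cdot(dQ\cdot P_0)$, and summing over the generators gives $Q^{\ast}\subseteq(n_0dQ)\cdot W$, a bounded‑fold sumset of $W$. As $W$ has size $O_C(|A|)$ and bounded doubling, Pl\"unnecke--Ruzsa gives $|Q^{\ast}|=O_C(|A|)$, and since $|A|\le|B|\le|P_0|$ we obtain $|P_1|\ll|P_0|$ (in fact $|P_1|\ll|A|$).

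I expect the main obstacle to be the transfer step, not the additive combinatorics: one must push the small‑sum‑of‑dilates property from $A$ out to $P_0$ — which is precisely what forces the use of an efficient Freiman theorem, since the generators of a bare Freiman GAP need not sit inside a bounded sumset of $B$ — and then from $P_0$ out to $P_1$ through the expansion of $\cO_K$ in powers of the $\lambda_l$. The degenerate case in which $A$ fails to span $K$ causes no trouble, as then $|A|=O_C(1)$ and all quantities in sight are bounded.
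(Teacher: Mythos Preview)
Your argument is correct and takes a genuinely different route from the paper. The paper never invokes an ``efficient'' Freiman theorem; instead it works directly with the standard Freiman GAP $P_0$ and exploits only the density $|P_0|\ll|A|$. The core of the paper's proof is a pigeonhole step: since the translates $A,\,A+sv_j,\,\ldots,\,A+csv_j$ all sit inside $P_0+P_0$, two of them must meet, forcing $(c!)sv_j\in A-A$; multiplying by $\lambda_l$ and using $\lambda_l\cdot A\subseteq B\subseteq P_0$ gives that $\{\lambda_l v_j,2\lambda_l v_j,\ldots\}$ is covered by boundedly many translates of $P_0$. This is elementary and avoids any appeal to $P_0\subseteq m(B-B)$. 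Your approach, by contrast, front-loads the structural input (either a strong Freiman theorem or, equivalently and more cheaply, Ruzsa covering to get $P_0\subseteq (B-B)+X$ with $|X|=O(1)$) and then finishes with Pl\"unnecke--Ruzsa on iterated monomial dilates; this is arguably cleaner and more systematic.

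One point where your write-up is actually more careful than the paper's: you express $Me_i$ as a bounded-degree \emph{polynomial} in the $\lambda_l$, which is what the inclusion $M\cO_K\subseteq\ZZ[C_2\lambda_1,\ldots,C_2\lambda_k]$ gives. The paper writes $be_i=a_1\lambda_1+\cdots+a_k\lambda_k$ as a \emph{linear} combination, which is only valid when $\lambda_1,\ldots,\lambda_k$ span $K$ over $\QQ$ (e.g.\ it fails for $k=1$, $d\ge 3$); the paper's pigeonhole step would then need to be iterated to reach higher-degree monomials, exactly as your argument does. Two minor remarks: your invocation of the efficient Freiman theorem can be replaced by standard Freiman plus Ruzsa covering, which keeps the whole proof self-contained; and your final sentence about ``$A$ failing to span $K$'' is not needed anywhere in the argument and is not obviously correct as stated, so it can simply be dropped.
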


    \begin{proof}
        For a subset $X\subseteq K$ and $c>0$, we say that $X$ is \emph{$(c,P_0)$-small} if $X$ can be covered by $c$-many translates of $P_0$. For brevity, we will simply say that $X$ is $P_0$-small if $c$ is a bounded constant independent of $X,P_0$. 
        Thus, if $X,Y$ are $P_0$-small, so is their sumset $X+Y$. Indeed, if $X,Y$ can be covered by $x,y$-many translates of $P_0$, respectively, then $X+Y$ can be covered by $xy$-many translates of $P_0+P_0$, which itself can be covered by $2^{n_0}$-many translates of $P_0$.

        We shall show that for each $i\in [d],j\in [n_0]$, the set $S_{ij}:=\set{e_iv_j,2e_iv_j,\ldots,L_je_iv_j}$ is $P_0$-small. Then we would have proved the claim, since the sets $\set{-L_je_iv_j,\ldots,L_je_iv_j}$ are then $P_0$-small, $P_1$ is the sum of these sets and there are only a bounded number of them. 

        Since $\lambda_1,\ldots,\lambda_k$ generate $K$, there exist (fixed) integers $b,a_1,\ldots,a_k$ with $b>0$ such that $be_i=a_1\lambda_1+\cdots+a_k\lambda_k$. It will suffice to show that the set $S:=\set{be_iv_j,2be_iv_j,\ldots,L_jbe_iv_j}$ is $P_0$-small, since $S_{ij}$ can be covered by $b$ translates of it. But then it suffices to show that $S_l':=\set{a_l\lambda_lv_j,2a_l\lambda_lv_j,\ldots,L_ja_l\lambda_lv_j}$ is $P_0$-small for each $l$, since $S$ is contained in $S_1'+\cdots+S_k'$. But then, finally, it suffices to show that $S_l:=\set{\lambda_lv_j,2\lambda_lv_j,\ldots,L_j\lambda_lv_j}$ is $P_0$-small for each $l$, since $S_l'$ is covered by $|a_l|$-many translates of $S_l$.

        Suppose $|P_0+P_0|< c|A|$, where $c=O(1)$ is a positive integer. Let $s$ be an arbitrary positive integer with $s < L_j/c$. Consider the sets
        $$A,A+sv_j,A+2sv_j,\ldots,A+csv_j.$$
        All these sets have size $|A|$ and are contained in $P_0+P_0$. But $|P_0+P_0|<c|A|$, so two of these sets intersect, say $(A+msv_j)\cap (A+m'sv_j)\neq\emptyset$ for $0\leq m<m'\leq c$. Thus, $(m'-m)sv_j\in A-A$. Therefore, $c!s\lambda_lv_j\in c!(\lambda_l\cdot A)-c!(\lambda_l\cdot A)\subseteq c!P_0-c!P_0$. Since $1\leq s < L_j/c$ was arbitrary, we have that the set
        $$\set{c!\lambda_lv_j,2c!\lambda_lv_j,\ldots,\floor{L_j/c}c!\lambda_lv_j}\subseteq c!P_0-c!P_0$$
        is $P_0$-small. Thus, the set $T:=\set{c!\lambda_lv_j,2c!\lambda_lv_j,\ldots,L_jc!\lambda_lv_j}$ is $P_0$-small. Finally, $S_l$ is $P_0$-small since it can be covered by $c!$-many translates of $T$.
    \end{proof}

    If $P_1$ is $p$-proper, then we are done. Otherwise, by Lemma~\ref{lem:properreduce}, we can find an $\cO_K$-GAP $P_2$ of one dimension smaller containing $P_1$ with $|P_2|\ll |P_1|$. If $P_2$ is also not $p$-proper, we invoke Lemma~\ref{lem:properreduce} again to obtain $P_3$ and so on. Note that we can only do this at most $n_0$ times, since any $\cO_K$-GAP of dimension 1 is necessarily $p$-proper. Thus, we will eventually find a $p$-proper $\cO_K$-GAP $P$ containing $A$ of dimension $O(1)$ with $|P|\ll |A|$.
\end{proof}

\section{Reduction to a dense subset of the box} \label{sec:reduction}

With the results of the last section in hand, we are now able to complete the second part of our plan, reducing the proof of our main result, in the form of Theorem~\ref{thm:main2}, to the case where $A$ is a dense subset of the box $[0,N)^d$.

\begin{lem} \label{lem:dense}
    For any $\varepsilon>0$, there exists $N_0$ such that if $N \ge N_0$  and $A\subseteq [0,N)^d$ with $|A|\geq \varepsilon N^d$, then
    $$|\cL_0 A+\cdots+\cL_k A|\geq H(\lambda_1,\ldots,\lambda_k)|A|-o_{\varepsilon}(|A|).$$
\end{lem}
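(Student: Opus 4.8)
After rescaling as in Section~\ref{sec:mapZd}, we regard $A$ as a dense subset of the box $[0,N)^d$, which under $\Phi'\colon\frakD\to\ZZ^d$ corresponds to a parallelepiped $R\subseteq K_\RR$ of volume $N^d\Vol(K_\RR/\frakD)$ meeting the fractional ideal $\frakD=\frakD_{\lambda_1,\ldots,\lambda_k;K}$ in exactly $N^d$ points, and we set $B:=\lambda_0\cdot A+\cdots+\lambda_k\cdot A\subseteq\cO_K$, so that $|B|=|\cL_0A+\cdots+\cL_kA|$. The plan is to attach to $A$ a compact model $\ol{A}\subseteq K_\RR\times\RR^{k+1}$ with $\mu(\ol{A})\geq\Vol(K_\RR/\frakD)(|A|-o_\varepsilon(|A|))$ and, in the same way, a model $\ol{B}\subseteq K_\RR\times\RR^{k+1}$ for $B$ over $\cO_K$ with $\mu(\ol{B})\leq\Vol(K_\RR/\cO_K)(|B|+o_\varepsilon(|A|))$, such that $\ol{B}$ contains the sumset $\cN_0'\ol{A}+\cdots+\cN_k'\ol{A}$ of an augmented family $\cN_0',\ldots,\cN_k'$ of simultaneously diagonalisable linear maps on $K_\RR\times\RR^{k+1}$. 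Feeding this into the continuous estimate of Theorem~\ref{thm:cts} and then comparing the covolumes of $\frakD$ and $\cO_K$ will produce exactly the constant $H(\lambda_1,\ldots,\lambda_k)=N_{K/\QQ}(\frakD)\prod_{i=1}^d(1+|\sigma_i(\lambda_1)|+\cdots+|\sigma_i(\lambda_k)|)$. The extra $\RR^{k+1}$ coordinates record the \emph{local lattice density} of $A$ at each point, which is what repairs the weakness of the naive continuous representation discussed after Observation~\ref{obs:localdens}.

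The first ingredient is lattice density itself. For a flag $\cF=\{L_0\subseteq L_1\subseteq\cdots\subseteq L_k\}$ of finite-index sublattices and a periodic set $S$, one defines a compact downset $\LD(S;\cF)\subseteq[0,1]^{k+1}$ recording the densities of $S$ relative to the $L_l$, normalised so that $\mu(\LD(S;\cF))$ equals the density of $S$; its basic properties — monotonicity, behaviour under intersection, and the product inequality of Observation~\ref{obs:localdens2} — are checked directly. We then chop $[0,N)^d$ into roughly $(N/m)^d$ sub-boxes of side $m$, with $m\to\infty$ slowly, treat $A$ on each sub-box $Q$ as essentially a periodic subset of $\frakD$ (this is where it is convenient to work with periodic sets), and let $R_Q\subseteq[0,1]^{k+1}$ be its local lattice density with respect to a flag $\cF$ to be chosen. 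Putting $\ol{A}:=\bigcup_{Q\cap A\neq\emptyset}Q\times R_Q$, with the sub-boxes viewed in $K_\RR$ via $\Phi'^{-1}$, the identity $\sum_Q m^d\mu(R_Q)=|A|+O(mN^{d-1})$ gives the claimed measure for $\ol{A}$, since $mN^{d-1}=o(N^d)=o_\varepsilon(|A|)$; the model $\ol{B}$ is built identically over $\cO_K$ and a flag $\cG$, giving the claimed measure for $\ol{B}$.

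The crux of the argument — and the step I expect to be the main obstacle — is to choose $\cF$ on $\frakD$ and $\cG$ on $\cO_K$ so that multiplication by $\lambda_l$ carries $\cF$ to $\cG$ while preserving the $(l+1)$-st marginal of lattice density; that is, $\pi_{l+1}(\LD(\cL_l A;\cG))\approx\pi_{l+1}(\LD(A;\cF))$ uniformly over the sub-boxes, for each $l=0,\ldots,k$. This should follow from a simultaneous refinement argument over flags, exploiting that multiplication by $\lambda_l$ fixes one distinguished one-dimensional marginal of lattice density while distorting only the others — the $(k+1)$-fold analogue of the column-preservation seen in the $\ZZ/6\ZZ$ example of the introduction. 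Granted such flags, a local application of Observation~\ref{obs:localdens2}, together with the downset structure of lattice density, lets us, for any choice of sub-boxes, pick periodic subsets $A_0,\ldots,A_k$ of the corresponding local pieces of $A$ for which $\LD(\cL_0A_0+\cdots+\cL_kA_k;\cG)$ contains, up to $o_\varepsilon$-loss, the box with side lengths $|\pi_1(\LD(A_0;\cF))|,\ldots,|\pi_{k+1}(\LD(A_k;\cF))|$. Glueing these local choices along the grid yields the global containment $\ol{B}\supseteq\cN_0'\ol{A}+\cdots+\cN_k'\ol{A}$, where $\cN_l'(x,y)=(\lambda_l\cdot x,\pi_{l+1}(y))$ for $x\in K_\RR$, $y\in\RR^{k+1}$; here $\ol{A}$ is read in $\Phi'$-coordinates and $\ol{B}$ in $\Phi$-coordinates, so that the base action of $\cL_l$ becomes multiplication by $\lambda_l$ on $K_\RR$.

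Finally, by Lemma~\ref{lem:diag} the maps $\cN_0',\ldots,\cN_k'$ are simultaneously diagonalisable over $\CC$: on the $K_\RR$-factor they act as multiplication by $\lambda_l$, with eigenvalue $\sigma_i(\lambda_l)$ on the $i$-th eigenline, and on the $j$-th extra coordinate line $\cN_l'$ has eigenvalue $1$ when $l=j-1$ and $0$ otherwise. Thus the sum over $l=0,\ldots,k$ of the moduli of the eigenvalues equals $1+|\sigma_i(\lambda_1)|+\cdots+|\sigma_i(\lambda_k)|$ on the $i$-th $K_\RR$-eigenline (using $\sigma_i(\lambda_0)=1$) and equals $1$ on each extra line, so Theorem~\ref{thm:cts}, in the form permitting zero eigenvalues, gives
\begin{align*}
\mu(\ol{B})\;\geq\;\mu\bigl(\cN_0'\ol{A}+\cdots+\cN_k'\ol{A}\bigr)
&\geq\biggl(\prod_{i=1}^d\bigl(1+|\sigma_i(\lambda_1)|+\cdots+|\sigma_i(\lambda_k)|\bigr)\biggr)\mu(\ol{A}).
\end{align*}
Substituting $\mu(\ol{B})\leq\Vol(K_\RR/\cO_K)(|B|+o_\varepsilon(|A|))$ and $\mu(\ol{A})\geq\Vol(K_\RR/\frakD)(|A|-o_\varepsilon(|A|))$, dividing by $\Vol(K_\RR/\cO_K)$, and using $\Vol(K_\RR/\frakD)/\Vol(K_\RR/\cO_K)=[\cO_K:\frakD]=N_{K/\QQ}(\frakD)$, we obtain $|B|\geq H(\lambda_1,\ldots,\lambda_k)|A|-o_\varepsilon(|A|)$. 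Every error — the boundary term $O(mN^{d-1})$, the cost of the periodic approximations on the sub-boxes, and the $o_\varepsilon$-losses in the flag matching and the lattice-density sumset inclusion — is $o(N^d)=o_\varepsilon(|A|)$ once $m$ grows slowly enough with $N$, so $N_0$ may be taken to depend only on $\varepsilon$.
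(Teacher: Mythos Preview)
Your outline follows the same architecture as the paper's proof: build a compact model $\ol{A}\subseteq\RR^{d+k+1}$ with fibres given by local lattice densities, build a model $\ol{B}$ for the sumset, prove a containment $\cN_0'\ol{A}+\cdots+\cN_k'\ol{A}\subseteq\ol{B}$, apply Theorem~\ref{thm:cts}, and extract the factor $N_{K/\QQ}(\frakD)$ from the covolume ratio. The final computation with the eigenvalues of the $\cN_l'$ is correct and matches the paper.

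There is, however, a real gap at the step you yourself flag as the crux. You write that you need flags $\cF$ on $\frakD$ and $\cG$ on $\cO_K$ with $\pi_{l+1}(\LD(\cL_lA;\cG))\approx\pi_{l+1}(\LD(A;\cF))$ for every $l$ simultaneously, and you suggest this ``should follow from a simultaneous refinement argument over flags''. The paper shows that no single pair $(\cF,\cG)$ can do this. Concretely, it constructs explicit algebraic families $\cF_{\vec n}$, $\cG_{\vec n}$ indexed by $\vec n\in\ZZ_{\geq0}^k$ (Section~\ref{sec:algfamily}) for which Lemma~\ref{lem:pi1stable} matches $\pi_1$ between $\cF_{\vec n}$ and $\cG_{\vec n+1}$, while Lemma~\ref{lem:pilstable} matches $\pi_{l+1}$ only between $\cF_{\vec n}$ and $\cG_{\vec m}$ with $m_l=n_l$ but $m_i=n_i+1$ for $i>l$ --- so the required $\vec m$'s are incompatible across $l$. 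The resolution is not a refinement of a fixed pair but an energy-increment style regularity lemma (Lemmas~\ref{lem:regularity}--\ref{lem:bigregularity}): one iteratively chooses the components of $\vec n$ and a scale $N/M^r$ so that on most sub-cubes the projections $\pi_{l+1}(\LD_Q(A;\cF_{\vec n}))$ are $\delta$-stable under both incrementing $\vec n$ and shrinking $Q$. Only then can one absorb the off-by-one discrepancies above into the $\delta$-slack and prove the containment (Claim~\ref{claim:ctssubset}). Your single scale $m\to\infty$ slowly, with fixed $\cF,\cG$, does not supply this stabilisation.

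A second, smaller point: the local sumset inclusion does not come from Observation~\ref{obs:localdens2} (the product inequality $\eta\eta'\geq\sigma_1\sigma_2$), which was only heuristic. What is actually used is Theorem~\ref{thm:ldsum}, namely that $\LD(A+B;\cG)$ contains $\max(p,q)$ whenever $p\in\LD(A;\cG)$ and $q\in\LD(B;\cG)$; iterating this over $l=0,\ldots,k$ gives that $\LD(\cL_0A_0+\cdots+\cL_kA_k;\cG)$ contains the product of the $\pi_{l+1}$-projections, which is the box you want.
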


The proof of Lemma~\ref{lem:dense}, which is the heart of this paper, will occupy us for the next few sections. 
Before moving on to this, we first show that, together with our version of Freiman's theorem for sums of dilates, Lemma~\ref{lem:dense} completes the proof of Theorem~\ref{thm:main2}. 

\begin{proof}[Proof of Theorem~\ref{thm:main2} assuming Lemma~\ref{lem:dense}]
    Let $A\subset \ZZ^d$ be finite and suppose that
    $$|\cL_0 A+\cdots+\cL_k A|\leq H|A|,$$
    where $H=H(\lambda_1,\ldots,\lambda_k)$. Let $\Phi,\Phi',\frakD$ be as in Section~\ref{sec:mapZd}. Setting $A'=\Phi'^{-1}(A)\subseteq \frakD\subseteq \cO_K$, we have
    $$|A'+\lambda_1\cdot A'+\cdots+\lambda_k\cdot A'|\leq H|A'|.$$
    Let $C_3$ be as in Lemma~\ref{lem:consts}. By Theorem~\ref{thm:algfreiman}, our version of Freiman's theorem for sums of dilates applied with $p = (k+1)C_3$, $A'$ is contained in a $(k+1)C_3$-proper $\cO_K$-GAP $P\subset K$ of dimension $n=O(1)$ and size $|P|=O(|A'|)$. Suppose $P$ is of the form
    $$\setcond{v_0+l_1v_1+\cdots+l_nv_n}{l_j\in B(L_j)}.$$
    Then $|P|\sim (\prod_{j=1}^n L_j)^d$, where the notation $A \sim B$ indicates that the quantities $A$ and $B$ are equal up to a constant multiplicative factor depending only on $\lambda_1, \dots, \lambda_k$. By translating $A'$, we may assume that $v_0=0$. By Lemma~\ref{lem:consts}, we have $\lambda_l\cdot B(L_j)\subseteq \frac{1}{C_2}\cdot B(C_3L_j)$ for all $j,l$. Thus, $\lambda_l\cdot A'\subseteq \lambda_l\cdot P\subseteq \frac{1}{C_2}\cdot (C_3\star P)$ for all $l$. 
    
    We will now map $P$ to a dense subset of a box via a Freiman isomorphism. Let $v_1^*=1$ and $v_l^*=3(k+1)C_3L_{l-1}v_{l-1}^*$ for $l=2,\ldots,n$. Let $P^*$ be the $\cO_K$-GAP 
    $$P^*:=\setcond{l_1v_1^*+l_2v_2^*+\cdots+l_nv_n^*}{l_j\in B(L_j)}.$$
    Then $P^*$ is $(k+1)C_3$-proper. Indeed, if $l_1v_1^*+l_2v_2^*+\cdots+l_nv_n^*=l_1'v_1^*+l_2'v_2^*+\cdots+l_n'v_n^*$ for some $l_j,l_j'\in B((k+1)C_3L_j)$, then we have
    $$(l_1-l_1')v_1^*+\cdots+(l_n-l_n')v_n^*=0.$$
    Suppose $l_t\neq l_t'$ for some $t\in [n]$. 
    Let $t$ be the largest such index, so we have
    $$(l_t'-l_t)v_t^*=(l_1-l_1')v_1^*+\cdots+(l_{t-1}-l_{t-1}')v_{t-1}^*.$$
    However, $\norm{(l_t'-l_t)v_t^*}\geq v_t^*=3(k+1)C_3L_{t-1}v_{t-1}^*$, whereas
    \begin{align*}
        \norm{(l_1-l_1')v_1^*+\cdots+(l_{t-1}-l_{t-1}')v_{t-1}^*} &\leq \norm{(l_1-l_1')v_1^*}+\cdots+\norm{(l_{t-1}-l_{t-1}')v_{t-1}^*}\\
        &\leq (\norm{l_1}+\norm{l_1'})v_1^*+\cdots+(\norm{l_{t-1}}+\norm{l_{t-1}'})v_{t-1}^*\\
        &< 2(k+1)C_3L_1v_1^*+\cdots+2(k+1)C_3L_{t-1}v_{t-1}^*\\
        &\leq 3(k+1)C_3L_{t-1}v_{t-1}^*,
    \end{align*}
    a contradiction. This proves that $P^*$ is $(k+1)C_3$-proper.
    
    Consider $\Psi:(k+1)C_3\star P\to (k+1)C_3\star P^*$, the natural bijection given by
    $$l_1v_1+\cdots+l_nv_n \longleftrightarrow l_1v_1^*+l_2v_2^*+\cdots+l_nv_n^*.$$
    Let $A^*=\Psi(A')$, so that $|A^*|=|A'|$. We claim that for $l=0,\ldots,k$, we have $\Psi(C_2\lambda_l\cdot A')=C_2\lambda_l\cdot A^*$. Indeed, first observe that the LHS is well-defined, since $\lambda_l\cdot P\subseteq \frac{1}{C_2}\cdot (C_3\star P)$, so we have $C_2\lambda_l\cdot P\subseteq C_3\star P$, 
    which is in the domain of $\Psi$. For any $a=l_1v_1+\cdots+l_nv_n\in A'$, set $l_{jl}'=C_2\lambda_l\cdot l_j$, which belongs to $B(C_3L_j)$ by Lemma~\ref{lem:consts}. Then 
    \begin{align*}
        \Psi(C_2\lambda_l\cdot a) &= \Psi(l_{1l}'v_1+\cdots+l_{nl}'v_n) = l_{1l}'v_1^*+\cdots+l_{nl}'v_n^*\\
        &= C_2\lambda_l\cdot (l_1v_1^*+\cdots+l_nv_n^*) = C_2\lambda_l\Psi(a).
    \end{align*}
    This proves the stated claim that $\Psi(C_2\lambda_l\cdot A')=C_2\lambda_l\cdot A^*$.
    
    Since $P$ is $(k+1)C_3$-proper, $C_3\star P$ is $(k+1)$-proper. Hence, $\Psi$ is a $(k+1)$-Freiman isomorphism on $C_3\star P$ and, therefore, since $C_3 > C_2$,
    \begin{align*}
        \Psi(C_2\cdot (A'+\lambda_1\cdot A'+\cdots+\lambda_k\cdot A')) &= \Psi(C_2\lambda_0\cdot A'+C_2\lambda_1\cdot A'+\cdots+C_2\lambda_k\cdot A')\\
        &= \Psi(C_2\lambda_0\cdot A')+\Psi(C_2\lambda_1\cdot A')+\cdots+\Psi(C_2\lambda_k\cdot A')\\
        &= C_2\lambda_0\cdot A^*+C_2\lambda_1\cdot A^*+\cdots+C_2\lambda_k\cdot A^*\\
        &= C_2\cdot(A^*+\lambda_1\cdot A^*+\cdots+\lambda_k\cdot A^*).
    \end{align*}
    It follows that
    $$|A^*+\lambda_1\cdot A^*+\cdots+\lambda_k\cdot A^*| = |A'+\lambda_1\cdot A'+\cdots+\lambda_k\cdot A'|.$$
    
    Note that $P^*\subseteq B(L)$ for some $L\sim \prod_{j=1}^n L_j$. Recall that $C_2$ is an integer satisyfing $C_2\lambda_l\in \cO_K$ for all $l$. In particular, $C_2\in \frakD$ and, since $P^*\subset \cO_K$, $C_2\cdot P^*\subset \frakD$. 
    Since $C_2\cdot P^*\subseteq B(C_2L)$, $\Phi'(C_2\cdot P^*)$ is contained in a box $[-N,N]^d$ with $N\sim L$. But $N^d\sim |P|\sim |A|$ and so $\Phi'(C_2\cdot A^*)$ is a dense subset of the box $[-N,N]^d$. By Lemma~\ref{lem:dense} (after translating into the box $[0,2N+1)^d$), we have
    \begin{align*}
        |\cL_0 A+\cdots+\cL_k A| &= |A'+\lambda_1\cdot A'+\cdots+\lambda_k\cdot A'|\\
        &= |A^*+\lambda_1\cdot A^*+\cdots+\lambda_k\cdot A^*|\\
        &= |\cL_0(\Phi'(C_2\cdot A^*))+\cdots+\cL_k(\Phi'(C_2\cdot A^*))|\\
        &\geq H|A^*|-o(|A^*|)\\
        &= H|A|-o(|A|),
    \end{align*}    
    as required.
\end{proof}

\section{Lattice densities} \label{sec:ld}

As already mentioned in the introduction, the key to proving Lemma~\ref{lem:dense} is to represent each discrete set $A$ by a continuous set $\overline{A}$, which we call a lattice density, to which we can apply the continuous estimate given by Theorem~\ref{thm:cts}. 
In this section, we introduce these lattice densities and prove some general facts about them. Very roughly, the lattice density of a set $A\subseteq \ZZ^d$ will encode the density of $A$ with respect to certain lattices.

\subsection{Lattice densities for periodic sets}

Let $L$ be a lattice of rank $d$, that is, $L\cong \ZZ^d$. We say that $A\subseteq L$ is \emph{$d$-periodic} if its group of translational symmetries has rank $d$. Let $\cF=\set{L_1\subseteq L_2\subseteq \cdots \subseteq L_k}$ be a flag of sublattices of $L$, each of which has rank $d$. In this section, we will define the \emph{lattice density} of any $d$-periodic set $A\subseteq L$ with respect to the flag $\cF$, denoted by $\LD(A;\cF)$, which will be a subset of $[0,1]^k$ that is a finite union of closed axis-aligned boxes.

For any affine lattice $M\subseteq L$ of rank $d$, we write $\rho_M(A)$ for the density of $A\cap M$ in $M$. Since $A$ is $d$-periodic, this density is always well-defined. In particular, $0\leq \rho_M(A)\leq 1$.
This already allows us to define the lattice density for $k=1$. 
Indeed, if $\cF=\set{L_1}$ and $A\cap L_1 \ne \emptyset$, we set $\LD(A;\cF)$ to be the interval $[0,\rho_{L_1}(A)]\subset \RR$, while if $A\cap L_1=\emptyset$, we set $\LD(A;\cF)=\emptyset$.

For $k>1$, let $a_1,\ldots,a_m\in L_k$ be any set of coset representatives of $L_k/L_{k-1}$, where $m=[L_k:L_{k-1}]$. Let $D_j=\LD(A+a_j;\cF\setminus L_k)\subseteq [0,1]^{k-1}$ for each $j \in [m]$ and 
$$D=\bigcup_{j=1}^m \paren{D_j\times \left[\frac{j-1}{m},\frac{j}{m}\right]}\subseteq [0,1]^k.$$
Finally, set $\LD(A;\cF)=C_k(D)$, where $C_k$ is the compression in the $k$-th direction, defined as follows. 

In our case, we will only be compressing sets which are finite unions of axis-aligned closed boxes. Let $X\subset \RR^d$ be such a set and $1\leq i\leq d$. Let $\pi_i:\RR^d\to \RR^{d-1}$ be the projection along the $i$-th axis. For $x\in \RR^{d-1}$, let $X_x=\pi_i^{-1}(x)$, viewed as a subset of $\RR$, and write $|X_x|$ for the measure of $X_x$.
Now define $C_i'(X)$ to be the set $Y$ such that $\pi_i(X)=\pi_i(Y)$ and, for each $x\in \pi_i(X)$, $Y_x$ is the interval $[0,|X_x|]$.
However, because of boundary issues, this is not quite the compression we want. For example, if $X=[0,1]^2\cup [1,2]^2\subset \RR^2$, then $C_2'(X)=[0,2]\times [0,1]\cup \{1\}\times [1,2]$. The artifact $\{1\}\times [1,2]$ is undesirable and only arises because the boundaries of the two squares $[0,1]^2$ and $[1,2]^2$ overlap in the projection. To remove this artifact, we formally define $C_i(X)$ to be the closure of the interior of $C_i'(X)$. Since we will only be compressing sets which are finite unions of axis-aligned closed boxes, we still enjoy the main properties of compressions, such as preservation of the measure of $X$ and that $C_i(X)$ is also a finite union of axis-aligned closed boxes. We will say that $X$ is \emph{$C_i$-compressed} if $C_i(X) = X$ and \emph{compressed} if it is $C_i$-compressed for all $i$.

Observe that, because of the compression, $\LD(A;\cF)$ is independent of the ordering $a_1,\ldots,a_m$.

\begin{example}
Suppose $d=1$, $k=2$, $L=\ZZ$, $\cF=\set{3\ZZ\subset \ZZ}$ and $A=12\ZZ\cup (12\ZZ+1)\cup (6\ZZ+3)$. Pick $a_i=-i$ for $i=1,2,3$ to be the coset representatives of $\ZZ/3\ZZ$. Let $A_i=(A-i)\cap 3\ZZ$ for $i=1,2,3$. Thus, $A_1,A_2,A_3$ are the parts of $A$ in the residue classes mod $3$, translated so they all lie in $3\ZZ$. We can easily check that 
\begin{itemize}
    \item $A_1=12\ZZ$,
    \item $A_2=\emptyset$,
    \item $A_3=12\ZZ+\set{0, 6, 9}$.
\end{itemize}

From the definition, $D_i=\LD(A_i; \set{3\ZZ})=[0,\rho_{3\ZZ}(A_i)]$, so we have $D_1=[0,1/4]$, $D_2=\emptyset$ and $D_3=[0,3/4]$. Stacking these intervals vertically and compressing, we get $\LD(A;\cF)\subset [0,1]^2$ as shown in Figure~\ref{fig:ld-ex}.

\begin{figure}
    \centering
    \begin{tikzpicture}
        \tikzmath{\x=6;}
        
        \draw[fill=lightgray] (0,0) rectangle (1,4/3) node[pos=0.5] {$D_1$};
        \draw[fill=lightgray] (0,8/3) rectangle (3,4) node[pos=0.5] {$D_3$};
        \node[anchor=east] at (0,2/3) {$1\pmod{3}$};
        \node[anchor=east] at (0,2) {$2\pmod{3}$};
        \node[anchor=east] at (0,10/3) {$0\pmod{3}$};
        
        \draw[fill=lightgray] (\x,0) -- (\x+3,0) -- (\x+3,4/3) -- (\x+1,4/3) -- (\x+1,8/3) -- (\x,8/3) -- cycle;

        \foreach \a in {0,\x}{
            \draw[->] (\a-0.5,0) -- (\a+4.5,0);
            \draw[->] (\a,-0.5) -- (\a,4.5);
            \draw (\a+4,0) -- (\a+4,0.2) node[anchor=north] at (\a+4,-0.05) {$1$};
            \draw (\a,4) -- (\a+0.2,4) node[anchor=east] at (\a,4) {$1$};
        }
    \end{tikzpicture}
    \caption{On the left, the $D_i$ are stacked, while on the right they are compressed to give the final lattice density.}
    \label{fig:ld-ex}
\end{figure}
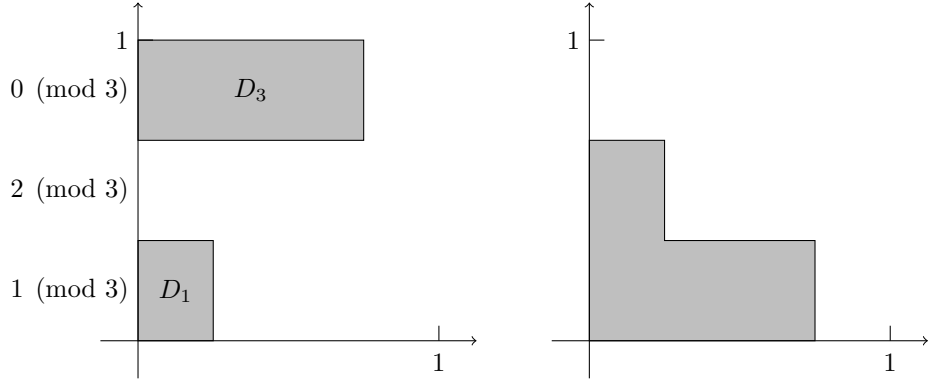

\end{example}

Throughout the rest of this section, $\cF=\set{L_1\subseteq \cdots\subseteq L_k}$ will be a flag of full-rank sublattices of a lattice $L\cong \ZZ^d$ and $A\subseteq L$ a $d$-periodic subset of $L$, our aim being to understand the properties of the lattice density $\LD(A;\cF)$. We begin with some basic observations. 

\begin{lem} \label{lem:ldbasic}
    The following are true:
    \begin{enumerate}
        \item For any $a\in L_k$, $\LD(A;\cF)=\LD(A+a;\cF)$.
        \item $\LD(A;\cF)$ is compressed. 
        \item If $B\subseteq A$ is $d$-periodic, then $\LD(B;\cF)\subseteq \LD(A;\cF)$.
        \item $\rho_{L_k}(A)=\Vol(\LD(A;\cF))$.
        \item $\LD(A;\cF)$ is a finite union of boxes of the form
        $$[0,r]\times \sqb{0,\frac{m_2}{[L_2:L_1]}}\times\cdots\times \sqb{0,\frac{m_k}{[L_k:L_{k-1}]}},$$
        where $r\in (0,1]$ and $m_2,\ldots,m_k$ are positive integers.
    \end{enumerate}
\end{lem}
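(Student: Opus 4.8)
The plan is to prove all five items simultaneously by induction on the length $k$ of the flag $\cF=\set{L_1\subseteq\cdots\subseteq L_k}$. The base case $k=1$ is immediate from $\LD(A;\set{L_1})=[0,\rho_{L_1}(A)]$ (or $\emptyset$): item~(1) is translation-invariance of $\rho_{L_1}$, item~(3) is its monotonicity in $A$, item~(4) is $\Vol([0,r])=r$, item~(2) is trivial in one variable, and item~(5) is the single box $[0,\rho_{L_1}(A)]$. For the inductive step write $\cF':=\cF\setminus L_k$, $m:=[L_k:L_{k-1}]$, fix coset representatives $a_1,\ldots,a_m$ of $L_k/L_{k-1}$, and recall $\LD(A;\cF)=C_k(D)$ with $D=\bigcup_{j=1}^m\paren{D_j\times\sqb{\tfrac{j-1}{m},\tfrac{j}{m}}}$ and $D_j=\LD(A+a_j;\cF')$. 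Note first that this is well defined: permuting the $a_j$ only permutes the slabs and so does not affect $C_k(D)$, and replacing $a_j$ by another representative of its coset changes $A+a_j$ by a translation by an element of $L_{k-1}$, which by the inductive form of item~(1) leaves $D_j$ unchanged.

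Items~(1) and~(3) are short. For~(1), given $a\in L_k$, compute $\LD(A+a;\cF)$ using the coset representatives $a_j-a$; this produces exactly the same sets $D_j=\LD(A+a_j;\cF')$ in the same order, hence the same $C_k(D)$. For~(3), if $B\subseteq A$ then with a common choice of representatives the inductive form of item~(3) gives $\LD(B+a_j;\cF')\subseteq\LD(A+a_j;\cF')$ for each $j$, so $D^B\subseteq D^A$, and $C_k$ is monotone.

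The heart of the matter is item~(5), which will also yield item~(2). Call a box in $[0,1]^{k-1}$ \emph{standard} if it has the shape $[0,r]\times\sqb{0,\tfrac{m_2}{[L_2:L_1]}}\times\cdots\times\sqb{0,\tfrac{m_{k-1}}{[L_{k-1}:L_{k-2}]}}$ with $r\in(0,1]$ and $m_2,\ldots,m_{k-1}$ positive integers; intersecting two standard boxes amounts to taking coordinatewise minima and again gives a standard box, so finite unions of standard boxes are closed under finite unions and intersections. By the inductive hypothesis each $D_j$ is a finite union of standard boxes. For $x\in[0,1]^{k-1}$ set $c(x):=\abs{\setcond{j}{x\in D_j}}$; the last-coordinate fibre of $D$ over $x$ then has measure $c(x)/m$, so $\LD(A;\cF)=C_k(D)=\setcond{(x,y)}{0\le y\le c(x)/m}$ (the closure-of-interior in the definition of $C_k$ only prunes lower-dimensional pieces, and all the boxes entering the description of $\LD(A;\cF)$ turn out to be full-dimensional). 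Putting $S_t:=\setcond{x}{c(x)\ge t}=\bigcup_{\abs{S}=t}\bigcap_{j\in S}D_j$ for $1\le t\le m$, each $S_t$ is a finite union of standard boxes and $S_1\supseteq\cdots\supseteq S_m$; writing $S_t=\bigcup_\beta B_\beta^{(t)}$ and using that $B_\beta^{(t)}\subseteq S_t\subseteq S_s$ for all $s\le t$ gives
\[\LD(A;\cF)=\bigcup_{t=1}^m\bigcup_\beta\paren{B_\beta^{(t)}\times\sqb{0,\tfrac{t}{m}}},\]
which is a finite union of boxes of exactly the form in item~(5), since $t/m=t/[L_k:L_{k-1}]$. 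Item~(2) follows immediately: in any coordinate direction the fibre of such a finite union of corner boxes is a finite union of intervals of the form $[0,\cdot]$, hence an interval $[0,M]$ of measure $M$, so $C_i$ fixes the set for every $i$.

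For item~(4), compression preserves the measure of a finite union of axis-aligned boxes, so by the inductive form of item~(4),
\[\Vol(\LD(A;\cF))=\Vol(D)=\frac{1}{m}\sum_{j=1}^m\Vol(D_j)=\frac{1}{m}\sum_{j=1}^m\rho_{L_{k-1}}(A+a_j);\]
this equals $\rho_{L_k}(A)$ because $L_k$ is the disjoint union of the $m$ translates $L_{k-1}-a_j$ of $L_{k-1}$, the density of $A$ within $L_{k-1}-a_j$ equals $\rho_{L_{k-1}}(A+a_j)$, and $\rho_{L_k}(A)$ is the average of these. The only step that needs genuine care is item~(5): one must check that the super-level sets $S_t$ remain finite unions of standard boxes and that they are nested, after which everything else is bookkeeping (including the minor nuisance, handled by the closure-of-interior in $C_k$, of discarding lower-dimensional pieces).
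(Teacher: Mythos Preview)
Your proof is correct. The overall induction and the arguments for items~(1), (3), and~(4) match the paper's essentially verbatim, but your treatment of items~(2) and~(5) is genuinely different and worth noting.

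The paper proves~(2) first, by observing that each $D_j$ is $C_l$-compressed for $l<k$ by induction, that the stack $D$ therefore is as well, and that applying $C_k$ preserves $C_l$-compressedness for $l<k$ (this last step is asserted without comment and is the one place where the paper's argument leans on an unstated fact about compressions). The paper then proves~(5) by first establishing a finer intermediate claim---that $\LD(A;\cF)$ is an interior-disjoint union of translated ``unit cells'' of the form $v+[0,r]\times\prod_{i\ge 2}\sqb{0,1/[L_i:L_{i-1}]}$---and then invoking~(2) to collapse these into the corner boxes of~(5).

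You reverse the dependency: you prove~(5) directly via the super-level sets $S_t=\setcond{x}{c(x)\ge t}=\bigcup_{|S|=t}\bigcap_{j\in S}D_j$, using the pleasant observation that standard corner boxes are closed under intersection (take coordinatewise minima), so each $S_t$ is again a finite union of full-dimensional standard boxes and $C_k'(D)=\bigcup_t S_t\times[0,t/m]$ already equals its closure-of-interior. Item~(2) then falls out immediately from~(5), since any finite union of corner boxes has every fibre of the form $[0,M]$. This route is arguably cleaner---it sidesteps the need to verify that $C_k$ preserves $C_l$-compressedness---at the cost of not recording the paper's unit-cell decomposition, which is in any case not used later. (Two minor remarks: the nestedness $S_1\supseteq\cdots\supseteq S_m$ is not actually needed for the identity $C_k'(D)=\bigcup_t S_t\times[0,t/m]$, and your description $\setcond{(x,y)}{0\le y\le c(x)/m}$ should be restricted to $x$ with $c(x)\ge 1$, but your subsequent box formula handles this correctly.)
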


\begin{proof}
    We proceed by induction on $k$. In the base case $k=1$, we have $\LD(A;\cF)=[0,\rho_{L_1}(A)]$ and it is easy to check that all of the required properties hold.

    Assume therefore that $k>1$. Let $D_1,\ldots,D_k,D$ be as defined  above. We verify each property in turn:
    \begin{enumerate}
        \item Addition by $a$ permutes the cosets $L_k/L_{k-1}$, so let $a_1',\ldots,a_m'$ be a permutation of $a_1,\ldots,a_m$ such that $a_j+a=a_j'+b_j$ for some $b_j\in L_{k-1}$. Let $D_j'=\LD(A+a+a_j;\cF\setminus L_k)$. By the induction hypothesis, $D_j'=\LD(A+a_j'+b_j;\cF\setminus L_k)=\LD(A+a_j';\cF\setminus L_k)$, so $D_1',\ldots,D_m'$ is a permutation of $D_1,\ldots,D_m$. After compression, it follows that $\LD(A;\cF)=\LD(A+a;\cF)$. 
        \item Each of the $D_j$ are $C_l$-compressed for $l=1,\ldots,k-1$. Thus, $D$ is $C_l$-compressed for $l=1,\ldots,k-1$ and, therefore, $\LD(A;\cF)=C_k(D)$ is $C_l$-compressed for $l=1,\ldots,k$.
        \item Let $D_j'=\LD(B+a_j;\cF\setminus L_k)$. By the induction hypothesis, $D_j'\subseteq D_j$, so the corresponding $D'$ satisfies $D'\subseteq D$. Therefore, $\LD(B;\cF)\subseteq \LD(A;\cF)$.
        \item By definition, $D_j=\LD(A+a_j;\cF\setminus L_k)$ and, by the induction hypothesis, we have $\rho_{L_{k-1}}(A+a_j)=\Vol(D_j)$. Therefore,
        \begin{align*}
            \Vol(\LD(A;\cF)) &=\Vol(D)
            = \frac{1}{m}\sum_{j=1}^m \Vol(D_j)
            = \frac{1}{m}\sum_{j=1}^m \rho_{L_{k-1}}(A+a_j)\\
            &= \frac{[L_k:L_{k-1}]}{m}\sum_{j=1}^m \rho_{L_k}((A+a_j)\cap L_{k-1})\\
            &= \sum_{j=1}^m \rho_{L_k}(A\cap (L_{k-1}-a_j))
            = \rho_{L_k}(A).
        \end{align*}
        \item We show by induction that $\LD(A;\cF)$ is an interior-disjoint union of boxes of the form
        $$v+[0,r]\times \sqb{0,\frac{1}{[L_2:L_1]}}\times\cdots\times \sqb{0,\frac{1}{[L_k:L_{k-1}]}},$$
        where $v$ is of the form
        $$\paren{0,\frac{m_2}{[L_2:L_1]},\ldots,\frac{m_k}{[L_k:L_{k-1}]}}$$
        with $m_2,\ldots,m_k$ non-negative integers. 
        The base case is trivial since $\LD(A;\cF)$ is an interval.
        
        By the induction hypothesis, each $D_j$ is an interior-disjoint union of boxes of the form
        $$v+[0,r]\times \sqb{0,\frac{1}{[L_2:L_1]}}\times\cdots\times \sqb{0,\frac{1}{[L_{k-1}:L_{k-2}]}}.$$
        Thus, $D$ is also the interior-disjoint union of boxes of the same kind and compressing preserves this property.

        Finally, since $\LD(A;\cF)$ is compressed, it is the finite union of boxes of the required form. \qedhere
    \end{enumerate}
\end{proof}

The next lemma fully determines $\LD(A;\cF)$ by giving a precise condition for when the lattice density contains any given point.

\begin{lem} \label{lem:ldpoint}
    Suppose $k\geq 2$, $r\in (0,1]$ is real and $m_2,\ldots,m_k$ are positive integers. Then the following are equivalent:
    \begin{enumerate}
        \item $\LD(A;\cF)$ contains the point
        $$\paren{r, \frac{m_2}{[L_2:L_1]},\frac{m_3}{[L_3:L_2]},\ldots,\frac{m_k}{[L_k:L_{k-1}]}}.$$
        \item For each $l=2,\ldots,k$ and $(i_l,i_{l+1},\ldots,i_k)\in [m_l]\times [m_{l+1}]\times\cdots \times [m_k]$, there exist $b_{i_l,\ldots,i_k}\in L_k$ such that:
        \begin{enumerate}
            \item For $l<k$, $b_{i_l,i_{l+1},\ldots,i_k}\in b_{i_{l+1},\ldots,i_k}+L_l$.
            \item $b_{i,i_{l+1},\ldots,i_k}-b_{j,i_{l+1},\ldots,i_k}\not\in L_{l-1}$ for each $i\neq j$ with $i,j\in [m_l]$.
            \item $\rho_{L_1}(A+b_{i_2,\ldots,i_k})\geq r$ for each $i_2,\ldots,i_k$.
        \end{enumerate}
    \end{enumerate}
\end{lem}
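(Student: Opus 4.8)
The plan is to induct on $k$, peeling off the top lattice $L_k$ at each stage. The engine of the induction is a statement purely about the compression $C_k$. Write $m := [L_k:L_{k-1}]$, let $a_1,\dots,a_m$ be coset representatives of $L_k/L_{k-1}$, set $D_j := \LD(A+a_j;\cF\setminus L_k)$ and $D := \bigcup_{j=1}^m (D_j\times[\tfrac{j-1}{m},\tfrac{j}{m}])$ as in the definition, and write the point in question as $p=(p',\tfrac{m_k}{m})$, where $p' := (r,\tfrac{m_2}{[L_2:L_1]},\dots,\tfrac{m_{k-1}}{[L_{k-1}:L_{k-2}]})$. The claim is
\[ p\in C_k(D) \iff \#\{\, j\in[m] : p'\in D_j \,\}\ge m_k . \]
For the forward direction I would use that $C_k(D)$ is the closure of the interior of $C_k'(D)$, that any interior point $(q',s)$ of $C_k'(D)$ satisfies $s<|D_{q'}|=\tfrac1m\#\{j:q'\in D_j\}$, and that $q'\mapsto\#\{j:q'\in D_j\}$ is upper semicontinuous since each $D_j$ is closed; a limiting argument then forces $\tfrac{m_k}{m}\le\tfrac1m\#\{j:p'\in D_j\}$. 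For the reverse direction, note that each $D_j$ is a down-set (being compressed, by Lemma~\ref{lem:ldbasic}(2), and a finite union of boxes anchored at the origin, by Lemma~\ref{lem:ldbasic}(5)) and that every coordinate of $p'$ is strictly positive; so if $m_k$ of the $D_j$ contain $p'$ then, after reindexing the coset representatives (which leaves $\LD(A;\cF)$ unchanged), the open box $\prod_i(0,p'_i)$ lies inside each of $D_1,\dots,D_{m_k}$, whence $\prod_i(0,p'_i)\times[0,\tfrac{m_k}{m})$ lies in the interior of $C_k'(D)$ and its closure contains $p$.

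With the compression claim in hand, the lemma reduces to understanding when $p'\in D_j=\LD(A+a_j;\cF\setminus L_k)$. When $k=2$ this is immediate: $D_j$ is either $\emptyset$ or $[0,\rho_{L_1}(A+a_j)]$, so, since $r>0$, we have $p'=r\in D_j$ iff $\rho_{L_1}(A+a_j)\ge r$; combined with the compression claim, $p\in\LD(A;\cF)$ iff at least $m_2$ cosets of $L_1$ in $L_2$ admit a representative $b$ with $\rho_{L_1}(A+b)\ge r$, which is exactly condition~(2) in this case (the $b_{i_2}$ being those representatives). When $k\ge 3$ I would apply the induction hypothesis to each $A+a_j$ together with the shorter flag $\cF\setminus L_k$.

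It then remains to match the nested index bookkeeping. For $(1)\Rightarrow(2)$: pick $m_k$ representatives $a_j$ with $p'\in D_j$, relabel them by an index $i_k\in[m_k]$ and set $b_{i_k}:=a_{j(i_k)}$; these are pairwise inequivalent mod $L_{k-1}$, which is condition~(b) at level $l=k$. The induction hypothesis applied to $A+b_{i_k}$ supplies $c^{(i_k)}_{i_l,\dots,i_{k-1}}\in L_{k-1}$ satisfying the analogues of (a)--(c) for $\cF\setminus L_k$, and then $b_{i_l,\dots,i_k}:=b_{i_k}+c^{(i_k)}_{i_l,\dots,i_{k-1}}$ verifies all of (a)--(c), the key point being that $A+b_{i_2,\dots,i_k}=(A+b_{i_k})+c^{(i_k)}_{i_2,\dots,i_{k-1}}$, so (c) is literally (c) for the subflag. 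For $(2)\Rightarrow(1)$: given the family $(b_{i_l,\dots,i_k})$, the elements $b_{i_k}$ ($i_k\in[m_k]$) are pairwise inequivalent mod $L_{k-1}$, so we may choose the coset representatives of $L_k/L_{k-1}$ with $a_{i_k}=b_{i_k}$; then $c_{i_l,\dots,i_{k-1}}:=b_{i_l,\dots,i_{k-1},i_k}-b_{i_k}$ lies in $L_{k-1}$ (iterate condition~(a), using $L_l\subseteq L_{k-1}$) and satisfies the hypotheses of the induction hypothesis for $A+b_{i_k}$, so $p'\in D_{i_k}$ for each $i_k\in[m_k]$, and the compression claim yields $p\in\LD(A;\cF)$.

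The main obstacle, I expect, is not a single deep idea but rather two pieces of care: (i) pinning down the compression claim so that the boundary effects hidden in the ``closure of the interior'' definition of $C_k$ do no harm — this is precisely where strict positivity of the coordinates of $p'$ and the down-set property of the $D_j$ are used — and (ii) keeping the nested index notation straight while ensuring that translating by elements of $L_{k-1}$ interacts correctly with $\rho_{L_1}$, which is finessed by using the freedom to take the coset representatives $a_j$ to be the given $b_{i_k}$.
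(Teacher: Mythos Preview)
Your proposal is correct and follows essentially the same inductive strategy as the paper's proof: peel off $L_k$, reduce to the statement that $p\in\LD(A;\cF)$ iff at least $m_k$ of the $D_j$ contain $p'$, then apply the induction hypothesis to each $A+b_{i_k}$ with the shorter flag, with the same index bookkeeping (setting $b_{i_l,\dots,i_k}=b_{i_k}+c^{(i_k)}_{i_l,\dots,i_{k-1}}$ in one direction and $c_{i_l,\dots,i_{k-1}}=b_{i_l,\dots,i_{k-1},i_k}-b_{i_k}$ in the other). The only real difference is that you spell out the ``compression claim'' carefully via upper semicontinuity and the down-set structure of the $D_j$, whereas the paper simply asserts it as following from the construction; your extra care here is warranted given the closure-of-interior definition of $C_k$.
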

\begin{proof}
    We proceed by induction on $k$. Let $a_1,\ldots,a_m$ be any coset representatives of $L_k/L_{k-1}$ with $m=[L_k:L_{k-1}]$ and $D_i=\LD(A+a_i;\cF\setminus L_k)$. 

    \vspace{2mm}
    \noindent
    $1\Rightarrow 2$: From the construction of $\LD(A;\cF)$, $m_k$ of the $D_i$ contain the point 
    $$\paren{r, \frac{m_2}{[L_2:L_1]},\frac{m_3}{[L_3:L_2]},\ldots,\frac{m_{k-1}}{[L_{k-1}:L_{k-2}]}}.$$
    Without loss of generality, assume that they are $D_1,\ldots,D_{m_k}$. Set $b_i=a_i\in L_k$ for $i=1,\ldots,m_k$. Then $b_i-b_j\not\in L_{k-1}$ for $i\neq j$. 

    If $k=2$, then each $D_i$ with $i \in [m_k]$ contains $r$, meaning that $\rho_{L_1}(A+a_i)\geq r$. Thus, $\rho_{L_1}(A+b_i)\geq r$ for each $i \in [m_k]$, completing the proof of the base case.

    Now suppose that $k>2$. By the induction hypothesis applied to each $D_{i_k}$, there exist $b_{i_l,\ldots,i_k}'\in L_{k-1}$ for each $(i_l,i_{l+1},\ldots,i_k)\in [m_l]\times [m_{l+1}]\times\cdots \times [m_k]$ such that
    \begin{enumerate}[label=(\alph*)]
        \item For $l<k-1$, $b_{i_l,\ldots,i_k}'\in b_{i_{l+1},\ldots,i_k}'+L_l$.
        \item For $l<k$, $b_{i,i_{l+1},\ldots,i_k}'-b_{j,i_{l+1},\ldots,i_k}'\not\in L_{l-1}$ for each $i\neq j$ with $i,j\in [m_l]$.
        \item $\rho_{L_1}(A+b_{i_k}+b_{i_2,\ldots,i_k}')\geq r$ for each $i_2,\ldots,i_k$.
    \end{enumerate}
    Set $b_{i_l,\ldots,i_k}=b_{i_l,\ldots,i_k}'+b_{i_k}$. Then property (a) holds for $l<k-1$; property (b) holds for $l<k$ and property (c) holds. It remains to check that $b_{i_{k-1},i_k}\in b_{i_k}+L_{k-1}$ and $b_{i}-b_{j}\not\in L_{k-1}$ for each $i\neq j$. The former holds since $b_{i_{k-1},i_k}=b_{i_{k-1},i_k}'+b_{i_k}\in b_{i_k}+L_{k-1}$ and the latter was observed earlier.

    \vspace{2mm}
    \noindent
    $2\Leftarrow 1$: Since $a_1,\ldots,a_m$ are any coset representatives, we may pick $a_i=b_i$ for $i=1,\ldots,m_k$. 

    For $k=2$, since $\rho_{L_1}(A+b_i)\geq r$, $D_i$ contains $r$ for $i=1,\ldots,m_2$. Thus, $\LD(A;\cF)$ contains the point $(r,\frac{m_2}{|L_2/L_1|})$.

    Now assume $k>2$. Let $b_{i_l,\ldots,i_k}'=b_{i_l,\ldots,i_k}-b_{i_k}$. Then we have the following properties, inherited from the $b$:
    \begin{enumerate}[label=(\alph*)]
        \item For $l<k-1$, $b_{i_l,\ldots,i_k}'\in b_{i_{l+1},\ldots,i_k}'+L_l$.
        \item For $l<k$, $b_{i,i_{l+1},\ldots,i_k}'-b_{j,i_{l+1},\ldots,i_k}'\not\in L_{l-1}$ for each $i\neq j$ with $i,j\in [m_l]$.
        \item $\rho_{L_1}(A+b_{i_k}+b_{i_2,\ldots,i_k}')\geq r$ for each $i_2,\ldots,i_k$.
    \end{enumerate}
    By the induction hypothesis, for $i=1,\ldots,m_k$, $D_i$ contains the point
    $$\paren{r, \frac{m_2}{[L_2:L_1]},\frac{m_3}{[L_3:L_2]},\ldots,\frac{m_{k-1}}{[L_{k-1}:L_{k-2}]}}.$$
    Therefore, by the definition of $\LD(A;\cF)$, it contains the point
    $$\paren{r, \frac{m_2}{[L_2:L_1]},\frac{m_3}{[L_3:L_2]},\ldots,\frac{m_k}{[L_k:L_{k-1}]}},$$
    as required.
\end{proof}

As an application of this lemma, we now show how to compute the projections of lattice densities.

\begin{lem} \label{lem:ldproj}
    The following are true:
    \begin{enumerate}
        \item $\pi_1(\LD(A;\cF))$ is the interval $[0,r]$, where
            $$r=\max_{a\in L_k}\set{\rho_{L_1}(A+a)}.$$
            In particular, $\pi_1(\LD(A;\cF))$ depends only on $A$, $L_1$ and $L_k$.
        \item For $2\leq l\leq k$, $\pi_l(\LD(A;\cF))$ is the interval 
        $$\sqb{0,\frac{m}{[L_l:L_{l-1}]}},$$
        where $m\in \ZZ$ is the maximum number of elements $a_1,\ldots,a_m\in A\cap L_k$ such that $a_i-a_j\in L_l\setminus L_{l-1}$ for any $i\neq j$. In particular, $\pi_l(\LD(A;\cF))$ depends only on $A$, $L_{l-1}$, $L_l$ and $L_k$.
    \end{enumerate}
\end{lem}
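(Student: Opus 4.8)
The plan is to read off both parts from the two preceding lemmas. Lemma~\ref{lem:ldbasic}(5) tells us that $\LD(A;\cF)$ is a finite union of axis-parallel boxes, each anchored at the origin, of the form $[0,r]\times[0,m_2/[L_2:L_1]]\times\cdots\times[0,m_k/[L_k:L_{k-1}]]$; in particular $\LD(A;\cF)$ is a closed downset in $[0,1]^k$ (we assume throughout that $A\cap L_k\ne\emptyset$, so that it is non-empty). The first observation is then purely formal: for a closed origin-anchored downset that is a finite union of boxes, the $l$-th-coordinate projection $\pi_l(\LD(A;\cF))$ is exactly the interval $[0,M_l]$, where $M_l$ is the maximum value of the $l$-th coordinate over $\LD(A;\cF)$; moreover $M_1\in(0,1]$, and for $l\ge 2$ one has $M_l=m^\ast/[L_l:L_{l-1}]$ for some positive integer $m^\ast$, since the $l$-th side of every box has this shape. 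Using the downset property once more, $M_l$ equals the largest value $c$ for which $\LD(A;\cF)$ contains the point whose $l$-th coordinate is $c$, whose first coordinate is an arbitrarily small $r>0$, and whose remaining coordinates take the minimal admissible values $1/[L_{l'}:L_{l'-1}]$. Everything thus reduces to deciding, via Lemma~\ref{lem:ldpoint}, for which $m$ this particular point lies in $\LD(A;\cF)$.

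For $k=1$ both claims are immediate, since $\LD(A;\cF)=[0,\rho_{L_1}(A)]$ and translation by $L_1$ preserves density, so assume $k\ge 2$. I would apply Lemma~\ref{lem:ldpoint} to the point above with $m_{l'}=1$ for every $l'\ne l$ and $m_l=m$. The key simplification is that once all but one of the $m_{l'}$ equal $1$, the index sets $[m_{l'}]\times\cdots\times[m_k]$ appearing in condition~(2) collapse: for each $i_l\in[m]$ there is a single chain of vectors $b_\cdot$, and choosing each such chain to be constant one sees that condition~(2) is equivalent to the existence of $b_1,\dots,b_m\in L_k$ that are pairwise congruent modulo $L_l$, pairwise incongruent modulo $L_{l-1}$, and satisfy $\rho_{L_1}(A+b_i)\ge r$ for all $i$. (When $l=k$ the congruence-modulo-$L_l$ condition is vacuous, which is why the statement for $\pi_k$ imposes only the $L_{k-1}$ condition.) Since $A$ is $d$-periodic, each $(A+b_i)\cap L_1$ is either empty or of positive density, so the last condition, holding for arbitrarily small $r>0$, is the same as $(A+b_i)\cap L_1\ne\emptyset$, i.e.\ the existence of $a_i\in A$ with $a_i+b_i\in L_1$.

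It then remains to translate between such families $(b_i)$ and families of elements of $A\cap L_k$ with pairwise differences in $L_l\setminus L_{l-1}$. Given $(b_i)$ and corresponding $a_i\in A$ with $a_i+b_i\in L_1$, one has $a_i\in L_1-b_i\subseteq L_k$, hence $a_i\in A\cap L_k$, and $a_i-a_j=[(a_i+b_i)-(a_j+b_j)]-(b_i-b_j)$ lies in $L_l\setminus L_{l-1}$ for $i\ne j$, using $L_1\subseteq L_{l-1}\subseteq L_l$. Conversely, if $a_1,\dots,a_m\in A\cap L_k$ have pairwise differences in $L_l\setminus L_{l-1}$, then $b_i:=-a_i$ works (with $a_i+b_i=0\in L_1$). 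Hence the largest realisable $m$ is exactly the quantity in the statement, giving $\pi_l(\LD(A;\cF))=[0,m/[L_l:L_{l-1}]]$ for $l\ge 2$; the same argument with all $m_{l'}=1$ and the first coordinate read off yields $\pi_1(\LD(A;\cF))=[0,\max_{a\in L_k}\rho_{L_1}(A+a)]$. The ``depends only on'' assertions are then immediate, as these formulas involve only the stated data. I expect the only genuine work to be the bookkeeping in the middle paragraph: verifying carefully, and in both directions, that the collapsed index structure of Lemma~\ref{lem:ldpoint}(2) is equivalent to the clean three-condition description of the $b_i$, including the degenerate case $l=k$. (An alternative, which I would use as a cross-check, is to induct on $k$ directly from the recursive definition of $\LD(A;\cF)$: compression in the $k$-th direction does not change coordinates $l<k$, and the $k$-th coordinate is handled by counting non-empty fibres among the $D_j$.)
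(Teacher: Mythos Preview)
Your proposal is correct and follows essentially the same route as the paper: reduce to membership of the point with all but one coordinate minimal, apply Lemma~\ref{lem:ldpoint}, collapse the index structure to a single family $b_1,\dots,b_m$, and translate between the $b_i$ and elements $a_i\in A\cap L_k$ exactly as the paper does (with the same $b_i=-a_i$ construction for the converse). The paper is slightly terser in asserting the collapse of condition~(2), while you spell out the downset/box preliminaries and the $k=1$ case more explicitly, but there is no substantive difference.
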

\begin{proof}
    We first observe that the maxima are well-defined. Indeed, $\rho_{L_1}$ is invariant under translations by elements of $L_1$, so, for (1), we may take the maximum over the finitely many coset representatives of $L_k/L_1$. For (2), we see that each $a_i$ must belong to a different coset of $L_l/L_{l-1}$, so $m\leq [L_l:L_{l-1}]$.
    \begin{enumerate}
        \item If $\pi_1(\LD(A;\cF))=[0,r]$, then $\LD(A;\cF)$ contains the point
        $$\paren{r, \frac{1}{[L_2:L_1]},\frac{1}{[L_3:L_2]},\ldots,\frac{1}{[L_k:L_{k-1}]}}$$
        and $r$ is the maximum such real number. By Lemma~\ref{lem:ldpoint}, this is equivalent to the existence of some $b\in L_k$ such that $\rho_{L_1}(A+b)\geq r$. Thus,
        $$r=\max_{b\in L_k}\set{\rho_{L_1}(A+b)}.$$
        
        \item Suppose $\LD(A;\cF)$ contains the point
        $$\paren{r, \frac{1}{[L_2:L_1]},\ldots,\frac{m}{[L_{l+1}:L_l]},\ldots,\frac{1}{[L_k:L_{k-1}]}}$$
        for some $r>0$ and $m$ is the maximum such integer. By Lemma~\ref{lem:ldpoint}, this is equivalent to the existence of $b\in L_k$ and $b_1,\ldots,b_m\in b+L_l$ such that $b_i-b_j\not\in L_{l-1}$ for each $i\neq j$ and $\rho_{L_1}(A+b_i)\geq r$ for each $i$. Since we may take $r$ to be the minimum of $\rho_{L_1}(A+b_i)$ over all $i$, we are just requiring that $\rho_{L_1}(A+b_i)>0$, that is, $(A+b_i)\cap L_1\neq\emptyset$ for each $i$. 
        
        Suppose such $b,b_i$ exist. Let $a_i\in A$ be such that $a_i+b_i\in L_1$, which exists since $(A+b_i)\cap L_1\neq\emptyset$. Note that $a_i\in L_k$ since $a_i\in -b_i+L_1\subseteq L_k$. Moreover, for any $i\neq j$, $a_i-a_j\in b_j-b_i+L_1\subseteq L_l\setminus L_{l-1}$, as required.

        On the other hand, suppose we have $a_1,\ldots,a_m\in A\cap L_k$ such that $a_i-a_j\in L_l\setminus L_{l-1}$ for all $i\neq j$. Set $b=-a_1$ and $b_i=-a_i$ for each $i$. Then $b_i=b+a_1-a_i\in b+L_l$ and $b_i-b_j=a_j-a_i\not\in L_{l-1}$ for $i\neq j$. Finally, note that $(A+b_i)\cap L_1\neq\emptyset$ for each $i$, since it contains 0. \qedhere
    \end{enumerate}
\end{proof}

The next result, which again makes use of Lemma~\ref{lem:ldpoint}, describes lattice densities of sumsets.

\begin{thm} \label{thm:ldsum}
    Suppose $B\subseteq L$ is $d$-periodic. If $p=(p_1,\ldots,p_k)\in \LD(A;\cF)$ and $q=(q_1,\ldots,q_k)\in \LD(B;\cF)$, then 
    $$\max(p,q)\in \LD(A+B;\cF),$$
    where $\max(p,q)=(\max(p_1,q_1),\ldots,\max(p_k,q_k))$.
\end{thm}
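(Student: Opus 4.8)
The plan is to reduce any membership question about a lattice density to the explicit ``tree'' description of Lemma~\ref{lem:ldpoint}, and then to prove the statement as a fact about Minkowski sums of nested subsets of the finite abelian group $L_k/L_1$.

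\textbf{Step 1 (reduction to grid points).} By Lemma~\ref{lem:ldbasic}(2),(5), each of $\LD(A;\cF)$, $\LD(B;\cF)$, $\LD(A+B;\cF)$ is a finite union of origin‑anchored boxes, hence a downset in $[0,1]^k$. So if $p$ lies in a box $[0,r]\times\prod_{l\geq 2}[0,m_l/[L_l:L_{l-1}]]$ of $\LD(A;\cF)$ and $q$ lies in a box with parameters $r',m'_l$ of $\LD(B;\cF)$, then $\max(p,q)$ is dominated coordinatewise by the grid point with parameters $R:=\max(r,r')$ and $M_l:=\max(m_l,m'_l)$, so by the downset property it suffices to place that grid point in $\LD(A+B;\cF)$. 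Thus I may assume $p,q$ are grid points of the form appearing in Lemma~\ref{lem:ldpoint}, and, using $A+B=B+A$, that $r\geq r'$, so $R=r$. The case $k=1$, not covered by Lemma~\ref{lem:ldpoint}, is handled directly: with $r\geq r'$ one picks $\beta\in B\cap L_1$ (nonempty since $\LD(B;\cF)\neq\emptyset$), and then $(A+B)\cap L_1\neq\emptyset$ and $\rho_{L_1}(A+B)\geq\rho_{L_1}(A+\beta)=\rho_{L_1}(A)\geq r$.

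\textbf{Step 2 (passing to $L_k/L_1$).} Applying Lemma~\ref{lem:ldpoint} to $p\in\LD(A;\cF)$ and to $q\in\LD(B;\cF)$ produces witness families $(b^A_{\mathbf i})$ and $(b^B_{\mathbf j})$ in $L_k$. Translating $A$ and $B$ by suitable elements of $L_k$ — harmless for all three densities by Lemma~\ref{lem:ldbasic}(1) — I would normalise the distinguished leaves $b^A_{(1,\dots,1)}=b^B_{(1,\dots,1)}=0$. The leaves then define subsets $\cS_A,\cS_B\subseteq L_k/L_1$, each containing $0$ and each carrying a nested structure (branching $(m_l)$, resp.\ $(m'_l)$) relative to the filtration $0=L_1/L_1\subseteq L_2/L_1\subseteq\cdots\subseteq L_k/L_1$. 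The only input needed from the sets themselves is the observation that if $c\in L_k$ satisfies $c+L_1=(b^A_{\mathbf i}+L_1)+(b^B_{\mathbf j}+L_1)$ for leaves $\mathbf i,\mathbf j$, then $\rho_{L_1}((A+B)+c)\geq R$: condition (c) for $B$ supplies $\beta\in B$ with $\beta+b^B_{\mathbf j}\in L_1$, so $A+\beta+c\subseteq(A+B)+c$ with $\beta+c\equiv b^A_{\mathbf i}\pmod{L_1}$, giving $\rho_{L_1}((A+B)+c)\geq\rho_{L_1}(A+b^A_{\mathbf i})\geq r=R$.

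\textbf{Step 3 (combinatorial core, and the main obstacle).} It then remains to show that the Minkowski sum $\cS_A+\cS_B$ in $L_k/L_1$ contains a nested family with branching $(M_l)$; converting such a family, together with a choice of coset representatives, back through Lemma~\ref{lem:ldpoint} — with condition (c) supplied by the observation of Step~2 — produces the target grid point in $\LD(A+B;\cF)$. I would prove this combinatorial statement by induction on $k$: for $k=2$, since $0\in\cS_A\cap\cS_B$ we have $\cS_A+\cS_B\supseteq\cS_A\cup\cS_B$, so it has at least $M_2$ elements; for $k\geq 3$, project to $L_k/L_{k-1}$, choose $M_k$ elements $\bar g_t=\bar\alpha_t+\bar\beta_t$ of the (again $0$‑containing, hence enlarging) image of $\cS_A+\cS_B$, restrict $\cS_A$ and $\cS_B$ to the fibres over $\bar\alpha_t$ and $\bar\beta_t$ and recentre at a point of the respective fibre to obtain nested families in $L_{k-1}/L_1$, apply the inductive hypothesis fibrewise, and reassemble. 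I expect this to be the main obstacle: the nested (laminar) structure must survive Minkowski addition, which is exactly why one first normalises both witness trees to share the leaf $0$, so that $0$ sits in every relevant quotient and Minkowski sums only grow; and the translation between the language of Lemma~\ref{lem:ldpoint} and that of nested subsets of $L_k/L_1$, together with the verification of the coset condition (a) and the distinctness condition (b) for the reassembled family, must be done with care.
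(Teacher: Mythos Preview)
Your approach is correct and rests on the same key idea as the paper's proof: reduce via Lemma~\ref{lem:ldpoint} to grid points, and at each level $l$ branch along whichever of the two witness trees has the larger branching, pairing it with a fixed leaf of the other. The difference is purely in execution. You pass to the quotient $L_k/L_1$, normalise both trees to contain $0$, and run an induction on $k$; the paper instead writes the combined witness family down in one line. Partitioning $[2,k]$ into $I=\{l:m_l\geq m'_l\}$ and $J=[2,k]\setminus I$, it sets
\[
c_{i_l,\ldots,i_k} := b_{i'_l,\ldots,i'_k} + b'_{i''_l,\ldots,i''_k},
\]
where $i'_j=i_j$ for $j\in I$ and $i'_j=1$ otherwise, and dually for $i''_j$. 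This is exactly what your induction unwinds to when one always branches along the larger side and fixes the other at index $1$, and it reduces the verification of conditions (a)--(c) to a three-line check with no recentring or reassembly bookkeeping. Your route is sound, but the direct formula is worth knowing.
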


\begin{proof}
    Since $\LD(A;\cF)$ and $\LD(B;\cF)$ are both unions of boxes of the form
    $$[0,r]\times \sqb{0,\frac{m_2}{[L_2:L_1]}}\times\cdots\times \sqb{0,\frac{m_k}{[L_k:L_{k-1}]}}$$
    for $r\in (0,1]$ and $m_2,\ldots,m_k$ positive integers, we may assume that $p,q$ are of the form
    $$p=\paren{r, \frac{m_2}{[L_2:L_1]},\frac{m_3}{[L_3:L_2]},\ldots,\frac{m_{k}}{[L_{k}:L_{k-1}]}},$$
    $$q=\paren{r', \frac{m_2'}{[L_2:L_1]},\frac{m_3'}{[L_3:L_2]},\ldots,\frac{m_{k}'}{[L_{k}:L_{k-1}]}}.$$
    Without loss of generality, we assume that $r\geq r'$. By Lemma~\ref{lem:ldpoint}, we obtain $b_{i_l,\ldots,i_k}, b_{i_l,\ldots,i_k}'\in L_k$ with the properties given in the lemma. Let $I=\setcond{i\in [2,k]}{m_i\geq m_i'}$ and $J=[2,k]\setminus I$. Set $c_{i_l,\ldots,i_k}=b_{i_l',\ldots,i_k'}+b_{i_l'',\ldots,i_k''}'$, where 
    $$i_j'=\begin{cases}
        i_j & \text{if } j\in I\\
        1 & \text{otherwise}
    \end{cases}\quad \text{and}\quad i_j''=\begin{cases}
        i_j & \text{if } j\in J\\
        1 & \text{otherwise}
    \end{cases}$$
    for $(i_l,\ldots,i_k)\in [\max(m_l,m_l')]\times\cdots\times [\max(m_k,m_k')]$. We wish to show that the $c_{i_l,\ldots,i_k}$ satisfy properties (a)--(c) in Lemma~\ref{lem:ldpoint} for $\LD(A+B;\cF)$. Note that we have $c_{i_l,\ldots,i_k}\in L_k$ since $b_{i_l',\ldots,i_k'},b_{i_l'',\ldots,i_k''}'\in L_k$. We now prove each of (a)--(c) in turn:
    \begin{enumerate}[label=(\alph*)]
        \item For $l<k$, we have $b_{i_l',i_{l+1}',\ldots,i_k'}\in b_{i_{l+1}',\ldots,i_k'}+L_l$ and $b_{i_l'',i_{l+1}'',\ldots,i_k''}'\in b_{i_{l+1}'',\ldots,i_k''}'+L_l$. Thus, $c_{i_l,i_{l+1},\ldots,i_k}\in c_{i_{l+1},\ldots,i_k}+L_l$.
        \item Suppose $l\in I$. Then, for $i\neq j$, $c_{i,i_{l+1},\ldots,i_k}-c_{j,i_{l+1},\ldots,i_k}=b_{i,i_{l+1}',\ldots,i_k'}-b_{j,i_{l+1}',\ldots,i_k'}\not\in L_{l-1}$. The case $l\in J$ is similar.
        \item We have $\rho_{L_1}(B+b_{i_2'',\ldots,i_k''}')\geq r'>0$. In particular, $B+b_{i_2'',\ldots,i_k''}'$ contains some element $x\in L_1$. Thus, $A+B+c_{i_2,\ldots,i_k}\supseteq A+b_{i_2',\ldots,i_k'}+x$, so we have $\rho_{L_1}(A+B+c_{i_2,\ldots,i_k})\geq \rho_{L_1}(A+b_{i_2',\ldots,i_k'}+x)=\rho_{L_1}(A+b_{i_2',\ldots,i_k'})\geq r$. \qedhere
    \end{enumerate}
\end{proof}

The final result of this subsection relates projections of lattice densities with respect to different flags.

\begin{lem} \label{lem:projineq}
    Suppose $\cF'=\set{L_1'\subseteq\cdots\subseteq L_{k-1}'\subseteq L_k}$ is a flag of full-rank sublattices of $L$. Then the following are true:
    \begin{enumerate}
        \item If $L_1'\subseteq L_1$, then
        $$|\pi_1(\LD(A;\cF))|\leq |\pi_1(\LD(A;\cF'))|.$$
        \item For $2\leq l\leq k$, if $L_l'=L_l$ and $L_{l-1}'\subseteq L_{l-1}$, then
        $$|\pi_l(\LD(A;\cF))|\geq |\pi_l(\LD(A;\cF'))|.$$
    \end{enumerate}
\end{lem}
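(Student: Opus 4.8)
The plan is to deduce both inequalities directly from the explicit description of the one-dimensional projections of a lattice density provided by Lemma~\ref{lem:ldproj}, turning the statement into a purely combinatorial comparison between densities (for part~(1)) and between coset counts (for part~(2)) attached to the two flags. The crucial structural point is that $\cF$ and $\cF'$ share the same top lattice $L_k$, which is what allows the two lattice densities to be compared at all.

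For part~(1), Lemma~\ref{lem:ldproj}(1) gives $|\pi_1(\LD(A;\cF))| = \max_{a\in L_k}\rho_{L_1}(A+a)$ and $|\pi_1(\LD(A;\cF'))| = \max_{a\in L_k}\rho_{L_1'}(A+a)$. I would pick $a_0\in L_k$ attaining the first maximum, so that $A$ has density $r := |\pi_1(\LD(A;\cF))|$ in the affine lattice $L_1 - a_0$. Since $L_1'\subseteq L_1$, this affine lattice splits into $[L_1:L_1']$ cosets of $L_1'$, and because $A$ is $d$-periodic its density in $L_1-a_0$ is the average of its densities over these cosets; hence some coset $L_1'+x$ has density at least $r$. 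Writing $x = y - a_0$ with $y\in L_1\subseteq L_k$, this coset equals $L_1' - (a_0 - y)$ with $a_0 - y\in L_k$, so $\rho_{L_1'}(A + (a_0-y))\ge r$, giving $|\pi_1(\LD(A;\cF'))|\ge r$ as required.

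For part~(2), fix $2\le l\le k$, so that $L_l'=L_l$ and $L_{l-1}'\subseteq L_{l-1}$. Lemma~\ref{lem:ldproj}(2) gives $|\pi_l(\LD(A;\cF))| = m/[L_l:L_{l-1}]$ and $|\pi_l(\LD(A;\cF'))| = m'/[L_l:L_{l-1}']$, where $m$ (resp.\ $m'$) is the largest number of elements of $A\cap L_k$ lying in a common coset of $L_l$ and pairwise inequivalent modulo $L_{l-1}$ (resp.\ modulo $L_{l-1}'$). Taking a witness $a_1,\ldots,a_{m'}$ for $m'$, these lie in a single coset $C$ of $L_l$ and occupy $m'$ distinct cosets of $L_{l-1}'$ inside $C$; since each coset of $L_{l-1}$ contains exactly $[L_{l-1}:L_{l-1}']$ cosets of $L_{l-1}'$, those $m'$ cosets meet at least $m'/[L_{l-1}:L_{l-1}']$ distinct cosets of $L_{l-1}$, and selecting one $a_i$ from each produces a witness for $m$, so $m\ge m'/[L_{l-1}:L_{l-1}']$. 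Combining this with the index identity $[L_l:L_{l-1}'] = [L_l:L_{l-1}]\,[L_{l-1}:L_{l-1}']$ yields $m'/[L_l:L_{l-1}']\le m/[L_l:L_{l-1}]$, which is the claim.

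The arguments are short, so I do not anticipate a serious obstacle; the only steps requiring care are the bookkeeping ensuring that every coset representative produced along the way genuinely lies in $L_k$ (which uses $L_1, L_{l-1}\subseteq L_k$) and the justification that, for a $d$-periodic set, the density in an affine lattice of rank $d$ equals the average of the densities over the cosets of a finite-index sublattice. Both are routine once the problem has been reduced, via Lemma~\ref{lem:ldproj}, to these concrete quantities.
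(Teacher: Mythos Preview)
Your proposal is correct and follows essentially the same approach as the paper: both parts are reduced via Lemma~\ref{lem:ldproj} to the same concrete comparisons, with part~(1) handled by averaging $\rho_{L_1}$ over the cosets of $L_1'$ in $L_1$ and part~(2) by grouping the witness points for $m'$ into $L_{l-1}$-cosets, each of size at most $[L_{l-1}:L_{l-1}']$, then using the index identity. Your write-up is slightly more explicit about checking that the resulting translates lie in $L_k$, but the arguments are otherwise identical.
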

\begin{proof}
    \begin{enumerate}
        \item Let
        $$r = \max_{a\in L_k}\set{\rho_{L_1}(A+a)} \quad\text{and}\quad r' = \max_{a\in L_k}\set{\rho_{L_1'}(A+a)}.$$
        By Lemma~\ref{lem:ldproj}, it suffices to show that $r\leq r'$. Suppose $r$ is attained by $a\in L_k$. Let $s=[L_1 : L_1']$ and $c_1,\ldots,c_s$ be coset representatives of $L_1/L_1'$. We can split $(A+a)\cap L_1$ into the disjoint union $\bigcup_{i=1}^s (A+a+c_i)\cap L_1'$, so that 
        $$\rho_{L_1}(A+a)=\frac{1}{s}\sum_{i=1}^s \rho_{L_1'}(A+a+c_i).$$
        Therefore, there is some $i$ such that $\rho_{L_1'}(A+a+c_i)\geq r$, so $r'\geq r$.

        \item Suppose $|\pi_l(\LD(A;\cF'))|=\frac{n}{[L_l':L_{l-1}']}$. By Lemma~\ref{lem:ldproj}, there are $b_1,\ldots,b_n\in A\cap L_k$ such that $b_i-b_j\in L_l'\setminus L_{l-1}'$. Let $s=[L_{l-1}:L_{l-1}']$. Define an equivalence relation by setting $b_i\sim b_j$ if $b_i-b_j\in L_{l-1}$. Then each equivalence class has at most $s$ elements, since no two elements belong to the same coset of $L_{l-1}/L_{l-1}'$. Let $a_1,\ldots,a_m$ be any representatives of the equivalence classes of $b_1,\ldots,b_n$, so that $ms\geq n$. Since the $a_i$ are in different equivalence classes, we have $a_i-a_j\not\in L_{l-1}$ for $i\neq j$. By Lemma~\ref{lem:ldproj} again, we have 
        $$|\pi_l(\LD(A;\cF))|\geq \frac{m}{[L_l:L_{l-1}]}=\frac{ms}{[L_l:L_{l-1}']}\geq \frac{n}{[L_l:L_{l-1}']}=|\pi_l(\LD(A;\cF'))|,$$
        as required. \qedhere
    \end{enumerate}
\end{proof}

\subsection{Local lattice densities}

In practice, we will make use of a local variant of lattice density. 
Intuitively, the local lattice density of $A$ at some point $x$ is the lattice density of a tiny region of $A$ around $x$. However, $A$ is a discrete set, so we cannot simply take an infinitesimally small ball around $x$. Instead, we define the local lattice density of $A$ in some small region $S\subset \RR^d$ to be the lattice density of a collection of copies of $A\cap S$, placed so as to be $d$-periodic. To make this work, we require that $S$ be tileable, which we now define. Note that we will continue to use notation from the previous subsection. In particular, $\cF=\set{L_1\subseteq \cdots\subseteq L_k}$ is a flag of full-rank sublattices of a lattice $L\cong \ZZ^d$. 

Let $L_{\RR}=L\otimes \RR\cong \RR^d$. We say that $S\subset L_{\RR}$ is \emph{tileable} if there is a sublattice $P\subseteq L_1$ of full rank such that $S\oplus P=L_{\RR}$. In this case, we say that $S$ is \emph{tiled} by $P$. For example, if $L=\ZZ^d$, the box $[0,M)^d\subset \RR^d$ is tileable as long as $M\ZZ^d\subseteq L_1$. In all of our applications, $S$ will be a half-open box of the form $[0,M)^d$ or an affine transformation of it.

Let $P\subseteq L_1$ and $S\subset L_\RR$ be such that $S$ is tiled by $P$. For any $A\subseteq L$, define the \emph{local lattice density} 
$$\LD_S(A;\cF):=\LD((A\cap S)+P; \cF),$$
noting that $(A\cap S)+P$ is $d$-periodic. Using Lemma~\ref{lem:ldpoint}, it is not hard to check that $\LD_S(A;\cF)$ is independent of the choice of $P$ as long as $P\subseteq L_1$.

Before moving on, we note some basic properties of these local lattice densities.

\begin{lem} \label{lem:localldchange}
    Let $S, T \subset L_{\RR}$ be tileable and $A\subseteq S\cap T\cap L$. Then
    $$\LD_S(A;\cF)=\Psi(\LD_T(A;\cF)),$$
    where $\Psi:\RR^d\to \RR^d$ is given by 
    $$(x_1,\ldots,x_k)\mapsto \paren{\frac{\Vol(T)}{\Vol(S)}x_1,x_2,\ldots,x_k}.$$
\end{lem}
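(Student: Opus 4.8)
The plan is to reduce the statement to one about ordinary (non-local) lattice densities and then induct on $k=|\cF|$. Since $A\subseteq S\cap T$, we have $A\cap S=A=A\cap T$, so if $P_S\subseteq L_1$ tiles $S$ and $P_T\subseteq L_1$ tiles $T$, then $\LD_S(A;\cF)=\LD(A+P_S;\cF)$ and $\LD_T(A;\cF)=\LD(A+P_T;\cF)$. It therefore suffices to prove $\LD(A+P_S;\cF)=\Psi(\LD(A+P_T;\cF))$, where $\Psi$ scales the first coordinate by $c:=\Vol(T)/\Vol(S)$ and fixes the remaining ones.

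For the base case $k=1$, note that $(A+P_S)\cap L_1=(A\cap L_1)+P_S$ because $P_S\subseteq L_1$; if this is empty then the same holds for $P_T$ and both lattice densities are empty, so assume not. Since $A\cap L_1$ lies in the fundamental domain $S$ of $P_S$, its elements are distinct modulo $P_S$, whence $\rho_{L_1}(A+P_S)=|A\cap L_1|/[L_1:P_S]$. As $S$ is a fundamental domain for $P_S$ we have $\Vol(S)=[L_1:P_S]\cdot\Vol(L_\RR/L_1)$, so $\rho_{L_1}(A+P_S)=|A\cap L_1|\cdot\Vol(L_\RR/L_1)/\Vol(S)$, and likewise with $T$ in place of $S$. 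Dividing gives $\rho_{L_1}(A+P_S)=c\,\rho_{L_1}(A+P_T)$, which is exactly $\LD(A+P_S;\set{L_1})=\Psi(\LD(A+P_T;\set{L_1}))$.

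For the inductive step, fix coset representatives $a_1,\ldots,a_m$ of $L_k/L_{k-1}$ and recall that in the recursive definition of $\LD(A+P_S;\cF)$ the stacked pieces are $D_j^S:=\LD((A+P_S)+a_j;\cF\setminus L_k)=\LD((A+a_j)+P_S;\cF\setminus L_k)$, and similarly $D_j^T$ for $P_T$. Now $A+a_j\subseteq(S+a_j)\cap(T+a_j)\cap L$, the sets $S+a_j$ and $T+a_j$ are tiled by $P_S$ and $P_T$ (both still contained in $L_1$), and $\Vol(T+a_j)/\Vol(S+a_j)=c$ by translation invariance of Lebesgue measure. Hence $D_j^S=\LD_{S+a_j}(A+a_j;\cF\setminus L_k)$ and $D_j^T=\LD_{T+a_j}(A+a_j;\cF\setminus L_k)$, so the inductive hypothesis yields $D_j^S=\Psi'(D_j^T)$, where $\Psi'\colon\RR^{k-1}\to\RR^{k-1}$ scales the first coordinate by $c$. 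Writing $I_j=[(j-1)/m,j/m]$, the extra coordinate sits in the $k$-th slot, which $\Psi$ leaves untouched, so $D_j^S\times I_j=\Psi(D_j^T\times I_j)$ and hence the stacked set $D^S:=\bigcup_j D_j^S\times I_j$ equals $\Psi(D^T)$. It remains to pass through the compression $C_k$: since $k\geq 2$, the direction $k$ differs from the direction $1$ scaled by $\Psi$, and a direct check on fibres along the $k$-th axis gives $C_k'(\Psi(X))=\Psi(C_k'(X))$ for any finite union of axis-aligned boxes $X$; as $\Psi$ is a linear homeomorphism it commutes with taking the closure of the interior, so $C_k(\Psi(X))=\Psi(C_k(X))$. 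Applying this with $X=D^T$ gives $\LD(A+P_S;\cF)=C_k(D^S)=C_k(\Psi(D^T))=\Psi(C_k(D^T))=\Psi(\LD(A+P_T;\cF))$, completing the induction.

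The only slightly delicate points are the bookkeeping in the recursive step — recognising the stacked pieces $D_j^S$ as genuine local lattice densities of the translated sets $A+a_j$ so that the inductive hypothesis applies — and the verification that the compression $C_k$, defined as the closure of the interior of $C_k'$, commutes with the coordinate scaling $\Psi$; both are routine once set up, and the base case is a short covolume computation.
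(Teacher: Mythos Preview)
Your proof is correct and takes a genuinely different route from the paper. The paper proves this lemma by invoking the pointwise characterisation of lattice densities given in Lemma~\ref{lem:ldpoint}: for a point $x\in\LD_T(A;\cF)$ one obtains the witnessing family $\{b_{i_l,\ldots,i_k}\}$, observes that $\rho_{L_1}(A+Q+b)\det(Q)=\rho_{L_1}(A+P+b)\det(P)$ for any two tiling lattices $P,Q\subseteq L_1$ (the same covolume computation you carry out in your base case), and concludes that $\Psi(x)\in\LD_S(A;\cF)$; the converse inclusion is symmetric. Your approach instead inducts directly on the recursive definition of $\LD$, recognising the stacked pieces $D_j^S$ as local lattice densities of the translates $A+a_j$ and pushing the scaling through the final compression. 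The paper's route is shorter once Lemma~\ref{lem:ldpoint} is available, but that lemma itself requires an induction of comparable length; your argument is more self-contained and avoids the auxiliary characterisation entirely.
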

\begin{proof}
    Suppose $T$ is tiled by $P\subseteq L_1$. Let $x=\paren{r,\frac{m_2}{[L_2:L_1]},\ldots,\frac{m_k}{[L_k:L_{k-1}]}}\in \LD_T(A;\cF)$. By Lemma~\ref{lem:ldpoint}, there exist $b_{i_l,\ldots,i_k}\in L_k$ satisfying the conditions in the lemma, one of which is that $\rho_{L_1}(A+P+b_{i_2,\ldots,i_k})\geq r$.

    Suppose $S$ is tiled by $Q\subseteq L_1$. Since $T\oplus P=L_\RR$, $\det(P)=\Vol(T)$ and, similarly, $\det(Q)=\Vol(S)$. Since $A+Q+b_{i_2,\ldots,i_k}$ is a union of translates of $A+b_{i_2,\ldots,i_k}$, one for each point of $Q$, its density within $L_1$, $\rho_{L_1}(A+Q+b_{i_2,\ldots,i_k})$, is inversely proportional to $\det(Q)$. In particular, $\rho_{L_1}(A+Q+b_{i_2,\ldots,i_k})\det(Q)=\rho_{L_1}(A+P+b_{i_2,\ldots,i_k})\det(P)$. Hence, 
    $$\rho_{L_1}(A+Q+b_{i_2,\ldots,i_k})=\frac{\det(P)}{\det(Q)}\rho_{L_1}(A+P+b_{i_2,\ldots,i_k})\geq \frac{\Vol(T)}{\Vol(S)}r.$$
    Therefore, by Lemma~\ref{lem:ldpoint}, $\Psi(x)\in \LD_S(A;\cF)$, so we have $\LD_S(A;\cF)\supseteq \Psi(\LD_T(A;\cF))$. The converse follows similarly.
\end{proof}

\begin{lem} \label{lem:sublocalld}
    Let $S,T\subset L_\RR$ be tileable with $T\subseteq S$. Then, for $2\leq l\leq k$,
    $$|\pi_l(\LD_T(A;\cF))|\leq |\pi_l(\LD_S(A;\cF))|.$$
\end{lem}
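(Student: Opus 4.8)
The plan is to reduce everything to the auxiliary set $A\cap T$, which lies inside both $T$ and $S$, so that two already-established facts apply directly: the change-of-domain identity of Lemma~\ref{lem:localldchange} and the monotonicity of $\LD(\cdot;\cF)$ under inclusion of $d$-periodic sets from Lemma~\ref{lem:ldbasic}(3).

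First I would note that, straight from the definition of the local lattice density, $\LD_T(A;\cF)=\LD_T(A\cap T;\cF)$, since $(A\cap T)\cap T=A\cap T$. Because $T\subseteq S$, the set $A\cap T$ is contained in $S\cap T\cap L$, so Lemma~\ref{lem:localldchange} applies with $A\cap T$ in place of $A$ and gives $\LD_S(A\cap T;\cF)=\Psi(\LD_T(A\cap T;\cF))=\Psi(\LD_T(A;\cF))$, where $\Psi$ multiplies the first coordinate by $\Vol(T)/\Vol(S)$ and leaves the coordinates $2,\ldots,k$ untouched. Hence for every $l$ with $2\le l\le k$ we get the exact equality $\pi_l(\LD_S(A\cap T;\cF))=\pi_l(\LD_T(A;\cF))$.

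Next I would compare $\LD_S(A\cap T;\cF)$ with $\LD_S(A;\cF)$. Fixing a full-rank $Q\subseteq L_1$ with $S\oplus Q=L_\RR$, we have $\LD_S(A\cap T;\cF)=\LD((A\cap T)+Q;\cF)$ and $\LD_S(A;\cF)=\LD((A\cap S)+Q;\cF)$; both $(A\cap T)+Q$ and $(A\cap S)+Q$ are $d$-periodic, and since $A\cap T\subseteq A\cap S$ the former is a subset of the latter. So Lemma~\ref{lem:ldbasic}(3) yields $\LD_S(A\cap T;\cF)\subseteq\LD_S(A;\cF)$, whence $\pi_l(\LD_S(A\cap T;\cF))\subseteq\pi_l(\LD_S(A;\cF))$. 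Combining this with the previous paragraph gives $|\pi_l(\LD_T(A;\cF))|=|\pi_l(\LD_S(A\cap T;\cF))|\le|\pi_l(\LD_S(A;\cF))|$, which is the claim.

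There is no genuine difficulty; the only things needing a moment's attention are the observation that routing through $A\cap T$ is exactly what makes both lemmas applicable, and the bookkeeping fact that the distortion $\Psi$ of Lemma~\ref{lem:localldchange} acts only on the first coordinate, so that all the projections in the relevant range $2\le l\le k$ are preserved verbatim. If one prefers to sidestep $\Psi$ entirely, one can instead argue directly from Lemma~\ref{lem:ldproj}(2): for $l\ge 2$ the length $|\pi_l(\LD_S(A;\cF))|$ equals $m_S/[L_l:L_{l-1}]$, where $m_S$ is the maximum number of points of $A\cap S\cap L_k$ pairwise differing by an element of $L_l\setminus L_{l-1}$ (using that translating by the tiling lattice $Q\subseteq L_1\subseteq L_{l-1}$ affects neither membership in $L_k$ nor residues modulo $L_{l-1}$), and then $A\cap T\cap L_k\subseteq A\cap S\cap L_k$ forces $m_T\le m_S$.
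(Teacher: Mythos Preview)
Your proof is correct and follows essentially the same route as the paper's own argument: reduce to $A\cap T$, use Lemma~\ref{lem:localldchange} to pass from $\LD_T$ to $\LD_S$ (observing that $\Psi$ fixes coordinates $2,\ldots,k$), and then invoke monotonicity (Lemma~\ref{lem:ldbasic}(3)) to compare $\LD_S(A\cap T;\cF)$ with $\LD_S(A;\cF)$. The paper's proof is simply a terser version of exactly these three steps.
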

\begin{proof}
    By Lemma~\ref{lem:localldchange},
    \begin{align*}
        |\pi_l(\LD_T(A;\cF))| &= |\pi_l(\LD_T(A\cap T;\cF))|\\
        & = |\pi_l(\LD_S(A\cap T;\cF))|\\
        &\leq |\pi_l(\LD_S(A;\cF))|,
    \end{align*}
    as required.
\end{proof}

\begin{lem} \label{lem:localldvol}
    Let $S\subset L_\RR$ be tileable and $A\subseteq L_k$. Then
    $$\frac{|A\cap S|}{|L_k\cap S|}=\Vol(\LD_S(A;\cF)).$$
\end{lem}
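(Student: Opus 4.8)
The plan is to strip the local lattice density down to an ordinary lattice density and then quote Lemma~\ref{lem:ldbasic}(4). Since $S$ is tileable, fix a full-rank sublattice $P\subseteq L_1$ with $S\oplus P=L_\RR$, and recall that by definition $\LD_S(A;\cF)=\LD(X;\cF)$ where $X:=(A\cap S)+P$. As $A\subseteq L_k\subseteq L$ and $P\subseteq L_1\subseteq L$, the set $X$ is a subset of $L$, and it is $d$-periodic because its translation-symmetry group contains the full-rank lattice $P$. Hence Lemma~\ref{lem:ldbasic}(4) applies to $X$ and gives
$$\Vol\big(\LD_S(A;\cF)\big)=\Vol\big(\LD(X;\cF)\big)=\rho_{L_k}(X),$$
so everything reduces to showing $\rho_{L_k}(X)=|A\cap S|/|L_k\cap S|$.

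For the remaining identity I would argue purely in terms of cosets of $P$. First, $X\subseteq L_k$ (because $A\cap S\subseteq L_k$ and $P\subseteq L_1\subseteq L_k$), so $X$ is a union of cosets of $P$ inside $L_k$; writing $c(Y)$ for the number of $P$-cosets contained in a $P$-periodic set $Y\subseteq L_k$, the density of such a $Y$ in $L_k$ is $c(Y)/[L_k:P]$, so in particular $\rho_{L_k}(X)=c(X)/[L_k:P]$. The crucial observation is that $S$ is a transversal: because $S\oplus P=L_\RR$, every coset $P+v$ meets $S$ in exactly one point, so for any $P$-periodic $Y\subseteq L_k$ the set $Y\cap S$ is a complete set of representatives for the $P$-cosets comprising $Y$, giving $|Y\cap S|=c(Y)$. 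Taking $Y=L_k$ yields $|L_k\cap S|=[L_k:P]$, and taking $Y=X$ yields $|X\cap S|=c(X)$. Finally, $X\cap S=A\cap S$: the inclusion $\supseteq$ is clear, and if $a+p\in S$ with $a\in A\cap S$ and $p\in P$, then $a+p$ has the two decompositions $a+p$ and $(a+p)+0$ in $S\oplus P$, so uniqueness forces $p=0$. Combining, $\rho_{L_k}(X)=|X\cap S|/[L_k:P]=|A\cap S|/|L_k\cap S|$, as claimed.

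I do not expect a genuine obstacle here; the proof is essentially bookkeeping about fundamental domains. The one point deserving real care is the transversal property of $S$, which is exactly the uniqueness half of $S\oplus P=L_\RR$ and which simultaneously pins down $|L_k\cap S|=[L_k:P]$ (in particular finite and positive, so the right-hand side is well defined) and the count $|X\cap S|=|A\cap S|$. A secondary point is to note at the outset that $X\subseteq L_k$, so that $\rho_{L_k}(X)$ is literally the density of $X$ in $L_k$ rather than of $X\cap L_k$. Phrasing the density through coset counts, as above, has the advantage of avoiding any exhaustion argument and hence any question of whether $S$ is bounded.
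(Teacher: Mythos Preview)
Your proof is correct and follows essentially the same approach as the paper: reduce to Lemma~\ref{lem:ldbasic}(4) and then compute $\rho_{L_k}((A\cap S)+P)$. The paper compresses the last step into a single equality, whereas you carefully unpack it via the transversal property of $S$ and the coset count $|L_k\cap S|=[L_k:P]$; this is exactly the justification the paper leaves implicit.
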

\begin{proof}
    Suppose $S$ is tiled by $P\subseteq L_1$. Then
    \begin{align*}
        \Vol(\LD_S(A;\cF)) &= \Vol(\LD((A\cap S)+P;\cF)) = \rho_{L_k}((A\cap S)+P) = \frac{|A\cap S|}{|L_k\cap S|},
    \end{align*}
    as required.
\end{proof}

\section{Families of flags} \label{sec:families}

In this section, we construct flags such that ``the projection $\pi_{l+1}$ of the lattice density is preserved under multiplication by $\lambda_l$". 
More precisely, we want to find a flag $\cF$ in $\frakD_{\lambda_1,\ldots,\lambda_k;K}$ and a flag $\cG$ in $\cO_K$ such that, for any $d$-periodic $A\subseteq \frakD_{\lambda_1,\ldots,\lambda_k;K}$,
\begin{equation} \label{eqn:ldproj}
    \pi_{l+1}(\LD(A;\cF))\subseteq \pi_{l+1}(\LD(\lambda_l\cdot A;\cG))
\end{equation}
for $l=0,1,\ldots,k$. We can find such flags for each $l$, but, unfortunately, it may not be possible to find $\cF,\cG$ that work simultaneously for all $l$. To overcome this, we construct families of flags $\cF_{\vec{n}},\cG_{\vec{n}}$ and show that for $\vec{n}$ ``sufficiently large'' these pairs satisfy (\ref{eqn:ldproj}) approximately for all $l$. 

\subsection{Algebraic families of flags} \label{sec:algfamily}

Recall that $\lambda_1,\ldots,\lambda_k\in K=\QQ(\lambda_1,\ldots,\lambda_k)$ and $d=\deg(K/\QQ)$. Let $\fraka_l$ be the ideal $\cO_K\cap \lambda_1^{-1}\cO_K\cap\cdots\cap \lambda_l^{-1}\cO_K$ for $l=0,1,\ldots,k$. In particular, $\fraka_k=\frakD_{\lambda_1,\ldots,\lambda_k;K}$. Then $\fraka_l^{-1}$ is the fractional ideal $\cO_K+\lambda_1 \cO_K+\dots + \lambda_l\cO_K$. We also have $\cO_K=\fraka_0\mid \fraka_1\mid\cdots\mid \fraka_k$. Let $\frakb_l\subseteq \cO_K$ be the ideal such that $\fraka_l=\frakb_l\fraka_{l-1}$ for each $l=1,\ldots,k$. For each $\vec{n}=(n_1,\ldots,n_k)\in \ZZ_{\geq 0}^k$ and $l=0,1,\ldots,k$, let $\frakc_{\vec{n},l}=\frakb_{l+1}^{n_{l+1}}\cdots \frakb_k^{n_k}$. Define two flags of lattices by
\begin{align*}
    \cF_{\vec{n}}^K &:= \set{\fraka_k\frakc_{\vec{n},0}\subseteq \fraka_k\frakc_{\vec{n},1}\subseteq \cdots \subseteq \fraka_k\frakc_{\vec{n},k-1}\subseteq \fraka_k},\\
    \cG_{\vec{n}}^K &:= \set{\frakc_{\vec{n},0}\subseteq \frakc_{\vec{n},1}\subseteq \cdots \subseteq \frakc_{\vec{n},k-1}\subseteq \cO_K}.
\end{align*}

These families of flags will serve as candidates for satisfying (\ref{eqn:ldproj}). The following two lemmas make this precise. 
Note that for any two vectors $\vec{n}, \vec{m}\in \ZZ^k$, we write $\vec{n}\geq \vec{m}$ if $n_i\geq m_i$ for all $i$. We also write $\vec{n}+c$ to denote the vector $(n_1+c,\ldots,n_k+c)$.

\begin{lem} \label{lem:pi1stable}
    Let $A\subseteq \fraka_k$ be $d$-periodic. Then, for any $\vec{n}\geq 0$,
    $$\pi_1(\LD(A;\cF_{\vec{n}}^K))=\pi_1(\LD(A;\cG_{\vec{n}+1}^K)).$$
\end{lem}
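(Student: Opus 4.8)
The plan is to reduce the whole statement to the explicit formula for the first projection of a lattice density provided by Lemma~\ref{lem:ldproj}(1). That lemma tells us that for any flag $\cF$ whose smallest member is $M$ and whose largest member is $N$, one has $\pi_1(\LD(A;\cF))=[0,\max_{a\in N}\{\rho_M(A+a)\}]$, so $\pi_1(\LD(A;\cF))$ depends only on $A$, $M$ and $N$. Thus it suffices to identify the top and bottom lattices of the two flags $\cF_{\vec{n}}^K$ and $\cG_{\vec{n}+1}^K$ and then compare the two resulting maxima.

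First I would record the ideal identities needed. Since $\fraka_0=\cO_K$ and $\fraka_l=\frakb_l\fraka_{l-1}$, telescoping gives $\fraka_k=\frakb_1\frakb_2\cdots\frakb_k$, while by definition $\frakc_{\vec{n},0}=\frakb_1^{n_1}\cdots\frakb_k^{n_k}$. Hence the smallest member of $\cF_{\vec{n}}^K$ is
\[
\fraka_k\frakc_{\vec{n},0}=\frakb_1^{n_1+1}\cdots\frakb_k^{n_k+1}=\frakc_{\vec{n}+1,0},
\]
which is exactly the smallest member of $\cG_{\vec{n}+1}^K$; write $L:=\frakc_{\vec{n}+1,0}$ for this common lattice. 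The largest member of $\cF_{\vec{n}}^K$ is $\fraka_k$ (the ambient lattice for that flag, and $A\subseteq\fraka_k$ by hypothesis), while the largest member of $\cG_{\vec{n}+1}^K$ is $\cO_K$. Applying Lemma~\ref{lem:ldproj}(1) to each flag,
\[
\pi_1\bigl(\LD(A;\cF_{\vec{n}}^K)\bigr)=\Bigl[0,\max_{a\in\fraka_k}\rho_L(A+a)\Bigr]
\quad\text{and}\quad
\pi_1\bigl(\LD(A;\cG_{\vec{n}+1}^K)\bigr)=\Bigl[0,\max_{a\in\cO_K}\rho_L(A+a)\Bigr].
\]

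It remains to show the two maxima coincide. One inequality is immediate from $\fraka_k\subseteq\cO_K$. For the other, I would use $A\subseteq\fraka_k$ together with $L=\frakc_{\vec{n}+1,0}\subseteq\fraka_k$: if $a\in\cO_K$ and $(A+a)\cap L\neq\emptyset$, choosing $x\in(A+a)\cap L$ gives $a=x-(x-a)$ with $x\in L\subseteq\fraka_k$ and $x-a\in A\subseteq\fraka_k$, hence $a\in\fraka_k$. Since $A$ is $d$-periodic, $\rho_L(A+a)>0$ precisely when $(A+a)\cap L\neq\emptyset$, so every $a\in\cO_K\setminus\fraka_k$ contributes $0$ to the second maximum. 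Therefore that maximum is either $0$, in which case both maxima are $0$, or it is attained at some $a\in\fraka_k$, in which case it equals the first maximum; either way the two intervals agree, which proves the lemma. I do not expect a genuine obstacle here: the only points needing care are the exponent bookkeeping in the identity $\fraka_k\frakc_{\vec{n},0}=\frakc_{\vec{n}+1,0}$ and the routine fact that $d$-periodicity makes $\rho_L$ strictly positive exactly on the lattices the periodic set meets.
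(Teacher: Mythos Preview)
Your proof is correct and follows essentially the same route as the paper: reduce via Lemma~\ref{lem:ldproj}(1) to comparing $\max_{a\in\fraka_k}\rho_L(A+a)$ with $\max_{a\in\cO_K}\rho_L(A+a)$ for the common bottom lattice $L=\fraka_k\frakc_{\vec{n},0}=\frakc_{\vec{n}+1,0}$, then use $A\subseteq\fraka_k$ to show that any $a\in\cO_K\setminus\fraka_k$ gives $\rho_L(A+a)=0$. The only cosmetic difference is that the paper observes $(A+a)\cap\fraka_k=\emptyset$ directly, while you route through $(A+a)\cap L=\emptyset$ using $L\subseteq\fraka_k$; both are equivalent.
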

\begin{proof}
    Let
    $$r = \max_{a\in \fraka_k}\set{\rho_{\fraka_k\frakc_{\vec{n},0}}(A+a)} \quad\text{and}\quad r' = \max_{a\in \cO_K}\set{\rho_{\fraka_k\frakc_{\vec{n},0}}(A+a)}.$$
    Note that $\frakb_1\cdots \frakb_k=\fraka_k$, so that $\frakc_{\vec{n}+1,0}=\fraka_k\frakc_{\vec{n},0}$. By Lemma~\ref{lem:ldproj}(1), it suffices to show that $r=r'$. Since $\fraka_k \subseteq \cO_K$, we clearly have $r'\geq r$. To see that $r\geq r'$, observe that, since $A\subseteq \fraka_k$, $(A+a)\cap \fraka_k=\emptyset$ for any $a\in \cO_K\setminus \fraka_k$. In particular, $\rho_{\fraka_k\frakc_{\vec{n},0}}(A+a)=0$.
\end{proof}

For the next lemma, recall that $\cM_l:K\to K$ is the $\QQ$-linear map corresponding to  multiplication by $\lambda_l$ and each $\cM_l$ restricts to the map $\fraka_k\to \cO_K$. 

\begin{lem} \label{lem:pilstable}
    Let $A\subseteq \fraka_k$ be $d$-periodic and $l\in [k]$. Then, for $\vec{n},\vec{m}\geq 0$ with $m_i=n_i+1$ for $i=l+1,l+2,\ldots,k$ and $m_l=n_l$, 
    $$|\pi_{l+1}(\LD(A;\cF_{\vec{n}}^K))|\leq |\pi_{l+1}(\LD(\cM_l A; \cG_{\vec{m}}^K))|.$$
\end{lem}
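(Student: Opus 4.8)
The plan is to make both sides explicit via Lemma~\ref{lem:ldproj}(2) and then push a witnessing family through multiplication by $\lambda_l$. Set up notation: write the lattices of $\cF_{\vec{n}}^K$ as $M_0\subseteq M_1\subseteq\cdots\subseteq M_k$ with $M_j=\fraka_k\frakc_{\vec{n},j}$ (so $M_k=\fraka_k$), those of $\cG_{\vec{m}}^K$ as $N_0\subseteq\cdots\subseteq N_k$ with $N_j=\frakc_{\vec{m},j}$ (so $N_k=\cO_K$), and put $\mathfrak{d}_l:=\frakb_{l+1}\cdots\frakb_k=\fraka_k\fraka_l^{-1}$, which is an integral ideal since $\fraka_l\mid\fraka_k$. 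From the hypotheses $m_l=n_l$ and $m_i=n_i+1$ for $i>l$ one reads off
$$M_{l-1}=\frakb_l^{n_l}M_l,\qquad N_l=\frakc_{\vec{n},l}\mathfrak{d}_l,\qquad N_{l-1}=\frakb_l^{n_l}\frakc_{\vec{n},l}\mathfrak{d}_l=\frakb_l^{n_l}N_l.$$
Since $[M:\frakc M]=N_{K/\QQ}(\frakc)$ for a fractional ideal $M$ and an integral ideal $\frakc$, both $[M_l:M_{l-1}]$ and $[N_l:N_{l-1}]$ equal $N_{K/\QQ}(\frakb_l)^{n_l}$ — this is exactly what the condition $m_l=n_l$ secures.

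Next I would apply Lemma~\ref{lem:ldproj}(2). Since $A\subseteq\fraka_k=M_k$, and since $\lambda_l\fraka_k\subseteq\cO_K$ also $\cM_l A=\lambda_l A\subseteq\cO_K=N_k$, the lemma gives
$$\pi_{l+1}(\LD(A;\cF_{\vec{n}}^K))=[0,\,\mu/N_{K/\QQ}(\frakb_l)^{n_l}],\qquad \pi_{l+1}(\LD(\cM_l A;\cG_{\vec{m}}^K))=[0,\,\mu'/N_{K/\QQ}(\frakb_l)^{n_l}],$$
where $\mu$ is the largest size of a set $\{a_1,\dots,a_\mu\}\subseteq A$ with $a_i-a_j\in M_l\setminus M_{l-1}$ for $i\neq j$, and $\mu'$ is defined the same way with $A,M_l,M_{l-1}$ replaced by $\lambda_l A,N_l,N_{l-1}$. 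So it suffices to show $\mu\leq\mu'$, for which I would check that if $a_1,\dots,a_\mu\in A$ have all $a_i-a_j\in M_l\setminus M_{l-1}$, then the distinct points $\lambda_l a_1,\dots,\lambda_l a_\mu\in\lambda_l A$ satisfy $\lambda_l(a_i-a_j)\in N_l\setminus N_{l-1}$. Writing $x=a_i-a_j$, membership in $N_l$ is immediate: $x\in M_l=\fraka_l\mathfrak{d}_l\frakc_{\vec{n},l}$ and $\lambda_l\fraka_l\subseteq\cO_K$ (because $\lambda_l\in\fraka_l^{-1}=\cO_K+\lambda_1\cO_K+\cdots+\lambda_l\cO_K$), so $\lambda_l x\in\mathfrak{d}_l\frakc_{\vec{n},l}=N_l$.

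The substance is non-membership in $N_{l-1}$, which I would prove prime by prime. Since $M_{l-1}=\frakb_l^{n_l}M_l$ and $N_{l-1}=\frakb_l^{n_l}N_l$, failure of membership is visible only at primes dividing $\frakb_l$; as $x\notin M_{l-1}$ there is some $\frakp\mid\frakb_l$ with $v_\frakp(x)<v_\frakp(M_l)+n_l v_\frakp(\frakb_l)$. The key arithmetic input is that $v_\frakp(\lambda_l)=-v_\frakp(\fraka_l)$ for every $\frakp\mid\frakb_l$: from $\fraka_l=\fraka_{l-1}\cap\lambda_l^{-1}\cO_K$ one gets $v_\frakp(\fraka_l)=\max(v_\frakp(\fraka_{l-1}),-v_\frakp(\lambda_l))$, and $\frakp\mid\frakb_l=\fraka_l\fraka_{l-1}^{-1}$ means $v_\frakp(\fraka_l)>v_\frakp(\fraka_{l-1})$, forcing the maximum to be the second term. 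Combining this with $v_\frakp(M_l)=v_\frakp(\fraka_l)+v_\frakp(\mathfrak{d}_l)+v_\frakp(\frakc_{\vec{n},l})$ and $v_\frakp(\lambda_l x)=v_\frakp(\lambda_l)+v_\frakp(x)$ yields
$$v_\frakp(\lambda_l x)< -v_\frakp(\fraka_l)+v_\frakp(M_l)+n_l v_\frakp(\frakb_l)=v_\frakp(\mathfrak{d}_l)+v_\frakp(\frakc_{\vec{n},l})+n_l v_\frakp(\frakb_l)=v_\frakp(N_{l-1}),$$
so $\lambda_l x\notin N_{l-1}$, as needed. This gives $\mu'\geq\mu$ and finishes the argument. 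I expect the prime-by-prime bookkeeping of this last step — pinning down $v_\frakp(\fraka_l)$ and $v_\frakp(\frakb_l)$ in terms of $v_\frakp(\lambda_1),\dots,v_\frakp(\lambda_l)$ and extracting the identity $v_\frakp(\lambda_l)=-v_\frakp(\fraka_l)$ on primes dividing $\frakb_l$ — to be the one genuinely delicate point; everything else is a matter of assembling Lemma~\ref{lem:ldproj}(2) with the multiplicativity of ideal norms.
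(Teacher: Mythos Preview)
Your argument is correct and follows essentially the same route as the paper's: both apply Lemma~\ref{lem:ldproj}(2), verify $[M_l:M_{l-1}]=[N_l:N_{l-1}]=N_{K/\QQ}(\frakb_l)^{n_l}$, push a witnessing family through multiplication by $\lambda_l$, and handle the non-membership $\lambda_l x\notin N_{l-1}$ via prime valuations using the identity $\nu_\frakp(\lambda_l)=-\nu_\frakp(\fraka_l)$ for $\frakp\mid\frakb_l$. The only cosmetic difference is that the paper phrases the last step as the ideal containment $\lambda_l^{-1}\frakc_{\vec{m},l-1}\cap\fraka_k\frakc_{\vec{n},l}\subseteq\fraka_k\frakc_{\vec{n},l-1}$ checked at all primes (splitting into two cases), whereas you argue the contrapositive directly at a single prime witnessing $x\notin M_{l-1}$; the content is the same.
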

\begin{proof}
    Let $r$ be the maximum number of elements $a_1,\ldots,a_r\in A$ such that $a_i-a_j\in \fraka_k\frakc_{\vec{n},l}\setminus \fraka_k\frakc_{\vec{n},l-1}$ for $i\neq j$. 
    Then, by Lemma~\ref{lem:ldproj}(2), $|\pi_{l+1}(\LD(A;\cF_{\vec{n}}^K))|=r/[\fraka_k\frakc_{\vec{n},l}:\fraka_k\frakc_{\vec{n},l-1}]$.
    Since $m_l=n_l$, we have $[\fraka_k\frakc_{\vec{n},l}:\fraka_k\frakc_{\vec{n},l-1}]=[\frakc_{\vec{n},l}:\frakc_{\vec{n},l-1}]=N_{K/\QQ}(\frakb_l^{n_l})=[\frakc_{\vec{m},l}:\frakc_{\vec{m},l-1}]$. By Lemma~\ref{lem:ldproj}(2) applied to $|\pi_{l+1}(\LD(\cM_l A; \cG_{\vec{m}}^K))|$, it suffices to find $b_1,\ldots,b_r\in \cM_l A=\lambda_l\cdot A$ such that $b_i-b_j\in \frakc_{\vec{m},l}\setminus \frakc_{\vec{m},l-1}$ for $i \ne j$.
    
    Set $b_i=\lambda_l a_i$, so it is clear that $b_i\in \lambda_l\cdot A$. It suffices to show that:
    \begin{enumerate}[label=(\alph*)]
        \item $b_i-b_j\in \frakc_{\vec{m},l}$,
        \item $b_i-b_j\not\in \frakc_{\vec{m},l-1}$ for $i\neq j$.
    \end{enumerate}
    
    For (a), observe that $\lambda_l \fraka_k\frakc_{\vec{n},l}\subseteq \fraka_l^{-1}\fraka_k\frakc_{\vec{n},l}=\frakb_{l+1}\cdots\frakb_k\frakc_{\vec{n},l} 
    =\frakc_{\vec{m},l}$. Thus, $b_i-b_j=\lambda_l(a_i-a_j)\in \lambda_l \fraka_k\frakc_{\vec{n},l}\subseteq \frakc_{\vec{m},l}$. 
    
    For (b), suppose that $b_i-b_j\in \frakc_{\vec{m},l-1}$ for some $i\neq j$. Then $a_i-a_j\in \lambda_l^{-1}\frakc_{\vec{m},l-1}$. On the other hand, $a_i-a_j\in \fraka_k\frakc_{\vec{n},l}$. Together, we have $a_i-a_j\in \lambda_l^{-1}\frakc_{\vec{m},l-1}\cap \fraka_k\frakc_{\vec{n},l}$. 

    We claim that $\lambda_l^{-1}\frakc_{\vec{m},l-1}\cap \fraka_k\frakc_{\vec{n},l}\subseteq \fraka_k\frakc_{\vec{n},l-1}$, which will lead to a contradiction, since $a_i-a_j\not\in \fraka_k\frakc_{\vec{n},l-1}$. We prove the claim by proving it locally at every prime ideal $\frakp\subseteq \cO_K$, that is, we will show that $\nu_{\frakp}(\lambda_l^{-1}\frakc_{\vec{m},l-1}\cap \fraka_k\frakc_{\vec{n},l})\geq \nu_{\frakp}(\fraka_k\frakc_{\vec{n},l-1})$.

    Recall that $\fraka_l=\fraka_{l-1}\cap \lambda_l^{-1}\cO_K$, so $\nu_{\frakp}(\fraka_l)=\max(\nu_{\frakp}(\fraka_{l-1}), \nu_{\frakp}(\lambda_l^{-1}))$, which implies that $\nu_{\frakp}(\frakb_l)=\max(0,\nu_{\frakp}(\lambda_l^{-1})-\nu_{\frakp}(\fraka_{l-1}))$. We have 
    \begin{align*}
        \nu_{\frakp}(\lambda_l^{-1}\frakc_{\vec{m},l-1}\cap \fraka_k\frakc_{\vec{n},l}) &= \nu_{\frakp}(\lambda_l^{-1}\fraka_k\fraka_l^{-1}\frakc_{\vec{n},l-1}\cap \fraka_k\frakb_l^{-n_l}\frakc_{\vec{n},l-1})\\
        &= \nu_{\frakp}(\fraka_k\frakc_{\vec{n},l-1}) + \max(\nu_{\frakp}(\lambda_l^{-1})-\nu_{\frakp}(\fraka_l), -n_l\nu_{\frakp}(\frakb_l)).
    \end{align*}
    If $\nu_{\frakp}(\fraka_{l-1})\geq \nu_{\frakp}(\lambda_l^{-1})$, then $\nu_{\frakp}(\frakb_l)=0$. Otherwise, $\nu_{\frakp}(\fraka_{l})=\nu_{\frakp}(\lambda_l^{-1})$. In either case, $\max(\nu_{\frakp}(\lambda_l^{-1})-\nu_{\frakp}(\fraka_l), -\nu_{\frakp}(\frakb_l))\geq 0$, proving the claim and the lemma.
\end{proof}

Unfortunately, there are no pairs of flags $\cF_{\vec{n}}^K$ and $\cG_{\vec{m}}^K$ that simultaneously satisfy Lemmas~\ref{lem:pi1stable} and $\ref{lem:pilstable}$ for all $l$. 
Indeed, in order for $\pi_1(\LD(A;\cF_{\vec{n}}^K))=\pi_1(\LD(A;\cG_{\vec{m}}^K))$ and $|\pi_{l+1}(\LD(A;\cF_{\vec{n}}^K))|\leq |\pi_{l+1}(\LD(\cM_l A; \cG_{\vec{m}}^K))|$ to hold for all $l$ via the lemmas, we would require that $m_l=n_l$ and $m_l=n_l+1$ simultaneously. 
To overcome this, in the next subsection, we will show that for $\vec{n}$ sufficiently large the projections of the lattice densities stabilise, so we may use $\cF_{\vec{n}}^K$ and $\cG_{\vec{n}}^K$. This seems to suggest that, as $\vec{n}$ tends to infinity, the lattice densities $\LD(A;\cF_{\vec{n}})$ themselves converge as compact subsets. However, we make no attempt to formally prove this, since all we require is that their projections converge.

\subsection{Regularity}

For this subsection, we consider a more general setup, where we have, 
for each $\vec{n}=(n_1,\ldots,n_k)\in \NN^k$, two flags
\begin{align*}
    \cF_{\vec{n}} &= \set{L_{\vec{n},1}\subseteq L_{\vec{n},2}\subseteq\cdots \subseteq L_{\vec{n},k}\subseteq \ZZ^d},\\
    \cG_{\vec{n}} &=\set{M_{\vec{n},1}\subseteq M_{\vec{n},2}\subseteq\cdots \subseteq M_{\vec{n},k}\subseteq \ZZ^d},
\end{align*}
where $L_{\vec{n},l}$ depends only on $n_l,n_{l+1},\ldots,n_k$ and $L_{\vec{n},l}\subseteq L_{\vec{n}',l}$ if $\vec{n}\geq \vec{n}'$ and  similarly for $M_{\vec{n},l}$. We also fix a set $A\subseteq \ZZ^d$.

For a positive integer $R$, an \emph{$R$-cube} is a set that comes from taking the set $[0,R)^d\subset \RR^d$ and shifting it by an element of $R\ZZ^d$. 
Let $P$ be an $R$-cube for some $R$. For natural numbers $M,n_l,n_{l+1},\ldots,n_k$ with $M>0$ and a real number $\delta>0$, we say that $P$ is \emph{$(M,\delta,n_l,\ldots,n_k)$-regular} if each of the $M^d$ different $R/M$-subcubes $Q$ of $P$ satisfies
\begin{equation} \label{eqn:regularity}
    |\pi_{l+1}(\LD_{Q}(A;\cF_{\vec{n}'}))|\geq (1-\delta)|\pi_{l+1}(\LD_{P}(A;\cF_{\vec{n}}))|,
\end{equation}
where $\vec{n}=(0,\ldots,0,n_l,\ldots,n_k)$ and $\vec{n}'=(0,\ldots,0,n_l+1,\ldots,n_k)$. 

\begin{rmk}
    Here we are implicitly assuming that $R/M$ is an integer. Throughout the remainder of the paper, whenever we mention a local density $\LD_P(A;\cF)$, we will assume that $P$ is tileable. In particular, this means that $R$ and $R/M$ will always be multiples of every bounded number, so that the lattices $R\ZZ^d$ and $(R/M)\ZZ^d$ are contained in $L_{\vec{n},1}$. In practice, we will only be considering $\cF_{\vec{n}}$ where $\vec{n}$ is bounded and $(N/M)$-cubes where $M$ is bounded and $N$ can be taken to be a multiple of a sufficiently large integer.
\end{rmk}

By Lemmas~\ref{lem:projineq} and \ref{lem:sublocalld}, we always have 
$$|\pi_{l+1}(\LD_{Q}(A;\cF_{\vec{n}'}))|\leq |\pi_{l+1}(\LD_{Q}(A;\cF_{\vec{n}}))|\leq |\pi_{l+1}(\LD_{P}(A;\cF_{\vec{n}}))|,$$
so regularity says that both inequalities are close to equalities. In other words, our notion of regularity really encompasses two different types of regularity. The first is that the size of the projection $\pi_{l+1}$ does not change much when we replace $\vec{n}$ with $\vec{n}'$. The second is that the local lattice density does not change much when we shrink the local region from $P$ to $Q$. Note that in the definition of regularity, we may replace $\vec{n},\vec{n}'$ with $\vec{n}=(*,\ldots,*,n_l,\ldots,n_k)$ and $\vec{n}'=(*,\ldots,*,n_l+1,\ldots,n_k)$, where the $*$'s could be any (possibly distinct) natural numbers, since that does not change the relevant projection of the lattice density. 

Before proving our main result on regularity, we note some simple consequences of the definition.

\begin{lem} \label{lem:squeezereg}
    Let $M_1,M_2$ be positive integers and $P$ be an $(M_1M_2,\delta,n_l,\ldots,n_k)$-regular $R$-cube. Then the following hold: 
    \begin{enumerate}
        \item $P$ is $(M_1,\delta,n_l,\ldots,n_k)$-regular.
        \item For any $R/M_1$-subcube $Q$ of $P$, $Q$ is $(M_2,\delta,n_l,\ldots,n_k)$-regular.
    \end{enumerate}
\end{lem}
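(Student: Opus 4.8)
The plan is to deduce both parts from a single monotonicity principle, namely Lemma~\ref{lem:sublocalld} (shrinking the tileable region only decreases $|\pi_{l+1}(\LD_{\cdot}(A;\cF))|$), combined with the trivial observation that the grid of $\frac{R}{M_1M_2}$-subcubes of $P$ refines the grid of $\frac{R}{M_1}$-subcubes of $P$: each $\frac{R}{M_1}$-subcube $Q$ of $P$ is partitioned into exactly $M_2^d$ of the $\frac{R}{M_1M_2}$-subcubes of $P$, and these are precisely the $\frac{R}{M_1M_2}$-subcubes of $Q$. (Here, as throughout the section, we assume the side lengths are chosen divisible enough that all the local densities in question are defined, per the Remark.) Write $\vec{n}=(0,\ldots,0,n_l,\ldots,n_k)$ and $\vec{n}'=(0,\ldots,0,n_l+1,\ldots,n_k)$ as in the definition of regularity.

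For part (1): fix an arbitrary $\frac{R}{M_1}$-subcube $Q'$ of $P$ and choose any $\frac{R}{M_1M_2}$-subcube $Q''$ of $P$ with $Q''\subseteq Q'$ (such a $Q''$ exists since $Q'$ contains $M_2^d\ge 1$ of them). Then $(M_1M_2,\delta,n_l,\ldots,n_k)$-regularity of $P$ gives
$$|\pi_{l+1}(\LD_{Q''}(A;\cF_{\vec{n}'}))|\ge (1-\delta)|\pi_{l+1}(\LD_{P}(A;\cF_{\vec{n}}))|,$$
while Lemma~\ref{lem:sublocalld} applied to $Q''\subseteq Q'$ gives $|\pi_{l+1}(\LD_{Q'}(A;\cF_{\vec{n}'}))|\ge |\pi_{l+1}(\LD_{Q''}(A;\cF_{\vec{n}'}))|$. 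Chaining these two bounds yields the inequality defining $(M_1,\delta,n_l,\ldots,n_k)$-regularity of $P$. For part (2): fix a $\frac{R}{M_1}$-subcube $Q$ of $P$ and a $\frac{R}{M_1M_2}$-subcube $Q'$ of $Q$; then $Q'$ is also a $\frac{R}{M_1M_2}$-subcube of $P$, so $(M_1M_2,\delta,n_l,\ldots,n_k)$-regularity of $P$ gives $|\pi_{l+1}(\LD_{Q'}(A;\cF_{\vec{n}'}))|\ge (1-\delta)|\pi_{l+1}(\LD_{P}(A;\cF_{\vec{n}}))|$, and Lemma~\ref{lem:sublocalld} applied to $Q\subseteq P$ gives $|\pi_{l+1}(\LD_{P}(A;\cF_{\vec{n}}))|\ge |\pi_{l+1}(\LD_{Q}(A;\cF_{\vec{n}}))|$; combining these is exactly the inequality defining $(M_2,\delta,n_l,\ldots,n_k)$-regularity of $Q$.

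I do not expect a genuine obstacle here: the content is entirely the monotonicity already established in Lemma~\ref{lem:sublocalld}, and the one point that deserves a sentence of care is the refinement/alignment claim for the nested subcube grids, which is immediate from the divisibility conventions adopted in the Remark. The only thing to be careful about in the write-up is to track whether the one-parameter versus two-parameter refinement is being used (part (1) uses "every $\frac{R}{M_1}$-subcube contains a $\frac{R}{M_1M_2}$-subcube", part (2) uses "every $\frac{R}{M_1M_2}$-subcube of $Q$ is a $\frac{R}{M_1M_2}$-subcube of $P$"), but both are one-line observations.
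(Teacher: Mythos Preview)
Your proof is correct and follows essentially the same approach as the paper: both use the $(M_1M_2,\delta,n_l,\ldots,n_k)$-regularity of $P$ at the finest scale together with the monotonicity of Lemma~\ref{lem:sublocalld} (once for $Q''\subseteq Q'$ and once for $Q\subseteq P$) to deduce the two parts. The paper's version merely packages both parts into a single setup with one $R/M_1$-subcube $Q$ and one $R/(M_1M_2)$-subcube $S\subseteq Q$, but the chain of inequalities is identical to yours.
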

\begin{proof}
    Let $Q$ be any $R/M_1$-subcube of $P$ and $S$ be any $R/(M_1M_2)$-subcube of $Q$. By regularity, we have 
    $$|\pi_{l+1}(\LD_{S}(A;\cF_{\vec{n}'}))|\geq (1-\delta)|\pi_{l+1}(\LD_{P}(A;\cF_{\vec{n}}))|.$$
    By Lemma~\ref{lem:sublocalld}, we have $|\pi_{l+1}(\LD_{P}(A;\cF_{\vec{n}}))|\geq |\pi_{l+1}(\LD_{Q}(A;\cF_{\vec{n}}))|$ and $|\pi_{l+1}(\LD_{Q}(A;\cF_{\vec{n}'}))|\geq |\pi_{l+1}(\LD_{S}(A;\cF_{\vec{n}'}))|$. Therefore, 
    \begin{align*}
        |\pi_{l+1}(\LD_{Q}(A;\cF_{\vec{n}'}))| &\geq (1-\delta)|\pi_{l+1}(\LD_{P}(A;\cF_{\vec{n}}))|,\\
        |\pi_{l+1}(\LD_{S}(A;\cF_{\vec{n}'}))| &\geq (1-\delta)|\pi_{l+1}(\LD_{Q}(A;\cF_{\vec{n}}))|,
    \end{align*}
    which prove the first and second parts of the lemma, respectively.
\end{proof}

We now come to our main result on regularity, which says that, for any dense $A\subseteq [0,N)^d$, one can cut the box $[0,N)^d$ into a bounded number of subcubes, most of which are regular and where the union of the regular subcubes covers most of $A$. We first prove such a result with respect to a single projection $\pi_{l+1}$, before iterating to establish regularity with respect to all projections.

\begin{lem} \label{lem:regularity}
    Fix $\varepsilon,\delta>0$ and $l\in [k]$, a positive integer $M$ and non-negative integers $n_{l+1},\ldots,n_k$. Then there exists $R_0=R_0(M,\varepsilon,\delta)$ such that if $A\subseteq [0,N)^d$ is of size at least $\varepsilon N^d$ and $N'\mid N$, there exists a natural number $r\leq R_0$ and a collection $\cP$ of disjoint $N'/M^r$-cubes such that, for $A'=A\cap \bigcup_{P\in \cP} P$, 
    \begin{enumerate}
        \item $|A'|\geq (1-\delta)|A|$,
        \item $P$ is $(M,\delta,r,n_{l+1},\ldots,n_k)$-regular for all $P\in \cP$.
    \end{enumerate}
\end{lem}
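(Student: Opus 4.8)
The plan is to run an iterative refinement (a ``stopping-time'' or energy-increment argument) on the single projection $\pi_{l+1}$, where the monovariant being tracked is the relevant weighted size of the projection of the local lattice density. Fix $\vec{n}_r=(0,\ldots,0,r,n_{l+1},\ldots,n_k)$ and $\vec{n}_r'=(0,\ldots,0,r+1,n_{l+1},\ldots,n_k)$. Start at stage $0$ with the trivial collection $\cP_0=\{[0,N)^d\}$ (which is tileable since $N'\mid N$ and $N'$ can be taken divisible by all bounded integers, as in the Remark) and $r=0$. At each stage we have a collection $\cP_s$ of disjoint $N'/M^s$-cubes whose union covers a set $A_s:=A\cap\bigcup_{P\in\cP_s}P$ with $|A_s|\ge(1-\delta)|A|$ maintained throughout. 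For each $P\in\cP_s$ we ask whether $P$ is $(M,\delta,r,n_{l+1},\ldots,n_k)$-regular, i.e.\ whether every $N'/M^{s+1}$-subcube $Q$ of $P$ satisfies \eqref{eqn:regularity}. If \emph{all} $P\in\cP_s$ are regular, we stop and output $\cP_s$, $r=s$. Otherwise, let $\cP_s^{\mathrm{bad}}$ be the non-regular cubes; for each such $P$, replace it by its $M^d$ subcubes of size $N'/M^{s+1}$, \emph{discarding} those subcubes $Q$ failing \eqref{eqn:regularity}, and keep all subcubes of the good cubes. This produces $\cP_{s+1}$, a collection of $N'/M^{s+1}$-cubes.

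The key quantity to control is an aggregate projection size, something like
$$\Phi_s := \sum_{P\in\cP_s} |P\cap L_k|\cdot \bigl|\pi_{l+1}\bigl(\LD_P(A;\cF_{\vec{n}_s})\bigr)\bigr|,$$
appropriately normalised; by Lemma~\ref{lem:localldvol} the ``mass'' weights sum correctly and by Lemma~\ref{lem:projineq}(2) and Lemma~\ref{lem:sublocalld} the projection size is monotone decreasing under both refining the cube and increasing $\vec{n}$, so $\Phi_{s+1}\le\Phi_s$ even before accounting for discards. Two things must be checked. First, at each non-terminal stage a definite amount of mass is lost in the monovariant: if $P$ is not regular, it has a subcube $Q$ with $|\pi_{l+1}(\LD_Q(A;\cF_{\vec{n}_s'}))|<(1-\delta)|\pi_{l+1}(\LD_P(A;\cF_{\vec{n}_s}))|$, so when we pass from the $\cF_{\vec{n}_s}$-density on $P$ to the $\cF_{\vec{n}_{s+1}}=\cF_{\vec{n}_s'}$-densities on its kept subcubes, the contribution strictly drops by an amount proportional to $\delta$ times the local mass, and summing over bad cubes gives $\Phi_{s+1}\le\Phi_s-c\delta\cdot(\text{bad mass fraction})$; since $\Phi_s$ is bounded in $[0,\text{const}]$ (each projection size is at most $1$), the process terminates after $R_0=R_0(M,\varepsilon,\delta)$ steps, where the $\varepsilon$-dependence enters because the total relevant mass is $\ge\varepsilon N^d$, giving a lower bound on the initial $\Phi_0$ relative to what can be lost. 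Second, the discarded subcubes $Q$ (those violating \eqref{eqn:regularity}) carry only a small fraction of $|A|$: each such $Q$ has, by Lemma~\ref{lem:localldvol}, $|A\cap Q|/|L_k\cap Q| = \Vol(\LD_Q(A;\cF))$, and the defect in the $\pi_{l+1}$-projection times the other bounded projection sizes bounds the volume deficit; budgeting $\delta/R_0$ worth of mass-loss per stage keeps the total discarded mass below $\delta|A|$, preserving $|A_s|\ge(1-\delta)|A|$. Here one must be slightly careful to choose the per-stage discard budget in terms of the \emph{final} step count $R_0$, which is legitimate since $R_0$ depends only on $M,\varepsilon,\delta$, not on $N$ — this is the usual mild circularity in increment arguments, resolved by first fixing $R_0$ from the energy bound and then allocating the discard budget.

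The main obstacle I expect is bookkeeping the interaction between the two ``directions'' of regularity simultaneously inside one monovariant: the definition of $(M,\delta,n_l,\ldots,n_k)$-regularity compares $\LD_Q$ at parameter $\vec{n}'$ against $\LD_P$ at parameter $\vec{n}$, so the natural monovariant must be evaluated with the parameter vector that matches the current cube-size scale (parameter $=$ number of refinement steps), and one has to verify that refining a cube \emph{and} bumping the last free coordinate of $\vec{n}$ are both covered by the monotonicity lemmas (Lemmas~\ref{lem:projineq}(2), \ref{lem:sublocalld}) — this is exactly the content of the displayed chain of inequalities just before Lemma~\ref{lem:squeezereg}, so it goes through, but care is needed that the ``$*$'s'' in the parameter vectors (the coordinates $1,\ldots,l-1$, which are irrelevant to $\pi_{l+1}$) are handled by the remark following that chain. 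Once the single-projection statement is in hand, the full Lemma (regularity with respect to all of $\pi_2,\ldots,\pi_{k+1}$ at once, which the proof of Lemma~\ref{lem:dense} will need) follows by applying this result successively for $l=1,2,\ldots,k$, each time working inside the subcubes produced by the previous round and using Lemma~\ref{lem:squeezereg} to see that regularity established at a coarser scale persists at the finer scales; the total number of refinement levels remains bounded (a sum of $k$ bounded quantities) and the mass losses add up to at most $k\delta|A|$, which is absorbed by adjusting constants. Although the statement as written is already the single-$l$ version, so for the lemma itself only the first paragraph's argument is needed; the iteration remark belongs to its application.
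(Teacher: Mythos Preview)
Your choice of monovariant --- a weighted sum of $|\pi_{l+1}(\LD_P(A;\cF_{\vec{n}_s}))|$ over the current cubes --- is essentially the quantity $D_r$ the paper uses, and its monotonicity under both refinement and the parameter bump does follow from Lemmas~\ref{lem:projineq}(2) and \ref{lem:sublocalld}, as you say. However, your termination argument has a genuine gap. The drop $\Phi_s-\Phi_{s+1}$ at a non-terminal stage is at least $\frac{\delta}{M^d}$ times the \emph{projection-weighted} contribution of the bad cubes to $\Phi_s$, not their $|A|$-mass. If the bad cubes happen to have tiny projection, this drop is tiny, and nothing forces the process to reach the stopping condition ``all cubes regular'' in a bounded number of steps: $\Phi_s$ can decrease forever without hitting zero. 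The ``mild circularity'' you flag is not of the usual resolvable kind, because the discards at each stage are determined by which subcubes fail \eqref{eqn:regularity}; they are not a free choice you can budget.

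The paper sidesteps this by \emph{not} discarding during the iteration. It takes $\cP^{(r)}$ to be \emph{all} $N'/M^r$-cubes at every level $r$, tracks the normalised average
\[
D_r=\frac{(N'/N)^d}{M^{rd}}\sum_{P\in\cP^{(r)}}|\pi_{l+1}(\LD_P(A;\cF_{\vec{n}^{(r)}}))|\in[0,1],
\]
and applies pigeonhole: since $D_r$ is decreasing in $[0,1]$, some $r\le R_0:=M^d/(\varepsilon\delta^2)$ has $D_r-D_{r+1}\le\varepsilon\delta^2/M^d$. At that single level, each non-regular cube contributes at least a $\frac{\delta}{M^d}$-share to $D_r-D_{r+1}$, so the total projection-weighted size of the non-regular cubes is small; and since $|A\cap P|=|P\cap L_k|\cdot\Vol(\LD_P)\le |P\cap L_k|\cdot|\pi_{l+1}(\LD_P)|$, this bounds the $|A|$-mass in non-regular cubes by $\varepsilon\delta N^d\le\delta|A|$. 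Then one simply outputs the regular cubes at that level. The role of $\varepsilon$ is thus in calibrating $R_0$ against the target $\delta|A|$, not in lower-bounding $\Phi_0$ as you suggest.
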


\begin{proof}
    Let $\cP^{(r)}$ be the collection of $N'/M^r$-cubes in $[0,N)^d$,  $\cP^{(r)}_0$ be the subcollection of all $(M,\delta,r,n_{l+1},\ldots,n_k)$-regular cubes in $\cP^{(r)}$ and $A^{(r)}=A\cap \bigcup_{P\in \cP_0^{(r)}} P$. We will set $A'=A^{(r)}$ and $\cP=\cP^{(r)}_0$, so we wish to show that there is some bounded $r$ such that $|A^{(r)}|\geq (1-\delta)|A|$.

    Let $\cP^{(r)}_1$ be the collection of all cubes in $\cP^{(r)}$ which are not $(M,\delta,r,n_{l+1},\ldots,n_k)$-regular. Writing $\vec{n}^{(r)}=(0,\ldots,0,r,n_{l+1},\ldots,n_k)$, consider the quantity
    $$D_r:=\frac{(N'/N)^d}{M^{rd}}\sum_{P\in \cP^{(r)}} |\pi_{l+1}(\LD_{P}(A;\cF_{\vec{n}^{(r)}}))|\leq \frac{(N'/N)^d}{M^{rd}}|\cP^{(r)}|= 1.$$
    For any $P\in \cP^{(r)}$ and subcube $Q\in \cP^{(r+1)}$, we have the inequalities
    $$|\pi_{l+1}(\LD_{P}(A;\cF_{\vec{n}^{(r)}}))|\geq |\pi_{l+1}(\LD_{P}(A;\cF_{\vec{n}^{(r+1)}}))|\geq |\pi_{l+1}(\LD_{Q}(A;\cF_{\vec{n}^{(r+1)}}))|.$$
    Therefore, $D_r$ is decreasing in $r$. 
    
    Set $R_0:=\frac{M^d}{\varepsilon\delta^2}$. Since $D_r$ is decreasing and in $[0,1]$, there is some $r\leq R_0$ such that $D_r\geq D_{r+1}\geq D_{r}-\frac{\varepsilon\delta^2}{M^d}$. For each $P\in \cP^{(r)}_1$, since $P$ is not regular, there is some subcube $Q\in \cP^{(r+1)}$ of $P$ such that 
    $$|\pi_{l+1}(\LD_{Q}(A;\cF_{\vec{n}^{(r+1)}}))|\leq (1-\delta)|\pi_{l+1}(\LD_{P}(A;\cF_{\vec{n}^{(r)}}))|.$$
    Therefore,
    \begin{align}
        D_r-D_{r+1} &= \frac{(N'/N)^d}{M^{rd}}\sum_{P\in \cP^{(r)}}\paren{|\pi_{l+1}(\LD_{P}(A;\cF_{\vec{n}^{(r)}}))|-\frac{1}{M^d}\sum_{\substack{Q\in \cP^{(r+1)}\\ Q\subset P}}|\pi_{l+1}(\LD_{Q}(A;\cF_{\vec{n}^{(r+1)}}))|} \nonumber\\
        &\geq \frac{(N'/N)^d}{M^{rd}}\sum_{P\in \cP^{(r)}_1}\frac{\delta}{M^d}|\pi_{l+1}(\LD_{P}(A;\cF_{\vec{n}^{(r)}}))| \nonumber\\
        &= \frac{(N'/N)^d\delta}{M^{(r+1)d}}\sum_{P\in \cP^{(r)}_1} |\pi_{l+1}(\LD_{P}(A;\cF_{\vec{n}^{(r)}}))|. \label{eqn:sumP1_1}
    \end{align}
    By Lemma~\ref{lem:localldvol}, for any $P\in \cP^{(r)}_1$, we have
    $$\Vol(\LD_{P}(A;\cF_{\vec{n}^{(r)}}))=\frac{|A\cap P|}{|\ZZ^d\cap P|}=\frac{M^{rd}}{N'^d}|A\cap P|.$$
    Therefore, 
    \begin{align}
        |A\setminus A^{(r)}| &= \sum_{P\in \cP^{(r)}_1} |A\cap P| = \frac{N'^d}{M^{rd}}\sum_{P\in \cP^{(r)}_1} \Vol(\LD_{P}(A;\cF_{\vec{n}^{(r)}})) \nonumber\\
        &\leq \frac{N'^d}{M^{rd}}\sum_{P\in \cP^{(r)}_1} |\pi_{l+1}(\LD_{P}(A;\cF_{\vec{n}^{(r)}}))|. \label{eqn:sumP1_2}
    \end{align}
    Combining \eqref{eqn:sumP1_1} and \eqref{eqn:sumP1_2}, we have 
    \begin{align*}
        |A\setminus A^{(r)}| &\leq \frac{N^dM^d}{\delta}(D_r-D_{r+1})\leq \varepsilon\delta N^d\leq \delta|A|,
    \end{align*}
    as required.
\end{proof}

\begin{lem} \label{lem:bigregularity}
    Fix $\varepsilon,\delta>0$ and a positive integer $M$ and suppose that $A\subseteq [0,N)^d$ is of size at least $\varepsilon N^d$. Then there exist $n_1,\ldots,n_k$, $r\leq R_1=R_1(M,\varepsilon,\delta)$ and a collection $\cP$ of disjoint $N/M^r$-cubes such that, for $A'=A\cap \bigcup_{P\in \cP} P$,
    \begin{enumerate}
        \item $|A'|\geq (1-\delta)|A|$,
        \item $P$ is $(M,\delta,n_l,\ldots,n_k)$-regular for all $P\in \cP$ and $l\in [k]$.
    \end{enumerate}
    
\end{lem}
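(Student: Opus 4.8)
The plan is to establish Lemma~\ref{lem:bigregularity} by applying Lemma~\ref{lem:regularity} once for each index $l\in[k]$, processing the indices in the order $l=k,k-1,\ldots,1$; this order is forced, since to run Lemma~\ref{lem:regularity} at index $l$ one must already have chosen $n_{l+1},\ldots,n_k$. One may assume $\delta<1$, as the statement is otherwise vacuous, and, by the convention in the Remark preceding Lemma~\ref{lem:squeezereg}, that $N$ is divisible by a sufficiently large integer so that every subdivision below is exact. Set $\delta':=\delta/(2k)$ and $\varepsilon':=\varepsilon/2$, and fix in advance a sequence of integers $M_1,\ldots,M_k$ as follows: let $M_1:=M$ and, having defined $M_1,\ldots,M_l$, put $\rho_j:=\ceil{R_0(M_j,\varepsilon',\delta')}$ for $j\le l$ (using the constant from Lemma~\ref{lem:regularity}, which depends only on its three arguments) and $M_{l+1}:=M\prod_{j=1}^l M_j^{\rho_j}$. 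Each $M_l$ is then a fixed power $M^{e_l}$ of $M$ whose exponent depends only on $M,\varepsilon,\delta$ and $k$; set $R_1:=\sum_{l=1}^k e_l\rho_l$.

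Now I would run the iteration. Put $A^{(k+1)}:=A$ and $\cP_{k+1}:=\{[0,N)^d\}$, and suppose, descending in $l$, that I have a set $A^{(l+1)}\subseteq A$ with $|A^{(l+1)}|\ge(1-\delta')^{k-l}|A|\ge\varepsilon'N^d$, a collection $\cP_{l+1}$ of disjoint cubes of common side length dividing $N$ and covering $A^{(l+1)}$, and chosen values $n_{l+1},\ldots,n_k$. Apply Lemma~\ref{lem:regularity} to $A^{(l+1)}$ with parameters $\varepsilon'$, $\delta'$, the index $l$, the integer $M_l$, the values $n_{l+1},\ldots,n_k$, and with $N'$ equal to the common side length of the cubes of $\cP_{l+1}$. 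This produces $r_l\le\rho_l$ and a collection $\cP_l$ of disjoint $N'/M_l^{r_l}$-cubes, each $(M_l,\delta',r_l,n_{l+1},\ldots,n_k)$-regular, such that $A^{(l)}:=A^{(l+1)}\cap\bigcup\cP_l$ has $|A^{(l)}|\ge(1-\delta')|A^{(l+1)}|$. Set $n_l:=r_l$, and delete from $\cP_l$ every cube disjoint from $\bigcup\cP_{l+1}$; since $A^{(l+1)}$ is supported on $\bigcup\cP_{l+1}$ this leaves $A^{(l)}$ unchanged, and afterwards every cube of $\cP_l$ is a sub-cube of a unique cube of $\cP_{l+1}$. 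This restores the inductive hypotheses, so the iteration runs down to $l=1$; put $\cP:=\cP_1$ and $A':=A^{(1)}$. Then $|A'|\ge(1-\delta')^k|A|\ge(1-k\delta')|A|=(1-\delta/2)|A|>(1-\delta)|A|$, and the cubes of $\cP$ have common side length $N/\prod_{l=1}^k M_l^{r_l}=N/M^r$ with $r=\sum_l e_lr_l\le R_1$.

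It then remains to verify that every $Q\in\cP$ is $(M,\delta,n_l,\ldots,n_k)$-regular for each $l\in[k]$, and this is the one step that requires care. Fix $l$. Following the sub-cube relations, $Q\subseteq P_l$ for a unique $P_l\in\cP_l$, and $Q$ arises from $P_l$ through the subdivisions carried out in rounds $l-1,l-2,\ldots,1$, each of which refines by the linear factor $M_j^{r_j}$; hence $Q$ is an $R_{P_l}/F$-sub-cube of $P_l$, where $R_{P_l}$ is the side length of $P_l$ and $F:=\prod_{j=1}^{l-1}M_j^{r_j}$ divides $\prod_{j=1}^{l-1}M_j^{\rho_j}=M_l/M$, so that $MF\mid M_l$. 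Since $P_l$ is $(M_l,\delta',n_l,\ldots,n_k)$-regular (recall $n_l=r_l$), Lemma~\ref{lem:squeezereg}(1) shows $P_l$ is $(MF,\delta',n_l,\ldots,n_k)$-regular, and then Lemma~\ref{lem:squeezereg}(2), applied with $M_1:=F$ and $M_2:=M$, shows $Q$ is $(M,\delta',n_l,\ldots,n_k)$-regular, hence $(M,\delta,n_l,\ldots,n_k)$-regular as $\delta'\le\delta$.

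The heart of the argument, and the only genuine obstacle, is exactly this compatibility: a cube produced while processing index $l$ must remain regular (with respect to $\pi_{l+1}$) after being cut up in all the later rounds for indices $l-1,\ldots,1$. I handle it by insisting on a correspondingly higher regularity level, namely $M_l$ rather than $M$, when processing index $l$, with $M_l$ pinned down beforehand, as a fixed power of $M$, to dominate every possible cumulative future subdivision factor, and then appealing to Lemma~\ref{lem:squeezereg}. The remaining ingredients are routine: the density bookkeeping (one sheds at most a factor $1-\delta'$ per round, so $|A^{(l)}|$ never drops below $(1-\delta/2)\varepsilon N^d\ge\varepsilon'N^d$ since $\delta<1$), the fact that after the deletion step the cubes of $\cP_l$ are genuine sub-cubes of those of $\cP_{l+1}$ (because the $\cP_l$-grid refines the $\cP_{l+1}$-grid and distinct cubes in a grid are disjoint), and the bound $r\le R_1$.
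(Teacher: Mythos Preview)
Your proof is correct and follows essentially the same strategy as the paper: iterate Lemma~\ref{lem:regularity} from $l=k$ down to $l=1$, applying it at each stage with a sufficiently large power of $M$ (your $M_l$, the paper's $M^{S_1+\cdots+S_{l-1}+1}$) so that the regularity established at stage $l$ survives all subsequent subdivisions, and then invoke Lemma~\ref{lem:squeezereg} to pass the regularity down to the final small cubes. Your bookkeeping with the recursively defined $M_l$ and the explicit deletion step ensuring $\cP_l$ refines $\cP_{l+1}$ is, if anything, more careful than the paper's write-up; the one point you leave implicit (and the paper does too) is that after the deletion step the retained cubes lie inside $\bigcup\cP_{l+1}$, where $A^{(l+1)}$ agrees with $A$, so the regularity you obtain from Lemma~\ref{lem:regularity} applied to $A^{(l+1)}$ is genuinely regularity with respect to the fixed ambient set $A$.
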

\begin{proof}
    Following the notation of Lemma~\ref{lem:regularity}, set  $S_1=R_0(M,\varepsilon/2,\delta/k)$ and, for $l=2,\ldots,k$,  
    $$S_l=R_0(M^{S_1+\cdots+S_{l-1}+1},\varepsilon/2, \delta/k).$$
    We then set $R_1:=S_1+\cdots+S_k$. We shall apply Lemma~\ref{lem:regularity} $k$ times in succession to obtain $n_k,n_{k-1},\ldots,n_1\leq R_1$. 
    
    First, we obtain $n_k\leq S_k$ and a collection $\cP^{(k)}$ of disjoint $N/M^{n_k}$-cubes such that, for $A^{(k)}=A\cap \bigcup_{P\in \cP^{(k)}} P$, we have
    \begin{enumerate}
        \item $|A^{(k)}|\geq \paren{1-\frac{\delta}{k}}|A|$,
        \item $P$ is $(M^{S_1+\cdots+S_{k-1}+1},\delta,n_k)$-regular for all $P\in \cP^{(k)}$.
    \end{enumerate}
    
    Suppose we have constructed $n_k,n_{k-1},\ldots,n_{l+1}$ for some $l\geq 1$. Then, using Lemma~\ref{lem:regularity}, we obtain $n_l\leq S_l$ and a collection $\cP^{(l)}$ of disjoint $N/M^{n_k+\cdots+n_l}$-cubes such that, for $A^{(l)}=A^{(l+1)}\cap \bigcup_{P\in \cP^{(l)}} P$, we have
    \begin{enumerate}
        \item $|A^{(l)}|\geq \paren{1-\frac{\delta}{k}}|A^{(l+1)}|$,
        \item $P$ is $(M^{S_1+\cdots+S_{l-1}+1},\delta,n_l,\ldots,n_k)$-regular for all $P\in \cP^{(l)}$.
    \end{enumerate}
    We may also assume that the collection $\cP^{(l)}$ is a subset of a refinement of $\cP^{(l+1)}$.

    Finally, set $\cP=\cP^{(1)}$, a collection of $N/M^r$-cubes, where $r=n_1+\cdots+n_k\leq R_1$. Then, for $A'=A\cap \bigcup_{P\in \cP} P$, we have
    \begin{enumerate}
        \item $|A'|\geq \paren{1-\frac{\delta}{k}}^k |A|\geq (1-\delta)|A|$,
        \item for each $l\in [k]$ and each $P\in \cP$, $P$ is a subcube of some $P^{(l)}\in \cP^{(l)}$, which is, by construction,  $(M^{S_1+\cdots+S_{l-1}+1},\delta,n_l,\ldots,n_k)$-regular. But then, by Lemma~\ref{lem:squeezereg}, $P$ is $(M,\delta,n_l,\ldots,n_k)$-regular. \qedhere
    \end{enumerate}
\end{proof}

\section{Proof of the dense case} \label{sec:dense}

In this section, we make use of the results of the last two sections to prove Lemma~\ref{lem:dense}, which we restate for the reader's convenience. As noted in Section~\ref{sec:reduction}, this will complete the proof of our main result. Recall, from Section~\ref{sec:mapZd}, that we have isomorphisms $\Phi':\frakD \to \ZZ^d$ and $\Phi:\cO_K\to \ZZ^d$. Multiplication of the elements of $\frakD$ by $\lambda_l$ then corresponds to the map $\cL_l:\ZZ^d\to \ZZ^d$ given by $\cL_l=\Phi\circ \cM_l\circ \Phi'^{-1}$. In particular, one may check that $|\det \cL_0|=N_{K/\QQ}(\frakD)$.

\begin{lem} \label{lem:dense2}
    For any $\varepsilon>0$, there exists $N_0$ such that if $N \ge N_0$  and $A\subset [0,N)^d$ with $|A|\geq \varepsilon N^d$, then
    $$|\cL_0 A+\cdots+\cL_k A|\geq H(\lambda_1,\ldots,\lambda_k)|A|-o_{\varepsilon}(|A|).$$
\end{lem}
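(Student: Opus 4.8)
The plan is to derive the bound from the continuous estimate of Theorem~\ref{thm:cts}, using the lattice densities of Sections~\ref{sec:ld}--\ref{sec:families} to build compact sets in $\RR^{d+k+1}$ representing $A$ and its sum of dilates. Transporting the problem along the isomorphisms of Section~\ref{sec:mapZd}, we work with $\tilde A\subseteq\frakD$ (the preimage of $A$) and $B:=\lambda_0\cdot\tilde A+\lambda_1\cdot\tilde A+\cdots+\lambda_k\cdot\tilde A\subseteq\cO_K$, so that $|\tilde A|=|A|$ and $|B|=|\cL_0A+\cdots+\cL_kA|$, normalising the volume form on $K_\RR$ so that $\cO_K$ has covolume $1$ and $\frakD$ has covolume $N_{K/\QQ}(\frakD)$. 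We first fix a small $\delta>0$ and a large integer $M$ (both chosen at the very end in terms of $\varepsilon$ and the $\lambda_l$) and apply Lemma~\ref{lem:bigregularity} to $\tilde A$, obtaining exponents $\vec n=(n_1,\dots,n_k)$, a bounded $r$, and a family $\cP$ of disjoint $(N/M^r)$-cubes with $\bigl|\tilde A\cap\bigcup\cP\bigr|\ge(1-\delta)|A|$, each of which is $(M,\delta,n_l,\dots,n_k)$-regular for every $l\in[k]$. Setting $\cF:=\cF^K_{\vec n}$ and $\cG:=\cG^K_{\vec n+1}$, we define the compact sets
\[
\overline A:=\bigcup_{P\in\cP}\overline P\times\bigl((1-\delta)\LD_P(\tilde A;\cF)\bigr),\qquad
\overline B:=\bigcup_{Q}\overline Q\times\LD_Q(B;\cG),
\]
the union in $\overline B$ being over a sufficiently fine grid of cubes. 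By Lemma~\ref{lem:localldvol} and the fact that all cubes are large, $\Vol(\overline A)=(1-\delta)^{k+1}N_{K/\QQ}(\frakD)\bigl|\tilde A\cap\bigcup\cP\bigr|(1+o(1))$ while $\Vol(\overline B)\le(1+o(1))|B|$; the factor $N_{K/\QQ}(\frakD)$ in $H(\lambda_1,\dots,\lambda_k)$ will come out precisely as this covolume ratio $\Vol(K_\RR/\frakD)/\Vol(K_\RR/\cO_K)=[\cO_K:\frakD]$.

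The heart of the argument is the inclusion
\[
\cM_0'\,\overline A+\cM_1'\,\overline A+\cdots+\cM_k'\,\overline A\ \subseteq\ \overline B
\]
up to deletion of a set of measure $o(|A|)$ from the left, where $\cM_l$ denotes multiplication by $\lambda_l$ on $K_\RR$ and $\cM_l'(x,y):=(\cM_l x,\pi_{l+1}(y))$ on $K_\RR\times\RR^{k+1}$. This is proved fibrewise. Fixing cubes $P_0,\dots,P_k\in\cP$, the parallelepipeds $\cM_lP_l$ have a zonotope $Z$ as their Minkowski sum, and for a small cube $Q$ lying well inside $Z$ one chooses sub-boxes $P_l'\subseteq P_l$, positioned so that the small zonotope $\cM_0P_0'+\cdots+\cM_kP_k'$ lies inside $Q$; then $B\cap Q$ contains $\cM_0(\tilde A\cap P_0')+\cdots+\cM_k(\tilde A\cap P_k')$, so by monotonicity of lattice density (Lemma~\ref{lem:ldbasic}(3)) and Lemma~\ref{lem:localldchange} the set $\LD_Q(B;\cG)$ contains the $\cG$-lattice density of the appropriately periodised sumset. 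Applying Theorem~\ref{thm:ldsum} repeatedly, together with the fact that every lattice density is a compressed downset containing the origin, this last set contains the box whose $(l+1)$st edge has length $\bigl|\pi_{l+1}\bigl(\LD\bigl(\lambda_l\cdot(\tilde A\cap P_l')^{\mathrm{per}};\cG\bigr)\bigr)\bigr|$ for $l=0,\dots,k$.

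It remains to see that each of these edge lengths is at least $(1-\delta)\bigl|\pi_{l+1}(\LD_{P_l}(\tilde A;\cF))\bigr|$, which is exactly the length of the $(l+1)$st edge of the fibre of $\cM_l'\overline A$ over a point of $\cM_lP_l$; granting this, the fibre of $\cM_0'\overline A+\cdots+\cM_k'\overline A$ over any point of $Z$ is contained in $\LD_Q(B;\cG)$, the leftover (boundary layers of zonotopes, non-regular or sparse cubes, points covered by no good configuration) being the $o(|A|)$ error. For $l=0$ this is immediate from the exact identity $\pi_1(\LD(\cdot;\cF^K_{\vec n}))=\pi_1(\LD(\cdot;\cG^K_{\vec n+1}))$ of Lemma~\ref{lem:pi1stable}, since $\cM_0=\mathrm{id}$ and $\pi_1$ does not shrink under the reduction to a suitable subcube of $P_0$. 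For $l\ge1$ one feeds the flag index $\vec n+e_l$ (bumping only the $l$th coordinate) into Lemma~\ref{lem:pilstable}: its conclusion concerns $\cG^K_{\vec m}$ with $\vec m$ agreeing with $\vec n+1$ in the coordinates $l,l+1,\dots,k$ on which $\pi_{l+1}$ depends, hence concerns $\cG=\cG^K_{\vec n+1}$, and yields $\bigl|\pi_{l+1}(\LD(\lambda_l\cdot(\tilde A\cap P_l')^{\mathrm{per}};\cG))\bigr|\ge\bigl|\pi_{l+1}(\LD_{P_l'}(\tilde A;\cF^K_{\vec n+e_l}))\bigr|$; finally, choosing $P_l'$ to contain an $(R/M)$-subcube of $P_l$ and using that $P_l$ is $(M,\delta,n_l,\dots,n_k)$-regular (together with Lemmas~\ref{lem:squeezereg} and \ref{lem:sublocalld}), the right-hand side is $\ge(1-\delta)\bigl|\pi_{l+1}(\LD_{P_l}(\tilde A;\cF))\bigr|$, as needed.

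With the inclusion established, the maps $\cM_0',\dots,\cM_k'$ are simultaneously diagonalisable over $\CC$: they are block sums of the simultaneously diagonalisable $\cM_l$ (Lemma~\ref{lem:diag}, with eigenvalues $\sigma_1(\lambda_l),\dots,\sigma_d(\lambda_l)$) and the coordinate projections $\pi_{l+1}$ (each a diagonal $0/1$ matrix). Hence Theorem~\ref{thm:cts}, applied to these $k+1$ maps, gives
\[
\mu\Bigl(\textstyle\sum_{l=0}^k\cM_l'\overline A\Bigr)\ \ge\ \Biggl(\prod_{i=1}^d\sum_{l=0}^k|\sigma_i(\lambda_l)|\Biggr)\Vol(\overline A)\ =\ \frac{H(\lambda_1,\dots,\lambda_k)}{N_{K/\QQ}(\frakD)}\,\Vol(\overline A),
\]
because for each of the $k+1$ last coordinates exactly one $\cM_l'$ contributes a $1$ and all the others a $0$, and $\sum_{l=0}^k|\sigma_i(\lambda_l)|=1+|\sigma_i(\lambda_1)|+\cdots+|\sigma_i(\lambda_k)|$ since $\lambda_0=1$. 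Combining with the volume estimates of the first paragraph,
\[
|B|\ \ge\ \Vol(\overline B)-o(|A|)\ \ge\ \mu\Bigl(\textstyle\sum_{l}\cM_l'\overline A\Bigr)-o(|A|)\ \ge\ (1-\delta)^{k+2}H(\lambda_1,\dots,\lambda_k)|A|-o(|A|),
\]
and letting $\delta\to0$ and then $N\to\infty$ yields $|B|\ge H(\lambda_1,\dots,\lambda_k)|A|-o_\varepsilon(|A|)$, as required. The main obstacle is the content of the third paragraph: the two natural flag families $\cF^K_{\vec n}$ and $\cG^K_{\vec n+1}$ do not satisfy a single exact relation transporting all of the projections $\pi_1,\dots,\pi_{k+1}$ simultaneously, and bridging them requires the regularity machinery of Section~\ref{sec:families}, carried out in tandem with the geometric bookkeeping (sub-boxes, zonotopes, periodisations, and the first-coordinate rescaling of Lemma~\ref{lem:localldchange}) relating local lattice densities of $\tilde A$ to those of $B$.
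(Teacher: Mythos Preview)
Your proposal is essentially the paper's own argument, transported to $K_\RR$ and phrased as a sketch; all the key ingredients (regularity, the flags $\cF^K_{\vec n}$ and $\cG^K_{\vec n+1}$, Lemmas~\ref{lem:pi1stable}--\ref{lem:pilstable}, Theorem~\ref{thm:ldsum}, and Theorem~\ref{thm:cts} applied to the block maps $\cM_l'$) are correctly identified and correctly combined.

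One point deserves a word of caution. Your treatment of the $(l{+}1)$st edge is right for $l\ge 1$, but your $l=0$ sentence (``$\pi_1$ does not shrink under the reduction to a suitable subcube of $P_0$'') is not how the loss is actually avoided: passing to a genuinely smaller tileable region can and does shrink $\pi_1$. In the paper's execution the $l=0$ summand is \emph{not} restricted to a small sub-box; one works at a second, finer scale $\cQ$ (with $\cP$ taken $(M^2,\delta,\dots)$-regular so that subcubes in $\cQ$ are still $(M,\delta,\dots)$-regular), uses the \emph{full} cube $Q_0$ for $A_0$, and then the only $\pi_1$-loss is the $\Vol(Q_0)/\Vol(Q_0^+)\ge 1-\delta$ rescaling coming from Lemma~\ref{lem:localldchange}. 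Correspondingly, the sumset is placed in the slightly expanded image cube $Q^{*+}$ rather than in $Q^*$, and to make the small pieces $\cM_lA_l$ with $l\ge 1$ land there one of the base points is translated (in the paper, $x_k\mapsto x_k^*$), which is why the buffer between $\cQ$ and the boundaries of $\cP$ is needed. These are exactly the ``geometric bookkeeping'' details you allude to in your last sentence; once you implement them, your outline coincides with the paper's proof.
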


\begin{proof}
Suppose $A\subseteq [0,N)^d$ with $|A|\geq \varepsilon N^d$. Let $\delta>0$ be arbitrary, $D$ be a large integer and $M$ be a sufficiently large multiple of $D$. $M$ will depend on both $\varepsilon$ and $\delta$, but not on $N$, which is assumed to be very large. By Lemma~\ref{lem:bigregularity}, there are bounded $n_1,\ldots,n_k,r$ and a collection $\cP$ of disjoint $N/M^r$-cubes such that, for $A'=A\cap \bigcup_{P\in \cP} P$, we have
\begin{enumerate}
    \item $|A'|\geq (1-\delta)|A|$,
    \item $P$ is $(M^2,\delta,n_l,\ldots,n_k)$-regular for all $P\in \cP$ and $l\in [k]$.
\end{enumerate}

Let $\cQ$ be the collection of $N/M^{r+1}$-cubes $Q$ such that $Q\subset P$ for some $P\in \cP$ and $Q$ is at least at a distance of $DN/M^{r+1}$ away from the boundary of $P$. In particular, $|\cQ|=(M-2D)^d|\cP|$. By Lemma~\ref{lem:squeezereg}, each $Q$ is $(M,\delta,n_l,\ldots,n_k)$-regular for all $l\in [k]$. Set $A''=A\cap \bigcup_{Q\in \cQ} Q$. Then $A'\setminus A''$ consists of points covered by $\cP$ but not $\cQ$, so
\begin{align*}
    |A'\setminus A''| &\leq \paren{1-\paren{\frac{M-2D}{M}}^d} N^d \leq \varepsilon^{-1}\paren{1-\paren{\frac{M-2D}{M}}^d}|A|\\
    &\leq \frac{2Dd}{M\varepsilon}|A| \leq \delta |A|
\end{align*}
for $M\geq 2Dd/\delta\varepsilon$. It follows that $|A''|\geq (1-2\delta)|A|$. Let $\cQ_0$ be the collection of all $N/M^{r+1}$-cubes, including those outside $[0,N)^d$. For $Q\in \cQ_0$, denote by $Q^+$ the slightly expanded cube $Q+[-\frac{DN}{M^{r+2}},\frac{DN}{M^{r+2}}]^d$. Then, for $M$ sufficiently large ($M\geq 4Dd/\delta$ suffices), 
\begin{equation} \label{eq:vol}
\Vol(Q^+)=\paren{1+\frac{2D}{M}}^d\Vol(Q)\leq (1+\delta)\Vol(Q).
\end{equation}

Let $\cF_{\vec{n}}^K,\cG_{\vec{n}}^K$ be the families of flags of sublattices of $\frakD$ and $\cO_K$ defined in Section~\ref{sec:algfamily}. Under the isomorphisms $\Phi,\Phi'$, these families translate to families $\cF_{\vec{n}},\cG_{\vec{n}}$ in $\ZZ^d$ given by $\cF_{\vec{n}}:=\Phi'(\cF_{\vec{n}}^K)$ and $\cG_{\vec{n}}:=\Phi(\cG_{\vec{n}}^K)$. By Lemmas~\ref{lem:pi1stable} and \ref{lem:pilstable}, we have the following two properties:
\begin{enumerate}
    \item For $d$-periodic $A\subseteq \ZZ^d$,
    \begin{equation}
        \pi_1(\LD(A;\cF_{\vec{n}}))=\pi_1(\LD(\cL_0 A;\cG_{\vec{n}+1})). \label{eqn:p1}
    \end{equation}
    \item For $d$-periodic $A\subseteq \ZZ^d$ and $l\in [k]$, 
    \begin{equation}
        |\pi_{l+1}(\LD(A;\cF_{\vec{n}}))|\leq |\pi_{l+1}(\LD(\cL_l A;\cG_{\vec{m}}))| \label{eqn:p2}
    \end{equation}
    if $m_i=n_i+1$ for $i=l+1,\ldots,k$ and $m_l=n_l$.
\end{enumerate}

Define the bodies $X,Y\subset \RR^{d+k+1}=\RR^d\times \RR^{k+1}$ by
\begin{align*}
    X &:= \bigcup_{Q\in \cQ} (Q\times \LD_Q(A;\cF_{\vec{n}})),\\
    Y &:= \bigcup_{Q\in \cQ_0} (\cL_0 Q\times (1+2\delta)\LD_{\cL_0 (Q^+)}(\cL_0 A+\cdots+\cL_k A;\cG_{\vec{n}+1})).
\end{align*}
We remark that in order for $\LD_{\cL_0(Q^+)}$ to make sense, we require that $\cL_0(Q^+)$ be tileable with respect to the sparsest lattice in $\cG_{\vec{n}+1}$. But this is possible for $N$ a multiple of a large enough number, since $\vec{n}$ is bounded. 

For each $l=0,\ldots,k$, let $\cL_l':\RR^{d+k+1}\to \RR^{d+k+1}$ be the linear map given by
$$\cL_l'(\vec{x},y_0,y_1,\ldots,y_k)=(\cL_l \vec{x},0,\ldots,0,y_l,0,\ldots,0).$$
We make the following claim. 
\begin{claim} \label{claim:ctssubset}
    $\cL_0' X+\cdots +\cL_k' X\subseteq Y.$
\end{claim}
Before proving this key claim, we first finish the proof of Lemma~\ref{lem:dense2} assuming it. Let $\cL^*:\RR^{d+k+1}\to \RR^{d+k+1}$ be given by $\cL^*(x,y)=(\cL_0^{-1}x, y)$ for $x\in \RR^d$ and $y\in \RR^{k+1}$. Note that $\cL_0^{-1}\cL_l$ is conjugate to $\cM_l$, the map corresponding to multiplication by $\lambda_l$ on $K$. By Lemma~\ref{lem:diag}, the maps $1, \cL_0^{-1}\cL_1,\ldots,\cL_0^{-1}\cL_k$ are simultaneously diagonalisable over $\CC$, where the diagonal matrix corresponding to $\cL_0^{-1}\cL_l$ has diagonal entries $(\sigma_1(\lambda_l),\ldots,\sigma_d(\lambda_l))$. Therefore, the $\cL^*\cL_l'$ are simultaneously diagonalisable with corresponding diagonal matrix entries 
$(1, \ldots, 1, 1, 0, \ldots, 0)$ for $l = 0$ and 
$(\sigma_1(\lambda_l),\ldots,\sigma_d(\lambda_l),0,\ldots,0,1,0,\ldots,0)$ otherwise. Thus, by Theorem~\ref{thm:cts}, we have
$$\mu(\cL^*\cL_0' X+\cdots +\cL^*\cL_k' X)\geq \prod_{i=1}^d (1+|\sigma_i(\lambda_1)|+|\sigma_i(\lambda_2)|+\cdots+|\sigma_i(\lambda_k)|)\mu(X).$$
Therefore, using Claim~\ref{claim:ctssubset} and the fact that $|\det \cL_0|=N_{K/\QQ}(\frakD)$, 
\begin{align*}
    \mu(Y) &\geq \mu(\cL_0' X+\cdots +\cL_k' X)\\
    &= \frac{1}{|\det(\cL^*)|}\mu(\cL^*\cL_0' X+\cdots +\cL^*\cL_k' X)\\
    &\geq |\det(\cL_0)|\prod_{i=1}^d (1+|\sigma_i(\lambda_1)|+|\sigma_i(\lambda_2)|+\cdots+|\sigma_i(\lambda_k)|)\mu(X)\\
    &= H(\lambda_1,\ldots,\lambda_k)\mu(X).
\end{align*}

By Lemma~\ref{lem:localldvol},
\begin{align*}
    \mu(X) &= \sum_{Q\in \cQ} \Vol(Q)\times \Vol(\LD_Q(A;\cF_{\vec{n}}))\\
    &= \sum_{Q\in \cQ} |A\cap Q| = |A''|\geq (1-2\delta)|A|.
\end{align*}

By the definition of $C_1$ (see Lemma~\ref{lem:consts}), since $A$ lies in the cube $(-N,N)^d$, the sum $A+\cL_0^{-1}\cL_1 A+\cdots+\cL_0^{-1}\cL_k A$ lies in the cube $(-(k+1)C_1N,(k+1)C_1N)^d\subset \RR^d$. There are at most $(4(k+1)C_1)^dM^{d(r+1)}$ different $Q\in \cQ_0$ such that $Q^+$ intersects $(-(k+1)C_1N,(k+1)C_1N)^d$. For simplicity, assume that $D>(4(k+1)C_1)^d$, so that there are at most $DM^{d(r+1)}$ such $Q$. Thus, there are at most $DM^{d(r+1)}$ different $Q\in \cQ_0$ such that $\cL_0 (Q^+)\cap (\cL_0 A+\cdots+\cL_k A)\neq\emptyset$. For each such $Q$, we have 
\begin{align*}
    |\cL_0 (Q^+)\cap (\cL_0 A+\cdots+\cL_k A)|-|\cL_0 Q\cap (\cL_0 A+\cdots+\cL_k A)| &\leq \det(\cL_0)(\Vol(Q^+)-\Vol(Q))\\
    &= O\paren{\frac{DN^d}{M^{(r+1)d+1}}}\\
    &\leq \delta (N/M^{r+1})^d.
\end{align*}
Therefore, again using Lemma~\ref{lem:localldvol}, 
\begin{align*}
    \mu(Y) &= \sum_{Q\in \cQ_0} \Vol(\cL_0 Q)\times (1+2\delta)^{k+1}\Vol(\LD_{\cL_0 (Q^+)}(\cL_0 A+\cdots+\cL_k A;\cG_{\vec{n}+1}))\\
    &= (1+2\delta)^{k+1}\sum_{Q\in \cQ_0} \frac{\Vol(\cL_0 Q)}{\Vol(\cL_0 (Q^+))}|\cL_0 (Q^+)\cap (\cL_0 A+\cdots+\cL_k A)|\\
    &\leq (1+2\delta)^{k+1}\sum_{Q\in \cQ_0} |\cL_0 (Q^+)\cap (\cL_0 A+\cdots+\cL_k A)|\\
    &\leq (1+2\delta)^{k+1}\paren{\sum_{Q\in \cQ_0} |\cL_0 Q\cap (\cL_0 A+\cdots+\cL_k A)|+DM^{d(r+1)}\cdot \delta (N/M^{r+1})^d}\\
    &= (1+2\delta)^{k+1}(|\cL_0 A+\cdots+\cL_k A|+D\delta N^d).
\end{align*}
Thus, we have
\begin{align*}
    |\cL_0 A+\cdots+\cL_k A| &\geq (1+2\delta)^{-(k+1)}\mu(Y)-O_D(\delta)N^d\\
    &= (1-O(\delta))\mu(Y)-O_D(\delta)N^d\\
    &\geq (1-O(\delta))H(\lambda_1,\ldots,\lambda_k)\mu(X)-O_D(\delta)N^d\\
    &\geq (1-O(\delta))H(\lambda_1,\ldots,\lambda_k)(1-2\delta)|A|-O_D(\delta)N^d\\
    &= H(\lambda_1,\ldots,\lambda_k)|A|-O_D(\delta)N^d.
\end{align*}
Since $\delta$ was arbitrary, this proves the lemma.
\end{proof}

In order to complete the proof, we now return to Claim~\ref{claim:ctssubset}.

\begin{proof}[Proof of Claim~\ref{claim:ctssubset}]
    Let $(x_l,y_l)\in X$ for $l=0,\ldots,k$ with $Q_l\in \cQ$ the cube containing $x_l$ and $y_l\in \LD_{Q_l}(A;\cF_{\vec{n}})$. Our aim is to show that $(\sum_l \cL_l x_l,y)\in Y$, where $y=(\pi_1(y_0),\ldots,\pi_{k+1}(y_k))$.

    Let $Q^*\in \cQ_0$ be the cube containing $x:=x_0+\cL_0^{-1}\cL_1 x_1+\cdots + \cL_0^{-1}\cL_k x_k$. Then $\cL_0 x_0+\cdots +\cL_k x_k=\cL_0 x\in \cL_0 Q^*$, so it suffices to show that $y\in (1+2\delta)\LD_{\cL_0 (Q^{*+})}(\cL_0 A+\cdots+\cL_k A;\cG_{\vec{n}+1})$.
    
    Suppose $Q^*=Q_0+t$ for some translate $t\in \frac{N}{M^{r+1}}\ZZ^d$. Then $t=\cL_0^{-1}\cL_1 x_1+\cdots + \cL_0^{-1}\cL_k x_k+t_0$ for some $t_0\in (-\frac{N}{M^{r+1}},\frac{N}{M^{r+1}})^d$. Let $x_k^*=x_k+\cL_k^{-1}\cL_0 t_0$, so that 
    $$x_k^*-x_k=\cL_k^{-1}\cL_0 t_0\in \sqb{-\frac{C_1N}{M^{r+1}},\frac{C_1N}{M^{r+1}}}^d\subseteq \sqb{-\frac{DN}{M^{r+1}},\frac{DN}{M^{r+1}}}^d.$$
    Therefore, if $P_k\in \cP$ is the cube containing $Q_k$ and $Q_k^*\in \cQ_0$ is the cube containing $x_k^*$, we must have $Q_k^*\subset P_k$, since $Q_k\in \cQ$ is at least a distance $DN/M^{r+1}$ away from the boundary of $P_k$. 
    
    Let $R_l$ be the $N/M^{r+2}$-cube containing $x_l$ for each $l=1,\ldots,k-1$ and $R_k^*$ the $N/M^{r+2}$-cube containing $x_k^*$, so that $R_k^*\subset P_k$. Define the following sets:
    \begin{itemize}
        \item $A_0=A\cap Q_0$,
        \item $A_l=A\cap R_l$ for $l=1,\ldots,k-1$,
        \item $A_k=A\cap R_k^*$.
    \end{itemize}

    We have $x_l\in A_l$ for $l=0,\ldots,k-1$, $x_k^*\in A_k$ and $t=\cL_0^{-1}\cL_1 x_1+\cdots +\cL_0^{-1}\cL_k x_k^*$. Since $A_l$ is contained in an $N/M^{r+2}$-cube for $l=1,\ldots,k$, $\cL_0^{-1}\cL_1 A_1+\cdots +\cL_0^{-1}\cL_k A_k$ is contained in a cube of side length $DN/M^{r+2}$ if $D$ is sufficiently large. Since $t\in \cL_0^{-1}\cL_1 A_1+\cdots +\cL_0^{-1}\cL_k A_k$ and $A_0\subseteq Q_0$, we have $A_0+\cL_0^{-1}\cL_1 A_1+\cdots +\cL_0^{-1}\cL_k A_k\subseteq Q_0^{+}+t=Q^{*+}$.

    Suppose $L$ is a lattice such that $Q^+$ is tiled by $L$ for every $Q\in \cQ_0$, such as $L=((N/M^{r+1}+2DN/M^{r+2})\ZZ)^d$. Then $\cL_0(Q^+)$ is tiled by $\cL_0 L$. By repeatedly applying Theorem~\ref{thm:ldsum}, we have that 
    \begin{align*}
        \prod_{l=0}^k \pi_{l+1}(\LD(\cL_l A_l+\cL_0 L;\cG_{\vec{n}+1})) &\subseteq \LD(\cL_0 A_0+\cdots+\cL_k A_k+\cL_0 L;\cG_{\vec{n}+1})\\
        &= \LD_{\cL_0(Q^{*+})}(\cL_0 A_0+\cdots+\cL_k A_k;\cG_{\vec{n}+1}).
    \end{align*}

    We will now show that $|\pi_{l+1}(\LD(\cL_l A_l+\cL_0 L;\cG_{\vec{n}+1}))|\geq (1-\delta)\pi_{l+1}(y_l)$ for all $l$, looking at each of the three cases $l=0$, $1\leq l\leq k-1$ and $l=k$ separately. For $l=0$, we have 
    \begin{align*}
        |\pi_1(\LD(\cL_0 A_0+\cL_0 L;\cG_{\vec{n}+1}))| &= |\pi_1(\LD(A_0+L;\cF_{\vec{n}}))| & \text{by (\ref{eqn:p1})}\\
        &= |\pi_1(\LD_{Q_0^+}(A_0;\cF_{\vec{n}}))|\\
        &= \frac{\Vol(Q_0)}{\Vol(Q_0^+)}|\pi_1(\LD_{Q_0}(A_0;\cF_{\vec{n}}))| & \text{by Lemma~\ref{lem:localldchange}}\\
        &\geq (1-\delta)|\pi_1(\LD_{Q_0}(A;\cF_{\vec{n}}))| & \text{by (\ref{eq:vol})}\\
        &\geq (1-\delta)\pi_1(y_0).
    \end{align*}
For $l=1,\ldots,k-1$, since $Q_l$ is $(M,\delta,n_l,\ldots,n_k)$-regular, by (\ref{eqn:regularity}), we have, for $\vec{n}^{(l)}=(n_1+1,\ldots,n_l+1,n_{l+1},\ldots,n_k)$, that 
    $$|\pi_{l+1}(\LD_{R_l}(A;\cF_{\vec{n}^{(l)}}))|\geq (1-\delta)|\pi_{l+1}(\LD_{Q_l}(A;\cF_{\vec{n}}))|.$$
    Note that $\cL_l^{-1}\cL_0(Q^+)$ is tiled by $\cL_l^{-1}\cL_0 L$ for any $Q\in \cQ_0$. Let $S_l$ be a translate of $\cL_l^{-1}\cL_0(Q^{*+})$ containing $R_l$. Such a translate exists for $M$ sufficiently large since $R_l$ is an $N/M^{r+2}$-cube and $Q^*$ is an $N/M^{r+1}$-cube. Therefore, 
    \begin{align*}
        |\pi_{l+1}(\LD(\cL_l A_l+\cL_0 L;\cG_{\vec{n}+1}))| &\geq |\pi_{l+1}(\LD(A_l+\cL_l^{-1}\cL_0 L;\cF_{\vec{n}^{(l)}}))| & \text{by (\ref{eqn:p2})}\\
        &= |\pi_{l+1}(\LD_{S_l}(A_l;\cF_{\vec{n}^{(l)}}))|\\
        &= |\pi_{l+1}(\LD_{R_l}(A_l;\cF_{\vec{n}^{(l)}}))| & \text{by Lemma~\ref{lem:localldchange}}\\
        &= |\pi_{l+1}(\LD_{R_l}(A;\cF_{\vec{n}^{(l)}}))|\\
        &\geq (1-\delta)|\pi_{l+1}(\LD_{Q_l}(A;\cF_{\vec{n}}))| & \text{by regularity}\\
        &\geq (1-\delta)\pi_{l+1}(y_l).
    \end{align*}
Finally, for $l=k$, similarly define $S_k$ to be a translate of $\cL_k^{-1}\cL_0(Q_k^{*+})$ containing $R_k^*$. Then, we have
    \begin{align*}
        |\pi_{k+1}(\LD(\cL_k A_k+\cL_0 L;\cG_{\vec{n}+1}))| &\geq |\pi_{k+1}(\LD(A_k+\cL_k^{-1}\cL_0 L;\cF_{\vec{n}^{(k)}}))| & \text{by (\ref{eqn:p2})}\\
        &= |\pi_{k+1}(\LD_{S_k}(A_k;\cF_{\vec{n}^{(k)}}))|\\ 
        &= |\pi_{k+1}(\LD_{R_k^*}(A_k;\cF_{\vec{n}^{(k)}}))| & \text{by Lemma~\ref{lem:localldchange}}\\
        &= |\pi_{k+1}(\LD_{R_k^*}(A;\cF_{\vec{n}^{(k)}}))|\\
        &\geq (1-\delta)|\pi_{k+1}(\LD_{P_k}(A;\cF_{\vec{n}}))| & \text{by regularity of $P_k$}\\
        &\geq (1-\delta)|\pi_{k+1}(\LD_{Q_k}(A;\cF_{\vec{n}}))| & \text{by Lemma~\ref{lem:sublocalld}}\\
        &\geq (1-\delta)\pi_{k+1}(y_k).
    \end{align*}
    
    Therefore, we have
    \begin{align*}
        (1-\delta)y &= ((1-\delta)\pi_1(y_0),\ldots,(1-\delta)\pi_{k+1}(y_k))\\
        &\in \LD_{\cL_0(Q^{*+})}(\cL_0 A_0+\cdots+\cL_k A_k;\cG_{\vec{n}+1})\\
        &\subseteq \LD_{\cL_0(Q^{*+})}(\cL_0 A+\cdots+\cL_k A;\cG_{\vec{n}+1}),
    \end{align*}
    which implies that $y\in (1+2\delta)\LD_{\cL_0(Q^{*+})}(\cL_0 A+\cdots+\cL_k A;\cG_{\vec{n}+1})$, as required.
\end{proof}

\section{Sums of linear transformations} \label{sec:linearsums}

In this section, we prove Theorem~\ref{thm:linearsums}, our main result about sums of pre-commuting linear transformations. As mentioned in the introduction, the idea of the proof is to show that the general case reduces to the seemingly special case of sums of algebraic dilates.

\subsection{Algebraic number theory preliminaries}

Recall that, for $\alpha_1,\ldots,\alpha_k\in K$, the denominator ideal $\frakD_{\alpha_1,\ldots,\alpha_k;K}$ is given by
$$\frakD_{\alpha_1,\ldots,\alpha_k;K}:=\setcond{x\in\cO_K}{x\alpha_i\in\cO_K \text{ for all } i=1,\ldots,k}.$$
Abbreviating this again as $\frakD$, the ideal norm $N_{K/\QQ}(\frakD)$ is the index $[\cO_K:\frakD]$. The main result of this short subsection gives an alternative way to compute the norm of the denominator ideal. In the statement, we also use the notation $N_{K/\QQ}(\alpha)$, but this now refers to the field norm of an element $\alpha$ of $K$, which is the product of the conjugates of $\alpha$.

\begin{thm} \label{thm:altdenom}
    Let $\alpha_1,\ldots,\alpha_k\in K$ and consider the polynomial
    \begin{align*}
        F(x_0,\ldots,x_k) &:= N_{K/\QQ}(x_0+x_1\alpha_1+\dots+x_k\alpha_k)\in \QQ[x_0,x_1,\ldots,x_k].
    \end{align*}
    If $D>0$ is the smallest positive integer such that $DF$ has integer coefficients, then $D=N_{K/\QQ}(\frakD)$.
\end{thm}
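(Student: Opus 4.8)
I want to prove $D = N_{K/\QQ}(\frakD)$ by working prime-by-prime, showing that for every rational prime $p$ the $p$-adic valuations of the two sides agree. Both sides are multiplicative in the appropriate sense: $N_{K/\QQ}(\frakD) = [\cO_K : \frakD] = \prod_\frakp N_{K/\QQ}(\frakp)^{\nu_\frakp(\frakD)}$ over primes $\frakp$ of $\cO_K$, while $D = \prod_p p^{-\min_{c} v_p(\text{coeff})}$, so it suffices to fix $p$ and show that $v_p(D)$, which equals $-\min$ over all coefficients of $F$ of their $p$-adic valuation, matches $\sum_{\frakp \mid p} f_\frakp \cdot \nu_\frakp(\frakD)$ where $f_\frakp$ is the residue degree.

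\textbf{Key steps.} First, recall that $\frakD = \bigcap_{i=1}^k \alpha_i^{-1}\cO_K \cap \cO_K$, so $\nu_\frakp(\frakD) = \max(0, -\min_i \nu_\frakp(\alpha_i))$ for each prime $\frakp$; write $e_\frakp := \nu_\frakp(\frakD) = \max(0, -\min_i \nu_\frakp(\alpha_i))$. Second, factor $F$: since $N_{K/\QQ}(\beta) = \prod_{j=1}^d \sigma_j(\beta)$ where $\sigma_1, \dots, \sigma_d$ are the embeddings of $K$, we get
$$F(x_0, \dots, x_k) = \prod_{j=1}^d \bigl(x_0 + x_1\sigma_j(\alpha_1) + \cdots + x_k \sigma_j(\alpha_k)\bigr).$$
Third, the core computation: fix a prime $p$. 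I want to show $v_p\bigl(\text{content}(F)\bigr) = -\sum_{\frakp \mid p} f_\frakp e_\frakp$, where the content of $F$ is the $\gcd$ (equivalently $\min$ of $p$-valuations) of its coefficients, extended multiplicatively/$p$-adically to $\QQ$. The natural tool is Gauss's lemma for contents of polynomials over the localization $\ZZ_{(p)}$ or over the completion: the content (as a fractional ideal / power of $p$) of a product of polynomials is the product of the contents, once we work over a ring where Gauss's lemma applies. So I'd pass to $\QQ_p$ and factor $K \otimes_\QQ \QQ_p = \prod_{\frakp \mid p} K_\frakp$; correspondingly $F = \prod_{\frakp \mid p} N_{K_\frakp/\QQ_p}(x_0 + x_1\alpha_1 + \cdots + x_k\alpha_k)$ as polynomials over $\QQ_p$ (the factor for $\frakp$ being the product of the $f_\frakp e_\frakp / \ldots$ — more precisely, $N_{K_\frakp/\QQ_p}$ has degree $[K_\frakp : \QQ_p] = e'_\frakp f_\frakp$ and its content picks up $\frakp$-contributions). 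Then the content of $F$ at $p$ is the product over $\frakp \mid p$ of the contents of the local norm forms, so it remains to compute, for each local field $L = K_\frakp$ with uniformizer $\pi$, normalized valuation $w$, residue degree $f$, the $p$-content of $N_{L/\QQ_p}(x_0 + x_1\alpha_1 + \cdots + x_k\alpha_k)$, and show it equals $p^{-f \cdot \max(0, -\min_i w(\alpha_i))}$ (after suitable normalization of $w$ so it restricts to $v_p$-compatible). This last local statement is the heart: one direction (the content is at least that negative) comes from specializing variables to exhibit a coefficient with that valuation — e.g., if $\alpha_{i_0}$ achieves the minimal (most negative) valuation $-m$, consider the coefficient of a suitable monomial, using that $N_{L/\QQ_p}(\alpha_{i_0}) = p^{-fm \cdot (\text{something})}$; the other direction (no coefficient is more negative) follows because every coefficient of the norm form is, up to units, a product/sum of traces of products of the $\alpha_i$ and hence has $w$-valuation $\geq -$ (total degree) $\times$ (max negative valuation), combined with the fact that $w(\Tr_{L/\QQ_p}(\gamma)) \geq w(\gamma)$ when... actually one must be careful about the different. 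I would instead argue cleanly: clear denominators by setting $c = $ (smallest power of $p$ with $p^c \alpha_i$ all integral at $\frakp$), so $p^c \alpha_i \in \cO_{L}$; then $p^{c d_\frakp} F$ where $d_\frakp = \deg N_{L/\QQ_p}$ has integral coefficients locally, and one shows the content is exactly this by checking a leading/constant coefficient is a unit times $p$-power of the right size.

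\textbf{The main obstacle.} The delicate point is making the local content computation precise — specifically, showing that after clearing denominators optimally the resulting norm form over $\cO_L[x_0, \dots, x_k]$ (or $\ZZ_p$) is genuinely \emph{primitive} mod $\frakp$, i.e., does not reduce to zero modulo $\frakp$. This requires that some coefficient is a $\frakp$-unit, which one can see by reducing mod $\frakp$: the reduction of $N_{L/\QQ_p}(x_0 + x_1 \bar\alpha_1 + \cdots + x_k \bar\alpha_k)$ where the $\bar\alpha_i$ are the $\frakp$-integral parts scaled to have at least one unit among them, is a nonzero polynomial over the residue field because a norm form of a nonzero linear form is nonzero. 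I'd package this via Gauss's lemma: the content of a product is the product of contents over a DVR, so $\mathrm{cont}_p(F) = \prod_{\frakp \mid p} \mathrm{cont}_p(F_\frakp)$, and each $\mathrm{cont}_p(F_\frakp) = p^{-f_\frakp e_\frakp}$ where $e_\frakp = \max(0, -\min_i \nu_\frakp(\alpha_i))$ — here the exponent is $f_\frakp e_\frakp$ rather than $e_\frakp$ alone precisely because $N_{L/\QQ_p}$ of a $\frakp$-adic unit times $p$-power $p^{-e}$ has $p$-valuation $-f_\frakp e$ (norm multiplies valuations by residue degree when we normalize $w$ to extend $v_p$, accounting for ramification correctly: $v_p \circ N_{L/\QQ_p} = f_\frakp \cdot w_\frakp$ on the valuation, where $w_\frakp$ is normalized so $w_\frakp(p) = e_\frakp^{\mathrm{ram}}$... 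I will state it cleanly as $v_p(N_{L/\QQ_p}(\gamma)) = [L:\QQ_p]/e_\frakp^{\mathrm{ram}} \cdot (\text{stuff})$). Summing, $v_p(\mathrm{cont}(F)) = -\sum_{\frakp \mid p} f_\frakp e_\frakp$, hence $v_p(D) = \sum_{\frakp\mid p} f_\frakp e_\frakp = \sum_{\frakp \mid p} f_\frakp \nu_\frakp(\frakD) = v_p([\cO_K : \frakD]) = v_p(N_{K/\QQ}(\frakD))$. Since this holds for all $p$, $D = N_{K/\QQ}(\frakD)$. The bookkeeping with ramification in the exponent is where I expect to spend the most care, but it is routine local field theory once set up correctly.
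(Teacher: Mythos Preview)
Your approach is essentially correct and is a genuinely different organization from the paper's proof. The paper works \emph{globally}: it passes to the normal closure $K'$ of $K$, factors $F$ completely into linear forms $\prod_i (x_0 + x_1\sigma_i(\alpha_1) + \cdots + x_k\sigma_i(\alpha_k))$ over $K'$, applies Gauss's lemma (content is multiplicative) over $\cO_{K'}$, observes that each linear factor has content $\sigma_i(\frakD^{-1})\cO_{K'}$, and finishes with $\prod_i \sigma_i(\frakD^{-1})\cO_{K'} = N_{K/\QQ}(\frakD)^{-1}\cO_{K'}$. You instead work \emph{locally}, prime-by-prime over $\QQ_p$, factoring $F = \prod_{\frakp\mid p} F_\frakp$ into local norm forms via $K\otimes\QQ_p = \prod_\frakp K_\frakp$, and apply Gauss's lemma over the DVR $\ZZ_p$. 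The global route is cleaner because the content of a \emph{linear} form is transparently the ideal generated by its coefficients, whereas your local factors $F_\frakp$ are higher-degree norm forms whose content you must still compute. Your route has the advantage of avoiding the normal closure and the ideal identity $\prod_i \sigma_i(\fraka)\cO_{K'} = N_{K/\QQ}(\fraka)\cO_{K'}$.

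There is, however, a concrete gap in your primitivity argument in the ramified case. You propose clearing denominators with $p^c$ where $c$ is minimal so that $p^c\alpha_i \in \cO_{K_\frakp}$ for all $i$, and then checking that $p^{c d_\frakp} F_\frakp = N_{K_\frakp/\QQ_p}(p^c\gamma)$ is primitive. But when the ramification index $e^{\mathrm{ram}}_\frakp$ does not divide $m := \max(0,-\min_i \nu_\frakp(\alpha_i))$, the element $p^c$ over-clears: $p^c\gamma$ then has all coefficients in the maximal ideal of $\cO_{K_\frakp}$, so its norm is \emph{not} primitive. (Concretely: $L = \QQ_2(\sqrt 2)$, $\alpha_1 = 1/\sqrt 2$ gives $c = 1$ and $N(2x_0 + \sqrt 2 x_1) = 4x_0^2 - 2x_1^2$, content $2\ZZ_2$.) The fix is to clear denominators with a uniformizer: set $\gamma' := \pi^m\gamma \in \cO_{K_\frakp}[x_0,\dots,x_k]$, so that the coefficient $\pi^m\alpha_{i_0}$ of $x_{i_0}$ is a unit. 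Then $N_{K_\frakp/\QQ_p}(\gamma')$ lies in $\ZZ_p[x_0,\dots,x_k]$ (since it is the determinant of an $\cO_{K_\frakp}$-multiplication matrix written in a $\ZZ_p$-basis of $\cO_{K_\frakp}$) and its coefficient of $x_{i_0}^{d_\frakp}$ is the unit $N_{K_\frakp/\QQ_p}(\pi^m\alpha_{i_0})$, so it is primitive. Since $N_{K_\frakp/\QQ_p}(\pi^m)$ has $v_p$-valuation exactly $f_\frakp m$, this gives $v_p(\cont(F_\frakp)) = -f_\frakp m = -f_\frakp \nu_\frakp(\frakD)$, and summing over $\frakp\mid p$ completes the argument.
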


To prove this, we require a variant of Gauss's lemma over the ring of integers $\cO_K$. We first need a definition. 
\begin{defn}
    Let $F(x)=a_0+a_1 x+\cdots+a_n x^n \in K[x]$. Define the \emph{content} of $F$, denoted by $\cont_K(F)$, to be the fractional ideal $a_0\cO_K+a_1\cO_K+\cdots+a_n\cO_K\subseteq K$. If it is clear from context, we omit the subscript and simply write $\cont(F)$.
\end{defn}
If $F\in \ZZ[x]$, then $\cont_\QQ(F)=c\ZZ$, where $c\in \ZZ$ is the  content of $F$ as used in the usual Gauss's lemma, that is, the greatest common divisor of the coefficients of $F$. 
If $L$ is a field extension of $K$ and $F\in K[x]$, then $\cont_L(F)=\cont_K(F)\cdot \cO_L$. In particular, if $F\in \QQ[x]$, then $\cont_K(F)=\cont_\QQ(F)\cdot \cO_K$. Our variant of Gauss's lemma over $\cO_K$ is now as follows. 

\begin{lem}[Gauss's lemma over $\cO_K$] \label{lem:gauss}
    For any two polynomials $F,G\in K[x]$, $\cont(FG)=\cont(F)\cont(G)$.
\end{lem}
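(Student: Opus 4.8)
The plan is to prove Gauss's lemma over $\cO_K$ by reducing to the case of a Dedekind domain and checking the identity locally at each prime. The statement $\cont(FG) = \cont(F)\cont(G)$ for $F, G \in K[x]$ is a statement about fractional ideals of $\cO_K$, and since $\cO_K$ is a Dedekind domain, two nonzero fractional ideals are equal if and only if they agree at every nonzero prime $\frakp$, i.e. $\nu_\frakp(\cont(FG)) = \nu_\frakp(\cont(F)) + \nu_\frakp(\cont(G))$ for all $\frakp$. So the first step is to record that both sides of the claimed identity are nonzero fractional ideals (assuming $F, G \neq 0$; the case where one is zero is trivial) and that it suffices to verify equality of $\frakp$-adic valuations for each prime $\frakp$.

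**Next I would localise.** Fix a prime $\frakp$ and pass to the localisation $\cO_{K,\frakp}$, which is a discrete valuation ring with valuation $\nu_\frakp$; extend $\nu_\frakp$ to $K$ in the usual way and then to $K[x]$ by setting $\nu_\frakp(H) := \min_i \nu_\frakp(a_i)$ for $H = \sum a_i x^i$, so that $\nu_\frakp(H)$ is exactly $\nu_\frakp(\cont_K(H))$ read off via $\cont_K(H)\cdot \cO_{K,\frakp} = \frakp^{\nu_\frakp(\cont(H))}\cO_{K,\frakp}$. The claim then becomes $\nu_\frakp(FG) = \nu_\frakp(F) + \nu_\frakp(G)$, the familiar Gauss-lemma statement for the DVR $\cO_{K,\frakp}$. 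One inequality, $\nu_\frakp(FG) \ge \nu_\frakp(F) + \nu_\frakp(G)$, is immediate from the product formula for coefficients and the ultrametric inequality. For the reverse, after dividing $F$ and $G$ by suitable powers of a uniformiser $\pi$ we may assume $\nu_\frakp(F) = \nu_\frakp(G) = 0$, i.e. both have coefficients in $\cO_{K,\frakp}$ with at least one coefficient a unit; reducing mod $\frakp$ gives nonzero polynomials $\bar F, \bar G$ over the field $\cO_{K,\frakp}/\frakp$, whose product $\bar F \bar G$ is nonzero since that field is an integral domain, so $\nu_\frakp(FG) = 0$ as well. Since this holds at every $\frakp$, the global identity follows.

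**The step I expect to require the most care** is purely bookkeeping rather than conceptual: making sure the passage between the ideal-theoretic content $\cont_K(H) = \sum a_i \cO_K \subseteq K$ and its $\frakp$-localisation is handled cleanly — in particular that $\nu_\frakp(\cont_K(H))$ is well-defined (the minimum is attained and finite when $H \neq 0$) and that localisation commutes with forming the content, so that $\cont_K(H)\cdot\cO_{K,\frakp} = \cont_{K}(H)_\frakp = \sum a_i \cO_{K,\frakp}$. None of this is deep — it is the standard fact that an ideal of a Dedekind domain is determined by its localisations, combined with the elementary Gauss lemma for a DVR — but it is worth stating explicitly so that the reduction is airtight. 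I would also note, as a remark after the proof, that the corollary used elsewhere, namely $\cont_L(F) = \cont_K(F)\cdot\cO_L$ for a field extension $L/K$, follows because extension of ideals is compatible with generation by the same coefficient set; this is needed in the proof of Theorem~\ref{thm:altdenom} but is not part of the lemma itself.
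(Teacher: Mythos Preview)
Your proposal is correct and follows essentially the same strategy as the paper: both reduce the identity $\cont(FG)=\cont(F)\cont(G)$ to the local statement $\nu_\frakp(\cont(FG))=\nu_\frakp(\cont(F))+\nu_\frakp(\cont(G))$ at each prime $\frakp$ of $\cO_K$. The only cosmetic difference is at the DVR level: you normalise and reduce modulo $\frakp$ to conclude, whereas the paper carries out the classical ``smallest index'' computation directly (choosing the least $k,l$ with $\nu_\frakp(a_k)=s$, $\nu_\frakp(b_l)=t$ and showing $\nu_\frakp(c_{k+l})=s+t$); these are two standard and interchangeable proofs of the same DVR fact.
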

\begin{rmk}
    This result also follows from the Dedekind--Mertens lemma, which says that for any ring $R$ and polynomials $F,G\in R[x]$ there exists a positive integer $n$ such that $\cont(F)^n\cont(FG)=\cont(F)^{n+1}\cont(G)$. Our lemma then follows, since every non-zero fractional ideal in $\cO_K$ is invertible. We give a direct proof here for completeness.
\end{rmk}
\begin{proof}[Proof of Lemma~\ref{lem:gauss}]
    Let $F(x)=a_0+a_1x+\cdots+a_n x^n$ and $G(x)=b_0+b_1 x+\cdots+b_m x^m$. Then their product $F(x)G(x)=c_0+c_1x+\cdots+c_{n+m}x^{n+m}$ has coefficients $c_j=a_0b_j+a_1b_{j-1}+\cdots+a_jb_0$. It is clear that $\cont(FG)\subseteq \cont(F)\cont(G)$. To show that $\cont(FG)\supseteq \cont(F)\cont(G)$, it suffices to show that, for any prime ideal $\frakp\subseteq \cO_K$, $\nu_\frakp(\cont(FG))\leq \nu_\frakp(\cont(F))+\nu_\frakp(\cont(G))$.

    Suppose $\nu_\frakp(\cont(F))=s$ and $\nu_\frakp(\cont(G))=t$. Since $\nu_\frakp(\cont(F))=\min(\nu_\frakp(a_0),\ldots,\nu_\frakp(a_n))$, there exists an index $k$ such that $\nu_\frakp(a_k)=s$. Let $k$ be the smallest such index, so that $\nu_\frakp(a_j)\geq s+1$ for $j=0,\ldots,k-1$. Similarly, let $l$ be the smallest index such that $\nu_\frakp(b_l)=t$, so that $\nu_\frakp(b_j)\geq t+1$ for $j=0,\ldots,l-1$.

    Consider the coefficient $c_{k+l}=\sum_{j=0}^{k+l} a_j b_{k+l-j}$. For $j=k$, the term $a_kb_l$ satisfies $\nu_\frakp(a_kb_l)=\nu_\frakp(a_k)+\nu_\frakp(b_l)=s+t$. For every other $j\neq k$, either $j<k$ (for which $\nu_\frakp(a_j)\geq s+1$) or $j>k$ (for which $\nu_\frakp(b_{k+l-j})\geq t+1$). In either case, we have $\nu_\frakp(a_j b_{k+l-j})\geq s+t+1$. Therefore, $\nu_\frakp(c_{k+l})=s+t$, so we have $\nu_\frakp(\cont(FG))\leq s+t$, as required.
\end{proof}

Observe that we may similarly define content for multivariate polynomials $F\in K[x_0,\ldots,x_k]$ and our variant of Gauss's lemma then also holds for multivariate polynomials. Indeed, the set of coefficients for $F(x_0,\ldots,x_k)$ is the same as for $F(x,x^{N_1},\ldots,x^{N_k})$ for sufficiently large $N_k\gg N_{k-1}\gg\cdots \gg N_1\gg 1$. Thus, the content of $F$ is the same as the content of $F(x,x^{N_1},\ldots,x^{N_k})$, so we may apply the univariate case.

\begin{proof}[Proof of Theorem~\ref{thm:altdenom}]
    Let $\sigma_1,\ldots,\sigma_d:K\to \CC$ be the complex embeddings of $K$, with $\sigma_1$ being the identity. Then 
    \[F(x_0,\ldots,x_k)=N_{K/\QQ}(x_0+x_1\alpha_1+\dots+x_k\alpha_k)=\prod_{i=1}^d (x_0+x_1\sigma_i(\alpha_1)+\cdots+x_k\sigma_i(\alpha_k)).\]
    Let $K'\subseteq \CC$ be the smallest field containing $\sigma_1(K),\ldots,\sigma_d(K)$, that is, $K'$ is the normal closure of $K$ over $\QQ$. By definition, $D^{-1}\ZZ=\cont_\QQ(F)$, so we have $\cont_{K'}(F)=D^{-1}\cO_{K'}$. On the other hand, by Lemma~\ref{lem:gauss},
    \[\cont_{K'}(F)=\prod_i \cont_{K'}(x_0+x_1\sigma_i(\alpha_1)+\cdots+x_k\sigma_i(\alpha_k)).\]
    For any subset $S\subset K'$, denote by $S\cO_{K'}$ the $\cO_{K'}$-fractional ideal generated by $S$, i.e., the set of elements of the form $s_1a_1+\cdots+s_na_n$ for some non-negative integer $n$, $s_i\in S$ and $a_i\in \cO_{K'}$. Then
    \begin{align*}
        \cont_{K'}(x_0+x_1\sigma_i(\alpha_1)+\cdots+x_k\sigma_i(\alpha_k)) &= \cO_{K'}+\sigma_i(\alpha_1)\cO_{K'}+\cdots+\sigma_i(\alpha_k)\cO_{K'}\\
        &= \sigma_i(\cO_{K'}+\alpha_1\cO_{K'}+\cdots+\alpha_k\cO_{K'})\\
        &= \sigma_i(\frakD^{-1} \cO_{K'})=\sigma_i(\frakD^{-1}) \cO_{K'}.
    \end{align*}
    Multiplying over all $i$, we get $\cont_{K'}(F)=\prod_i \sigma_i(\frakD^{-1})\cO_{K'}=N_{K/\QQ}(\frakD^{-1})\cO_{K'}$, where the last equality follows from the fact that, for any $\cO_K$-fractional ideal $\fraka$, we have $\prod_i \sigma_i(\fraka)\cO_{K'}=N_{K/\QQ}(\fraka)\cO_{K'}$. Indeed, this holds when $\fraka=\alpha\cO_K$ is principal (since $N_{K/\QQ}(\alpha)=\prod_i \sigma_i(\alpha)$ and $N_{K/\QQ}(\alpha\cO_K)=N_{K/\QQ}(\alpha)\cO_K$), so the general case follows since $\fraka^m$ is always principal for some $m\geq 0$. 
    Therefore, we have $D^{-1}\cO_{K'}=N_{K/\QQ}(\frakD^{-1})\cO_{K'}$, so $D=N_{K/\QQ}(\frakD)$, as required. 
\end{proof}

\subsection{Pre-commuting matrices}

In this subsection, we prove the following result, which allows us to regard sums of pre-commuting linear transformations as sums of algebraic dilates.

\begin{thm} \label{thm:precommchar}
    Suppose $\cL_0,\ldots,\cL_k\in \Mat_d(\ZZ)$ are non-zero, pre-commuting, irreducible and coprime. Then they are invertible over $\QQ$ and there exist a number field $K$ with $\deg(K/\QQ)=d$, $\lambda_1,\ldots,\lambda_k\in K$ and a $\QQ$-isomorphism $\Phi:K\to \QQ^d$ such that $|\det(\cL_0)|=N_{K/\QQ}(\frakD_{\lambda_1,\ldots,\lambda_k;K})$ and, for all $u\in \QQ^d$ and $l=1,\ldots,k$,
    $$\cL_0^{-1}\cL_l(u)=\Phi(\lambda_l\cdot \Phi^{-1}(u)).$$
\end{thm}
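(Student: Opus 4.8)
The plan is to build the number field $K$ directly from the commuting algebra generated by the $\cL_0^{-1}\cL_l$. First I would observe that irreducibility forces each $\cL_i$ to be invertible over $\QQ$: if some $\cL_i$ had a non-trivial kernel $U$ and non-trivial image $V$, taking $V$ together with a complementary-dimension superspace would violate irreducibility (one has to be a little careful here, but the upshot, already noted in the introduction, is that all the $\cL_i$ are invertible). Since the $\cL_i$ are pre-commuting, fix $\cP \in \GL_d(\QQ)$ with $\cP\cL_0, \dots, \cP\cL_k$ pairwise commuting, and set $\cN_l := (\cP\cL_0)^{-1}(\cP\cL_l) = \cL_0^{-1}\cL_l$. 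Crucially the $\cN_l$ do \emph{not} depend on $\cP$, and because $\cP\cL_0$ commutes with every $\cP\cL_l$, the $\cN_l$ all commute with each other. Let $R := \QQ[\cN_1, \dots, \cN_k] \subseteq \Mat_d(\QQ)$ be the commutative $\QQ$-subalgebra they generate (note $\cN_0 = I$). The claim is that $R$ is a field of degree exactly $d$ over $\QQ$, and we take $K = R$, $\lambda_l = \cN_l$, with $\Phi$ the $\QQ$-linear isomorphism $K \to \QQ^d$ given by acting on a fixed cyclic vector.

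The key step is showing $R$ is a field of dimension $d$ acting on $\QQ^d$ as a regular representation. Since $R$ is a finite-dimensional commutative $\QQ$-algebra, it is a product of local Artinian rings; if it is not a field, it either has a non-trivial idempotent or a nilpotent. A non-trivial idempotent $e \in R$ would split $\QQ^d = e\QQ^d \oplus (1-e)\QQ^d$ into proper $R$-invariant subspaces $U, V$; then, by a dimension-counting / rescaling argument exploiting invertibility of the $\cL_i$, I would derive a contradiction with irreducibility (the subspaces $U$ and $V' = $ a complement of matching dimension are each $\cL_i$-related, since $\cN_l U \subseteq U$ means $\cL_l U = \cL_0 \cN_l U \subseteq \cL_0 U$). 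Similarly a nilpotent in $R$ yields a proper invariant subspace (its kernel) stable under all $\cL_0^{-1}\cL_l$, again contradicting irreducibility after the same translation back to the $\cL_l$. Hence $R$ is a field $K$. That $\dim_\QQ K = d$: $K$ acts faithfully on $\QQ^d$, so $\QQ^d$ is a non-zero $K$-vector space, say of dimension $m$, with $d = m \cdot \deg(K/\QQ)$; irreducibility forces $m = 1$ (if $m \geq 2$, pick any $1$-dimensional $K$-subspace to get a proper invariant subspace and, with a complement, contradict irreducibility). So $\deg(K/\QQ) = d$ and $\QQ^d \cong K$ as a $K$-module; fixing such an isomorphism $\Phi^{-1}: \QQ^d \to K$ gives exactly $\cN_l(u) = \Phi(\lambda_l \cdot \Phi^{-1}(u))$, i.e. $\cL_0^{-1}\cL_l(u) = \Phi(\lambda_l \cdot \Phi^{-1}(u))$.

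It remains to identify $|\det(\cL_0)|$ with $N_{K/\QQ}(\frakD_{\lambda_1,\dots,\lambda_k;K})$. Here I would use Theorem~\ref{thm:altdenom}: the denominator-ideal norm equals the smallest positive integer $D$ clearing denominators in $F(x_0,\dots,x_k) = N_{K/\QQ}(x_0 + x_1\lambda_1 + \cdots + x_k\lambda_k)$. On the other hand, under the regular representation $\Phi$, the element $x_0 + x_1\lambda_1 + \cdots + x_k\lambda_k$ of $K$ acts as the matrix $x_0 I + x_1\cN_1 + \cdots + x_k\cN_k = \cL_0^{-1}(x_0\cL_0 + \cdots + x_k\cL_k)$, whose determinant is $N_{K/\QQ}$ of that element. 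Therefore
\[
F(x_0,\dots,x_k) = \frac{\det(x_0\cL_0 + x_1\cL_1 + \cdots + x_k\cL_k)}{\det(\cL_0)} = \frac{G(x_0,\dots,x_k)}{\det(\cL_0)},
\]
where $G$ has integer coefficients. Then I would invoke coprimeness: the condition $\cL_0\ZZ^d + \cdots + \cL_k\ZZ^d = \ZZ^d$ (the "in particular" clause of the coprimeness definition) is exactly what forces $G$ to be primitive, i.e. $\cont_\QQ(G) = \ZZ$ --- this is the analogue of the statement that a collection of matrices spanning $\ZZ^d$ on the standard lattice has $\gcd$ of the entries of $x_0\cL_0 + \cdots$ (as a polynomial) equal to $1$; more carefully, one argues that any prime $p$ dividing all coefficients of $G$ would let one factor out $p$ via a $\cP, \cQ$ of determinant $< 1$, contradicting coprimeness. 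Granting that, the smallest $D$ clearing denominators of $F = G/\det(\cL_0)$ is precisely $|\det(\cL_0)|$, so $N_{K/\QQ}(\frakD_{\lambda_1,\dots,\lambda_k;K}) = |\det(\cL_0)|$, as required.

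\textbf{Main obstacle.} The delicate point is the primitivity of $G$ and its equivalence with coprimeness: showing $\cont_\QQ(G) = \ZZ$ cleanly from the hypothesis $\cL_0\ZZ^d + \cdots + \cL_k\ZZ^d = \ZZ^d$ requires pinning down how a common prime factor of the coefficients of $\det(x_0\cL_0+\cdots+x_k\cL_k)$ translates into a genuine $\GL_d(\QQ)$-witness against coprimeness, which may need the Smith normal form / an explicit $\cP,\cQ$ construction rather than a one-line argument. The idempotent-and-nilpotent analysis of $R$ is conceptually routine but also needs care to phrase the "complementary subspace of matching dimension" correctly so that it really contradicts the stated form of irreducibility.
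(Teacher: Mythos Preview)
Your construction of $K$ as the commutative $\QQ$-algebra $R=\QQ[\cN_1,\ldots,\cN_k]$ and the idempotent/nilpotent analysis is a legitimate alternative to the paper's route, which instead proves a separate structural lemma (Lemma~\ref{lem:irredstruct}) by passing to $\CC$, picking a common eigenvector, and using a Vandermonde argument to show the Galois-conjugate eigenvectors span. Both arguments arrive at the same conclusion that $\QQ^d$ is a one-dimensional $K$-vector space; your approach is more intrinsic (no passage to $\CC$), while the paper's yields the simultaneous diagonalisation explicitly along the way. Your translation ``$\cN_l U\subseteq U \Rightarrow \cL_l U\subseteq \cL_0 U$'' is exactly the right way to feed an $R$-invariant subspace back into the stated irreducibility hypothesis.

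Where you have a real gap is precisely where you flagged it. The claim that the lattice condition $\cL_0\ZZ^d+\cdots+\cL_k\ZZ^d=\ZZ^d$ alone forces $\cont_\QQ(G)=\ZZ$ is not justified (that condition is strictly weaker than coprimeness), and even with full coprimeness, producing a $(\cP,\cQ)$ witness from a hypothetical prime $p\mid\cont_\QQ(G)$ is not a one-liner. The paper avoids this entirely: having already shown $N_{K/\QQ}(\frakD)\mid|\det\cL_0|$ via Theorem~\ref{thm:altdenom}, it gets the reverse inequality by writing down explicit $\cP,\cQ$ coming from the number field itself. Fixing $\ZZ$-isomorphisms $\Phi_1:\cO_K\to\ZZ^d$ and $\Phi_2:\frakD\to\ZZ^d$, the map $u\mapsto \Phi_1(\lambda_l\cdot\Phi_2^{-1}(u))$ is integral because $\lambda_l\frakD\subseteq\cO_K$, and it equals $(\Phi_1\Phi^{-1})\cL_0^{-1}\,\cL_l\,(\Phi\Phi_2^{-1})$; coprimeness then forces $|\det(\Phi_1\Phi^{-1})\det(\cL_0^{-1})\det(\Phi\Phi_2^{-1})|\geq 1$, i.e.\ $N_{K/\QQ}(\frakD)\geq|\det\cL_0|$. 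This is exactly the ``explicit $\cP,\cQ$ construction'' you anticipated needing, but with the lattices $\cO_K$ and $\frakD$ handed to you by the algebraic structure rather than extracted from a prime dividing $G$; I would recommend replacing your primitivity argument with this.
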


Before proving this theorem, we prove a structure theorem for pairwise commuting matrices with no non-trivial common invariant subspace. 
A folklore result (e.g., \cite[Corollary 2.4.6.4]{HJ85}) says that pairwise commuting maps are simultaneously upper-triangularisable over $\CC$ and so have a common eigenvector.

\begin{lem} \label{lem:commupper}
    If $\cL_1,\ldots,\cL_k\in \Mat_d(\CC)$ are pairwise commuting matrices, then they have a common eigenvector $v\in \CC^d$.
\end{lem}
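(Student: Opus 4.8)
The plan is to induct on $k$, the number of matrices, keeping the dimension $d$ arbitrary throughout so that the inductive hypothesis can be invoked on a possibly smaller space. When $k=1$, the claim is the standard fact that any $\cL_1\in\Mat_d(\CC)$ has an eigenvector: the characteristic polynomial $\det(xI-\cL_1)\in\CC[x]$ has a root $\mu$ by the fundamental theorem of algebra, and any nonzero $v\in\ker(\cL_1-\mu I)$ works.

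For the inductive step, suppose the result holds for every collection of $k-1$ pairwise commuting complex matrices, in any dimension. Given pairwise commuting $\cL_1,\ldots,\cL_k\in\Mat_d(\CC)$, pick an eigenvalue $\mu\in\CC$ of $\cL_k$ and set $W:=\ker(\cL_k-\mu I)$, which is nonzero. For each $i<k$ and each $w\in W$, commutativity gives
$$\cL_k(\cL_i w)=\cL_i(\cL_k w)=\mu\,\cL_i w,$$
so $\cL_i w\in W$; hence each $\cL_i$ restricts to an operator $\cL_i|_W$ on $W$, and these restrictions still pairwise commute. Identifying $W$ with $\CC^{\dim W}$, the inductive hypothesis applied to $\cL_1|_W,\ldots,\cL_{k-1}|_W$ produces a common eigenvector $v\in W$ of these $k-1$ operators. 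Since $v\in W$, it is automatically an eigenvector of $\cL_k$ (with eigenvalue $\mu$), so $v$ is a common eigenvector of all of $\cL_1,\ldots,\cL_k$, completing the induction.

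No step here is a genuine obstacle; the only point demanding a little care is the phrasing of the induction, which must allow the dimension to vary so that the hypothesis applies to the operators on the (possibly lower-dimensional) eigenspace $W$. Alternatively, one could simply quote the folklore simultaneous upper-triangularisation of commuting matrices over $\CC$ (\cite[Corollary~2.4.6.4]{HJ85}), whose first column supplies the common eigenvector, but the short inductive argument above is self-contained.
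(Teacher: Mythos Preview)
Your proof is correct. The paper does not actually prove this lemma; it simply cites it as a folklore result from \cite[Corollary~2.4.6.4]{HJ85}, which is precisely the alternative you mention at the end. Your self-contained inductive argument is the standard one and works without issue.
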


Suppose $\lambda_1,\ldots,\lambda_k$ generate the field $K$ and multiplication by these elements correspond to the matrices $\cM_1,\ldots,\cM_k\in \Mat_d(\QQ)$, as spelled out in Section~\ref{sec:mapZd}. Then $\cM_1,\ldots,\cM_k$ are pairwise commuting and have no non-trivial common invariant subspace over $\QQ$. Conversely, we now show that any such tuple of matrices $\cM_1,\ldots,\cM_k$ arise from some $\lambda_1,\ldots,\lambda_k$ in some field $K$.

\begin{lem} \label{lem:irredstruct}
    Suppose $\cM_1,\ldots,\cM_k\in \Mat_d(\QQ)$ are pairwise commuting and have no non-trivial common invariant subspace over $\QQ$. Then there is a number field $K$ of degree $d$, algebraic numbers $\lambda_1,\ldots,\lambda_k\in K$ and a $\QQ$-isomorphism $\Phi:K\to \QQ^d$ such that
    \begin{enumerate}
        \item $K=\QQ(\lambda_1,\ldots,\lambda_k)$,
        \item for $l=1,\ldots,k$, the map $\Phi^{-1}\cM_l\Phi:K\to K$ is given by multiplication by $\lambda_l$.
    \end{enumerate}
\end{lem}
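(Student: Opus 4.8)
The plan is to build the field $K$ directly out of the $\QQ$-algebra generated by $\cM_1,\ldots,\cM_k$ inside $\End_\QQ(\QQ^d)$. Write $R := \QQ[\cM_1,\ldots,\cM_k] \subseteq \Mat_d(\QQ)$; since the $\cM_l$ pairwise commute, $R$ is a commutative $\QQ$-algebra of finite dimension (at most $d^2$, in fact at most $d$ once we know the conclusion). The first step is to show that $R$ is a field. Being a finite-dimensional commutative $\QQ$-algebra, $R$ is Artinian, so it is a product of local Artinian rings; if $R$ were not a field it would either have a nontrivial idempotent or a nonzero nilpotent. A nontrivial idempotent $e \in R$ would give a decomposition $\QQ^d = \ker e \oplus \operatorname{im} e$ into proper subspaces each invariant under all of $R$ (hence under all $\cM_l$), contradicting the no-common-invariant-subspace hypothesis. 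Similarly, if $N \in R$ is a nonzero nilpotent, then $\ker N$ is a proper nonzero subspace invariant under $R$ (it is nonzero since $N$ is nilpotent, and proper since $N \neq 0$), again a contradiction. Hence $R$ is a field; set $K := R$, a number field, and let $\lambda_l \in K$ be the element $\cM_l$, so that property (1), $K = \QQ(\lambda_1,\ldots,\lambda_k)$, holds by construction.

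The second step is to produce the isomorphism $\Phi$. The space $V := \QQ^d$ is a module over the field $K = R$, hence a $K$-vector space, say of dimension $m$, so $d = m \cdot [K:\QQ]$. I claim $m = 1$: if $v_1, v_2 \in V$ were $K$-linearly independent, then $K v_1$ would be a proper nonzero $K$-subspace of $V$, and a $K$-subspace is exactly a subspace invariant under every element of $R$, in particular under every $\cM_l$ — contradicting irreducibility. So $V$ is a one-dimensional $K$-vector space; fixing any nonzero $v_0 \in V$ gives a $K$-linear, hence $\QQ$-linear, isomorphism $\Phi^{-1}: K \to V = \QQ^d$, $\alpha \mapsto \alpha \cdot v_0$, where $\alpha \cdot v_0$ means applying the endomorphism $\alpha \in R$ to $v_0$. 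Let $\Phi$ be its inverse. Then for $l = 1,\ldots,k$ and $\alpha \in K$,
\[
\Phi^{-1}\cM_l\Phi(\alpha) \;=\; \cM_l\big(\alpha \cdot v_0\big) \;=\; (\cM_l \alpha)\cdot v_0 \;=\; (\lambda_l \alpha)\cdot v_0 \;=\; \Phi^{-1}(\lambda_l \alpha),
\]
using that $\cM_l$ and $\alpha$ commute and composition in $R$ corresponds to multiplication in $K$. This is precisely property (2): $\Phi^{-1}\cM_l\Phi$ is multiplication by $\lambda_l$. Finally $\deg(K/\QQ) = d/m = d$.

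I expect the main obstacle to be the verification that $R$ is a field — concretely, ruling out idempotents and nilpotents cleanly, and making sure the "invariant subspace" dictionary is applied correctly (the hypothesis is stated for the tuple $\cM_1,\ldots,\cM_k$, so one must note that an $R$-invariant subspace is the same as a simultaneously-$\cM_l$-invariant subspace, which is immediate since $R$ is generated by the $\cM_l$). Everything after that — the one-dimensionality of $V$ over $K$ and the explicit formula for $\Phi$ — is essentially formal, once one is careful that "$K$-subspace of $V$" and "simultaneously invariant subspace" coincide. One small point to double check: $K = R$ is nonzero and contains $\QQ \cdot I$, so it is a genuine field extension of $\QQ$ of finite degree, i.e.\ a number field, as required. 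Note that we do not here need the coprimeness hypothesis or the non-zero hypothesis on the $\cL_i$, nor any claim about determinants; those enter only in the subsequent Theorem~\ref{thm:precommchar}, which invokes this lemma after a change of basis by $\cP$.
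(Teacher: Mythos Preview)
Your argument is correct and takes a genuinely different route from the paper. The paper works over $\CC$: it invokes the existence of a common eigenvector, sets $\lambda_l$ to be the corresponding eigenvalues, uses the complex embeddings $\sigma_i$ and a Vandermonde determinant to show that $U=\langle\sigma_1(v),\ldots,\sigma_{d'}(v)\rangle\cap\QQ^d$ has dimension $d'=[K:\QQ]$, and then deduces $d'=d$ from irreducibility; the map $\Phi$ is built by hand via $\Phi(P(\lambda_1,\ldots,\lambda_k))=P(\cM_1,\ldots,\cM_k)e_1$. Your approach stays over $\QQ$ throughout: you recognise $R=\QQ[\cM_1,\ldots,\cM_k]$ as a commutative Artinian $\QQ$-algebra, use the irreducibility hypothesis to kill idempotents and nilpotents (so $R$ is a field), and then observe that $\QQ^d$ is a one-dimensional $R$-vector space, again by irreducibility. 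This is cleaner and more conceptual, and it makes the two uses of the hypothesis transparent; the paper's version has the mild advantage of identifying the $\lambda_l$ concretely as eigenvalues, which dovetails with how $H(\cL_0,\ldots,\cL_k)$ is computed later, but your $\lambda_l$ are the same elements by construction.

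One cosmetic point: your labelling of $\Phi$ and $\Phi^{-1}$ is reversed relative to the statement (you name the map $K\to\QQ^d$ as $\Phi^{-1}$), and as a result the displayed chain does not quite type-check as written: the first and last expressions live in $V$ while the asserted identity $\Phi^{-1}\cM_l\Phi=$ multiplication by $\lambda_l$ should be an equality of maps $K\to K$. If you rename the map $\alpha\mapsto\alpha\cdot v_0$ as $\Phi$ itself, the chain becomes $\cM_l\Phi(\alpha)=\cM_l(\alpha\cdot v_0)=(\lambda_l\alpha)\cdot v_0=\Phi(\lambda_l\alpha)$, whence $\Phi^{-1}\cM_l\Phi(\alpha)=\lambda_l\alpha$, which is exactly what is needed.
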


\begin{proof}
    By Lemma~\ref{lem:commupper}, there is a common eigenvector $v\in \CC^d$ for $\cM_1,\ldots,\cM_k$ with eigenvalues $\lambda_1,\ldots,\lambda_k$. Let $K=\QQ(\lambda_1,\ldots,\lambda_k)$. Then we may assume without loss of generality that $v\in K^d$. Let $d'=\deg(K/\QQ)$ and $\sigma_1,\ldots,\sigma_{d'}:K\to \CC$ be the complex embeddings. Then $\sigma_i(v)$ is also a common eigenvector for $\cM_1,\ldots,\cM_k$ with eigenvalues $\sigma_i(\lambda_1),\ldots,\sigma_i(\lambda_k)$. Since the tuples $(\sigma_i(\lambda_1),\ldots,\sigma_i(\lambda_k))$ are distinct for $i=1,\ldots,d'$ (as each tuple uniquely determines the map $\sigma_i:K\to K$), the common eigenvectors $\sigma_1(v),\ldots,\sigma_{d'}(v)\in \CC^d$ are linearly independent, implying that $d'\leq d$.

    Note that $U=\ang{\sigma_1(v),\ldots,\sigma_{d'}(v)}\cap \QQ^d$ is a common invariant subspace. We will show that this subspace has dimension $d'$. Clearly it has dimension at most $d'$. Let $\lambda\in K$ be a generator, so that $1,\lambda,\ldots,\lambda^{d'-1}$ is a $\QQ$-basis for $K$. For $i=0,\ldots,d'-1$, let $u_i=\sigma_1(\lambda^iv)+\cdots+\sigma_{d'}(\lambda^i v)=\Tr_{K/\QQ} (\lambda^i v)\in \QQ^d$, so that $u_i\in U$. We claim that $u_0,\ldots,u_{d'-1}$ are linearly independent. Indeed, since $u_i=\sigma_1(\lambda)^i\sigma_1(v)+\cdots+\sigma_{d'}(\lambda)^i\sigma_{d'}(v)$ and $\sigma_1(v),\ldots,\sigma_{d'}(v)$ are linearly independent, it suffices to show that the $d'\times d'$ matrix
    \[
    \begin{pmatrix}
        1 & \sigma_1(\lambda) & \cdots & \sigma_1(\lambda)^{d'-1}\\
        1 & \sigma_2(\lambda) & \cdots & \sigma_2(\lambda)^{d'-1}\\
        \vdots & \vdots & \ddots & \vdots\\
        1 & \sigma_{d'}(\lambda) & \cdots & \sigma_{d'}(\lambda)^{d'-1}\\
    \end{pmatrix}
    \]
    is non-singular. But this is true since it is a Vandermonde matrix and $\sigma_1(\lambda),\ldots,\sigma_{d'}(\lambda)$ are distinct, which in turn follows from the fact that $\lambda$ generates $K$.

    Since $\cM_1,\ldots,\cM_k$ have no non-trivial common invariant subspace, we must have $d'=d$. We deduce that $\cM_1,\ldots,\cM_k$ are simultaneously diagonalisable with eigenvalue tuples $(\sigma_i(\lambda_1),\ldots,\sigma_i(\lambda_k))$ for $i=1,\ldots,d$. Define the linear map $\Phi:K\to \QQ^d$ as follows. First, let $e_1\in \QQ^d$ be any non-zero vector and set $\Phi(1)=e_1$. Then, for any $\alpha\in K$, express $\alpha$ as a polynomial $P(\lambda_1,\ldots,\lambda_k)$ with rational coefficients in $\lambda_1,\ldots,\lambda_k$. Such a polynomial exists since $\lambda_1,\ldots,\lambda_k$ generate $K$, though it is not unique. Set $\Phi(\alpha)=P(\cM_1,\ldots,\cM_k)e_1$. 
    Observe that this is independent of the choice of $P$. Indeed, it suffices to show that if $P$ is a polynomial with rational coefficients such that $P(\lambda_1,\ldots,\lambda_k)=0$, then $P(\cM_1,\ldots,\cM_k)=0$. The matrix $P(\cM_1,\ldots,\cM_k)$ is diagonalisable with eigenvalues $P(\sigma_i(\lambda_1),\ldots,\sigma_i(\lambda_k))$ for $i=1,\ldots,d$. Since $P$ has rational coefficients, we have $P(\sigma_i(\lambda_1),\ldots,\sigma_i(\lambda_k))=\sigma_i(P(\lambda_1,\ldots,\lambda_k))=0$ for all $i$ and thus $P(\cM_1,\ldots,\cM_k)=0$.

    Notice that if $P(\lambda_1,\ldots,\lambda_k)=\alpha\neq 0$, then $P(\cM_1,\ldots,\cM_k)$ is diagonalisable with eigenvalues $\sigma_1(\alpha),\ldots,\sigma_k(\alpha)$. Thus, $P(\cM_1,\ldots,\cM_k)$ is non-singular and so $\Phi(\alpha)\neq 0$. It follows that $\Phi$ is injective and hence an isomorphism. Since $\Phi$ also satisfies condition 2, the result follows. 
\end{proof}

We now return to Theorem~\ref{thm:precommchar}.

\begin{proof}[Proof of Theorem~\ref{thm:precommchar}]
    Let $\cP\in \GL_d(\QQ)$ be such that $\cP\cL_0,\ldots,\cP\cL_k$ are pairwise commuting. Since $\cL_0,\ldots,\cL_k$ are irreducible, $\cP\cL_0,\ldots,\cP\cL_k$ have no non-trivial common invariant subspace. Let $K,\lambda_0,\ldots,\lambda_k,\Phi$ be as in the conclusion of Lemma~\ref{lem:irredstruct} when applied to $\cP\cL_0,\ldots,\cP\cL_k$.

    Since all of $\cL_0,\ldots,\cL_k$ are non-zero, all of  $\lambda_0,\ldots,\lambda_k$ are also non-zero. In particular, $\cL_0, \ldots,\cL_k$ are invertible over $\QQ$. Since $I,(\cP\cL_0)^{-1}\cP\cL_1,\ldots,(\cP\cL_0)^{-1}\cP\cL_k$ are also pairwise commuting, we may assume without loss of generality that $\cP=\cL_0^{-1}$, so we have $\lambda_0=1$. From Lemma~\ref{lem:irredstruct}, we have that, for all $u\in \QQ^d$ and $l=1,\ldots,k$,
    $$\cL_0^{-1}\cL_l(u)=\Phi(\lambda_l\cdot \Phi^{-1}(u)).$$

    It remains to show that $|\det(\cL_0)|=N_{K/\QQ}(\frakD)$, where $\frakD=\frakD_{\lambda_1,\ldots,\lambda_k;K}$. Consider the integer polynomial
    \begin{align*}
        G(x_0,\ldots,x_k) &= \det(x_0\cL_0+\cdots+x_k\cL_k)\\
        &= \det(\cL_0)\det(x_0+x_1\cL_0^{-1}\cL_1+\cdots+x_k\cL_0^{-1}\cL_k)\\
        &= \det(\cL_0)N_{K/\QQ}(x_0+x_1\lambda_1+\cdots+x_k\lambda_k).
    \end{align*}
    By Theorem~\ref{thm:altdenom}, $N_{K/\QQ}(\frakD)$ is the smallest positive integer required to scale $N_{K/\QQ}(x_0+x_1\lambda_1+\cdots+x_k\lambda_k)$ into an integer polynomial. Thus, $N_{K/\QQ}(\frakD)$ divides $\det(\cL_0)$, so that $|\det(\cL_0)|\geq N_{K/\QQ}(\frakD)$.

    Let $\Phi_1:\cO_K\to \ZZ^d$ and $\Phi_2:\frakD\to \ZZ^d$ be linear isomorphisms of lattices, so that $\Phi_1\circ \Phi_2^{-1}:\ZZ^d\to \ZZ^d$ is a $d\times d$ integer matrix with absolute determinant $N_{K/\QQ}(\frakD)$. Since $\lambda_l\cdot \frakD\subseteq \cO_K$, we have $\Phi_1(\lambda_l\cdot \Phi_2^{-1}(u))\in \ZZ^d$ for any $u\in \ZZ^d$. Thus, the linear map $u\mapsto \Phi_1(\lambda_l\cdot \Phi_2^{-1}(u))$ is represented by an integer matrix. But this map is also equal to the composition $(\Phi_1\circ \Phi^{-1})(\cL_0^{-1}\cL_l)(\Phi\circ \Phi_2^{-1})$. Since $\cL_0,\ldots,\cL_k$ are coprime, we have $|\det((\Phi_1\circ \Phi^{-1})(\cL_0^{-1})(\Phi\circ \Phi_2^{-1}))|\geq 1$, which implies that $N_{K/\QQ}(\frakD)=|\det(\Phi_1\circ \Phi_2^{-1})|\geq |\det(\cL_0)|$. Therefore, $|\det(\cL_0)|= N_{K/\QQ}(\frakD)$, as required.
\end{proof}

\subsection{Sums of pre-commuting linear transformations}

We are now ready to prove Theorem~\ref{thm:linearsums}, our main result about sums of linear transformations, which we restate for convenience. Recall that if $\cL_0,\ldots,\cL_k\in \Mat_d(\ZZ)$ are non-zero, pre-commuting, irreducible and coprime and the polynomial $G(x_0,\ldots,x_k)=\det(x_0\cL_0+\cdots+x_k\cL_k)$ factorises as 
$$G(x_0,\ldots,x_k)=\prod_{i=1}^d (a_{0i}x_0+\cdots+a_{ki}x_k),$$
then $H(\cL_0,\ldots,\cL_k)$ is defined by
$$H(\cL_0,\ldots,\cL_k)=\prod_{i=1}^d (|a_{0i}|+\cdots+|a_{ki}|).$$
The statement that we wish to prove is then as follows.

\begin{thm} \label{thm:linearsums2}
    Suppose that $\cL_0,\ldots,\cL_k\in \Mat_d(\ZZ)$ are pre-commuting, irreducible and coprime. Then 
    $$|\cL_0 A+\cdots+\cL_k A|\geq H(\cL_0,\ldots,\cL_k)|A|-o(|A|)$$
    for all finite subsets $A$ of $\ZZ^d$.
\end{thm}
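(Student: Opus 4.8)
The plan is to show that Theorem~\ref{thm:linearsums2} reduces almost immediately to Theorem~\ref{thm:main}, using Theorem~\ref{thm:precommchar} to realise $\cL_0,\ldots,\cL_k$ as the $\QQ$-linear maps attached to a family of algebraic dilates. First I would dispose of a degenerate case: if some $\cL_i$ is the zero matrix, one may simply delete it from the list. Indeed, removing a zero transformation affects neither irreducibility nor coprimeness (the requirements $\cL_i U\subseteq V$ and $\cP\cL_i\cQ\in\Mat_d(\ZZ)$ hold automatically when $\cL_i=0$), preserves the pre-commuting property, leaves $\cL_0 A+\cdots+\cL_k A$ unchanged up to translation, and does not alter $H(\cL_0,\ldots,\cL_k)$: in this case $G$ involves no $x_i$ and so admits a factorisation into linear forms not involving $x_i$, and $\prod_{i}\paren{\abs{a_{0i}}+\cdots+\abs{a_{ki}}}$ does not depend on which factorisation of $G$ into linear forms is chosen (rescaling the $d$ linear factors by scalars $t_i$ with $\prod_i t_i=1$ multiplies this product by $\abs{\prod_i t_i}=1$). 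So we may assume all of $\cL_0,\ldots,\cL_k$ are non-zero.

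Next, apply Theorem~\ref{thm:precommchar} to obtain a number field $K$ with $\deg(K/\QQ)=d$, elements $\lambda_1,\ldots,\lambda_k\in K$ (set $\lambda_0:=1$), and a $\QQ$-isomorphism $\Phi\colon K\to\QQ^d$ such that $\cL_0$ is invertible over $\QQ$, $\abs{\det\cL_0}=N_{K/\QQ}(\frakD)$ with $\frakD:=\frakD_{\lambda_1,\ldots,\lambda_k;K}$, and $\cL_0^{-1}\cL_l(u)=\Phi(\lambda_l\cdot\Phi^{-1}(u))$ for all $u\in\QQ^d$ and $l=1,\ldots,k$. Given a finite $A\subset\ZZ^d$, apply the $\QQ$-linear bijection $\cL_0^{-1}$ of $\QQ^d$ and then $\Phi^{-1}\colon\QQ^d\to K$, and put $B:=\Phi^{-1}(A)\subset K$. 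Since $\Phi^{-1}(\cL_0^{-1}\cL_l A)=\lambda_l\cdot B$ for each $l$ and both maps are bijections carrying Minkowski sums to Minkowski sums,
\[
\abs{\cL_0 A+\cdots+\cL_k A}=\abs{A+\cL_0^{-1}\cL_1 A+\cdots+\cL_0^{-1}\cL_k A}=\abs{B+\lambda_1\cdot B+\cdots+\lambda_k\cdot B},
\]
and $\abs{B}=\abs{A}$. Since $B\subset K\subset\CC$ is finite, Theorem~\ref{thm:main} gives $\abs{B+\lambda_1\cdot B+\cdots+\lambda_k\cdot B}\ge H(\lambda_1,\ldots,\lambda_k)\abs{A}-o(\abs{A})$, so it remains only to check $H(\cL_0,\ldots,\cL_k)=H(\lambda_1,\ldots,\lambda_k)$.

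For this, I would compute, exactly as in the proof of Theorem~\ref{thm:precommchar}, that
\[
G(x_0,\ldots,x_k)=\det\paren{x_0\cL_0+\cdots+x_k\cL_k}=\det(\cL_0)\,N_{K/\QQ}(x_0+x_1\lambda_1+\cdots+x_k\lambda_k)=\det(\cL_0)\prod_{i=1}^d\paren{x_0+\sigma_i(\lambda_1)x_1+\cdots+\sigma_i(\lambda_k)x_k},
\]
where $\sigma_1,\ldots,\sigma_d$ are the complex embeddings of $K$. The $d$ linear forms $x_0+\sum_l\sigma_i(\lambda_l)x_l$ are pairwise non-proportional (their $x_0$-coefficients all equal $1$ and $K=\QQ(\lambda_1,\ldots,\lambda_k)$), so by unique factorisation in $\CC[x_0,\ldots,x_k]$ any factorisation of $G$ into linear forms is, up to order, $\prod_{i=1}^d c_i\paren{x_0+\sum_l\sigma_i(\lambda_l)x_l}$ with $\prod_{i=1}^d c_i=\det\cL_0$ (comparing coefficients of $x_0^d$). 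Hence
\[
H(\cL_0,\ldots,\cL_k)=\prod_{i=1}^d\abs{c_i}\paren{1+\abs{\sigma_i(\lambda_1)}+\cdots+\abs{\sigma_i(\lambda_k)}}=\abs{\det\cL_0}\prod_{i=1}^d\paren{1+\abs{\sigma_i(\lambda_1)}+\cdots+\abs{\sigma_i(\lambda_k)}},
\]
and since $\abs{\det\cL_0}=N_{K/\QQ}(\frakD)$, this is precisely $H(\lambda_1,\ldots,\lambda_k)$, completing the proof.

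I expect no serious obstacle: the two hard ingredients, Theorem~\ref{thm:precommchar} and Theorem~\ref{thm:main}, are already in hand, and everything else is the bookkeeping above. The one point deserving care is the identification of the constants, which rests on the (easily verified) fact that $\prod_{i}\paren{\abs{a_{0i}}+\cdots+\abs{a_{ki}}}$ is independent of the chosen factorisation of $G$ into linear forms — this is also what makes $H(\cL_0,\ldots,\cL_k)$ well-defined — together with the equality $\abs{\det\cL_0}=N_{K/\QQ}(\frakD)$ from Theorem~\ref{thm:precommchar}; the treatment of zero matrices is a brief but routine separate check.
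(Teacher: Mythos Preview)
Your proposal is correct and follows essentially the same route as the paper's proof: reduce to non-zero matrices, invoke Theorem~\ref{thm:precommchar} to pass to algebraic dilates $\lambda_1,\ldots,\lambda_k$, apply Theorem~\ref{thm:main}, and verify $H(\cL_0,\ldots,\cL_k)=H(\lambda_1,\ldots,\lambda_k)$ via the computation of $G$ and the equality $\abs{\det\cL_0}=N_{K/\QQ}(\frakD)$. You are simply more explicit than the paper in spelling out the bijection $B=\Phi^{-1}(A)$ and in justifying both the removal of zero matrices and the well-definedness of $H(\cL_0,\ldots,\cL_k)$, all of which the paper leaves implicit.
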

\begin{proof}
    We may assume that $\cL_0,\ldots,\cL_k$ are non-zero. Let $\lambda_1, \dots, \lambda_k$ be the algebraic numbers given by applying Theorem~\ref{thm:precommchar} to $\cL_0,\ldots,\cL_k$. Then the required estimate follows from Theorem~\ref{thm:main} provided only that $H(\cL_0,\ldots,\cL_k)=H(\lambda_1,\ldots,\lambda_k)$. To check this, note that the map corresponding to multiplication by $\lambda_l$ is similar to $\cL_0^{-1}\cL_l$, so we have
    \begin{align*}
        G(x_0,\ldots,x_k) &= \det(x_0\cL_0+\cdots+x_k\cL_k)\\
        &= \det(\cL_0)\det(x_0I+x_1\cL_0^{-1}\cL_1+\cdots+x_k\cL_0^{-1}\cL_k)\\
        &= \det(\cL_0)N_{K/\QQ}(x_0+x_1\lambda_1+\cdots+x_k\lambda_k)\\
        &= \det(\cL_0)\prod_{i=1}^d (x_0+\sigma_i(\lambda_1)x_1+\cdots+\sigma_i(\lambda_k)x_k).
    \end{align*}
    Therefore,
    \begin{align*}
        H(\cL_0,\ldots,\cL_k) &= |\det(\cL_0)|\prod_{i=1}^d (1+|\sigma_i(\lambda_1)|+\cdots+|\sigma_i(\lambda_k)|)\\
        &= N_{K/\QQ}(\frakD_{\lambda_1,\ldots,\lambda_k;K})\prod_{i=1}^d (1+|\sigma_i(\lambda_1)|+\cdots+|\sigma_i(\lambda_k)|)\\
        &= H(\lambda_1,\ldots,\lambda_k),
    \end{align*}
    completing the proof.
\end{proof}

\section{Concluding remarks} \label{sec:conc}

\noindent
{\bf Lower-order terms.}
A close inspection of our arguments shows that the $o(|A|)$ term in our bound
\[|A+\lambda_1\cdot A+\cdots+\lambda_k\cdot A|\geq H(\lambda_1,\ldots,\lambda_k)|A|-o(|A|)\]
can be taken to be $O(|A|/\sqrt{\log_{(k)} |A|})$, where $\log_{(k)}$ is the $k$-times iterated logarithm. This is clearly not best possible. The lower bound in Section~\ref{sec:low} suggests that one should be able to improve the error term to $O(|A|^{1-1/d})$, where $d = \deg(K/\QQ)$, though this is likely to be difficult. Given this, it would already be interesting to obtain $O(|A|^{1- \sigma})$ for some $\sigma$ depending only on $\lambda_1, \dots, \lambda_k$. In the particular case where $k = 1$ and $\lambda$ is of the form $(p/q)^{1/d}$, this was already achieved in our earlier paper~\cite{CL23}. However, the methods of that paper and this one are quite orthogonal, so a novel approach is likely to be necessary for the general case.

\vspace{3mm}
\noindent
{\bf An interesting example.}
The main problem left open by this paper is to prove an analogue of Theorem~\ref{thm:main} when the matrices
$\cL_0,\ldots,\cL_k\in \Mat_d(\ZZ)$ are not necessarily pre-commuting. Our own attentions in this direction have focused on the specific example where 
$$\cL_0=\begin{pmatrix}
    0 & 1 & 0\\
    -1 & 0 & 0\\
    0 & 0 & 0
\end{pmatrix},\quad \cL_1=\begin{pmatrix}
    0 & 0 & 1\\
    0 & 0 & 0\\
    -1 & 0 & 0
\end{pmatrix},\quad \cL_2=\begin{pmatrix}
    0 & 0 & 0\\
    0 & 0 & 1\\
    0 & -1 & 0
\end{pmatrix}.$$
These matrices can be shown to be irreducible and coprime, though they are not pre-commuting. We believe that 
\[|\cL_0 A+\cL_1 A+\cL_2 A|\geq 8|A|-o(|A|)\]
for all finite $A \subset \mathbb{Z}^3$, with the box $[0, N)^3$ showing that this would be asymptotically best possible. However, we were unable to even prove that there is some $C > 0$ such that 
\[|\cL_0 A+\cL_1 A+\cL_2 A|\geq C|A|\]
for all finite $A \subset \mathbb{Z}^3$. Resolving this issue would be a promising first step towards understanding the general problem.


\begin{thebibliography}{}

\bibitem{BS14}
A. Balog and G. Shakan, On the sum of dilations of a set, {\it Acta Arith.} {\bf 164} (2014), 153--162.




\bibitem{BG13}
E. Breuillard and B. Green, Contractions and expansion, {\it Eur. J. Combin.} {\bf 34} (2013), 1293--1296.

\bibitem{BJM24}
A.~L.~Bruch, Y.~Jing and A.~Mudgal, Brunn--Minkowski type estimates for certain discrete sumsets, preprint available at arXiv:2409.05638 [math.CO].

\bibitem{B08}
B. Bukh, Sums of dilates, {\it Combin. Probab. Comput.} {\bf 17} (2008), 627--639.

\bibitem{CF18}
Y.-G. Chen and J.-H. Fang, Sums of dilates in the real numbers, {\it Acta Arith.} {\bf 182} (2018), 231--241.



\bibitem{CL22}
D. Conlon and J. Lim, Sums of transcendental dilates, {\it Bull. London Math. Soc.} {\bf 55} (2023), 2400--2406.


\bibitem{CL23}
D. Conlon and J. Lim, Sums of linear transformations, to appear in {\it Trans. Amer. Math. Soc.}


\bibitem{Fr73}
G. A. Freiman, 
{\bf Foundations of a structural theory of set addition}, Translations of Mathematical Monographs, Vol. 37, American Mathematical Society, Providence, R.I., 1973. 






\bibitem{HJ85}
R. A. Horn and C. R. Johnson, {\bf Matrix Analysis}, Cambridge University Press, Cambridge, 1985.


\bibitem{J48}
F. John, Extremum problems with inequalities as subsidiary conditions, in Studies and Essays Presented to R. Courant on his 60th Birthday, 187--204,
Interscience Publishers, New York, 1948.

\bibitem{KL06}
S. Konyagin and I. \L aba, Distance sets of well-distributed planar sets for polygonal norms, {\it Israel J. Math.} {\bf 152} (2006), 157--179.

\bibitem{KP23}
D. Krachun and F. Petrov, On the size of $A+\lambda A$ for algebraic $\lambda$, {\it Mosc. J. Comb. Number Theory} {\bf 12} (2023), 117--126.

\bibitem{KP24}
D. Krachun and F. Petrov, Tight lower bound on $|A+\lambda A|$ for algebraic integer $\lambda$, preprint available at arXiv:2311.09399 [math.CO].


\bibitem{M19}
A. Mudgal, Sums of linear transformations in higher dimensions, {\it Q. J. Math.} {\bf 70} (2019), 965--984.





\bibitem{S08}
T. Sanders, Appendix to ``Roth's theorem on progressions revisited'' by J. Bourgain, {\it J. Anal. Math.} {\bf 104} (2008), 193--206.

\bibitem{S12}
T. Sanders, On the Bogolyubov--Ruzsa lemma, {\it Anal.~PDE} {\bf 5} (2012), 627--655.

\bibitem{Sch11}
T. Schoen, Near optimal bounds in Freiman's theorem, {\it Duke Math. J.} {\bf 158} (2011), 1--12.

\bibitem{S16}
G. Shakan, Sum of many dilates, {\it Combin.~Probab.~Comput.} {\bf 25} (2016), 460--469.

\bibitem{SL22}
D. Singhal and Y. Lin, Primes in denominators of algebraic numbers, {\it Int. J. Number Theory} {\bf 20} (2024), 327--348.

\bibitem{TV06}
T. Tao and V. Vu, {\bf Additive combinatorics}, Cambridge Studies in Advanced Mathematics, 105, Cambridge University Press, Cambridge, 2006.

\bibitem{TV08}
T. Tao and V. Vu, John-type theorems for generalized arithmetic progressions and iterated sumsets, {\it Adv. Math.} {\bf 219} (2008),  428--449.





\end{thebibliography}
\end{document}